\newcommand{\pdfgraphics}{\ifpdf\DeclareGraphicsExtensions{.pdf,.jpg}\else\fi}
\definecolor{hanblue}{rgb}{0.27, 0.42, 0.81}
\definecolor{red}{rgb}{1.0, 0.0, 0.0}
\theoremstyle{plain}
\newtheorem{teo}{Theorem}[section]
\newtheorem{lemma}[teo]{Lemma}
\newtheorem{prop}[teo]{Proposition}
\newtheorem{cor}[teo]{Corollary}
\theoremstyle{definition}
\newtheorem{dfnz}[teo]{Definition}
\theoremstyle{remark}
\newtheorem{rem}[teo]{Remark}
\numberwithin{equation}{section}
\def\R{\mathbb R}
\newcommand{\e }{\varepsilon }
\newcommand{\ep }{\varepsilon }
\newcommand{\ga }{\gamma}
\newcommand{\intbar}{\etaathop{\int\etaakebox(-13.5,0){\rule[4pt]{.7em}{0.3pt}}
\kern-6pt}\nolimits}
\newcommand{\be}{\begin{equation}}
\newcommand{\ee}{\end{equation}}
\newcommand{\bea}{\begin{equation*}}
\newcommand{\eea}{\end{equation*}}
\def\R{{{\mathbb R}}}
\def\SS{{{\mathbb S}}}
\def\ders{\partial_s}
\def\pol{{\mathfrak{p}}}
\newcommand{\N}{\mathbb{N}}
\def\be{\begin{equation}}
\def\ee{\end{equation}}
\def\bea{\begin{eqnarray*}}
\def\bean{\begin{eqnarray}}
\def\eean{\end{eqnarray}}
\def\eea{\end{eqnarray*}}
\DeclarePairedDelimiter\scal{\langle}{\rangle} 
\newcommand{\de}{{\,\mathrm{d}}} 
\newcommand{\pa}{\partial}
\begin{document}

\pdfgraphics 

\title{A survey of the elastic flow of curves and networks}

\author{Carlo Mantegazza \footnote{Dipartimento di Matematica e Applicazioni, Universit\`a di Napoli Federico II, Via Cintia, Monte S. Angelo
80126 Napoli, Italy} \and Alessandra Pluda \footnote{ Dipartimento di Matematica, Universit\`a di Pisa, Largo
    Bruno Pontecorvo 5, 56127 Pisa, Italy} \and Marco Pozzetta \footnotemark[2]}
\date{\today}

\maketitle

\begin{abstract}
\noindent We collect and present in a unified way several results in recent years about the elastic flow of curves and networks, trying to draw the state of the art of the subject. In particular, we give a complete proof of global existence and smooth convergence to critical
points of the solution of the elastic flow of closed curves in $\mathbb{R}^2$.
In the last section of the paper we also discuss a list of open problems.
\end{abstract}

\textbf{Mathematics Subject Classification (2020)}: 53E40 (primary); 35G31, 35A01, 35B40.

\section{Introduction}

The study of \emph{geometric flows} is a very flourishing mathematical field and geometric evolution equations have been applied to a variety of topological, analytical and physical problems, giving in some cases very fruitful results. 
In particular, a great attention has been devoted to the analysis of 
harmonic map flow, mean curvature flow and Ricci flow.
With serious efforts from the members of the mathematical community the understanding of these topics gradually improved and it culminated with Perelman's proof of the Poincar\'{e} conjecture making use of the Ricci flow, completing Hamilton's program. The enthusiasm for such a marvelous result encouraged more and more researchers to investigate properties and applications of general geometric flows and the field branched out in various different directions, including \emph{higher order flows},
among which we mention the \emph{Willmore flow}.

In the last two decades a certain number of authors focused on
the one dimensional analog of the Willmore flow (see~\cite{kuschat1}): the elastic flow of curves
and networks.  The elastic energy of a regular and sufficiently smooth curve  $\gamma$  
is a linear combination of the $L^2$-norm of the curvature $\boldsymbol{\kappa}$ and the length, namely 
\begin{equation*}
\mathcal{E}\left(\gamma\right):=\int_{\gamma} \vert\boldsymbol{\kappa}\vert^{2}+\mu \,\mathrm{d}s\,.
\end{equation*} 
where $\mu\geq 0$.
In the case of networks
(connected sets composed of $N\in\mathbb{N}$ curves that meet at their endpoints
in junctions of possibly different order)
the functional is defined in a similar manner: 
one sum the contribution of each curve (see Definition~\ref{eef}).
Formally the elastic flow is the $L^2$ gradient flow of the functional $\mathcal{E}$
(as we show in Section~\ref{section:defi})
and the solutions of this flow are the object of our interest in the current paper.

To the best of our knowledge the problem was taken into account for the first time by Polden.
In his Doctoral Thesis~\cite[Theorem 3.2.3.1]{poldenthesis} he proved that, if we take as initial datum a smooth immersion of the circle in the plane, then there exists a smooth solution
to the gradient flow problem  for all positive times. Moreover, as times goes to infinity,
it converges along subsequences to a critical point of the  functional (either a circle, or
a symmetric figure eight or a multiple cover of one of these).
Polden was also able to prove that if the winding number of the initial curve is $\pm 1$
(for example the curve is embedded), then it converges to a unique circle
~\cite[Corollary 3.2.3.3]{poldenthesis}. 
In the early 2000s Dziuk, Kuwert and Sch\"{a}tzle generalize the global existence and subconvergence result to $\mathbb{R}^n$ and derive an algorithm to treat the flow and compute several numerical examples.
Later the analysis was extended to non closed curve, both with fixed endpoint and with non--compact branches. The problem for networks was first proposed in 
2012 by Barrett, Garcke and N\"{u}rnberg~\cite{bargarnu}.

Beyond the study of this specific problem
there are quite a lot of catchy variants.
For instance, as for a regular $C^2$ curve $\gamma:I\to\mathbb{R}^2$
it holds $\boldsymbol{k}=\partial_s \tau$,
where $\tau$ is the unit tangent vector and $\partial_s$ denotes derivative with respect to the arclength parameter $s$ of the curve, we can introduce the \emph{tangent indicatrix}: a scalar map $\theta:I\to\mathbb{R}$ such that 
$\tau=(\cos\theta,\sin\theta)$. 
Then we can write the elastic energy in terms of the angle spanned by the tangent vector.
By expressing the $L^2$ corresponding gradient flow by means of $\theta$
one get another geometric evolution equation. This is a second order gradient flow and it has been
first considered by~\cite{We93} and then further 
investigated by~\cite{LiYa18, LiYaSc15, NoPo, OkPoWh, We95}.

Critical points of total squared curvature subject to fixed length  are called \emph{elasticae}, or \emph{elastic curves}. 
Notice that for any $\mu>0$ the elasticae are (up to homothety) exactly the critical points of the energy
$\mathcal{E}$. Elasticae have been studied since Bernoulli and Euler as the elastic energy was used as a model for the bending energy of an elastic rod~\cite{Truesdell} and 
more recently Langer and Singer contributed to their classification~\cite{LaSi84, LaSi85}
(see also~\cite{DjHaMl, Li89}). \\
The $L^2$--gradient flow of $\int\vert\boldsymbol{\kappa}\vert^2\,\mathrm{d}s$ when the curve is subjected to fixed length is studied in~\cite{DaLiPo14,DaLiPo17,DzKuSc02,RuSp20}.

It is worth to mention also results about the Helfrich flow~ \cite{DaPo14, Wh15},
the elastic flow with constraints~\cite{Koiso,Ok07,Ok08}
and other fourth (or higher) order flows~\cite{AbBu19,AbBu20,McWhWu19,McWhWu17,WhWh}.

In the following table we collect some contributions on the elastic flow of curves (closed or open)
and networks. The first column concerns papers containing detailed proofs of short time existence results. The initial datum can be a function of a suitably chosen Sobolev space, or H\"{o}lder space, or the curves are smooth. In the second column we place the articles that show existence for all 
positive times or that describe obstructions to such a desired result. 
When the flow globally exists, it is natural to wonder about the behavior of the solutions for 
$t\to+\infty$. Papers that answer this question are in the third column.
The ambient space may vary from article to article: it can be $\mathbb{R}^2$, $\mathbb{R}^n$, or a Riemannian manifold.

\smallskip

\noindent
\begin{tabular}{|c|c|c|c|}
\hline
& Short Time Existence  & Long time Behavior  & Asymptotic Analysis \\
\hline     
  \small{closed curves} &  \cite{poldenthesis} & \cite{DzKuSc02}  \cite{poldenthesis} & \cite{DzKuSc02} \cite{MaPo20} \cite{poldenthesis} \cite{Po20} \\
\hline   
  \small{open curves  Navier b.c.}   &   \cite{NoOk14} &  \cite{NoOk14} & \cite{NoOk14} \cite{NoOk17} \\
\hline     
  \small{open curves, clamped b.c.}   & \cite{Sp17} & \cite{Ch12} & \cite{DaPoSp16} \cite{NoOk17}  \\  
 \hline 
     \small{non compact curves}   &  & \cite{NoOk14} & \cite{NoOk14}\\    
\hline     
  \small{networks}   & \cite{DaChPo20} \cite{GaMePl1} \cite{GaMePl2}  &  \cite{DaChPo19} \cite{GaMePl2} & \cite{DaChPo19} \\   
\hline     
\end{tabular}\vspace{0.2cm}
We refer also to the two recent PhD theses~\cite{menzelthesis,PozzettaThesis}.

\medskip

The aim of this expository paper is to arrange (most of) this material in a unitary form,
proving in full detail the results for the elastic flow of closed curves
and underlying the differences with the other cases.

For simplicity we restrict to the Euclidean plane as ambient space.
In Section~\ref{section:iniziale} we define the flow, deriving the motion
equation and the necessary boundary conditions for open curves and networks.
In the literature curves that meet at junctions of order at most three are usually considered, while here the order
of the junctions is arbitrary.

In Section~\ref{sec:ShortTimeExistence} we show short time existence and uniqueness (up to reparametrizations) for the elastic flow of closed curve, supposing that the initial datum is 
H\"{o}lder--regular (Theorem~\ref{geomexistence}).
The notion of $L^2$-gradient flow gives rise to a fourth order parabolic quasilinear  PDE, where  
the motion in tangential direction is not specified. To obtain a non--degenerate equation
we fix the tangential velocity, then getting first a \emph{special flow} (Definition ~\ref{def:SpecialFlow}).
We find a unique solution of the special flow (Theorem~\ref{existenceanalyticprob})
using a standard linearization procedure and a fixed point argument. Then a key point is to ensure that solving the special flow is enough to obtain a  solution to the original problem. How to overcome this issue is explained in 
Section~\ref{Sec:invariance}.
The short time existence result can be easily adapted to open curves (see Remark~\ref{generalizzazioneopen}),
but present some extra difficulties in the case of networks, that we explain in Remark~\ref{generalizzazionenetwork}.

One interesting feature following from the parabolic structure of the elastic flow is that solutions are smooth for any (strictly) positive times. We give the idea of two possible lines of proof of this fact and we refer to~\cite{GaMePl2}
and~\cite{DaChPo20} for the complete result.

Section~\ref{sec:LongTimeExistence} is devoted to the prove that the flow of either closed or open curves
with fixed endpoint 
exists globally in time (Theorem~\ref{teo:GlobalExistence}).
The situation for network is more delicate and it depends on the evolution of the length of the curves composing the network and on the angles formed by the tangent vectors of the curves
concurring at the junctions (Theorem~\ref{thm:LongTimeNetworks}).

In Section~\ref{sec:SmoothConvergence} we first show that, as time goes to infinity,
the solutions of the elastic flow of closed curve convergence 
along subsequences to stationary points of the elastic energy, up to translations and reparametrizations. We shall refer to this phenomenon as the \emph{subconvergence} of the flow. We then discuss how the subconvergence can be promoted to full convergence of the flow, namely to the existence of the full asymptotic limit as $t\to+\infty$ of the evolving flow, up to reparametrizations (Theorem~\ref{FullConvergence}). The proof is based on the derivation and the application of a \L ojasiewicz--Simon gradient inequality for the elastic energy.

We conclude the paper with a list of open problems.

\section{The elastic flow}\label{section:iniziale} 

A regular curve $\gamma$ is a continuous map $\gamma:[a,b]\to\R^2$ which is differentiable on $(a,b)$ and such that $\vert\partial_x\gamma\vert$ never vanishes on $(a,b)$.
Without loss of generality, from now on we consider $[a,b]=[0,1]$.

In the sequel 
we will abuse the word ``curve'' to refer both to the parametrization of a curve, the equivalence class of reparametrizations, or the support in $\mathbb{R}^2$.

We denote by $s$ the arclength parameter and
we will pass to the arclength parametrization of the curves when it is more convenient
without further mentioning. 
We will also extensively use the arclength measure $\mathrm{d}s$ when integrating
with respect to the volume element $\mu_g$ on $[0,1]$ induced by a regular rectifiable curve $\gamma$, namely,
given a $\mu_g$-integrable function $f$ on $[0,1]$ it holds
\begin{equation*}
\int_{[0,1]}f\,\mathrm{d}\mu_g=\int_0^1f(x)\vert\partial_x\gamma(x)\vert\,\mathrm{d}x
=\int_0^{\ell(\gamma)}f(x(s))\,\mathrm{d}s=:\int_{\gamma}f\,\mathrm{d}s\,,
\end{equation*}
where $\ell(\gamma)$ is the length of the curve $\gamma$.

\begin{dfnz}\label{network}
A planar \emph{network} $\mathcal{N}$ is a connected set in $\mathbb{R}^2$ given by a finite union of images of regular curves $\gamma^i:[0,1]\to\R^2$ 
that may have endpoints of order one fixed in the plane and 
curves that meet at junctions of different order $m\in\mathbb{N}_{\geq 2}$.\\
The \emph{order} of a junction $p\in\R^2$ is the number $\sum_i \{0,1\}\cap \sharp(\gamma^i)^{-1}(p)$.
\end{dfnz}

As special cases of networks we find:
\begin{itemize}
\item a single curve (either closed or not);
\item a network of three curves whose endpoints meet at two different triple junction
(the so-called \emph{Theta});
\item a network of three curves with one common endpoint
at a triple junction and the other three
endpoint of order one (the so called \emph{Triod}).
\end{itemize}

Notice that when it is more convenient, we will parametrize a closed curve as a map $\gamma:\mathbb{S}^1\to\mathbb{R}^2$.\\


In order to calculate the integral of an $N$-tuple $f=(f^1,\ldots,f^N)$ of functions along the network $\mathcal{N}$ 
composed of the $N$ curves $\gamma^i$ we adopt the notation
\begin{equation*}
\int_{\mathcal{N}} f\,\mathrm{d}s:=\sum_{i=1}^{N}\int_{\gamma^i}^{}f_{|\gamma^i}\,\mathrm{d}s
=\sum_{i=1}^N\int_0^1 f^i\vert\partial_x\gamma^i\vert\,\mathrm{d}x\,.
\end{equation*}
If $\mu=(\mu^1, \ldots,\mu^N)$ with $\mu^i\geq 0$, then the notation 
$\int_{\mathcal{N}} \mu f\,\mathrm{d}s$ stands for 
$\sum_{i=1}^N\int_0^1 \mu^i f^i\vert\partial_x\gamma^i\vert\,\mathrm{d}x$.\\

Let $\gamma:[0,1]\to\mathbb{R}^2$ be a regular curve and $f:(0,1)\to\mathbb{R}$ a Lebesgue measurable
function. For $p\in[1,\infty)$ we define
\begin{equation*}
\Vert f\Vert^p_{L^p(\mathrm{d}s)}:=\int_\gamma \vert f\vert^p\,\mathrm{d}s
=\int_0^1\vert f(x)\vert^p\vert\partial_x\gamma(x)\vert\,\mathrm{d}x
\end{equation*}
and
\begin{equation*}
L^p(\mathrm{d}s):=\left\lbrace f:(0,1)\to\mathbb{R}\;\text{Lebesgue measurable with}\;
\Vert f\Vert^p_{L^p(\mathrm{d}s)}<+\infty\right\rbrace\,.
\end{equation*}
We will also use the $L^\infty$--norm
\begin{equation*}
\lVert f^i\rVert_{L^\infty(\mathrm{d}s)}:=
\mathrm{ess\; sup}_{L^\infty(\mathrm{d}s)} \,\vert f^i\vert\,.
\end{equation*}
Whenever we are considering continuous functions, we identify the supremum norm with the $L^\infty$ norm and denote it by $\left\lVert\cdot\right\rVert_\infty$.

We remark here that for sake of notation we will simply write $\Vert\cdot\Vert_{L^p}$
instead of $\lVert \cdot \rVert_{L^p(\de s)}$
both for $p\in [1,\infty)$ and $p=\infty$ whenever there is no risk of confusion.

We will analogously write
\begin{equation*}
\lVert f\rVert_{L^p}:=\sum_{i=1}^{N}\lVert f^i\rVert_{L^p(\mathrm{d}s)}\quad\text{for all}\; p\in [1,\infty)\quad\text{and}\quad
\lVert f\rVert_{L^\infty}:=\sum_{i=1}^{N}\lVert f^i\rVert_
{L^\infty(\mathrm{d}s)}\,,
\end{equation*}
for an $N$-tuple of functions $f$ along a network $\mathcal{N}$.

Assuming that $\gamma^i$ is of class $H^2$, we denote by $\boldsymbol{\kappa}^i:=\partial_s^2\gamma^i$ the curvature vector to the curve $\gamma^i$, which is defined at almost every point and the curvature is nothing but $\kappa^i:=\vert\boldsymbol{\kappa}^i\vert$.
We recall that in the plane we can write the curvature vector as $\boldsymbol{\kappa}^i=k^i	\nu^i$
where $\nu^i$ is the counterclockwise rotation of $\frac{\pi}{2}$ of the unit tangent vector $\tau^i:=\vert\partial_x\gamma^i\vert^{-1}(\partial_x\gamma^i)$ to a curve $\gamma^i$ and then $k^i$ is the \emph{oriented curvature}.

\begin{dfnz}\label{elasticenergy}
Let $\mu^i \geq 0$ be fixed for 
$i\in \{1, \ldots, N\}$.
The \emph{elastic energy functional} $\mathcal{E}_\mu$
of a network $\mathcal{N}$ given by $N$ curves $\gamma^i$ of class $H^2$ is defined by
\begin{equation}\label{eef}
\mathcal{E}_\mu\left(\mathcal{N}\right):= 
 \int_{\mathcal{N}} \vert \boldsymbol{\kappa}\vert^{2}\,\mathrm{d}s
+\mu\, \mathrm{L}(\mathcal{N})
:=\sum_{i=1}^N\left(\int_{\mathcal{N}^{i}}  (k^i)^{2} \,\mathrm{d}s
+\mu^i \, \ell (\gamma^i) \right)\,,
\end{equation}
and $\mu \mathrm{L}(\mathcal{N})$ is named weighted  
global length of the network $\mathcal{N}$.
\end{dfnz}

\subsection{First variation of the elastic energy}\label{sec:FirstVariation}

The computation of the first variation
has been carried several times in full details in the literature, 
both in the setting of closed curves or networks. We refer for instance to~\cite{bargarnu, MaPo20}. 

Let $N\in\mathbb{N}$, $i\in\{1,\ldots,N\}$.
Consider a network $\mathcal{N}$ composed of $N$ curves, parametrized by $\gamma^i:[0,1]\to\mathbb{R}^2$ of class $H^4$.
In order to compute the first variation of the energy we can suppose that the curves meet at one junction, which is of order $N$
and $\gamma^i(1)$ is some fixed point in $\R^2$ for any $i$. That is
\begin{equation*}
\gamma^1(0)=\ldots=\gamma^N(0)\,, \quad \gamma^i(1)=P^i\in\mathbb{R}^2\,.
\end{equation*}
The case of networks with other possible topologies can be easily deduced from the presented one. 
We consider a variation $\gamma^i_\varepsilon=\gamma^i+\varepsilon\psi^i$
of each curve $\gamma^i$ of $\mathcal{N}$ 
with $\varepsilon\in\mathbb{R}$ and 
$\psi^i:[0,1]\to\mathbb{R}^2$ of class $H^2$.
We denote by $\mathcal{N}_\varepsilon$ the network composed
of the curves $\gamma^i_\varepsilon$, which are regular whenever $|\e|$ is small enough.
We need to impose that the structure of the network $\mathcal{N}$ is preserved in the variation:
we want the network $\mathcal{N}_\varepsilon$ to still have one junction of order $N$
and we want to preserve the position of the other endpoints
$\gamma^i_\varepsilon(1)=P^i$. To this aim we require
\begin{equation*}
\psi^1(0)=\ldots=\psi^N(0)\,,\qquad \psi^i(1)=0
\qquad\forall\,i\in\{1,\ldots,N\}\,.
\end{equation*}

By definition of the elastic energy functional of networks, we have
\begin{equation*}
\mathcal{E}_\mu(\mathcal{N}_\varepsilon)
=\sum_{i=1}^N\int_{\gamma^i_\varepsilon}(k^i_\varepsilon)^2+\mu^i\,\mathrm{d}s
=\sum_{i=1}^N\int_{\gamma^i_\varepsilon}\vert\boldsymbol{\kappa}^i_\varepsilon\vert^2
+\mu^i\,\mathrm{d}s\,.
\end{equation*}
We introduce the operator $\partial_s^\perp$ (that acts on a vector field $\varphi$)
defined as the normal component of $\partial_s\varphi$ along the curve $\gamma$, that is
$\partial_s^\perp\varphi=\partial_s\varphi
-\left\langle \partial_s\varphi,\partial_s\gamma\right\rangle\partial_s\gamma$.
Then a direct computation  yields the following identities:
\begin{equation}\label{eq:VariazioniOggettiGeometrici}
\begin{split}
    \partial_\e\de s_\e &= \scal{\partial_s\psi^i,\tau^i_\e} \de s_\e
    = \left( \partial_s\scal{\psi^i,\tau^i_\e} - \scal{\psi^i,\boldsymbol{\kappa}^i_\e} \right) \de s_\e , \\
    \partial_\e\partial_s - \partial_s\partial_\e &= \left( \scal{\boldsymbol{\kappa}^i_\e, \psi^i}- \partial_s \scal{\tau^i_\e, \psi^i} \right) \partial_s,\\
    \partial_\e \tau^i_\e &= \partial_s^\perp (\psi^i)^\perp + \scal{\tau^i_\e, \psi^i}\boldsymbol{\kappa}^i_\e, \\
    \partial_\e \boldsymbol{\kappa}^i_\e &= (\partial_s^\perp)^2 (\psi^i)^\perp - \scal{\partial_s^\perp(\psi^i)^\perp, \boldsymbol{\kappa}^i_\e}\tau^i_\e + \scal{\tau^i_\e, \psi^i}\partial_s\boldsymbol{\kappa}^i_\e + \scal{\boldsymbol{\kappa}^i_\e,\psi^i}\boldsymbol{\kappa}^i_\e,
\end{split}
\end{equation}
for any $i$ on $(0,1)$, where $s$ is the arclength parameter of $\gamma_\e$ for any $\e$.
Therefore, evaluating at $\e=0$, we obtain
\begin{align}\label{primostep}
 \frac{d}{d\varepsilon}\mathcal{E}_\mu(\mathcal{N}_\varepsilon)\Big\vert_{\varepsilon=0}
 & =\sum_{i=1}^{N} \left[\int_{\gamma^i} 2 \langle \boldsymbol{\kappa}^{i},
 \partial^2_{s} \psi^{i} \rangle \, \mathrm{d}s   
 + \int_{\gamma^i} (-3 |\boldsymbol{\kappa}^i|^2+\mu^i)
\left\langle \tau^{i},\partial_s\psi^{i}\right\rangle \,\mathrm{d}s \right]\,.
\end{align}
Moreover, denoting by $\partial_s^\perp(\cdot):
=\partial_s(\cdot)-\langle\partial_s (\cdot),\tau \rangle\tau$,
we have
\begin{align*}
\partial_s \boldsymbol{\kappa}^i&
=\partial^\perp_s \boldsymbol{\kappa}^i- |\boldsymbol{\kappa}^i|^2\tau^i\,,\\
\partial_s^2 \boldsymbol{\kappa}^{i}&= (\partial_s^\perp)^2 \boldsymbol{\kappa}^{i} 
-3 \langle \partial_s \boldsymbol{\kappa}^i, \boldsymbol{\kappa}^i\rangle 
\partial_s \gamma^i - |\boldsymbol{\kappa}^i|^2 \boldsymbol{\kappa}^i\,,
\end{align*}
then, using these identities and 
integrating~\eqref{primostep} by parts twice, one gets
\begin{align}
 \frac{d}{d\varepsilon}\mathcal{E}_\mu(\mathcal{N}_\varepsilon) \Big\vert_{\e=0}
 =&\sum_{i=1}^{N} 
 \int_{\gamma^i}  \left\langle
 2 (\partial^\perp_s)^2 \boldsymbol{\kappa}^{i}+ |\boldsymbol{\kappa}^i|^2 \boldsymbol{\kappa}^{i} 
 -\mu^i \boldsymbol{\kappa}^{i},
\psi^{i} \right\rangle \,\mathrm{d}s\nonumber\\
 +&\sum_{i=1}^{N} \left[ 2 \left. \langle \boldsymbol{\kappa}^{i},
 \partial_s \psi^{i}\rangle \right|_0^1 
+  \left. \langle -2\partial^\perp_s \boldsymbol{\kappa}^{i}
- |\boldsymbol{\kappa}^i|^2\tau^i+\mu^i\tau^i, \psi^{i}\rangle \right|_0^1 
 \right] \label{firtsvar} \\
 =&\sum_{i=1}^{N} 
 \int_{\gamma^i}  \left\langle
 2 (\partial^\perp_s)^2 \boldsymbol{\kappa}^{i}+ |\boldsymbol{\kappa}^i|^2 \boldsymbol{\kappa}^{i} 
 -\mu^i \boldsymbol{\kappa}^{i},
\psi^{i} \right\rangle \,\mathrm{d}s \nonumber\\
 +&\sum_{i=1}^{N} 2  \langle \boldsymbol{\kappa}^{i}(1),
 \partial_s \psi^{i}(1)\rangle - 2  \langle \boldsymbol{\kappa}^{i}(0),
 \partial_s \psi^{i}(0)\rangle \nonumber\\
+& \left\langle\left( \sum_{i=1}^{N}-2\partial^\perp_s \boldsymbol{\kappa}^{i}(0)
- \vert\boldsymbol{\kappa}^i(0)\vert^2\tau^i(0)+\mu^i\tau^i(0)\right), \psi^{1}(0)\right\rangle\,.\label{boundarypart}
\end{align}
As we chose arbitrary fields $\psi^i$, we can split 
$\partial_s\psi^i$ into normal and tangential components as
\begin{align*}
\partial_{s}\psi^{i}&=\partial_{s}^\perp\psi^{i}+\partial_{s}^\parallel\psi^{i}
=\left\langle \partial_{s}\psi^i,\nu^i\right\rangle\nu^i
+\left\langle \partial_{s}\psi^i,\tau^i\right\rangle\tau^i
=:\left(\psi^i_{s}\right)^\perp\nu^i+\left(\psi^i_{s}\right)^\parallel \tau^i\,.
\end{align*}
This allows us to write
\begin{align*}
 \left\langle \boldsymbol{\kappa}^{i},
 \partial_s \psi^{i}\right\rangle
 &= \left\langle k^i\nu^i,
\left(\psi^i_{s}\right)^\perp\nu^i+\left(\psi^i_{s}\right)^\parallel \tau^i\right\rangle
=k^i  \left(\psi^i_{s}\right)^\perp\,, 
\end{align*}
and we can then partially reformulate the first variation in terms of the oriented curvature and its derivatives:
\begin{align}\label{riformulazione}
& \frac{d}{d\varepsilon}\mathcal{E}_\mu(\mathcal{N}_\varepsilon) \Big\vert_{\e=0}
 =\sum_{i=1}^{N} \int_{\gamma^i} 
 \left(2\partial_ s^2 k^{i} +(k^i)^3-\mu^i k^i\right) \left(\psi^i\right)^\perp\,\mathrm{d}s\nonumber\\
 &+2\sum_{i=1}^N \left.  k^i  \left(\psi^i_{s}\right)^\perp  \right|_0^1 
+ \left\langle\left( \sum_{i=1}^{N}-2\partial^\perp_s \boldsymbol{\kappa}^{i}(0)
- \vert\boldsymbol{\kappa}^i(0)\vert^2\tau^i(0)+\mu^i\tau^i(0)\right), \psi^{1}(0)\right\rangle\,.
\end{align}

\subsection{Second variation of the elastic energy}\label{sec:SecondVariation}

In this part we compute the second variation of the elastic energy functional $\mathcal{E}_\mu$. We are interested only in showing its structure and analyze some properties, instead of computing it explicitly (for the full formula of the second variation we refer to~\cite{DaPoSp16} and~\cite{Po20}). In fact, we will exploit the properties of the second variation only in the proof of the smooth convergence of the elastic flow of closed curves in~\Cref{sec:SmoothConvergence}. In particular, we we will not need to carry over boundary terms in the next computations.

\medskip

Let $\gamma:(0,1)\to\R^2$ be a smooth curve and let $\psi:(0,1)\to \R^2$ be a vector field in $H^4(0,1)\cap C^0_c(0,1)$, that is, $\psi$ identically vanishes out of a compact set contained in $(0,1)$. In this setting, we can think of $\gamma$ as a parametrization of a part of an arc of a network or of a closed curve. We are interested in the second variation
\[
\frac{d^2}{d\e^2} \mathcal{E}_\mu(\gamma+\e\psi)\Big\vert_{\e=0}\,.
\]
By~\eqref{firtsvar} we have
\[
\begin{split}
    \frac{d^2}{d\e^2} \mathcal{E}_\mu(\gamma+\e\psi)\Big\vert_{\e=0}
    &= \frac{d}{d\e}\Big\vert_{\e=0} \int_{\gamma_\e} \left\langle
    2(\partial_s^\perp)^2\boldsymbol{\kappa}_\e + |\boldsymbol{\kappa}_\e|^2 \boldsymbol{\kappa}_\e - \mu \boldsymbol{\kappa}_\e
    , \psi\right\rangle \de s_\e\,,
\end{split}
\]
where $\boldsymbol{\kappa}_\e$ is the curvature vector of $\gamma_\e=\gamma+\e\psi$, for any $\e$ sufficiently small.

We further assume that $\gamma$ is a critical point for $\mathcal{E}_\mu$ and that $\psi$ is normal along $\gamma$. Then
\[
\frac{d^2}{d\e^2} \mathcal{E}_\mu(\gamma+\e\psi)\Big\vert_{\e=0}
= \int_\gamma \left\langle \partial_\e  \big\vert_{\e=0} \left(
    2(\partial_s^\perp)^2\boldsymbol{\kappa}_\e + |\boldsymbol{\kappa}_\e|^2 \boldsymbol{\kappa}_\e - \mu \boldsymbol{\kappa}_\e
    \right), \psi\right\rangle \de s\,.
\]
Using~\eqref{eq:VariazioniOggettiGeometrici}, if $\phi_\e$ is a normal vector field along $\gamma_\e$ for any $\e$ and we denote $\phi\coloneqq \phi_0$, a direct computation shows that
\[
\partial_\e |_{_{\e=0}} \partial_s^\perp \phi_\e - \partial_s^\perp \partial_\e|_{_{\e=0}} \phi_\e 
= \scal{\psi, \boldsymbol{\kappa}} \partial_s^\perp \phi - \scal{\partial_s^\perp \phi, \partial_s^\perp \psi} \tau + \scal{\phi,\boldsymbol{\kappa}}\partial_s^\perp \psi\,.
\]
Hence $\partial_\e |_{_{\e=0}} (\partial_s^\perp)^2 \boldsymbol{\kappa}_\e$ can be computed applying the above commutation rule twice, first with $\phi_\e = \partial_s^\perp \boldsymbol{\kappa}_\e$ and then with $\psi_\e = \boldsymbol{\kappa}_\e$. One easily obtains
\[
\partial_\e |_{_{\e=0}} (\partial_s^\perp)^2 \boldsymbol{\kappa}_\e 
= (\partial_s^\perp)^4 \psi + \Omega(\psi)\,,
\]
where $\Omega(\psi)\in L^2(\mathrm{d}s)$ is a normal vector field along $\gamma$, depending only on $k,\psi$ and their ``normal derivatives'' $\partial_s^\perp$ up to the third order. Moreover the dependence of $\Omega$ on $\psi$ is linear. For further details on these computations we refer to~\cite{MaPo20}.

Using~\eqref{eq:VariazioniOggettiGeometrici} it is immediate to check that $\partial_\e |_{_{\e=0}} \left( |\boldsymbol{\kappa}_\e|^2 \boldsymbol{\kappa}_\e - \mu \boldsymbol{\kappa}_\e \right)$ yields terms that can be absorbed in $\Omega(\psi)$. Therefore we conclude that
\[
\frac{d^2}{d\e^2} \mathcal{E}_\mu(\gamma+\e\psi)\Big\vert_{\e=0}
= \int_\gamma \left\langle 2 (\partial_s^\perp)^4\psi + \Omega(\psi) , \psi \right\rangle \de s\,.
\]
By polarization, we see that the second variation of $\mathcal{E}_\mu$ defines a bilinear form $\delta^2\mathcal{E}_\mu(\varphi,\psi)$ given by
\[
\delta^2\mathcal{E}_\mu(\varphi,\psi) = \int_\gamma \left\langle 2 (\partial_s^\perp)^4\varphi + \Omega(\varphi) , \psi \right\rangle \de s\,,
\]
for any normal vector field $\varphi,\psi$ of class $H^4\cap C^0_c$ along $\gamma$, which is a smooth critical point of $\mathcal{E}_\mu$.

\subsection{Definition of the flow}\label{section:defi}

In this section we define the elastic flow for curves and networks.
We formally derive it as the $L^2$--gradient flow of the elastic energy functional~\eqref{eef}.
We need to derive the normal velocity defining the flow. The reasons why a gradient flow is defined in term of a normal velocity are related to the invariance under reparametrization of the energy functional and we will come back on this point more deeply in~~\Cref{Sec:invariance}.

\medskip

The analysis of the boundary terms appeared in the computation of the first variation play an important role in the definition of the flow. Indeed, a correct definition of the flow depends on the fact that the velocity defining the evolution should be the opposite of the ``gradient'' of the energy. Hence we need to identify such a gradient from the formula of the first variation and, in turn, analyze the boundary terms appearing.

Suppose first that the network is composed only 
of one closed curve $\gamma\in C^{\infty}([0,1],\mathbb{R}^2)$.
This means that for every $k\in\mathbb{N}$ we have $\partial_x^k\gamma(0)=\partial_x^k\gamma(1)$ and $\gamma$ can be seen as a smooth periodic function on $\R$. Then a variation field $\psi$ is just a periodic function as well and no further boundary constraints are needed and then the boundary terms in~\eqref{firtsvar} are automatically zero.
Then~\eqref{boundarypart} reduces to 
\begin{equation*}
 \frac{d}{d\varepsilon}\mathcal{E}_\mu(\gamma_\varepsilon)_{|\varepsilon=0}
=\int_{\gamma}  \left\langle
 2 (\partial^\perp_s)^2 \boldsymbol{\kappa}+ |\boldsymbol{\kappa}|^2 \boldsymbol{\kappa}
 -\mu \boldsymbol{\kappa},
\psi \right\rangle \,\mathrm{d}s \,.
\end{equation*}
We  have formally written  
the directional derivative of $\mathcal{E}_\mu$ of each curve
in the direction $\psi$ as the $L^2$--scalar product
of $\psi$ and the vector $2 (\partial^\perp_s)^2 \boldsymbol{\kappa}
+|\boldsymbol{\kappa}|^2 \boldsymbol{\kappa} -\mu \boldsymbol{\kappa}$. Hence we can understand 
$2 (\partial^\perp_s)^2 \boldsymbol{\kappa}
+|\boldsymbol{\kappa}|^2 \boldsymbol{\kappa} -\mu \boldsymbol{\kappa}$
to be the gradient of $\mathcal{E}_\mu$.
We then set the normal velocity driving the flow to be the opposite of such a gradient, that is
\begin{equation}\label{motionequation}
(\partial_t\gamma)^\perp=-2 (\partial^\perp_s)^2 \boldsymbol{\kappa}
-|\boldsymbol{\kappa}|^2 \boldsymbol{\kappa} +\mu \boldsymbol{\kappa}\,,
\end{equation}
where, again, $(\cdot)^\perp$ denotes the 
normal component of the velocity $\partial_t\gamma$ of the curve $\gamma$:
\begin{equation*}
(\partial_t\gamma)^\perp=\partial_t\gamma-\left\langle\partial_t\gamma,\tau\right\rangle\tau\,.
\end{equation*}
In $\mathbb{R}^2$ it is possible to express  
the evolution equation in terms of the scalar curvature:
\begin{align*}
\left\langle\partial_t\gamma,\nu\right\rangle\nu=(\partial_t\gamma)^\perp
=2 (\partial^\perp_s)^2 \boldsymbol{\kappa}
+|\boldsymbol{\kappa}|^2 \boldsymbol{\kappa}-\mu \boldsymbol{\kappa}
=\left(2\partial_ s^2 k +(k)^2k-\mu k\right)\nu\,.
\end{align*}
This last equality can be directly deduced from~\eqref{riformulazione}.
In this way we have derived an equation that describe
the normal motion of each curve.

\medskip

We pass now to consider,
exactly as in Section~\ref{sec:FirstVariation},  
a network composed of $N$ curves, parametrized by $\gamma^i:[0,1]\to\mathbb{R}^2$
with $i\in\{1,\ldots,N\}$, that meet at one junction of order $N$ at $x=0$
and have the endpoints at $x=1$ fixed in $\mathbb{R}^2$.
We denote by $\mathcal{N}_\varepsilon$ the network composed
of the curves $\gamma^i_\varepsilon=\gamma^i+\varepsilon\psi^i$
with $\psi^i:[0,1]\to\mathbb{R}^2$ such that
\begin{equation*}
\psi^1(0)=\ldots=\psi^N(0)\,,\qquad \psi^i(1)=0\quad \forall\,i\in\{1,\ldots,N\}\,.
\end{equation*}

Since the energy of a network is defined as the sum of
of the energy of each curve, it is reasonable to define
the gradient of $\mathcal{E}_\mu$ as the sum of the gradient
of the energy of each curve composing the network,
that we have identified with the vectors
$2 (\partial^\perp_s)^2 \boldsymbol{\kappa}^i
+|\boldsymbol{\kappa}^i|^2 \boldsymbol{\kappa}^i-\mu^i \boldsymbol{\kappa}^i$.
Hence, a network is a critical point of the energy when the the vectors
$2 (\partial^\perp_s)^2 \boldsymbol{\kappa}^i
+|\boldsymbol{\kappa}^i|^2 \boldsymbol{\kappa}^i-\mu^i \boldsymbol{\kappa}^i$ vanish and the boundary terms in~\eqref{boundarypart} 
are zero.
Depending on the boundary constraints imposed on the network, i.e., its topology or possible fixed endpoints,
we aim now to characterize the set of networks
fulfilling  boundary conditions
that imply
\begin{equation*}
\sum_{i=1}^{N} \left[ 2 \left. \langle \boldsymbol{\kappa}^{i},
 \partial_s \psi^{i}\rangle \right|_0^1 
+  \left. \langle -2\partial^\perp_s \boldsymbol{\kappa}^{i}
- |\boldsymbol{\kappa}^i|^2\tau^i+\mu^i\tau^i, \psi^{i}\rangle \right|_0^1 
 \right]=0\,.
\end{equation*}
Let us discuss the main possible cases of boundary conditions separately.

\medskip

\noindent \textit{Curve with constraints at the endpoints}

As we have mentioned before, if the network is composed 
of one curve, but this curve is not closed, then we fix its endpoint,
namely $\gamma(0)=P\in\mathbb{R}^2$ and $\gamma(1)=Q\in\mathbb{R}^2$.
As already shown in 
in Section \ref{sec:FirstVariation},  to 
maintain the position of the endpoints, 
we require
$\psi(0)=\psi(1)=0$, that automatically implies
\begin{equation*}
\left. \langle -2\partial^\perp_s \boldsymbol{\kappa}^{i}
- |\boldsymbol{\kappa}^i|^2\tau^i+\mu^i\tau^i, \psi^{i}\rangle \right|_0^1 =0\,,
\end{equation*}
in the computation of the first variation.
On the other hand we are free to chose $\partial_s\psi$ as test fields in the first variation. Suppose for example that $\partial_s\psi(0)=\nu$
(where $\nu$ is the unit normal vector to the curve $\gamma$)
and $\partial_s\psi(1)=0$, then from~\eqref{boundarypart}
we obtain $k(0)=0$ and so $\boldsymbol{k}(0)=0$.
Interchanging the role of $\partial_s\psi(0)$ and $\partial_s\psi(1)$
we have $k(1)=\boldsymbol{k}(1)=0$.

Hence we end up with the following set of conditions
\begin{equation*}
\begin{cases}
\gamma(0)=P\\
\gamma(1)=Q\\
\boldsymbol{\kappa}(0)=\boldsymbol{\kappa}(1)=0\,,
\end{cases}
\end{equation*}
known in the literature as \emph{natural or Navier} boundary conditions.

\medskip

However, 
since the elastic energy functional is a functional of the second order,
it is legitimate to impose also that the unit tangent vectors at the endpoint of the curve are
fixed, namely that the curve is \emph{clamped}.
Hence we now have $\gamma(0)=P, \gamma(1)=Q, \tau(0)=\tau_0,\tau(1)=\tau_1$ as constraints.
This time these boundary conditions affects 
the class of test function requiring $\partial_s\psi(0)=\partial_s\psi(1)=0$, that, together with
$\psi(0)=\psi(1)=0$, automatically set~\eqref{boundarypart} to zero.

\medskip

\noindent \textit{Networks}

We can consider without loss of generality that the structure of a network is as described in Section~\ref{sec:FirstVariation}. Indeed
boundary conditions for a
other possible topologies can be easily deduces from this case.

The possible boundary condition at $x=1$ are nothing but what we just described for
a single curve with constraints at the endpoints.
Thus we focus on the junction
$O=\gamma^1(0)=\ldots=\gamma^N(0)$. We can distinguish two sub cases

\medskip

\noindent \textit{Neumann (so-called natural or Navier) boundary conditions}

In this case we only require the network not to change its topology in a first variation. Letting first $\psi^i(0)=0$ for any $i$, it remains the boundary term
\begin{equation*}
\sum_{i=1}^{N}  \langle \boldsymbol{\kappa}^{i}(0),
 \partial_s \psi^{i}(0)\rangle  =0,
\end{equation*}
where the test functions $\psi^i$ appear differentiated. 
We can choose $\partial_s\psi^{1}(0)=\nu^1(0)$ and $\partial_s\psi^i(0)=0$
for every $i\in\{2,\ldots,N\}$.
This implies $\boldsymbol{\kappa}^1(0)=0$.
Then, because of the arbitrariness of the choice of $i$ we obtain:
\begin{equation}\label{curvaturenulle}
\boldsymbol{\kappa}^i(0)=0\,,
\end{equation}
for any $i\in\{1, \ldots, N\}$.

It remains to consider the last term of~\eqref{boundarypart}.
Taking into account the just obtained condition~\eqref{curvaturenulle}, by arbitrariness of $\psi^1(0)=...=\psi^N(0)$ it
reads
\begin{equation*}
\sum_{i={1}}^{N} \left(-2\partial^\perp_s \boldsymbol{\kappa}^{i}(0)
+\mu\tau^i(0)\right)=0\,,
\end{equation*}

\medskip

\noindent \textit{Dirichlet (so-called clamped) boundary conditions}

As discussed above, also in the case of a network we can impose a condition on the tangent of the curves at their endpoints. As we saw in the clamped curve case, from the variational point of 
this extra condition  involves the unit tangent vectors. Then an extra property 
on $\partial_{s}\psi^{i}$ is expected. 

At the junction 
we require the following $(N-1)$ conditions:
\begin{equation*}
\left\langle \tau^{i_1}(0),\tau^{i_2}(0)\right\rangle=c^{1,2}\,, 
\ldots\,,\, 
\left\langle  \tau^{i_{N-1}}(0),\tau^{i_N}(0)\right\rangle=c^{N-1,N}\,,
\end{equation*}
that is, the angles between tangent vectors are fixed.
We need that also the variation $\mathcal{N}_{\varepsilon}$ satisfies the same
\begin{equation*}
\left\langle \tau_\varepsilon^{i_1}(0),\tau_\varepsilon^{i_2}(0)\right\rangle=c^{1,2}\,, 
\ldots\,,\, 
\left\langle  \tau_\varepsilon^{i_{N-1}}(0),\tau_\varepsilon^{i_N}(y_N)\right\rangle=c^{N-1,N}\,,
\end{equation*}
for any $|\e|$ small enough. This means that for every $i,j\in\{1,\ldots, N\}$ we need that 
\begin{equation*}
\frac{d}{d\varepsilon}\left\langle\tau_\varepsilon^{i}(0),\tau_\varepsilon^{j}(0)\right\rangle=0\,,
\end{equation*}
that implies
\begin{align*}
0&=\frac{{d}}{{d}\varepsilon}\left\langle\tau_\varepsilon^{i}(0),\tau_\varepsilon^{j}(0)\right\rangle
\Big\vert_{\varepsilon=0}=
\left\langle
\partial_{s}^\perp\psi^i(0), \tau^j(0)\right\rangle
+\left\langle \tau^i(0),\partial_{s}^\perp\psi^j(0)\right\rangle\\
&=({\psi}_{s}^{i})^\perp(0)\left\langle\nu^i(0),\tau^j(0) \right\rangle
+({\psi}_{s}^{j})^\perp(0)\left\langle \tau^i(0), \nu^j(0)\right\rangle\\
&=({\psi}_{s}^{i})^\perp(0)\left\langle\nu^i(0),\tau^j(0) \right\rangle
-({\psi}_{s}^{j})^\perp(0)\left\langle\nu^i(0),\tau^j(0) \right\rangle\,.
\end{align*}
where we used the notation $\left(\psi^i_{s}\right)^\perp:=\left\langle \partial_{s}\psi^i,\nu^i\right\rangle$.
So we impose 
\begin{equation}\label{proprieta2}
({\psi}_{s}^{1})^\perp(0)=\ldots=({\psi}_{s}^{N})^\perp(0)\,.
\end{equation}
Then the first boundary term of~\eqref{boundarypart} reduces to
\begin{equation*}
2\langle({\psi}_{s}^{1})^\perp(0),\sum_{i=1}^{N}
k^i(0)\rangle\,.
\end{equation*}
Hence we find the following boundary conditions:
\begin{align*}
\sum_{i=1}^{N}\ k^i(0)=0\,,\qquad \sum_{i=1}^{N}-2\partial^\perp_s \boldsymbol{\kappa}^{i}(0)
- \vert\boldsymbol{\kappa}^i(0)\vert^2\tau^i(0)+\mu^i\tau^i(0)=0\,.
\end{align*}

\medskip

In the end, whenever the network is composed of $N$ curves
we have a system of $N$ equations (not coupled)
that are quasilinear and of fourth order in the parametrizations 
of the curves with coupled boundary conditions. 

\medskip

We now need to briefly introduce the H\"{o}lder spaces that will appear in the 
definition of the flow. 
 
Let $N\in\mathbb{N}$, 
consider a network $\mathcal{N}$ composed of $N$ curves
with endpoints of order one fixed in the plane and the
curves that meet at junctions of different order $m\in\mathbb{N}_{\geq 2}$.
As we have already said each curve of  $\mathcal{N}$ 
is parametrized by $\gamma^i:[0,1]\to\mathbb{R}^2$.
Let $\alpha\in (0,1)$.
We denote $\gamma:=(\gamma^1,\ldots,\gamma^N)\in(\mathbb{R}^2)^N$
and
\begin{equation*}
\mathrm{I}_N:=C^{4+\alpha}\left([0,1];(\mathbb{R}^2)^N\right)\,.
\end{equation*}

We will make and extensive use of 
parabolic H\"{o}lder spaces (see also~\cite[\S 11, \S 13]{solonnikov2}).
For $k\in \{0,1,2,3,4\}$, $\alpha\in (0,1)$ the parabolic H\"older space
$$
C^{\frac{k+\alpha}{4}, k+\alpha}([0,T]\times[0,1])
$$
is the space 
of all functions $u:[0,T]\times [0,1]\to\mathbb{R}$ that have continuous derivatives $\partial_t^i\partial_x^ju$ where $i,j\in\mathbb{N}$ are such that $4i+j\leq k$ for which the norm
\begin{equation*}
\left\lVert u\right\rVert_{C^{\frac{k+\alpha}{4},k+\alpha}}:=\sum_{4i+j=0}^k\left\lVert\partial_t^i\partial_x^ju\right\rVert_\infty
+\sum_{4i+j=k}\left[\partial_t^i\partial_x^ju\right]_{0,\alpha}+\sum_{0<k+\alpha-4i-j<4}\left[\partial_t^i\partial_x^ju\right]_{\frac{k+\alpha-4i-j}{4},0}
\end{equation*}
is finite. 
We recall that for a function $u:[0,T]\times [0,1]\to\mathbb{R}$, for $\rho\in (0,1)$ the semi--norms $[ u]_{\rho,0}$ and $[ u]_{0,\rho}$ are defined as
$$
[ u]_{\rho,0}:=\sup_{(t,x), (\tau,x)}\frac{\vert u(t,x)-u(\tau,x)\vert}{\vert t-\tau\vert^\rho}\,,
$$
and
$$
[ u]_{0,\rho}:=\sup_{(t,x), (t,y)}\frac{\vert u(t,x)-u(t,y)\vert}{\vert x-y\vert^\rho}\,.
$$
Moreover
the space $C^{\frac{\alpha}{4},\alpha}\left([0,T]\times[0,1]\right)$ is equal to the space
$$
C^{\frac{\alpha}{4}}\left([0,T];C^0([0,1])\right)\cap C^0\left([0,T];C^\alpha([0,1])\right)\,,
$$
with equivalent norms.

We also define the spaces
$C^{\frac{k+\alpha}{4}, k+\alpha}([0,T]\times\{0,1\}, \mathbb{R}^m)$
to be 
$C^{\frac{k+\alpha}{4}}([0,T], \mathbb{R}^{2m})$
via the isomorphism
$f\mapsto (f(t,0),f(t,1))^t$.

\begin{dfnz}[Elastic flow]\label{Def:elasticflow}
Let $N\in\mathbb{N}$ and let 
$\mathcal{N}_0$  be an 
initial network composed of $N$ curves parametrized
by $\gamma_0=(\gamma_0^1,\ldots,\gamma_0^N)\in \mathrm{I}_N$,
 (possibly) with 
endpoints of order one  and (possibly) with 
curves that meet at junctions of different order $m\in\mathbb{N}_{\geq 2}$.
Then a time dependent family of networks 
$\mathcal{N}(t)_{t\in [0,T]}$ is a solution to the \emph{elastic flow}
in the time interval $[0,T]$ with $T>0$
if there exists a parametrization 
$$
\gamma(t,x)=\left(\gamma^1(t,x), \ldots,\gamma^N(t,x)\right)\in 
C^{\frac{4+\alpha}{4}, 4+\alpha}\left([0,T]\times [0,1];(\mathbb{R}^2)^N\right)\,,
$$
with $\gamma^i$ regular,
and such that for every $t\in [0,T], x\in [0,1]$ and $i\in\{1,\ldots,N\}$
the system 
\begin{equation}\label{evolutionlaw}
\begin{cases}
(\partial_t\gamma^i)^\perp=\left(-2\partial_s^2k^i-(k^i)^3+k^i\right)\nu^i\\
\gamma^i(0,x)=\gamma_0^i(x)
\end{cases}
\end{equation}
is satisfied.
Moreover the system is coupled with suitable boundary conditions as follows, corresponding to the possible cases of boundary conditions discussed in the formulation of the first variation.

\begin{itemize}
\item If $N=1$ and the curve $\gamma_0$ is closed
we require $\gamma(t,x)$ to be closed and we impose periodic boundary conditions.

\item If $N=1$ and the curve  $\gamma_0$ is not closed with
$\gamma_0(0)=P\in\mathbb{R}^2$, $\gamma_0(1)=Q\in\mathbb{R}^2$ 
and we want to impose natural boundary conditions we require
\begin{equation}\label{navier-endpoint}
\begin{cases}
\gamma(t,0)=P\\
\gamma(t,1)=Q\\
\boldsymbol{\kappa}(t,0)=\boldsymbol{\kappa}(t,1)=0\,.
\end{cases}
\end{equation}

\item If $N=1$ and the curve  $\gamma_0$ is not closed with
$\gamma_0(0)=P\in\mathbb{R}^2$, $\gamma_0(1)=Q\in\mathbb{R}^2$
and we want to impose clamped boundary conditions, we require
\begin{equation}\label{clamped-endpoint}
\begin{cases}
\gamma(t,0)=P\\
\gamma(t,1)=Q\\
\tau(t,0)=\tau_0\\
\tau(t,1)=\tau_1\,.
\end{cases}
\end{equation}

\item  If $N$ is arbitrary and $\mathcal{N}_0$ has one multipoint
$$
\gamma_0^{i_1}(y_1)=\ldots=\gamma_0^{i_m}(y_m)\,,
$$
with $(i_1,y_1),\ldots,(i_m,y_m)\in \{1,\ldots,N\}\times \{0,1\}$
and we want to impose natural boundary conditions, 
for every $j\in\{1,\ldots,m\}$ we require 
\begin{equation}\label{flussonatural}
\begin{cases}
\kappa^{i_j}(t,y)=0\\
\sum_{j=1}^{m} \left(-2\partial^\perp_s \boldsymbol{\kappa}^{i_j}+\mu^{i_j}\tau^{i_j}\right)(t,y_j)=0\,.
\end{cases}
\end{equation}

\item If $N$ is arbitrary and $\mathcal{N}_0$ has one multipoint
$$
\gamma_0^{i_1}(y_1)=\ldots=\gamma_0^{i_m}(y_m)\,,
$$
with $(i_1,y_1),\ldots,(i_m,y_m)\in \{1,\ldots,N\}\times \{0,1\}$
where  we want to impose clamped boundary conditions, we require 
\begin{equation}\label{flussoclamped}
\begin{cases}
\left\langle \tau^{i_1}(y_1),\tau^{i_2}(y_2)\right\rangle=c^{1,2}\\
\ldots\\
\left\langle \tau^{i_{m-1}}(y_{m-1}),\tau^{i_m}(y_m)\right\rangle=c^{m-1,m}\\
    \sum_{j=1}^m k^{i_j}=0\\
	\sum_{j=1}^m \left(-2\partial^\perp_s \boldsymbol{\kappa}^{i_j}- |\boldsymbol{\kappa}^{i_j}(y_i)|^2\tau^{i_j}(y_i)
	+\mu^{i_j}\tau^{i_j}(y_i)\right)=0\,.
\end{cases}
\end{equation}
\end{itemize}
\end{dfnz}

Clearly in the case of network with several junctions and 
endpoints of order one fixed in the plane,
one has to impose different boundary conditions 
(chosen among~\eqref{navier-endpoint},~\eqref{clamped-endpoint},
~\eqref{flussonatural} and~\eqref{flussoclamped}) at each junctions and endpoint.

We give a name to the boundary conditions appearing in the 
definition of the flow. 
When there is a multipoint 
$$
\gamma_0^{i_1}(y_1)=\ldots=\gamma_0^{i_m}(y_m)\,,
$$
with $(i_1,y_1),\ldots,(i_m,y_m)\in \{1,\ldots,N\}\times \{0,1\}$
we shortly refer to:
\begin{itemize}
\item $\gamma_0^{i_1}(t,y_1)=\ldots=\gamma_0^{i_m}(t,y_m)$
as \emph{concurrency condition};
\item $\left\langle \tau^{i_1}(y_1),\tau^{i_2}(y_2)\right\rangle=c^{1,2}\,,
\ldots,
\left\langle \tau^{i_{m-1}}(y_{m-1}),\tau^{i_m}(y_m)\right\rangle=c^{m-1,m}$ as \emph{angle conditions};
\item either $k^{i_j}(t,y)=0$ for every $j\in\{1,\ldots,m\}$
or $\sum_{j=1}^m k^{i_j}=0$ as \emph{curvature conditions};
\item $\sum_{j=1}^m \left(-2\partial^\perp_s \boldsymbol{\kappa}^{i_j}- |\boldsymbol{\kappa}^{i_j}(y_i)|^2\tau^{i_j}(y_i)
	+\mu^{i_j}\tau^{i_j}(y_i)\right)=0$ as \emph{third order condition}.
	
When we have an endpoint of order one 
	we refer to the condition involving the tangent
	vector as \emph{angle condition} and the curvature as \emph{curvature condition}.
\end{itemize}

\begin{rem}
In system~\eqref{evolutionlaw} only the normal component of the velocity is prescribed. This does not mean that the tangential velocity
is necessary zero. We can equivalently write the motion equations as
\begin{equation*}
    \partial_t\gamma^i=V^i\nu^i+T^i\tau^i\,,
\end{equation*}
where $V^i=-2\partial_s^2k^i-(k^i)^3+k^i$ and $T^i$ are some at least
continuous functions.
In the case of a single closed curve or a single curve with
fixed endpoint we can  impose $T\equiv 0$ (see Section~\ref{Sec:invariance}).
\end{rem}

\begin{dfnz}[Admissible initial network]\label{Def:admissible-initial-net}
A network $\mathcal{N}_0$ of $N$ regular curves
parametrized by $\gamma=(\gamma^1, \ldots,\gamma^N)$, 
$\gamma^i:[0,1]\to\mathbb{R}^2$ with $i\in\{1, \ldots, N\}$
possibly with $\ell$ 
endpoints of order one $\{\gamma^j(y_j)\}$
for some  $(j,y_j)\in \{1, \ldots, N\}\times \{0,1\}$,
and possibly with curves that meet 
at $k$ different junctions $\{O^p\}$ of order $m\in\N_{\ge2}$
at $O^p=\gamma^{p_1}(y_1)=\ldots=\gamma^{p_m}(p_m)$ 
for some $(p_i,y_i)\in \{1, \ldots, N\}\times \{0,1\}, p\in\{1, \ldots,k\}$
forming angles $\alpha^{p_i,p_{i+1}}$ between $\nu^{p_i}$ and $\nu^{p_{i+1}}$ is an admissible initial network 
if 
\begin{itemize}
    \item[i)] the parametrization $\gamma$ belongs to $\mathrm{I}_N$;
    \item[ii)] $\mathcal{N}_0$ satisfies all the boundary condition
imposed in the system: concurrency, angle, curvature and third order 
conditions;
     \item[iii)] at each endpoint $\gamma^j(y_j)$ of order one  it  holds
$$
2\partial_s^2 k^j(y_j)+(k^j)^3(y_j)-\mu^i k^j(y_j)=0\,;
$$
 \item[iv)] the initial datum fulfills the non--degeneracy condition:
at each junction
$$
\mathrm{span}\{\nu^{p_1}, \ldots,\nu_0^{p_m}\}=\mathbb{R}^2\,;
$$   
\item[v)] at each junction $\gamma^{p_1}(y_1)=\ldots=\gamma^{p_m}(y_m)$ 
where at least three curves concur, 
consider two consecutive unit normal vectors 
$\nu^{p_i}(y_i)$ and $\nu^{p_k}(y_k)$
such that
$\mathrm{span}\{\nu^{p_i}(y_i),\nu^{p_k}(y_k)\}=\R^2$.
Then for every $j\in\{1,\ldots,m\}$, $j\neq i$, $j\neq k$
we require
$$
\sin\theta^i V^i(y_i)+\sin\theta^k V^k(y_k)
+\sin\theta^j V^j(y_j)=0\,,
$$
where $\theta^i$ is the angle between $\nu^{p_k}(y_k)$
and $\nu^{p_j}(y_j)$, 
$\theta^k$ between $\nu^{p_j}(y_j)$
and $\nu^{p_i}(y_i)$
and $\theta^{j}$  between $\nu^{p_i}(y_i)$
and $\nu^{p_k}(y_k)$.
\end{itemize}
\end{dfnz}

\begin{rem}
The conditions ii)-iii)--v)
on the initial network are the 
so--called \emph{compatibility conditions}.
Together with the non--degeneracy condition, 
these conditions concern the boundary of the network,
and so they are not required in the case of one single closed curve.
\end{rem}

\begin{rem}
We refer to the conditions iii) and v) as 
fourth order compatibility conditions. 
We explain here how one derives condition v)
in the case of a junction 
$\gamma^1(0)=\ldots=\gamma^m(0)$.
Differentiating in time the concurrency condition
we get $\partial_t\gamma^1(0)
=\ldots=\partial_t\gamma^m(0)$, or, in terms
of the normal and tangential velocities
$V^1(0)\nu^1(0)+T^1(0)\tau^1(0)=\ldots
=V^m(0)\nu^m(0)+T^m(0)\tau^m(0)$.

Without loss of generality we suppose that the
concurring curves are labeled in a counterclockwise sense
and that $\mathrm{span}\{\nu^1(0),\nu^2(0)\}=\mathbb{R}^2$.
Then for every $j\in\{3,\ldots,m\}$ we have
$$
\sin\theta^1\nu^1(0)+\sin\theta^2\nu^2(0)
+\sin\theta^j\nu^j(0)=0\,,
$$
where $\theta^1$ is the angle between $\nu^2(0)$
and $\nu^j(0)$, 
$\theta^2$ between $\nu^j(0)$
and $\nu^1(0)$
and $\theta^j$  between $\nu^1(0)$
and $\nu^2(0)$.
Then 
\begin{align*}
\sin\theta^1V^1(0)&=
\left\langle V^1(0)\nu^1(0)+T^1(0)\tau^1(0),
\sin\theta^1\nu^1(0)\right\rangle\\
&=
\left\langle V^2(0)\nu^2(0)+T^2(0)\tau^2(0),
-\sin\theta^2\nu^2(0)-\sin\theta^j\nu^j(0)\right\rangle\\
&=
-\sin\theta^2 V^2(0)+
\left\langle V^2(0)\nu^2(0)+T^2(0)\tau^2(0),
-\sin\theta^j\nu^j(0)\right\rangle\\
&=
-\sin\theta^2 V^2(0)+
\left\langle V^j(0)\nu^j(0)+T^j(0)\tau^j(0),
-\sin\theta^j\nu^j(0)\right\rangle\\
&=-\sin\theta^2 V^2(0)-\sin\theta^j V^j(0)\,.
\end{align*}
Hence for every $j\in\{3,\ldots,m\}$ 
we obtained 
$\sin\theta^1V^1(0)+\sin\theta^2 V^2(0)
+\sin\theta^j V^j(0)=0$.
\end{rem}

\begin{rem}
To prove existence of solutions of class $C^{\frac{4+\alpha}{4}, 4+\alpha}$ to the elastic flow 
of networks it is necessary 
to require the fourth order compatibility 
conditions for the initial datum.
This conditions may sound not very natural because it 
does not appear among the boundary conditions imposed 
in the system.
It is actually possible not to ask for it by 
defining the elastic flow of networks
in a Sobolev setting. 
The price that we have to pay is that 
in such a case
a solution will be slightly less regular 
(see~\cite{GaMePl2, menzelthesis} for details). 
On the opposite side, if we want a smooth solution till $t=0$
one has to impose many more conditions.
These properties, the compatibility conditions of \emph{any order},  are derived 
repeatedly differentiating in time the boundary conditions 
and using the motion equation
to substitute time derivatives with space derivatives
(see~\cite{DaChPo19, DaChPo20}).
\end{rem}

\subsection{Invariance under reparametrization}\label{Sec:invariance}

It is very important to remark the consequences of the invariance under reparametrization of the energy functional on the resulting gradient flow. These effects actually occur whenever the starting energy is \emph{geometric}, i.e., invariant under reparamentrization.
To be more precise, 
let us say that the time dependent 
family of closed curves 
parametrized by 
$\gamma:[0,T]\times\mathrm{S}^1\to \mathbb{R}^2$ is a smooth solution to the elastic flow
	\begin{equation}\label{eq:CauchyCanonica}
	\begin{cases}
 \partial_t\gamma(t,x) = V_\gamma(t,x)\nu_\gamma(t,x) \,,\\
	\gamma(0,\cdot)=\gamma_0(\cdot)\,,
	\end{cases}
	\end{equation}
and the driving velocity $\partial_t\gamma$ is normal along $\gamma$.
If $\chi:[0,T]\times \mathrm{S}^1\to \mathrm{S}^1$ with $\chi(t,0)=0$ and $\chi(t,1)=1$
is a smooth one--parameter family of diffeomorphism
and  $\sigma(t,x)\coloneqq \gamma(t,\chi(t,x))$, then  it is immediate to check that $\sigma$ solves
	\[
	\begin{cases}
	\partial_t \sigma (t,x) = V_\sigma(t,x)\nu_\sigma(t,x) + W(t,x)\tau_\sigma(t,x),\\
	\sigma(0,\cdot) = \gamma_0(\chi(0,\cdot))\,,
	\end{cases}
	\]
and $W$ can be computed explicitly in terms of $\chi$ and $\gamma$. More importantly, one has that $V_\sigma(t,x)\nu_\sigma(t,x) =  V_\gamma(t,\chi(t,x))\nu_\gamma(t,\chi(t,x))$.	
Since  $W(t,x)\tau_\sigma(t,x)$ is a tangential term,
$\sigma$ itself is a solution to the elastic flow. Indeed its normal driving velocity $\partial_t^\perp\sigma$ is the one defining the elastic flow on $\sigma$.
This is the reason why the definition of the elastic flow is given in terms of the normal velocity of the evolution only.

\medskip

In complete analogy, if $\beta:[0,T)\times  \mathrm{S}^1 \to \mathbb{R}^2$ is given, it is smooth and solves
	\[
	\begin{cases}
	\partial_t \beta(t,x)= V_\beta(t,x)\nu_\beta(t,x) + w(t,x)\tau_\beta(t,x),\\
	\beta(0,\cdot)= \gamma_0 (\chi_0(\cdot))\,,
	\end{cases}
	\]
	where $\chi_0: \mathrm{S}^1\to  \mathrm{S}^1$ is a diffeomorphism, then letting $\psi:[0,T]\times  \mathrm{S}^1\to  \mathrm{S}^1$ be the smooth solution of
	\[
	\begin{cases}
	\partial_t\psi(t,x) = - |(\partial_x\beta)(t,\psi(t,x))|^{-1} w(t,\psi(t,x)),\\
	\psi(0,\cdot)=\chi_0^{-1}(\cdot)\,,
	\end{cases}
	\]
	it immediately follows that $\gamma(t,x)\coloneqq \beta(t,\psi(t,x))$ solves~\eqref{eq:CauchyCanonica}.

\medskip

Something similar holds true also in the general case of networks.
First of all it is easy to check the all possible boundary conditions
are invariant under reparametrizations (both at the multiple junctions and at the endpoints of order one). 
Concerning the velocity, we cannot impose the tangential velocity
to be zero as in~\eqref{eq:CauchyCanonica}, but it remains
true that if a time dependent family of networks
parametrized by $\gamma=(\gamma^1,\ldots,\gamma^N)$
with $\gamma^i:[0,T]\times [0,1]\to\mathbb{R}^2$ is a solution to the 
elastic flow, then $\sigma=(\sigma^1,\ldots,\sigma^N)$
defined by $\sigma^i(t,x)=\gamma^i(t, \chi^i(t,x))$
with $\chi^i:[0,T]\times [0,1]\to [0,1]$ a time dependent family
of diffeomorphisms such that $\sigma(t,0)=0$ and $\sigma(t,1)=1$
(together with suitable conditions on $\partial_x\sigma(t,0)$, $\partial_{x}^2\sigma(t,0)$ and so on)
is still a solution to the elastic flow of networks.
Indeed the velocity of $\gamma^i$ and $\sigma^i$
differs only by a tangential component.

\begin{rem}\label{tang-in-funz-norm}
We want to stress that at the junctions the tangential is
velocity determined by the normal velocity.

Consider a junction of order $m$
$$
\gamma^1(t,0)=\ldots=\gamma^m(t,0)\,.
$$
Differentiating in time yields  $\partial_t\gamma^1(t,0)=\ldots
=\partial_t\gamma^m(t,0)$ that, in terms of the normal and tangential motion $V$ and $T$ reads as
\begin{equation*}
V^j\nu^j+T^j\tau^j=V^{j+1}\nu^{j+1}+T^{j+1}\tau^{j+1}\,,
\end{equation*}
where $j\in\{1, \ldots, m\}$ with $m+1:=1$ and the argument $(t,0)$ is omitted from now on.
Testing these identities with the unit tangent 
vectors $\tau^j$ leads to the system:
\begin{footnotesize}
	\begin{equation*}
	\begin{pmatrix}
	 1 & -\cos\alpha^{1,2} & 0 & 0 &\ldots & 0\\
	 0 & 1 & -\cos\alpha^{2,3} & 0 & \ldots &  0 \\
	 0 & 0 & 1 & -\cos\alpha^{3,4} &  \ldots &  0 \\
	 \vdots & \vdots &  \vdots &  \vdots &  \vdots &  \vdots \\
	 0 & 0 & 0 & \ldots & 1 & -\cos\alpha^{m-1,m} \\
	 -\cos\alpha^{m,1}  & 0 & 0 & \ldots & 0 & 1 
	\end{pmatrix}
	\begin{pmatrix}
	T^1\\ T^2 \\ T^3 \\ \vdots \\ T^{m-1} \\ T^m
	\end{pmatrix}
	=\begin{pmatrix}
	-\sin\alpha^{1,2} V^2 \\ 
	-\sin\alpha^{2,3} V^3 \\ 
	-\sin\alpha^{3,4} V^4 \\ 
     \vdots \\
	-\sin\alpha^{m-1,m} V^m \\ 
	-\sin\alpha^{m,1} V^1
	\end{pmatrix} \,.
	\end{equation*}
\end{footnotesize}
	We call $M$ the $m\times m$--matrix of the coefficients and $R_1,\ldots, R_m$ its rows.

It is easy to see that
\begin{equation*}
\det(M)=1- \cos\alpha^{m,1}\cos\alpha^{1,2}
	\ldots\cos\alpha^{m-2,m-1}\cos\alpha^{m-1,m}\,,
\end{equation*}
that is different from zero till the non--degeneracy condition is satisfied.
Then the system has a unique solution and so
each  $T^i(t)$ can be expressed as a linear combination of $V^1(t),\ldots, V^m(t)$. 
\end{rem}

\begin{rem}
The previous observations clarify the fact that 
the only meaningful notion of uniqueness for a geometric flow like the elastic one is thus uniqueness up to
reparametrization.
\end{rem}

\medskip

We can actually take advantage of the invariance by reparametrization of 
the problem to reduce system~\eqref{evolutionlaw} to a 
non--degenerate system of quasilinear PDEs.
Consider the flow of one curve $\gamma$. As we said before, the normal velocity is a geometric quantity,
namely $\partial_t\gamma^\perp= V\nu=-2\partial_s^2 k\nu-k^{3}\nu+\mu k\nu$.
Computing this quantity in terms of the parametrization $\gamma$ we get
\begin{align}\label{Apart}
&-V\nu=2\partial_s^2 k\nu+k^{3}\nu-\mu k\nu\nonumber \\
&=2\frac{\partial_x^4 \gamma}{\left|\partial_x\gamma\right|^{4}}
-12\frac{\partial_x^3 \gamma\left\langle \partial_x^2 \gamma,\partial_x\gamma\right\rangle }{\left|\partial_x\gamma\right|^{6}}
-5\frac{\partial_x^2 \gamma\left|\partial_x^2 \gamma\right|^{2}}{\left|\partial_x\gamma\right|^{6}}
-8\frac{\partial_x^2 \gamma\left\langle \partial_x^3 \gamma,\partial_x\gamma\right\rangle }
{\left|\partial_x\gamma\right|^{6}}
+35\frac{\partial_x^2 \gamma\left\langle \partial_x^2 \gamma,\partial_x\gamma\right\rangle ^{2}}
{\left|\partial_x\gamma\right|^{8}}\nonumber\\
&\indent +\left\langle -2\frac{\partial_x^4 \gamma}{\left|\partial_x\gamma\right|^{4}}+12\frac{\partial_x^3 \gamma\left\langle 
\partial_x^2 \gamma,\partial_x\gamma\right\rangle }{\left|\partial_x\gamma\right|^{6}}+5\frac{\partial_x^2 \gamma\left|
\partial_x^2 \gamma\right|^{2}}{\left|\partial_x\gamma\right|^{6}}+8\frac{\partial_x^2 \gamma\left\langle 
\partial_x^3 \gamma,\partial_x\gamma\right\rangle }{\left|\partial_x\gamma\right|
^{6}}-35\frac{\partial_x^2 \gamma\left\langle \partial_x^2 \gamma,\partial_x\gamma\right\rangle ^{2}}{\left|\partial_x\gamma\right|^{8}},\tau\right\rangle 
\tau\nonumber\\
&\indent -\mu\frac{\partial_x^2 \gamma}{\left|\partial_x\gamma\right|^{2}}
+\left\langle \mu\frac{\partial_x^2 \gamma}{\left|\partial_x\gamma\right|^{2}},
\tau\right\rangle \tau\,.
\end{align}

We define 
\begin{align}\label{Tang}
\overline{T}&:=\left\langle -2\frac{\partial_x^4 \gamma}{\left|\partial_x\gamma\right|^{4}}
+12\frac{\partial_x^3 \gamma\left\langle \partial_x^2 \gamma,\partial_x\gamma\right\rangle }
{\left|\partial_x\gamma\right|^{6}}
+5\frac{\partial_x^2 \gamma\left|\partial_x^2 \gamma\right|^{2}}{\left|\partial_x\gamma\right|^{6}}
\right. \nonumber \\
&\indent \left. +8\frac{\partial_x^2 \gamma\left\langle \partial_x^3 \gamma,\partial_x\gamma\right\rangle }
{\left|\partial_x\gamma\right|^{6}}
-35\frac{\partial_x^2 \gamma\left\langle \partial_x^2 \gamma,\partial_x\gamma\right\rangle ^{2}}
{\left|\partial_x\gamma\right|^{8}}
+\mu\frac{\partial_x^2 \gamma}{\left|\partial_x\gamma\right|^{2}},\tau\right\rangle \,.
\end{align}

We can insert this choice of the tangential component of the velocity in
the motion equation, which becomes
\begin{align*}
 \partial_t\gamma&=V\nu+\overline{T}\tau 
    =-\frac{2}{\vert\partial_x\gamma\vert^4}\partial_x^4 \gamma
    +f(\partial_x\gamma,\partial_x^2 \gamma,\partial_x^3 \gamma)\\
   & =-2\frac{\partial_x^4 \gamma}{\left|\partial_x\gamma\right|^{4}}
+12\frac{\partial_x^3 \gamma\left\langle \partial_x^2 \gamma,\partial_x\gamma\right\rangle }{\left|\partial_x\gamma\right|^{6}}
+5\frac{\partial_x^2 \gamma\left|\partial_x^2 \gamma\right|^{2}}{\left|\partial_x\gamma\right|^{6}}
+8\frac{\partial_x^2 \gamma\left\langle \partial_x^3 \gamma,\partial_x\gamma\right\rangle }
{\left|\partial_x\gamma\right|^{6}}
-35\frac{\partial_x^2 \gamma\left\langle \partial_x^2 \gamma,\partial_x\gamma\right\rangle ^{2}}
{\left|\partial_x\gamma\right|^{8}}
+\mu\frac{\partial_x^2 \gamma}{\left|\partial_x\gamma\right|^{2}}\,.
\end{align*}

Considering now the boundary conditions:
up to reparametrization
the clamped condition $\tau(t,0)=\tau_0$
can be reformulated as $\partial_x\gamma(t,0)=\tau_0$ and
the curvature condition $k=\boldsymbol{\kappa}=0$
as $\partial_x^2 \gamma(t,0)=0$. We can then extend this discussion to the flow of general networks, in order to define the so-called special flow.

\begin{dfnz}[Admissible initial parametrization]\label{DEf:admissible-initial-para}
We say that $\varphi_0=(\varphi_0^1, \ldots, \varphi_0^N)$
is an admissible parametrization for the special flow
if 
\begin{itemize}
    \item the functions $\varphi_0^i$ are of class $C^{4+\alpha}([0,1];\mathbb{R}^2)$;
    \item $\varphi_0=(\varphi_0^1, \ldots, \varphi_0^N)$ satisfies 
    all the boundary conditons imposed in the system;
    \item at each endpoint of order one it holds $V^i=0$ and 
    $\overline{T}^i=0$ for any $i$;
    \item at each junction it holds 
    \begin{equation*}
        V^i\nu^i+\overline{T}^i\tau^i=V^j\nu^j+\overline{T}^j\tau^j
        \end{equation*}
        for any $i,j$;
    \item at each junction the non--degeneracy condition is satisfied;
\end{itemize}
where $\overline{T}^i$ is defined as in~\eqref{Tang} for any $i$ and $j$.
\end{dfnz}

\begin{dfnz}[Special flow]\label{def:SpecialFlow}
Let $N\in\mathbb{N}$ and let $\varphi_0=(\varphi_0^1,\ldots,\varphi_0^N)$
be an admissible initial parametrization in the sense of Definition~\ref{DEf:admissible-initial-para}
(possibly) with endpoints of order one and (possibly)
with junctions of different orders $m\in\mathbb{N}_{\geq 2}$.
Then a time dependent family of parametrizations
$\varphi_{t\in [0,T]}$, $\varphi=(\varphi^1,\ldots,\varphi^N)$ is a solution to the special flow
if and only if for every $i\in\{1,\ldots,N\}$ the functions
$\varphi^i$ are of class 
$C^{\frac{4+\alpha}{4}, 4+\alpha}([0,T]\times [0,1];\mathbb{R}^2)$,
for every $(t,x)\in [0,T]\times [0,1]$ it holds
$\partial_x\varphi(x)\neq 0$ and the system 
\begin{equation}\label{specialflow}
    \begin{cases}
  \partial_t\varphi^i=V^i\nu^i+\overline{T}^i\tau^i\\
    \varphi^i(0,x)=\varphi_0(x)
    \end{cases}
\end{equation}
is satisfied, where $\overline{T}^i$ is defined as in~\eqref{Tang} for any $i$.
Moreover the following boundary conditions are imposed:
\begin{itemize}
    \item if $N=1$ and $\varphi_0$ is a closed curve, then we impose periodic boundary conditions;
    \item if $N=1$ and $\varphi_0(0)=P,\varphi_0(1)=Q$, we can require
    either 
\begin{equation*}
\begin{cases}
\varphi^1(t,0)=P\\
\varphi^1(t,1)=Q\\
\partial_x^2\varphi(t,0)=\partial_x^2\varphi^1(t,1)=0\,,
\end{cases}
\end{equation*}
or 
\begin{equation*}
\begin{cases}
\varphi^1(t,0)=P\\
\varphi^1(t,1)=Q\\
\partial_x\varphi^1(t,0)=\tau_0\\
\partial_x\varphi^1(t,1)=\tau_1\,.
\end{cases}
\end{equation*}
\item  if $N$ is arbitrary and $\mathcal{N}_0$ has one multipoint
$$
\gamma_0^{i_1}(y_1)=\ldots=\gamma_0^{i_m}(y_m)\,,
$$
with $(i_1,y_1),\ldots,(i_m,y_m)\in \{1,\ldots,N\}\times \{0,1\}$
we can impose either 
\begin{equation*}
\begin{cases}
\partial_x^2\varphi^{i_j}(t,y)=0\quad\text{for every}\, j\in\{1,\ldots,m\}\\
\sum_{j=1}^{m} \left(-2\partial^\perp_s \boldsymbol{\kappa}^{i_j}+\mu^{i_j}\tau^{i_j}\right)(t,y_j)=0\,,
\end{cases}
\end{equation*}
or
\begin{equation*}
\begin{cases}
\left\langle \tau^{i_1}(y_1),\tau^{i_2}(y_2)\right\rangle=c^{1,2}\\
\ldots\\
\left\langle \tau^{i_{m-1}}(y_{m-1}),\tau^{i_m}(y_m)\right\rangle=c^{m-1,m}\\
    \sum_{j=1}^m k^{i_j}=0\\
   \left\langle \partial_x^2\varphi^{i_j}(t,y),\partial_x\varphi^{i_j}(t,y)\right\rangle=0\quad\text{for every}\, j\in\{1,\ldots,m\}\\
	\sum_{j=1}^m  \left(-2\partial^\perp_s \boldsymbol{\kappa}^{i_j}- |\boldsymbol{\kappa}^{i_j}(y_j)|^2\tau^{i_j}(y_j)
	+\mu^{i_j}\tau^{i_j}(y_{i_j})\right)=0\,.
\end{cases}
\end{equation*}
\end{itemize}
\end{dfnz}

\begin{lemma}\label{lem:SpecialImplicaElastic}
Let $\varphi_0=(\varphi_0^1, \ldots,\varphi_0^N)$ be an admissible initial parametrization and 
$\varphi_{t\in [0,T]}$,  $\varphi=(\varphi^1, \ldots,\varphi^N)$ be a solution 
to the special flow.
Then $\mathcal{N}_t=\cup_{i=1}^N\varphi^i(t,[0,1])$ is a solution
of the elastic flow of networks with initial datum 
$\mathcal{N}_0:=\cup_{i=1}^N\varphi^i_0([0,1])$.
\end{lemma}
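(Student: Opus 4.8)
The plan is to show that the geometric quantities built from a special-flow solution $\varphi$ satisfy exactly the evolution law and the boundary conditions of Definition~\ref{Def:elasticflow}. The key observation, already prepared in the discussion preceding Definition~\ref{def:SpecialFlow}, is that the special flow was constructed precisely by adding a tangential term $\overline T^i\tau^i$ to the geometric normal velocity $V^i\nu^i$. So first I would record the parabolic regularity: since each $\varphi^i\in C^{\frac{4+\alpha}4,4+\alpha}([0,T]\times[0,1];\mathbb R^2)$ and $\partial_x\varphi^i$ never vanishes, all the geometric objects $\tau^i,\nu^i,k^i,\boldsymbol\kappa^i,\partial_s^\perp\boldsymbol\kappa^i$ are well defined and continuous, and $\gamma:=\varphi$ is an admissible parametrization in the class required by Definition~\ref{Def:elasticflow} (here condition iii) and v) of Definition~\ref{Def:admissible-initial-net} for the initial network follow from the corresponding bullets in Definition~\ref{DEf:admissible-initial-para} evaluated at $t=0$).

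Next I would verify the motion equation. By construction $\partial_t\varphi^i=V^i\nu^i+\overline T^i\tau^i$ with $V^i=-2\partial_s^2k^i-(k^i)^3+k^i$, so taking the normal component kills the $\overline T^i\tau^i$ term and yields $(\partial_t\varphi^i)^\perp=V^i\nu^i$, which is exactly the first line of~\eqref{evolutionlaw}; the second line $\varphi^i(0,x)=\varphi_0^i(x)$ is the initial condition in~\eqref{specialflow}, and $\mathcal N_0=\cup_i\varphi_0^i([0,1])$ by definition. One subtle point I would flag here: strictly speaking $\overline T^i$ as defined in~\eqref{Tang} is the expression that makes the full right-hand side of the special-flow PDE equal to $-2|\partial_x\gamma|^{-4}\partial_x^4\gamma+f(\partial_x\gamma,\partial_x^2\gamma,\partial_x^3\gamma)$, and I would check (or cite~\eqref{Apart}) that this $\overline T^i$ is genuinely the tangential component $\langle\partial_t\varphi^i,\tau^i\rangle$ so that the decomposition $\partial_t\varphi^i=V^i\nu^i+\overline T^i\tau^i$ is the normal/tangential splitting of the velocity — this is the content of the manipulation in~\eqref{Apart}–\eqref{Tang}.

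Then I would go through the boundary conditions case by case, matching each bullet of Definition~\ref{def:SpecialFlow} to the corresponding bullet of Definition~\ref{Def:elasticflow}. In the closed case periodic boundary conditions coincide verbatim. In the Navier / clamped single-curve cases one uses that $\partial_x\varphi=|\partial_x\varphi|\tau$ and $\partial_x^2\varphi=|\partial_x\varphi|^2\boldsymbol\kappa+\langle\partial_x^2\varphi,\tau\rangle\tau$, so $\partial_x^2\varphi(t,0)=0\iff\boldsymbol\kappa(t,0)=0$ (given $\partial_x\varphi\neq0$) and $\partial_x\varphi(t,0)=\tau_0\iff$ the length of $\partial_x\varphi$ at $0$ is $|\tau_0|=1$ together with $\tau(t,0)=\tau_0$; here I would need the mild observation that the concurrency/clamping conditions propagate the normalization $|\partial_x\varphi(t,0)|$ consistently, or simply note that the clamped datum is prescribed up to reparametrization as explained just before Definition~\ref{DEf:admissible-initial-para}. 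For the network junctions, the curvature conditions $\partial_x^2\varphi^{i_j}(t,y)=0$ (resp.\ $\langle\partial_x^2\varphi^{i_j},\partial_x\varphi^{i_j}\rangle=0$ together with $\sum_j k^{i_j}=0$), the angle conditions, and the third-order conditions $\sum_j(-2\partial_s^\perp\boldsymbol\kappa^{i_j}-|\boldsymbol\kappa^{i_j}|^2\tau^{i_j}+\mu^{i_j}\tau^{i_j})=0$ are literally the same in both definitions; and the concurrency condition $\varphi^{i_1}(t,y_1)=\dots=\varphi^{i_m}(t,y_m)$ follows from the admissibility of $\varphi_0$ at $t=0$ plus the fact that, by the junction bullet $V^i\nu^i+\overline T^i\tau^i=V^j\nu^j+\overline T^j\tau^j$, the quantity $\varphi^{i_j}(t,y_j)$ has the same time derivative for all $j$, so it stays equal for all $t$.

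The main obstacle is not any single deep estimate but the bookkeeping around reparametrization invariance: one must be careful that "$\mathcal N_t=\cup_i\varphi^i(t,[0,1])$ solves the elastic flow" is interpreted in the sense of Definition~\ref{Def:elasticflow}, i.e.\ there exists \emph{a} parametrization of $\mathcal N_t$ of the required regularity satisfying~\eqref{evolutionlaw} and the boundary conditions — and the natural candidate is $\varphi$ itself, whose tangential velocity $\overline T^i$ is nonzero but harmless, exactly as discussed in Section~\ref{Sec:invariance}. I would therefore structure the proof as: (1) $\varphi$ has the regularity demanded by Definition~\ref{Def:elasticflow}; (2) its normal velocity satisfies~\eqref{evolutionlaw}; (3) each imposed boundary/junction condition of the special flow implies the corresponding one of the elastic flow, using only $\partial_x\varphi\neq0$ and the elementary identities relating $\partial_x\varphi,\partial_x^2\varphi$ to $\tau,\nu,\boldsymbol\kappa$; (4) the concurrency conditions persist in time by differentiating and invoking the junction compatibility. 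I expect step (3) for the clamped/Navier normalization and step (4) to be where one has to be most careful, while everything else is a direct unwinding of definitions.
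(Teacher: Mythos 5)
Your overall route is the same as the paper's: take the normal component of $\partial_t\varphi^i=V^i\nu^i+\overline T^i\tau^i$ to recover~\eqref{evolutionlaw}, translate the boundary conditions of Definition~\ref{def:SpecialFlow} into the geometric ones of Definition~\ref{Def:elasticflow} via $\partial_x^2\varphi=0\Leftrightarrow\boldsymbol{\kappa}=0$ and $\partial_x\varphi=\tau_0\Rightarrow\tau=\tau_0$, and check that $\mathcal N_0$ is an admissible initial network. For a closed curve or a single open curve this is already a complete argument, and your steps (1)--(3) are fine.

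The gap is in the network case, exactly where you write that condition v) of Definition~\ref{Def:admissible-initial-net} ``follows from the corresponding bullets in Definition~\ref{DEf:admissible-initial-para} evaluated at $t=0$.'' It does follow, but not by mere evaluation, and this is where essentially all of the paper's proof lives. Condition v) is a statement about the \emph{normal} velocities alone, $\sin\theta^i V^i(y_i)+\sin\theta^k V^k(y_k)+\sin\theta^j V^j(y_j)=0$, whereas admissibility of $\varphi_0$ only hands you the full vector identities $V^i\nu^i+\overline T^i\tau^i=V^j\nu^j+\overline T^j\tau^j$ at the junction. To pass from the latter to the former one has to eliminate the tangential components $\overline T^i,\overline T^j,\overline T^k$: the paper tests the two vector identities~\eqref{primauguaglianza}--\eqref{secondauguaglianza} against suitable combinations $\sin\theta\,\tau\pm\cos\theta\,\nu$ of the frames of the other curves, obtains the four scalar relations~\eqref{id1}--\eqref{id4}, and combines their cyclic permutations to get $\sin\theta^iV^i=\cos\theta^j\overline T^j-\cos\theta^k\overline T^k$ and its two rotations, whose sum telescopes to zero. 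Your proposal asserts the conclusion without this elimination, which is the only nontrivial computation in the lemma. A secondary, smaller issue: your step (4) on persistence of concurrency invokes the equality of the full velocities at the junction for all $t>0$, but Definition~\ref{DEf:admissible-initial-para} imposes that equality only at $t=0$; for positive times the concurrency is part of the boundary conditions of the special-flow system itself, so the argument as you wrote it is circular, and you should simply quote the imposed condition rather than re-derive it from the velocities.
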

\begin{proof}
We show that 
$\mathcal{N}_0$ is an admissible initial networks. 
Conditions i) and iv) are clearly satisfied, together with condition ii) because at the endpoints of order one
$0=V^i=2\partial_s^2k^i+(k^i)^3-\mu^i k^i$.
Also condition iii) it is easy to get: 
$\partial_x^2\varphi(y)=0$ implies $k(y)=0$,
$\partial_x\varphi(y)=\tau^*$ implies $\tau=\tau^*$
and all the other conditions are already satisfied by the special flow.
At each junction $\gamma^{1}(y_1)=\ldots=\gamma^{m}(y_m)$ 
of order at least three we 
consider two consecutive unit normal vectors 
$\nu^{i}(y_i)$ and $\nu^{k}(y_k)$
such that
$\mathrm{span}\{\nu^{i}(y_i),\nu^{k}(y_k)\}=\mathbb{R}^2$.
For every $j\in\{1,\ldots,m\}$, $j\neq i$, $j\neq k$
we call $\theta^i$ the angle between $\nu^k(0)$
and $\nu^j(0)$, 
$\theta^k$ between $\nu^j(0)$
and $\nu^i(0)$
and $\theta^j$  between $\nu^i(0)$
and $\nu^k(0)$ and we recall that it holds
\begin{align}
V^i\nu^i+\overline{T}^i\tau^i
=V^j\nu^j+\overline{T}^j\tau^j\,,\label{primauguaglianza}\\
V^i\nu^i+\overline{T}^i\tau^i
=V^k\nu^k+\overline{T}^k\tau^k\,.\label{secondauguaglianza}
\end{align}
By testing~\eqref{primauguaglianza} 
by $\sin \theta^j\tau^k$ and by 
$\cos \theta^j\nu^k$ and summing, we get
\begin{equation}\label{id1}
    V^i=\cos\theta^k V^j-\sin\theta^k \overline{T}^j\,.
\end{equation}
If instead we test~\eqref{primauguaglianza} 
by $\cos \theta^j\tau^k$ and by 
$\sin \theta^j\nu^k$
and we subtract the second equality to the first one,
it holds 
\begin{equation}\label{id2}
    \overline{T}^i=\cos\theta^k \overline{T}^j+\sin\theta^k V^j\,.
\end{equation}
Similarly, by testing~\eqref{secondauguaglianza}
by $\cos \theta^k\nu^j$ and by $\sin\theta^k\tau^j$ 
and subtracting the second identity to the first we have
\begin{equation}\label{id3}
    V^i=\cos\theta^j V^k+\sin\theta^j \overline{T}^k\,.
\end{equation}
Finally we test~\eqref{secondauguaglianza}
by $\cos \theta^k\tau^j$ and by $\sin\theta^k\nu^j$ 
and sum, obtaining 
\begin{equation}\label{id4}
    \overline{T}^i=\cos\theta^j \overline{T}^k-\sin\theta^j V^k\,.
\end{equation}
With the help of the identities~\eqref{id1},\eqref{id2},~\eqref{id3}
and~\eqref{id4}
and interchanging the roles of $i,j,k$ we can write
\begin{align*}
    \sin\theta^iV^i=\cos\theta^j \overline{T}^j-\cos\theta^k \overline{T}^k\,,\\
    \sin\theta^kV^k=\cos\theta^i \overline{T}^i-\cos\theta^j \overline{T}^j\,,\\
    \sin\theta^jV^j=\cos\theta^k \overline{T}^k-\cos\theta^i \overline{T}^i\,.
\end{align*}
and so for every
$j\in\{1,\ldots,m\}$, $j\neq i$, $j\neq k$ we have
$\sin\theta^iV^i+ \sin\theta^kV^k+\sin\theta^jV^j=0$, as desired.

The solution $\mathcal{N}$ admits a parametrization
$\varphi$ with the required regularity.
As we have seen for the initial datum, the boundary conditions in Definition~\ref{specialflow}
implies the boundary conditions asked in Definition~\ref{Def:elasticflow}. By definition of solution
of the special flow the parametrizations 
$\varphi=(\varphi^1,\ldots,\varphi^N)$
solves $\partial_t\varphi^i=V^i\nu^i+\overline{T}^i\tau^i$.
Then 
$$\left\langle\partial_t\varphi^i, 
\nu^i\right\rangle \nu^i=\left\langle V^i\nu^i+\overline{T}^i\tau^i,\nu^i\right\rangle\nu^i
=V^i\nu^i =-2\partial_s^2 k^i-(k^i)^3+\mu^i k^i\,,$$
and thus all the properties of solution to the elastic flow
are satisfied.
\end{proof}

\begin{lemma}
Suppose that a closed  curve  parametrized by 
$$
\gamma\in C\left([0,T];C^{5}([0,1];\mathbb{R}^2)\right) \cap  C^1\left([0,T];C^{4}([0,1];\mathbb{R}^2)\right)
$$
is a solution to the elastic flow 
with admissible initial datum $\gamma_0\in C^5([0,1])$.
Then a reparametrization of $\gamma$ is a solution
to the special flow.
\end{lemma}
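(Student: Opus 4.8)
The plan is to produce the desired reparametrization by combining short--time existence for the \emph{special} flow (Theorem~\ref{existenceanalyticprob}) with the uniqueness up to reparametrization of the elastic flow of a closed curve (Theorem~\ref{geomexistence}). The first point is that $\gamma_0$ is an admissible initial parametrization for the special flow in the sense of Definition~\ref{DEf:admissible-initial-para}: since $\mathcal N_0=\gamma_0(\mathbb{S}^1)$ is a single closed curve there are no endpoints of order one and no junctions, so all the conditions of that definition except the regularity one are vacuous, and the regularity one, $\gamma_0\in C^{4+\alpha}([0,1];\R^2)$, holds because $C^5\subset C^{4+\alpha}$. Hence there are $\widetilde T\in(0,T]$ and a solution $\varphi$ of the special flow on $[0,\widetilde T]$, of class $C^{\frac{4+\alpha}{4},\,4+\alpha}$, with $\varphi(0,\cdot)=\gamma_0$.

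Next, by Lemma~\ref{lem:SpecialImplicaElastic} the evolving curves $t\mapsto\varphi(t,[0,1])$ form a solution of the elastic flow of a closed curve with initial datum $\gamma_0$. Thus, on $[0,\widetilde T]$, both $\varphi$ and $\gamma$ solve the elastic flow with the same admissible initial datum $\gamma_0$, and with enough regularity for the uniqueness statement to apply (parabolic smoothing of the special flow together with the regularity assumed on $\gamma$; one shrinks $\widetilde T$ if needed). By the uniqueness part of Theorem~\ref{geomexistence} there is a time--dependent family of diffeomorphisms $\psi(t,\cdot)\colon\mathbb{S}^1\to\mathbb{S}^1$ with $\psi(0,\cdot)=\mathrm{id}$ and $\varphi(t,x)=\gamma(t,\psi(t,x))$ on $[0,\widetilde T]$. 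Therefore $\gamma\circ\psi$ is a reparametrization of $\gamma$ that solves the special flow, which already proves the statement on $[0,\widetilde T]$.

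To reach the whole interval $[0,T]$ one runs the standard continuation argument: the special flow is a non--degenerate quasilinear parabolic system, so a solution issued from $\gamma_0$ extends as long as the evolving curve stays regular and bounded in $C^{4+\alpha}$; on its maximal interval of existence this solution is, by the previous step applied from any later time, a reparametrization of $\gamma$, and $\gamma\in C([0,T];C^5([0,1];\R^2))$ is uniformly regular and bounded on $[0,T]$, so no such degeneration can occur before $t=T$. The main obstacle is exactly this last point: one must ensure that the lifespan of the special flow started at $\gamma_0$ is at least $T$, equivalently that the reparametrizing diffeomorphisms $\psi(t,\cdot)$ do not degenerate on $[0,T)$, which requires the blow--up criterion for the special flow to be phrased in terms of quantities controlled by $\gamma$.

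A more self--contained route, bypassing the uniqueness theorem, is to write $\partial_t(\gamma\circ\psi)=V\,\nu+\big(\langle\partial_t\gamma,\tau\rangle\!\circ\!\psi+(|\partial_x\gamma|\!\circ\!\psi)\,\partial_t\psi\big)\tau$, with $V,\nu,\tau$ referring to $\gamma\circ\psi$, and to impose that the tangential coefficient equal $\overline T$ as in~\eqref{Tang}. Since the normal parts agree automatically ($V$ is geometric and $\gamma$ solves the flow), this single scalar requirement is, after expanding $\overline T$, a fourth--order quasilinear parabolic equation for $\psi$ with leading term $-2\,|\partial_x(\gamma\circ\psi)|^{-4}\partial_x^4\psi$ and initial datum $\psi(0,\cdot)=\mathrm{id}$; solving it by standard parabolic theory and setting $\varphi:=\gamma\circ\psi$ yields a reparametrization solving the special flow by construction. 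The same continuation issue as above reappears in this formulation.
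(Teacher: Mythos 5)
Your main route is circular within the logic of this paper. You invoke the uniqueness part of Theorem~\ref{geomexistence} to identify the special-flow solution $\varphi$ issued from $\gamma_0$ with a reparametrization of the given $\gamma$; but in this paper the uniqueness of the (geometric) elastic flow is itself \emph{deduced} from the present lemma (respectively from its network analogue, Proposition~\ref{riparametriz}): one first reparametrizes an arbitrary elastic-flow solution into a special-flow solution and only then applies the uniqueness of the special flow from Theorem~\ref{existenceanalyticprob}. So the statement you are asked to prove is exactly the missing ingredient that makes Theorem~\ref{geomexistence} work, and it cannot be derived from it. The existence half of your argument --- $\gamma_0$ is an admissible initial parametrization for the special flow, so Theorem~\ref{existenceanalyticprob} and Lemma~\ref{lem:SpecialImplicaElastic} give \emph{some} elastic flow starting from $\gamma_0$ --- is fine, but it does not by itself relate that flow to the given $\gamma$.

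Your ``more self-contained route'' at the end is, in substance, the paper's actual argument (the paper only sketches it, referring back to the discussion at the beginning of Section~\ref{Sec:invariance}): since the normal velocity is geometric and $\gamma$ already solves the elastic flow, one only has to adjust the tangential velocity, i.e.\ solve the scalar quasilinear fourth-order parabolic equation for $\psi$ obtained by requiring that the tangential component of $\partial_t(\gamma\circ\psi)$ equal $\overline{T}$ as in~\eqref{Tang}, with $\psi(0,\cdot)=\mathrm{id}$; then $\varphi:=\gamma\circ\psi$ solves the special flow by construction. You should lead with this and drop the first route. Concerning the continuation worry you raise in both versions: the network analogue, Proposition~\ref{riparametriz}, is explicitly stated only on a possibly smaller interval $[0,\widetilde T]$, and the present lemma is used in the same local way; so producing the reparametrization on some $[0,\widetilde T]\subseteq[0,T]$ suffices, and the extra regularity $C([0,T];C^5)\cap C^1([0,T];C^4)$ assumed on $\gamma$ is there precisely so that the coefficients of the equation for $\psi$ are regular enough for that local solvability.
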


\begin{proof}
The proof easily follows arguing similarly as in the discussion at the beginning of the section and, in particular,  by recalling that reparametrizations only affect the tangential velocity.
\end{proof}

The above result can be generalized to flow of networks as stated below.

\begin{lemma}
Suppose that a network $\mathcal{N}_0$ of $N$ 
regular curves parametrized by 
$\gamma=(\gamma^1, \ldots\gamma^N)$ with $\gamma^i:[0,1]\to\mathbb{R}^2$, $i\in\{1,\ldots,N\}$
 is an admissible initial network. Then there exist
$N$ smooth functions $\theta^i:[0,1]\to [0,1]$ such that the reparametrisztion 
$\left(\gamma^i\circ\theta^i\right)$
 is an admissible initial parametrization for the special flow.
\end{lemma}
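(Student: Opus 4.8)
The plan is to construct, curve by curve, a smooth strictly increasing diffeomorphism $\theta^i\colon[0,1]\to[0,1]$ with $\theta^i(0)=0$ and $\theta^i(1)=1$, whose jets up to order~$4$ at the two endpoints are prescribed so that $(\gamma^i\circ\theta^i)$ fulfils every requirement of Definition~\ref{DEf:admissible-initial-para}. First I would split those requirements into the \emph{geometric} ones --- invariant under an orientation--preserving reparametrization fixing $\{0,1\}$ --- and the ones that genuinely involve the parametrization. In the first group fall the $C^{4+\alpha}$ regularity (since $\theta^i$ is smooth and $\gamma^i\in\mathrm{I}_N$), the concurrency condition, the angle conditions, the intrinsic curvature and third order conditions (expressed through $k^i$, $\boldsymbol{\kappa}^i$, $\partial_s^\perp\boldsymbol{\kappa}^i$, $\tau^i$, $\nu^i$), the non--degeneracy at the junctions, and the identity $V^i=0$ at each order one endpoint, which is precisely condition~iii) of Definition~\ref{Def:admissible-initial-net}; all of these hold for $(\gamma^i\circ\theta^i)$ whatever the $\theta^i$ are, simply because $\mathcal{N}_0$ is an admissible initial network. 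In the second group lie the parametrization--dependent forms occurring in Definition~\ref{def:SpecialFlow}: $\partial_x^2\varphi^{i_j}(y)=0$ at Navier junctions and Navier endpoints, $\langle\partial_x^2\varphi^{i_j}(y),\partial_x\varphi^{i_j}(y)\rangle=0$ at clamped junctions, $\partial_x\varphi(y)=\tau_0$ at clamped endpoints, together with $\overline{T}^i=0$ at order one endpoints and $V^i\nu^i+\overline{T}^i\tau^i=V^j\nu^j+\overline{T}^j\tau^j$ at each junction, where $\overline{T}^i$ is the explicit expression~\eqref{Tang}. Only these have to be arranged. (For a single closed curve the second group is empty and $\theta^1=\mathrm{id}$ works, so assume the network has at least one junction or order one endpoint.)

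Next I would fix the target tangential values. At an order one endpoint set $T^i_\ast:=0$. At a junction of order~$m$, labelling the concurring curves counterclockwise and using Remark~\ref{tang-in-funz-norm}, the over--determined system asking that $V^i\nu^i+t^i\tau^i$ be independent of $i$ admits a solution $(t^i)=(T^i_\ast)$ exactly because of the compatibility condition~v) of Definition~\ref{Def:admissible-initial-net}, and this solution is unique since non--degeneracy gives $\det M\neq0$. Because $V^i$, $\tau^i$ and $\nu^i$ at the endpoints are geometric, the numbers $T^i_\ast$ are unaffected by the reparametrization. Hence the task reduces to choosing $\theta^i$ so that, at each of its two endpoints, $\gamma^i\circ\theta^i$ satisfies the relevant parametrization--dependent boundary condition and $\overline{T}^i$ (computed on $\gamma^i\circ\theta^i$) equals the associated $T^i_\ast$.

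The core of the argument is a triangular system of equations for the derivatives $(\theta^i)'(0),\dots,(\theta^i)^{(4)}(0)$ at an endpoint, say $x=0$ with $\theta^i(0)=0$, and likewise at $x=1$. By Fa\`a di Bruno, for $k\ge1$ the vector $\partial_x^k(\gamma^i\circ\theta^i)(0)$ is affine in $(\theta^i)^{(k)}(0)$ with coefficient $\partial_x\gamma^i(0)\neq0$, the lower order remainder involving only $(\theta^i)'(0),\dots,(\theta^i)^{(k-1)}(0)$ and $\partial_x^j\gamma^i(0)$ with $j<k$. I would solve order by order. Order~$1$: take $(\theta^i)'(0)=1$, or, at a clamped endpoint, $(\theta^i)'(0)=|\partial_x\gamma^i(0)|^{-1}>0$, which forces $\partial_x(\gamma^i\circ\theta^i)(0)=\tau^i(0)=\tau_0$ since $\mathcal{N}_0$ already obeys the angle condition. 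Order~$2$: pick $(\theta^i)''(0)$ so that $\partial_x^2(\gamma^i\circ\theta^i)(0)=0$ at Navier points --- feasible because $k^i(0)=0$ makes the normal part of $\partial_x^2\gamma^i(0)$ already vanish, so only the tangential part survives and it is cancelled by $(\theta^i)''(0)\,\partial_x\gamma^i(0)$ --- or so that $\langle\partial_x^2(\gamma^i\circ\theta^i),\partial_x(\gamma^i\circ\theta^i)\rangle(0)=0$ at clamped junctions, or simply $(\theta^i)''(0)=0$ when no order~$2$ condition is present. Order~$3$: put $(\theta^i)'''(0)=0$. Order~$4$: by~\eqref{Apart}--\eqref{Tang}, $\overline{T}^i=\langle(\,\cdot\,),\tau^i\rangle$ with leading term $-2|\partial_x\gamma^i|^{-4}\partial_x^4\gamma^i$, so $\overline{T}^i$ computed on $\gamma^i\circ\theta^i$ at $0$ is affine in $(\theta^i)^{(4)}(0)$ with coefficient a negative multiple of $(\theta^i)'(0)^{-4}|\partial_x\gamma^i(0)|^{-3}\neq0$; thus the equation $\overline{T}^i=T^i_\ast$ determines $(\theta^i)^{(4)}(0)$.

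Finally, for each $i$ the prescribed $4$--jets at $0$ and at $1$ are consistent with $\theta^i(0)=0$, $\theta^i(1)=1$, $(\theta^i)'(0),(\theta^i)'(1)>0$, so a routine extension --- glue the two boundary Taylor polynomials by a smooth strictly increasing interpolant in the interior --- yields a smooth diffeomorphism $\theta^i\colon[0,1]\to[0,1]$ realising them. Then $\gamma^i\circ\theta^i\in C^{4+\alpha}([0,1];\mathbb{R}^2)$ is regular, the geometric conditions hold by invariance, and the parametrization--dependent ones hold at the endpoints by construction, so $(\gamma^i\circ\theta^i)$ is an admissible initial parametrization for the special flow, as claimed. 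I expect the only delicate point to be the third step: verifying via~\eqref{Apart}--\eqref{Tang} and Fa\`a di Bruno that $\overline{T}^i$ and each parametrization--dependent boundary expression depend on the top--order derivative of $\theta^i$ affinely and with a non--vanishing coefficient, which is exactly what makes the recursive solution of the jet equations possible; everything else is soft.
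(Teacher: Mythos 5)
Your proposal is correct and follows essentially the same route as the paper: the paper (in the discussion within the proof of Theorem~\ref{geomexistence}, which is the content of the cited Lemma~3.31 of~\cite{GaMePl1}) likewise prescribes the $4$--jet of the reparametrization at each endpoint --- first derivative to normalize the tangent, second derivative to kill the tangential part of $\partial_x^2\gamma$, third derivative zero, fourth derivative to set $\overline{T}$ to the value dictated by the junction compatibility --- and then glues the two boundary Taylor polynomials by an increasing smooth interpolant. Your framing of this as a triangular jet system, with the observation that each boundary expression is affine in the top--order derivative of $\theta^i$ with non--vanishing coefficient, is a cleaner packaging of exactly the same construction.
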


For the proof see~\cite[Lemma 3.31]{GaMePl1}.
Moreover by inspecting the proof of Theorem 3.32 in \cite{GaMePl1}
we see that the following holds.

\begin{prop}\label{riparametriz}
Let $T>0$. 
Let $\mathcal{N}_0$
be an admissible initial network of $N$ curves  parametrized by 
$\gamma_0=(\gamma_0^1, \ldots\gamma_0^N)$
with $\gamma^i:[0,1]\to\mathbb{R}^2$, $i\in\{1,\ldots,N\}$.
Suppose that 
$\mathcal{N}(t)_{t\in [0,T]}$ is a solution to the elastic flow in the time interval 
$[0,T]$ with initial datum $\mathcal{N}_0$ and suppose that
it is parametrized by
regular curves
$\gamma=(\gamma^1, \ldots\gamma^N)$
with $\gamma^i:[0,T]\times[0,1]\to\mathbb{R}^2$.
Then there exists $\widetilde{T}\in (0,T]$ and
a time dependent family of reparametrizations 
$\psi:[0,\widetilde{T}]\times [0,1]\to [0,1]$ such that 
$\varphi(t,x):=(\varphi^1(t,x),\ldots,\varphi^N(t,x))$ with
$\varphi(t,x):=\gamma^i(t,\psi(t,x))$ 
is a solution to the special flow in $[0,\widetilde{T}]$.
\end{prop}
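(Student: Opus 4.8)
The plan is to construct the reparametrization family $\psi$ as the solution of an ODE in time whose right-hand side is designed precisely to cancel the tangential velocity of $\gamma$ and replace it with the prescribed $\overline{T}^i$ of the special flow. Recall from Section~\ref{Sec:invariance} that if $\gamma=(\gamma^1,\dots,\gamma^N)$ solves the elastic flow, then each curve satisfies $\partial_t\gamma^i = V^i\nu^i + S^i\tau^i$ for some continuous tangential coefficients $S^i$, where the normal part $V^i\nu^i$ is the geometric velocity $(-2\partial_s^2 k^i - (k^i)^3 + \mu^i k^i)\nu^i$. If we set $\varphi^i(t,x) := \gamma^i(t,\psi^i(t,x))$, then
\begin{equation*}
\partial_t\varphi^i = (\partial_t\gamma^i)(t,\psi^i) + (\partial_x\gamma^i)(t,\psi^i)\,\partial_t\psi^i
= V^i\nu^i + \Big( S^i + |\partial_x\gamma^i(t,\psi^i)|\,\partial_t\psi^i\Big)\tau^i\,,
\end{equation*}
so $\varphi^i$ solves $\partial_t\varphi^i = V^i\nu^i + \overline{T}^i\tau^i$ exactly when
\begin{equation*}
\partial_t\psi^i(t,x) = |\partial_x\gamma^i(t,\psi^i(t,x))|^{-1}\big(\overline{T}^i(\varphi(t,\cdot))(x) - S^i(t,\psi^i(t,x))\big)\,,
\end{equation*}
with initial condition $\psi^i(0,\cdot)$ equal to the reparametrization furnished by the previous lemma (the one from \cite[Lemma 3.31]{GaMePl1}) that turns $\gamma_0$ into an admissible initial parametrization for the special flow. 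One then invokes a short-time existence result for this system of ODEs — this is the content of the proof of \cite[Theorem 3.32]{GaMePl1} being cited — to obtain $\widetilde T\in(0,T]$ and a solution $\psi$ on $[0,\widetilde T]\times[0,1]$.

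The next step is to verify that $\psi^i(t,\cdot):[0,1]\to[0,1]$ is, for each $t$, an orientation-preserving diffeomorphism with $\psi^i(t,0)=0$, $\psi^i(t,1)=1$, and — at the junctions and endpoints — the compatibility of the higher $x$-derivatives of $\psi^i$ needed so that $\varphi$ satisfies the boundary conditions of the special flow in Definition~\ref{def:SpecialFlow}. The boundary values $\psi^i(t,0),\psi^i(t,1)$ are preserved because the forcing term in the ODE vanishes there (the admissibility conditions on $\varphi_0$ force $V^i=\overline{T}^i=0$ at free endpoints, and at junctions the tangential velocities are determined so as to be consistent); that $\psi^i(t,\cdot)$ stays a diffeomorphism follows from $\partial_x\psi^i$ solving a linear ODE in $t$ (differentiate the defining equation in $x$) and hence remaining strictly positive on the short interval $[0,\widetilde T]$, after possibly shrinking $\widetilde T$. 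One also checks that the parabolic Hölder regularity $\varphi\in C^{\frac{4+\alpha}{4},4+\alpha}$ is inherited: $\psi$ gains regularity from the ODE since its right-hand side involves $\gamma$, which is as regular as assumed, together with $\overline{T}^i$, which is a smooth function of $\partial_x\varphi,\dots,\partial_x^4\varphi$.

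Finally, since the special flow's boundary conditions and the admissibility of $\varphi_0$ are invariant under reparametrization — as already observed for all the boundary conditions in Section~\ref{Sec:invariance} — and since the non-degeneracy condition $\mathrm{span}\{\nu^{p_1},\dots,\nu^{p_m}\}=\mathbb{R}^2$ at each junction is a property of the geometric tangent/normal vectors, unchanged by reparametrization, the constructed $\varphi$ is a genuine solution of the special flow on $[0,\widetilde T]$ with the required initial datum. The main obstacle in carrying this out rigorously is the coupled boundary behaviour at the junctions: one must ensure the ODE system for $(\psi^1,\dots,\psi^N)$ is compatible there (the tangential velocities $S^i$ at a multipoint are not free but solve the linear system with matrix $M$ from Remark~\ref{tang-in-funz-norm}, whose invertibility is exactly the non-degeneracy condition), so that the resulting $\varphi$ keeps the junction structure and satisfies the higher-order boundary constraints; this is why the statement only claims existence on a possibly smaller interval $[0,\widetilde T]$ and why the detailed verification is deferred to the proof of \cite[Theorem 3.32]{GaMePl1}.
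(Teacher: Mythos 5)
There is a genuine gap at the heart of your construction: the evolution equation you write for $\psi^i$ is not an ODE. Your identity
\begin{equation*}
\partial_t\psi^i(t,x) = \big|\partial_x\gamma^i(t,\psi^i(t,x))\big|^{-1}\Big(\overline{T}^i(\varphi(t,\cdot))(x) - S^i(t,\psi^i(t,x))\Big)
\end{equation*}
is the correct condition for $\varphi^i=\gamma^i(t,\psi^i)$ to have tangential velocity $\overline{T}^i$, but $\overline{T}^i$ as defined in~\eqref{Tang} is \emph{not} a geometric quantity: it is computed from the parametrization $\varphi^i=\gamma^i\circ\psi^i$ and contains the term $\langle -2\partial_x^4\varphi^i/|\partial_x\varphi^i|^4,\tau^i\rangle$. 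Expanding, $\partial_x^4(\gamma^i\circ\psi^i)$ contributes $(\partial_x\gamma^i\circ\psi^i)\,\partial_x^4\psi^i$, whose tangential projection does not cancel, so the right-hand side contains $-2\,\partial_x^4\psi^i/\big(|\partial_x\gamma^i\circ\psi^i|^{3}(\partial_x\psi^i)^4\big)$ plus lower-order spatial derivatives of $\psi^i$. The system for $(\psi^1,\dots,\psi^N)$ is therefore a quasilinear fourth-order \emph{parabolic PDE} with boundary conditions coupled at the junctions, not a family of ODEs in $t$. Consequently the two steps you lean on — ``invoke a short-time existence result for this system of ODEs'' and ``$\partial_x\psi^i$ solves a linear ODE in $t$, hence stays positive'' — are not available: differentiating a fourth-order PDE in $x$ gives a fifth-order PDE, and short-time solvability requires the same linearization-plus-fixed-point (or maximal regularity) machinery used for the special flow itself, together with a verification that the linearized boundary operators satisfy the complementary conditions at the multipoints. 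This is precisely why the paper does not reprove the statement and instead defers to the proof of Theorem 3.32 in the cited reference.

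Note that the ODE picture you are extrapolating from Section~\ref{Sec:invariance} works there only because the target tangential velocity is identically zero (equation~\eqref{eq:CauchyCanonica}), so the right-hand side $-|\partial_x\beta(t,\psi)|^{-1}w(t,\psi)$ involves $\psi$ only through composition with known functions of $\gamma$; as soon as the target is the parametrization-dependent $\overline{T}^i$, that structure is lost. The remaining ingredients of your argument (preservation of $\psi^i(t,0)=0$, $\psi^i(t,1)=1$, invariance of the boundary and non-degeneracy conditions under reparametrization, the role of the matrix $M$ from Remark~\ref{tang-in-funz-norm}) are sensible, but they all sit downstream of the unproved solvability of the parabolic system for $\psi$, so the proof as written does not close.
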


\begin{rem}
In the case of a single open curve, reducing to the special flow is
 is particularly advantageous. 
 Indeed one passes from a the degenerate 
problem~\eqref{evolutionlaw} couple either with quasilinaer 
or fully nonlinear boundary conditions
to a non--degenerate system of quasilinaer PDEs with linear and 
affine
boundary conditions.
\end{rem}

\subsection{Energy monotonicity}\label{sec:energy-decreases-in-time}

Let us name $V^i:=-2\partial_ s^2 k^{i}-(k^i)^2k^i+\mu^i k^i$
the normal velocity of a curve $\gamma^i$ evolving by elastic flow and denote 
the tangential motion by $T^i$:
\begin{equation}\label{motionequationtang}
\partial_t\gamma^i=V^i\nu^i+T^i\tau^i\,.
\end{equation}

\begin{dfnz}
We denote by $\pol_\sigma^h(k)$
a  polynomial in $k,\dots,\ders^h k$ with constant 
coefficients in $\mathbb{R}$ such that every monomial it contains is of the form
\begin{equation*}
C \prod_{l=0}^h	(\ders^lk)^{\beta_l}\quad\text{ with} \quad \sum_{l=0}^h(l+1)\beta_l = \sigma\,,
\end{equation*}
where $\beta_l\in\mathbb{N}$ for $l\in\{0,\dots,h\}$ and $\beta_{l_0}\ge 1$ 
for at least one index $l_0$.
\end{dfnz}

We notice that 
\begin{align}
\partial_s\left(\pol_\sigma^h( k)\right)&=\pol_{\sigma+1}^{h+1}( k)\,,\nonumber\\
\mathfrak{p}_{\sigma_1}^{h_1}(k)\mathfrak{p}_{\sigma_2}^{h_2}
(k)&=\mathfrak{p}_{\sigma_1+\sigma_2}^{\max\{h_1,h_2\}}(k)\,,
\label{calcpol} \\
\mathfrak{p}_\sigma^{h_1}(k)+ \mathfrak{p}_\sigma^{h_2}(k) &= \mathfrak{p}_\sigma^{\max\{h_1,h_2\}}(k).
\end{align}

By~\eqref{eq:VariazioniOggettiGeometrici} the following result holds.

\begin{lemma}\label{evoluzionigeom}
If $\gamma$ satisfies~\eqref{motionequationtang}, the commutation rule
\begin{equation*}
\partial_{t}\partial_{s}=\partial_{s}\partial_{t}+\left(kV-\partial_s T\right)\partial_{s}\,
\end{equation*}
holds. The measure $\mathrm{d}s$ evolves as
\begin{equation*}
\partial_t(\mathrm{d}s)=\left(\partial_s T-kV\right)\mathrm{d}s\,.
\end{equation*}
Moreover the unit tangent vector, unit normal vector and the 
$j$--th
derivatives of scalar curvature of a curve  satisfy
\begin{align}
\partial_{t}\tau&=\left(\partial_s V+T k\right)\nu\,,\nonumber\\
\partial_{t}\nu&=-\left(\partial_s V+T k\right)\tau\,,\nonumber\\
\partial_tk&=\left\langle \partial_{t}\boldsymbol{\kappa},\nu\right\rangle =\partial_s^2 V+T\partial_s k+k^{2}V\,\nonumber\\
&=-2\partial_{s}^{4}k-5k^{2}\partial_{s}^{2}k-6k\left(\partial_{s}k\right)^{2}
+T\partial_{s}k-k^{5}+\mu \left( \partial_{s}^{2}k+k^3\right)\,,\label{kt}\\
\partial_t\partial_s^j k
&=-2\partial_{s}^{j+4}k	
-5k^2\partial_s^{j+2}k
+\mu\,\partial_s^{j+2}k
+T\partial_{s}^{j+1}k
+\mathfrak{p}_{j+5}^{j+1}\left(k\right)+
\mu\,\mathfrak{p}_{j+3}^{j}(k) \\
&= -2\partial_{s}^{j+4}k +T\partial_{s}^{j+1}k +\mathfrak{p}_{j+5}^{j+2}\left(k\right)+
	\mu\,\mathfrak{p}_{j+3}^{j+2}(k)	 \,.
\label{derivativekt}
\end{align}

\end{lemma}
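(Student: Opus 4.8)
The plan is to specialise the variation formulas \eqref{eq:VariazioniOggettiGeometrici} to the deformation given by the flow itself. Those identities were derived for the family $\gamma_\e = \gamma + \e\psi$, but their derivation only uses that $\partial_\e\gamma_\e = \psi$ and that $s$ is the arclength parameter of the deformed curve; hence they apply verbatim to the time-dependent family $t\mapsto\gamma(t,\cdot)$ with deformation field $\psi = \partial_t\gamma = V\nu + T\tau$. Since $\langle\psi,\tau\rangle = T$ and $\langle\psi,\boldsymbol{\kappa}\rangle = kV$, the first line of \eqref{eq:VariazioniOggettiGeometrici} gives $\partial_t(\de s) = (\partial_s T - kV)\de s$, the second gives the commutation rule $\partial_t\partial_s = \partial_s\partial_t + (kV - \partial_s T)\partial_s$, and the third, using $(\psi)^\perp = V\nu$ and $\partial_s^\perp(V\nu) = (\partial_s V)\nu$, gives $\partial_t\tau = (\partial_s V + Tk)\nu$. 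The evolution of $\nu$ then follows by differentiating the relations $\langle\nu,\nu\rangle = 1$ and $\langle\tau,\nu\rangle = 0$ in $t$: the first forces $\partial_t\nu \perp \nu$, and the second, combined with the formula just found for $\partial_t\tau$, forces $\langle\partial_t\nu,\tau\rangle = -(\partial_s V + Tk)$, whence $\partial_t\nu = -(\partial_s V + Tk)\tau$.

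For the curvature I would use the fourth line of \eqref{eq:VariazioniOggettiGeometrici} (equivalently, apply the commutation rule to $\boldsymbol{\kappa} = \partial_s\tau$): expanding with $(\psi)^\perp = V\nu$ and the Frenet relations $\partial_s\tau = k\nu$, $\partial_s\nu = -k\tau$, the normal component of $\partial_t\boldsymbol{\kappa}$ is $\partial_s^2 V + T\partial_s k + k^2 V$, and since $\boldsymbol{\kappa} = k\nu$ with $\partial_t\nu\perp\nu$ this equals $\partial_t k$. Substituting $V = -2\partial_s^2 k - k^3 + \mu k$ and using $\partial_s^2(k^3) = 3k^2\partial_s^2 k + 6k(\partial_s k)^2$ produces \eqref{kt}.

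Finally, \eqref{derivativekt} follows by induction on $j$ from \eqref{kt}, the commutation rule, and the algebra \eqref{calcpol} of the polynomials $\mathfrak{p}_\sigma^h(k)$. In the inductive step one writes $\partial_t\partial_s^j k = \partial_s\big(\partial_t\partial_s^{j-1}k\big) + (kV - \partial_s T)\partial_s^j k$; the key observation is that the term $-(\partial_s T)\partial_s^j k$ coming from the commutator cancels exactly against the $(\partial_s T)\partial_s^j k$ produced when $\partial_s$ hits $T\partial_s^j k$, so no control on $T$ is needed beyond its appearance in the explicit term $T\partial_s^{j+1}k$. One keeps $-2\partial_s^{j+4}k$, $-5k^2\partial_s^{j+2}k$, $\mu\partial_s^{j+2}k$ and $T\partial_s^{j+1}k$ explicit and checks, via the scaling bookkeeping ($k$ has weight $1$, each $\partial_s$ raises the weight by $1$, $\mu$ behaves as if it had weight $2$, products and sums as in \eqref{calcpol}), that all remaining terms are of the form $\mathfrak{p}_{j+5}^{j+1}(k) + \mu\,\mathfrak{p}_{j+3}^{j}(k)$; absorbing $-5k^2\partial_s^{j+2}k$ and $\mu\partial_s^{j+2}k$ into the $\mathfrak{p}$-terms then gives the second, more compact form. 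The only mildly delicate part of the whole argument is precisely this bookkeeping in the induction — in particular spotting the cancellation of the $\partial_s T$ terms — since everything else is a direct computation from \eqref{eq:VariazioniOggettiGeometrici} and the Frenet relations.
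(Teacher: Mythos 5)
Your argument is correct and is exactly the route the paper intends: the lemma is stated as a direct consequence of the variation identities~\eqref{eq:VariazioniOggettiGeometrici} applied with deformation field $\psi=\partial_t\gamma=V\nu+T\tau$, and your computations (including the cancellation of the $\partial_s T$ terms in the inductive step and the weight bookkeeping for the $\mathfrak{p}_\sigma^h$ polynomials) correctly fill in the details the paper omits.
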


With the help of the previous lemma it is now possible to compute
the derivative in time of a general polynimial $\mathfrak{p}_{\sigma}^h(k)$.
By definition 
every monomial composing $\mathfrak{p}_{\sigma}^h(k)$ is of the form
$\mathfrak{m}(k)=C \prod_{l=0}^h	(\ders^lk)^{\beta_l}$
with $\sum_{l=0}^h(l+1)\beta_l = \sigma$.
Then for every fixed $j\in\{1,\ldots,h\}$
the monomial $\mathfrak{n}(k)=C\beta_{j}(\partial_s^{j}k)^{\beta_j-1}
\prod_{l\neq j, l=0}^h	(\ders^lk)^{\beta_l}$
can be written as   $\mathfrak{n}(k)=\tilde{C}\prod_{l=0}^h
(\ders^lk)^{\alpha_l}$  with $\sum_{l=0}^h(l+1)\alpha_l = \sigma-j-1$.
Differentiating in time $\mathfrak{m}(k)$ we have
\begin{align*}
        \partial_t\left(\mathfrak{m}(k)\right)
&=\sum_{j=0}^h \left(
\left( C\beta_{j}\partial_s^{j}k^{\beta_j-1}\partial_t\partial_s^jk\right)\cdot
\prod_{l\neq j, l=0}^h	(\ders^lk)^{\beta_l}\right)\\
&=\sum_{j=0}^h \left(\left(-2\partial_{s}^{j+4}k	
+T\partial_{s}^{j+1}k
+\mathfrak{p}_{j+5}^{j+2}\left(k\right)+
\mu\,\mathfrak{p}_{j+3}^{j+2}(k)\right)\left(
C\beta_{j}\partial_s^{j}k^{\beta_j-1}\right)\cdot
\prod_{l\neq j, l=0}^h	(\ders^lk)^{\beta_l}\right)\\
&=\mathfrak{p}_{\sigma+4}^{h+4}(k)+T\mathfrak{p}_{\sigma+1}^{h+1}(k)+\mathfrak{p}_{\sigma+4}^{h+2}(k)+\mu \mathfrak{p}_{\sigma+2}^{h+2}(k)\,,
\end{align*}
where we used the product rule~\eqref{calcpol}
and the structure of the monomial $\mathfrak{n}(k)$.
Summing up the contribution of each monomial composing $\mathfrak{p}_{\sigma}^h(k)$ we have
\begin{equation}\label{der-t-pol}
    \partial_t\left(\mathfrak{p}_{\sigma}^h(k)\right)
    =\mathfrak{p}_{\sigma+4}^{h+4}(k)+T\mathfrak{p}_{\sigma+1}^{h+1}(k)+\mu \mathfrak{p}_{\sigma+2}^{h+2}(k)\,.
\end{equation}

\begin{prop}\label{energydecreases}
Let $\mathcal{N}_t$
be a time dependent family of smooth networks composed of $N$ curves, possibly with junctions and fixed endpoint in the plane.
Suppose that $\mathcal{N}_t$ is a 
solution of the elastic flow. 
Then 
\begin{align*}
\partial_{t}\mathcal{E}_\mu(\mathcal{N}_t)&=-\int_{\mathcal{N}} V^2\,\mathrm{d}s\,.
\end{align*}
\end{prop}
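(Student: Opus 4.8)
The plan is to differentiate the energy $\mathcal{E}_\mu(\mathcal{N}_t) = \sum_i \int_{\gamma^i} (k^i)^2 + \mu^i \, \mathrm{d}s$ in time, using the evolution formulas from Lemma~\ref{evoluzionigeom}, and then to integrate by parts, carefully tracking the boundary terms produced at the junctions and endpoints, and showing that the full set of boundary conditions imposed in Definition~\ref{Def:elasticflow} makes them vanish. Since the curves are smooth and the flow is given, all the manipulations are justified classically.

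First I would fix one curve $\gamma = \gamma^i$, drop the index, and compute $\partial_t \int_\gamma k^2 + \mu\,\mathrm{d}s$. Using $\partial_t(\mathrm{d}s) = (\partial_s T - kV)\,\mathrm{d}s$ and $\partial_t k = \partial_s^2 V + T\partial_s k + k^2 V$ from Lemma~\ref{evoluzionigeom}, we get
\[
\partial_t \int_\gamma k^2 + \mu\,\mathrm{d}s = \int_\gamma 2k(\partial_s^2 V + T\partial_s k + k^2 V) + (k^2+\mu)(\partial_s T - kV)\,\mathrm{d}s .
\]
The terms involving $T$ combine as $\partial_s(T(k^2+\mu))$ after noting $2k T\partial_s k + k^2 \partial_s T = \partial_s(Tk^2)$ and $\mu \partial_s T = \partial_s(\mu T)$, so they contribute only a boundary term. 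For the remaining terms, integrating $\int_\gamma 2k\,\partial_s^2 V\,\mathrm{d}s$ by parts twice gives $\int_\gamma 2V\,\partial_s^2 k\,\mathrm{d}s$ plus boundary terms $[2k\,\partial_s V - 2V\,\partial_s k]_0^1$, and collecting the rest yields $\int_\gamma V(2\partial_s^2 k + k^3 - \mu k)\,\mathrm{d}s = -\int_\gamma V^2\,\mathrm{d}s$ by the very definition of $V$. Summing over $i$ produces $\partial_t \mathcal{E}_\mu(\mathcal{N}_t) = -\int_{\mathcal{N}} V^2\,\mathrm{d}s + (\text{boundary terms})$.

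The main work is then to show the boundary terms vanish. For a closed curve there is no boundary and we are done immediately. For fixed endpoints of order one, $\gamma(t,y)$ constant forces $\partial_t\gamma(t,y)=0$, hence $V(y)=T(y)=0$, killing the $T(k^2+\mu)$ term and the $2V\partial_s k$ term; for Navier conditions $\boldsymbol{\kappa}(y)=0$ kills $2k\,\partial_s V$, while for clamped conditions $\tau(t,y)$ constant forces $\partial_s V + Tk = 0$ at $y$ (from $\partial_t\tau = (\partial_s V + Tk)\nu$), and since $T(y)=0$ we get $\partial_s V(y) = 0$, again killing $2k\,\partial_s V$. At a junction of order $m$ one sums the contributions of the $m$ concurring curves; the concurrency condition differentiated in time gives a common velocity vector, so the tangential boundary terms $\sum_j T^{i_j}((k^{i_j})^2 + \mu^{i_j})$ must be reorganized using that $\partial_t\gamma$ is the same for all branches, and the curvature/third-order conditions ($\sum_j k^{i_j}=0$ or each $\kappa^{i_j}=0$, together with the third-order condition $\sum_j(-2\partial_s^\perp\boldsymbol{\kappa}^{i_j} - |\boldsymbol{\kappa}^{i_j}|^2\tau^{i_j} + \mu^{i_j}\tau^{i_j})=0$) are precisely what makes the combined boundary sum vanish — indeed these conditions were derived in Section~\ref{section:defi} exactly so that the boundary part~\eqref{boundarypart} of the first variation vanishes, and the boundary terms here are the same expressions evaluated against $\psi = \partial_t\gamma$.

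The step I expect to be the main obstacle is the bookkeeping at the junctions: one must rewrite the boundary contribution in the form appearing in~\eqref{boundarypart}, namely as a pairing of $\partial_t\gamma$ (playing the role of the test field $\psi$, which by the concurrency condition is common to all branches for its value and, in the clamped case, also for $(\partial_s\cdot)^\perp$) with exactly the vector whose vanishing is imposed by the curvature and third-order boundary conditions. Once one recognizes that $\partial_t\mathcal{E}_\mu = \langle \text{grad}\,\mathcal{E}_\mu, \partial_t\gamma\rangle_{L^2}$ is just the first variation evaluated at the velocity field, the computation is formally identical to Section~\ref{section:defi} and the boundary terms vanish by construction; the remaining interior term is $\int_{\mathcal{N}}\langle \text{grad}, V\nu\rangle\,\mathrm{d}s = -\int_{\mathcal{N}} V^2\,\mathrm{d}s$ since the tangential part of $\partial_t\gamma$ is $L^2$-orthogonal to the (normal) gradient. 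I would present the single-curve computation in detail and then invoke this first-variation interpretation to handle the junctions cleanly.
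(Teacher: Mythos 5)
Your computation is correct and follows essentially the same route as the paper: differentiate using the evolution laws of Lemma~\ref{evoluzionigeom}, integrate by parts twice to isolate $-\int_{\mathcal N}V^2\,\mathrm{d}s$ together with the boundary remainder $2k\,\partial_s V-2V\partial_s k+T(k^2+\mu)$ at each endpoint, and kill that remainder case by case using the boundary conditions. The only cosmetic difference is at the junctions, where the paper differentiates the concurrency and angle conditions in time and cancels the sums explicitly, while you observe that the boundary remainder is exactly the first-variation boundary pairing~\eqref{boundarypart} evaluated at $\psi=\partial_t\gamma$ (an admissible test field, since $\partial_t\gamma$ is common to the branches and, in the clamped case, $(\partial_s\partial_t\gamma)^\perp$ is too); this is the same cancellation packaged through Section~\ref{section:defi}, and it is valid provided you do verify, as the paper does, that $\partial_t\gamma$ satisfies~\eqref{proprieta2}.
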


\begin{proof}
Using the evolution laws collected in Lemma~\ref{evoluzionigeom}, we get
\begin{align*}
\partial_{t}\int_{\mathcal{N}}k^{2}+\mu\,\mathrm{d}s
&=\int_{\mathcal{N}}2k\partial_{t}k+\left(k^{2}+\mu\right)\left(\partial_s T-kV\right)\,\mathrm{d}s\\
&=\int_{\mathcal{N}}2k\left(\partial_s^2V+T k_{s}+k^{2}V\right)+\left(k^{2}+\mu\right)\left(\partial_s T-kV\right)\,\mathrm{d}s\\
&=\int_{\mathcal{N}}^{}2k\partial_s^2V+k^3V-\mu kV+\partial_s\left(T\left(k^2+\mu\right)\right)\,\mathrm{d}s\,.
\end{align*}
Integrating twice by parts the term $\int 2kV_{ss}$ we obtain 
\begin{equation}\label{eq:perparti}
\partial_{t}\int_{\mathcal{N}}\mathcal{E}_\mu\,\mathrm{d}s
=-\int_{\mathcal{N}}^{}V^2\,\mathrm{d}s+\sum_{i=1}^{N}
\left. 
2k^i\partial_s V^i-2\partial_s k^iV^i+T^i\{\left(k^i\right)^2+\mu^i\}
\right|_{\text{bdry}}\,.
\end{equation}
It remains to show that the contribution of the boundary term in~\eqref{eq:perparti} equals zero,
whatever boundary condition we decide to impose at the endpoint among the ones listed in
Definition~\ref{Def:elasticflow}.
The case of the closed curve is trivial.

Let us start with the case of an endpoint $\gamma^j(y)$ (with $y\in\{0,1\}, j\in\{1,\ldots,N\}$)
subjected to Navier boundary condition, namely $k^j(y)=0$.
The point remains fixed, that implies $V^j(y)=T^j(y)=0$. The term $2k^j(y)\partial_s V^j(y)$
vanishes because  $k^j(y)=0$.

Suppose instead that the curve is clamped at $\gamma^j(y)$ with $\tau^j(y)=\tau^*$.
Then using Lemma~\ref{evoluzionigeom}, $0=\partial_t\tau^j(y)=(\partial_s V^j(y)-T^j(y)k^j(y))\nu^j(y)$. 
Hence 
$$
2k^j(y)\left(\partial_s V^j(y)-T^j(y)k^j(y)\right)=0\,,
$$ 
that combined with $V^j(y)=T^j(y)=0$ implies that the boundary terms vanish in~\eqref{eq:perparti}.

Consider now a junction of order $m$ where natural boundary conditions have been imposed.
Up to inverting the orientation of the parametrizations of the curves, we suppose that all
the curves concur at the junctions at $x=0$.
The curvature condition $k^i(0)=0$ with $i\in\{1,\ldots.m\}$ gives
\begin{equation*}
\sum_{i=1}^m 2k^i(0)\partial_s V^i(0)+T^i(0)\left(k^i(0)\right)^2=0\,.
\end{equation*}
Differentiating in time the concurrency condition
$\gamma^1(0)=\ldots\gamma^m(0)$ we obtain
$$
V^1(0)\nu^1(0)+T^1(0)\tau^1(0)=\ldots=V^m(0)\nu^m(0)+T^m(0)\tau^m(0)\,,
$$
	that combined with 
	the third order condition $0=\sum_{i=1}^m    2\partial_sk^i(0)\nu^i(0)-\mu^i\tau^i(0)$ gives
	\begin{align*}
	0&=\left\langle-\partial_t\gamma^1(0), \sum_{i=1}^m 2\partial_sk^i(0)\nu^i(0)-\mu^i\tau^i(0)\right\rangle\\
	&=\sum_{i=1}^m \left\langle -V^i(0)\nu^i(0)-T^i(0)\tau^i(0),
	2\partial_sk^i(0)\nu^i(0)-\mu^i\tau^i(0)\right\rangle
	=
	\sum_{i=1}^m-2\partial_sk^i(0)V^i(0)+\mu ^iT^i(0)\,,
	\end{align*}
hence  the boundary terms vanish and we get the desired result.

To conclude, consider a junction of order $m$, where the curves concur at 
$x=0$ and suppose that we have imposed there clamped boundary conditions.
In this case using the concurrency condition differentiated in time 
and the third order condition
we find
\begin{align}
0&= 
\sum_{i=1}^m    \left\langle-\partial_t\gamma^1(0), 2\partial_sk^i(0)\nu^i(0)+\left((k^i(0))^2-\mu^i\right)\tau^i(0)\right\rangle\nonumber\\
&=\sum_{i=i}^m-2\partial_s k^i(0)V^i(0)-\left((k^i(0))^2-\mu^i\right)T^i(0)\,.\label{eq:firstbdry}
\end{align}

Differentiating in time the angle condition 
$$
\left\langle \tau^i(0),\tau^{i+1}(0)\right\rangle=c^{i,i+1}=\cos (\theta^{i,i+1})
$$
we have
\begin{align*}
0&=\left\langle \partial_t\tau^i(0),\tau^{i+1}(0)\right\rangle
+\left\langle \tau^i(0),\partial_t\tau^{i+1}(0)\right\rangle\\
&=\left\langle (\partial_s V^i(0)+T^i(0)k^i(0))\nu^i(0),\tau^{i+1}(0)\right\rangle
+\left\langle \tau^i(0), (\partial_s V^{i+1}(0)+T^{i+1}(0)k^{i+1}(0))\nu^{i+1}(0)\right\rangle\\
&=(\partial_s V^i(0)+T^i(0)k^i(0))\sin(\theta^{i,i+1})-(\partial_s V^{i+1}(0)+T^{i+1}(0)k^{i+1}(0))\sin(\theta^{i,i+1})\,,
\end{align*}
and hence $\partial_s V^i(0)+T^i(0)k^i(0)=\partial_s V^{i+1}(0)+T^{i+1}(0)k^{i+1}(0)$.
Repeating the previous computation for every $i\in\{2,\ldots,m-1\}$ we get
\begin{equation*}
V^{1}_s(0)+T^{1}(0)k^{1}(0)=\ldots=
V^{m}_s(0)+T^{m}(0)k^{m}(0)\,,
\end{equation*}
that together with the curvature condition $\sum k^i=0$ at the junction gives
\begin{align*}
0=2\left(\partial_s V^{1}(0)+T^{1}(0)k^{1}(0)\right)\sum_{i=1}^m k^i(0)=\sum_{i=1}^m
2\partial_s V^{i}(0)k^i(0)+2T^{i}(0)(k^{i})^2(0)\,.
\end{align*}
Summing this last equality with~\eqref{eq:firstbdry} we have that the boundary terms
vanishes also in this case. 
\end{proof}

\section{Short time existence}\label{sec:ShortTimeExistence}

We prove a short time existence result for the elastic flow of closed curves.
We then explain how it can be generalized to other situations and
which are the main difficulties that arises when we pass from one curve to networks.

\subsection{Short time existence of the special flow}

First of all we aim to prove the existence of a solution to the special flow.
Omitting the dependence on $(t,x)$ 
we can write the motion equation of a curve subjected to~\eqref{specialflow} as  
\begin{equation*}
\partial_t\varphi=-2\frac{\partial_x^4\varphi}{\vert \partial_x\varphi\vert^4}
+\widetilde{f}(\partial_x^3\varphi,\partial_x^2\varphi,\partial_x\varphi)\,.
\end{equation*}

We linearize the highest order terms of the previous equation 
around the initial parametrization $\varphi^0$ obtaining
\begin{align}\label{definizione-f}
\partial_t\varphi
+\frac{2}{\vert\partial_x\varphi^0\vert^4}\partial_x^4\varphi
&=\left(\frac{2}{\vert\partial_x\varphi^0\vert^4} -\frac{2}{\vert\partial_x\varphi\vert^4}\right)
\partial_x^4\varphi
+\widetilde{f}(\partial_x^3\varphi,\partial_x^2\varphi,\partial_x\varphi)\nonumber \\
&=:f(\partial_x^4\varphi,\partial_x^3\varphi,\partial_x^2\varphi,\partial_x\varphi)\,.
\end{align}

\begin{dfnz}
Given $\varphi^0:\mathbb{S}^1\to\mathbb{R}^2$ an admissible initial parametrization
for~\eqref{specialflow}, the linearized system about  $\varphi^0$ associated to
the special flow of a closed curve is given by 
\begin{equation}\label{linearisedproblem}
\begin{cases}
\begin{array}{lll}
\partial_t\varphi(t,x)+2\frac{\partial_x^4\varphi(t,x)}{\vert\partial_x\varphi^0(x)\vert^4}&=f(t,x)&\;\text{on }[0,T]\times\mathbb{S}^1\\
\varphi(0,x)&=\psi(x) &\;\text{on }\mathbb{S}^1\,.
\end{array}
\end{cases}
\end{equation}
Here $(f,\psi)$ is a generic couple to be specified later on.
\end{dfnz}

Let $\alpha\in (0,1)$ be fixed.
Whenever a curve $\gamma$ is regular, there exists a constant $c>0$ such that 
$\inf_{x\in \mathbb{S}^1}\vert\partial_x\gamma\vert\geq c$. 
From now on we fix an admissible initial parametrization $\varphi^0$ 
with 
\begin{equation*}
\Vert \varphi^0 \Vert_{C^{4+\alpha}(\mathbb{S}^1;\mathbb{R}^2)}=R\,,\quad\text{and}\quad
\inf_{x\in \mathbb{S}^1}\vert \partial_x\varphi^0(x)\vert \geq c\,.
\end{equation*}
Then for every $j\in\mathbb{N}$ there holds
\begin{equation*}
\left\lVert \frac{1}{\vert \partial_x\varphi^0\vert^j}
\right\rVert_{C^{\alpha}(\mathbb{S}^1;\mathbb{R}^2)}\leq C(R,c)
\,.
\end{equation*}

\begin{dfnz}\label{linearspaces}
For  $T>0$
we consider the  linear spaces
\begin{align*}
\mathbb{E}_T:=&\,
C  ^{\frac{4+\alpha}{4},{^{4+\alpha}}}\left([0,T]\times \mathbb{S}^1;\mathbb{R}^2\right)
\,,\\
\mathbb{F}_T:=&\,
C^{\frac{\alpha}{4},{^{\alpha}}}\left([0,T]\times \mathbb{S}^1;\mathbb{R}^2\right)
\times  C^{4+\alpha}\left(\mathbb{S}^1;\mathbb{R}^2\right)
\,,
\end{align*}
endowed with the norms 
\begin{align*}
\Vert\gamma\Vert_{\mathbb{E}_T}:=\Vert\gamma\Vert_{C^{\frac{4+\alpha}{4},^{4+\alpha}} }\,,\quad
\Vert (f,\psi)\Vert_{\mathbb{F}_T}:=\Vert f\Vert_{C^{\frac\alpha4,^{\alpha}}}+
\Vert \psi\Vert_{C^{4+\alpha}}\,.
\end{align*}
and we define the operator 
$\mathcal{L}_{T}:\mathbb{E}_T\to \mathbb{F}_T$ by
\begin{equation*}
\mathcal{L}_{T}(\varphi):=\left(\mathcal{L}^1_{T}(\varphi),\mathcal{L}^2_{T}(\varphi)\right)
:=\left(\partial_t\varphi+\frac{2}{\vert\partial_x\varphi^0\vert^4}\partial_x^4\varphi, \varphi_{\vert t=0}\right)\,.
\end{equation*}
\end{dfnz}

\begin{rem}
For every $T>0$ 
the operator $\mathcal{L}_{T}:\mathbb{E}_T\to \mathbb{F}_T$  is well--defined, linear and 
continuous.
\end{rem}

\begin{teo}\label{linearexistence}
Let $\alpha\in(0,1)$, 
$(f,\psi)\in\mathbb{F}_T$.
Then for every $T>0$ the
system~\eqref{linearisedproblem} 
has a unique solution $\varphi\in \mathbb{E}_T$.
Moreover, for all $T>0$ there exists  $C(T)>0$ such that if $\varphi \in \mathbb{E}_T$ is a solution, then
\begin{equation}\label{stima-dipendente-da-T}
\Vert \varphi\Vert_{\mathbb{E}_T }
\leq C(T)\Vert (f,\psi)\Vert_{\mathbb{F}_T}\,.
\end{equation} 
\end{teo}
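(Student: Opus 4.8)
The plan is to treat \eqref{linearisedproblem} as a linear parabolic initial value problem on the compact manifold without boundary $\mathbb{S}^1$, and to invoke the classical Solonnikov–Eidelman theory of linear parabolic systems in parabolic Hölder spaces (see~\cite[\S 11, \S 13]{solonnikov2}). The operator $\partial_t + 2|\partial_x\varphi^0(x)|^{-4}\partial_x^4$ is a scalar (acting componentwise on $\mathbb{R}^2$) fourth–order operator whose leading coefficient $a(x):=2|\partial_x\varphi^0(x)|^{-4}$ is strictly positive and of class $C^\alpha(\mathbb{S}^1)$ by the a~priori bounds on $\varphi^0$ fixed above (regularity $R$ and non–degeneracy constant $c$). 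Hence the operator is uniformly parabolic: the symbol $a(x)\xi^4$ has real part bounded below by $\delta\xi^4$ for some $\delta>0$, which is exactly the (trivial, since $N=1$ equation and no boundary) parabolicity and Lopatinskii–Shapiro conditions needed. The first step is therefore to verify these structural hypotheses and to quote the existence–uniqueness–estimate theorem for the model problem.

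The second step is to pass from the half–space / model statement of the Solonnikov theory to the periodic setting. Since $\mathbb{S}^1$ is compact and the equation has no boundary conditions (periodicity is automatic once we work with functions on $\mathbb{S}^1$, equivalently $1$-periodic functions on $\mathbb{R}$), one covers $\mathbb{S}^1$ by finitely many coordinate charts, solves local model problems with frozen coefficients, and patches the local solutions by a partition of unity together with the standard perturbation argument absorbing the lower–order and coefficient–variation terms; the Hölder continuity of $a$ (not merely measurability) is precisely what makes this freezing–of–coefficients argument work. This yields a solution $\varphi\in\mathbb{E}_T=C^{\frac{4+\alpha}{4},4+\alpha}([0,T]\times\mathbb{S}^1;\mathbb{R}^2)$ together with the Schauder estimate
\[
\|\varphi\|_{\mathbb{E}_T}\le C(T)\big(\|f\|_{C^{\frac\alpha4,\alpha}}+\|\psi\|_{C^{4+\alpha}}\big)=C(T)\|(f,\psi)\|_{\mathbb{F}_T}\,,
\]
where the constant depends on $T$, on $\alpha$, and on $R,c$ through the norm and lower bound on $a$. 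Note that no compatibility conditions between $f$ and $\psi$ at $t=0$ are needed here because there is no spatial boundary; this is special to the closed–curve case (for open curves or networks one would need compatibility of $\psi$ with the boundary operators, cf.\ the admissibility conditions in Definition~\ref{DEf:admissible-initial-para}).

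The third step is uniqueness, which follows either from the linearity of the problem together with the a~priori estimate \eqref{stima-dipendente-da-T} applied to the difference of two solutions with $(f,\psi)=(0,0)$, or directly from an energy estimate: if $\varphi$ solves the homogeneous problem, multiply by $\varphi$ and integrate over $\mathbb{S}^1$ to get $\frac12\frac{d}{dt}\int_{\mathbb{S}^1}|\varphi|^2\,dx = -\int_{\mathbb{S}^1} a(x)\,\partial_x^4\varphi\cdot\varphi\,dx$, integrate by parts twice (no boundary terms on $\mathbb{S}^1$) to bound the right–hand side and apply Grönwall, forcing $\varphi\equiv 0$. I expect the main obstacle to be purely expository rather than mathematical: stating precisely which theorem of~\cite{solonnikov2} is being invoked and checking that the parabolic Hölder spaces used there coincide (up to equivalence of norms) with the spaces $\mathbb{E}_T$, $\mathbb{F}_T$ of Definition~\ref{linearspaces}, and carefully tracking that the Schauder constant can be taken uniform over the class of admissible $\varphi^0$ with $\|\varphi^0\|_{C^{4+\alpha}}\le R$ and $\inf|\partial_x\varphi^0|\ge c$ — a uniformity that will be needed later for the fixed–point argument producing the solution of the special flow.
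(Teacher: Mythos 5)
Your proposal is correct and follows essentially the same route as the paper, whose proof of this theorem consists precisely of a citation to the classical linear parabolic theory (\cite[Theorem 4.3.1]{LunardiSemigroups2013} and \cite[Theorem 4.9]{solonnikov2}); you simply spell out the verification of uniform parabolicity, the absence of compatibility conditions in the periodic setting, and the standard uniqueness argument. The only caveat worth keeping in mind is the one you already flag: the uniformity of the constant in the admissible class of $\varphi^0$ matters later, but for the statement as written a $T$-dependent constant suffices.
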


\begin{proof}
See for instance~\cite[Theorem 4.3.1]{LunardiSemigroups2013} and \cite[Theorem 4.9]{solonnikov2}.
\end{proof}

From the above theorem we get the following consequence.
\begin{cor}
The linear operator $\mathcal{L}_{T}:\mathbb{E}_T\to \mathbb{F}_T$ is a continuous isomorphism.
\end{cor}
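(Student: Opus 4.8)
The statement follows immediately from \Cref{linearexistence} together with the linearity and continuity of $\mathcal{L}_T$ already recorded above, so the plan is essentially a matter of bookkeeping. First I would note that the norms introduced in \Cref{linearspaces} make $\mathbb{E}_T$ and $\mathbb{F}_T$ into Banach spaces, and that $\mathcal{L}_T$ is, by construction, linear; its boundedness is the content of the remark preceding \Cref{linearexistence}. Hence it only remains to check that $\mathcal{L}_T$ is bijective with bounded inverse.

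For bijectivity, I would unwind the definition of $\mathcal{L}_T=(\mathcal{L}^1_T,\mathcal{L}^2_T)$: the equation $\mathcal{L}_T(\varphi)=(f,\psi)$ is exactly the requirement that $\varphi$ solve
\[
\partial_t\varphi+\frac{2}{\vert\partial_x\varphi^0\vert^4}\partial_x^4\varphi=f\ \text{ on }[0,T]\times\mathbb{S}^1,\qquad \varphi(0,\cdot)=\psi\ \text{ on }\mathbb{S}^1,
\]
i.e.\ system~\eqref{linearisedproblem} with datum $(f,\psi)$. Thus the existence part of \Cref{linearexistence} says precisely that $\mathcal{L}_T$ is surjective onto $\mathbb{F}_T$, and the uniqueness part says that it is injective (equivalently, $\mathcal{L}_T(\varphi)=0$ forces $\varphi$ to be the unique solution of~\eqref{linearisedproblem} with zero datum, which is $\varphi\equiv 0$). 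Therefore $\mathcal{L}_T$ is a continuous linear bijection.

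Finally, for the continuity of the inverse I would invoke the a priori estimate~\eqref{stima-dipendente-da-T}: setting $\varphi=\mathcal{L}_T^{-1}(f,\psi)$, that estimate reads $\Vert \mathcal{L}_T^{-1}(f,\psi)\Vert_{\mathbb{E}_T}\le C(T)\Vert(f,\psi)\Vert_{\mathbb{F}_T}$ for every $(f,\psi)\in\mathbb{F}_T$, which is exactly the boundedness of $\mathcal{L}_T^{-1}$; alternatively, once bijectivity is known, the same conclusion follows from the bounded inverse theorem since both spaces are Banach. I do not expect any genuine obstacle here: all the analytic work is already carried by \Cref{linearexistence}, and the only points to be careful about are the completeness of the parabolic H\"older spaces and the identification of the abstract operator equation with~\eqref{linearisedproblem}.
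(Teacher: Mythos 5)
Your argument is correct and is exactly the route the paper takes: the corollary is stated as an immediate consequence of Theorem~\ref{linearexistence}, with existence giving surjectivity, uniqueness giving injectivity, and the estimate~\eqref{stima-dipendente-da-T} (or the bounded inverse theorem) giving continuity of $\mathcal{L}_T^{-1}$. Nothing further is needed.
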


By the above corollary, we can denote by $\mathcal{L}^{-1}_T$ the inverse of $\mathcal{L}_T$.

Notice that till now we have considered fixed $T>0$ and derived~\eqref{stima-dipendente-da-T},
where the constant $C$ depends on $T$. 
Now, once a certain interval of time $(0, \widetilde{T}]$ with 
$\widetilde{T}>0$ is chosen,  we show that
for every $T\in (0, \widetilde{T}]$
it possible to  estimate the norm of $\mathcal{L}^{-1}_T$ with a constant independent of $T$.

\begin{lemma}\label{uniformestimateestension}
For all $\widetilde{T}>0$ there exists a constant $c(\widetilde{T})$ such that
$$
\sup_{T\in (0,\frac{1}{2}\widetilde{T}]}\Vert \mathcal{L}_T^{-1}\Vert_{\mathcal{L}(\mathbb{F}_T,\mathbb{E}_T)}\leq c(\widetilde{T})\,.
$$
\end{lemma}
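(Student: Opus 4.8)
The plan is to deduce the uniform bound from the (non‑uniform) estimate of Theorem~\ref{linearexistence} by an extension--restriction argument: fix the larger interval $[0,\widetilde{T}]$ once and for all, extend a datum given on a short interval $[0,T]$ to one on $[0,\widetilde{T}]$ with a $T$‑independent loss of norm, solve there by $\mathcal{L}_{\widetilde{T}}^{-1}$, and restrict the solution back. Concretely, fix $\widetilde{T}>0$ and let $C(\widetilde{T})$ be the constant furnished by Theorem~\ref{linearexistence} on the interval $[0,\widetilde{T}]$, so that every solution on $[0,\widetilde{T}]$ obeys $\Vert\bar\varphi\Vert_{\mathbb{E}_{\widetilde{T}}}\le C(\widetilde{T})\Vert(\bar f,\psi)\Vert_{\mathbb{F}_{\widetilde{T}}}$. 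Given $T\in(0,\tfrac12\widetilde{T}]$ and $(f,\psi)\in\mathbb{F}_T$, the goal is to produce the solution $\mathcal{L}_T^{-1}(f,\psi)$ with norm controlled by $c(\widetilde{T})\Vert(f,\psi)\Vert_{\mathbb{F}_T}$.

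For the extension I would use even reflection in time across $t=T$ followed by a constant continuation: set
\[
\bar f(t,x):=
\begin{cases}
f(t,x) & \text{if } t\in[0,T],\\
f(2T-t,x) & \text{if } t\in[T,2T],\\
f(0,x) & \text{if } t\in[2T,\widetilde{T}],
\end{cases}
\]
which is well defined because $2T\le\widetilde{T}$. This $\bar f$ is continuous, the pieces agreeing at $t=T$ and $t=2T$, and $\bar f(0,\cdot)=f(0,\cdot)$. Inspecting the three contributions to the $C^{\frac{\alpha}{4},\alpha}$‑norm (the sup‑norm, the spatial $C^\alpha$‑seminorm, and the parabolic $C^{\alpha/4}$‑seminorm in $t$), one checks that none increases: the spatial regularity is unaffected by reflecting or freezing the time variable, and for the $t$‑seminorm the only point requiring care is the mixed case $t_1<2T<t_2$ (or $t_1<T<t_2$), handled by the elementary inequalities $2T-t_1\le t_2-t_1$ and $|t_1+t_2-2T|\le t_2-t_1$. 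Hence $\Vert\bar f\Vert_{C^{\frac{\alpha}{4},\alpha}([0,\widetilde{T}]\times\mathbb{S}^1)}\le K\,\Vert f\Vert_{C^{\frac{\alpha}{4},\alpha}([0,T]\times\mathbb{S}^1)}$ with an absolute constant $K$ (in fact $K=1$ suffices), in particular independent of $T$, and consequently $(\bar f,\psi)\in\mathbb{F}_{\widetilde{T}}$ with $\Vert(\bar f,\psi)\Vert_{\mathbb{F}_{\widetilde{T}}}\le K\,\Vert(f,\psi)\Vert_{\mathbb{F}_T}$. Note also that the coefficient $2/\vert\partial_x\varphi^0\vert^4$ in~\eqref{linearisedproblem} does not depend on $t$, so the linearised operator on $[0,\widetilde{T}]$ restricts exactly to the one on $[0,T]$.

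Now set $\bar\varphi:=\mathcal{L}_{\widetilde{T}}^{-1}(\bar f,\psi)\in\mathbb{E}_{\widetilde{T}}$ and $\varphi:=\bar\varphi|_{[0,T]\times\mathbb{S}^1}$. Since $\bar f=f$ on $[0,T]\times\mathbb{S}^1$ and $\bar\varphi(0,\cdot)=\psi$, the restriction $\varphi$ solves~\eqref{linearisedproblem} on $[0,T]\times\mathbb{S}^1$, so by the uniqueness assertion of Theorem~\ref{linearexistence} we have $\varphi=\mathcal{L}_T^{-1}(f,\psi)$. As restriction in time does not increase the parabolic Hölder norm,
\[
\Vert\mathcal{L}_T^{-1}(f,\psi)\Vert_{\mathbb{E}_T}
=\Vert\varphi\Vert_{\mathbb{E}_T}
\le\Vert\bar\varphi\Vert_{\mathbb{E}_{\widetilde{T}}}
\le C(\widetilde{T})\,\Vert(\bar f,\psi)\Vert_{\mathbb{F}_{\widetilde{T}}}
\le K\,C(\widetilde{T})\,\Vert(f,\psi)\Vert_{\mathbb{F}_T}\,.
\]
Taking the supremum over $(f,\psi)$ and then over $T\in(0,\tfrac12\widetilde{T}]$ yields the claim with $c(\widetilde{T}):=K\,C(\widetilde{T})$.

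The only genuinely delicate point is the construction of the time‑extension operator with a norm constant independent of $T$; a naive extension would have constant degenerating as $T\to0$, whereas the reflection‑plus‑constant recipe above has a uniform (indeed absolute) constant, at the modest cost of verifying the mixed‑interval Hölder seminorm bounds. Everything else — boundedness and linearity of $\mathcal{L}_{\widetilde{T}}^{-1}$, contractivity of the restriction map, and the uniqueness of solutions — is already available from Theorem~\ref{linearexistence}.
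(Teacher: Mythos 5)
Your proof is correct and follows essentially the same route as the paper: extend the datum from $[0,T]$ to $[0,\widetilde{T}]$ by an operator whose norm is bounded independently of $T$ (which is where $T\le\tfrac12\widetilde{T}$ enters), apply the fixed-interval estimate for $\mathcal{L}_{\widetilde{T}}^{-1}$, and restrict back using uniqueness. The only difference is cosmetic: you extend by even reflection plus constant continuation, while the paper uses a time-reversed linear rescaling $t\mapsto T\frac{\widetilde{T}-t}{\widetilde{T}-T}$ on $(T,\widetilde{T}]$; both are norm-one extensions in the relevant parabolic H\"older class.
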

\begin{proof}
Fix $\widetilde{T}>0$, for all $T\in (0,\widetilde{T}]$,
for every $(f,\psi)\in\mathbb{F}_T$ we define the extension operator
$E(f,\psi):=(\widetilde{E}f,\psi)$ by 
\begin{align*}
&\widetilde{E}:C  ^{\frac{\alpha}{4},{^{\alpha}}}\left([0,T]\times\mathbb{S}^1;\mathbb{R}^2\right)\to
C  ^{\frac{\alpha}{4},{^{\alpha}}}\left([0,\widetilde{T}]\times\mathbb{S}^1;\mathbb{R}^2\right)\\
&\widetilde{E}f(t,x):=
\begin{cases}
f(t,x) &\text{for}\, t\in[0,T],\\
f\left(T\frac{\widetilde{T}-t}{\widetilde{T}-T},x\right) &\text{for}\, t\in(T,\widetilde{T}],\\
\end{cases}
\end{align*}
It is clear that $E(f,\psi)\in\mathbb{F}_{\widetilde{T}}$ and that 
$\Vert E \Vert_{\mathcal{L}(\mathbb{F}_T,\mathbb{F}_{\widetilde{T}})}\leq 1$.

Moreover  $\mathcal{L}^{-1}_{\widetilde{T}}(E(f,\psi))_{\vert [0,T]}=\mathcal{L}^{-1}_T(f,\psi)$ by uniqueness
and then
\begin{align*}
\Vert \mathcal{L}^{-1}_T(f,\psi) \Vert_{\mathbb{E}_T}
&\leq \Vert \mathcal{L}^{-1}_{\widetilde{T}}(E(f,\psi)) \Vert_{\mathbb{E}_{\widetilde{T}}}\\
&\leq \Vert \mathcal{L}_{\widetilde{T}}^{-1}\Vert_{\mathcal{L}(\mathbb{F}_{\widetilde{T}},\mathbb{E}_{\widetilde{T}})}
\Vert E(f,\psi)\Vert_{\mathbb{F}_{\widetilde{T}}}\leq c(\widetilde{T})\Vert (f,\psi)\Vert_{\mathbb{F}_{T}}\,.
\end{align*}
\end{proof}

\begin{dfnz}\label{affinespaces}
We define the affine spaces
\begin{align*}
\mathbb{E}^0_T&=\{\gamma\in 
\mathbb{E}_T\,\text{such that }\,\gamma_{\vert t=0}=\varphi^0\}
\,,\\
\mathbb{F}^0_T&=C^{\frac{\alpha}{4},{^{\alpha}}}\left([0,T]\times\mathbb{S}^1;\mathbb{R}^2\right)
\times\{\varphi^0\}
\,.
\end{align*}
\end{dfnz}

In the following 
we denote by $\overline{B_M}$ the closed ball of radius $M$ and center $0$ 
in $\mathbb{E}_T$.

\begin{lemma}\label{regular}
Let $\widetilde{T}>0$, $M>0$, $c>0$ and $\varphi^0$ an admissible initial parametrization
with $\inf_{x\in\mathbb{S}^1}\vert\partial_x\varphi^0\vert\geq c$.
Then there exists $\widehat{T}=\widehat{T}(c,M)\in (0,\widetilde{T}]$ such that 
for all $T\in (0,\widehat{T}]$ every curve 
$\varphi\in \mathbb{E}^0_T\cap B_M$ is regular with 
\begin{equation}\label{inverseest}
\inf_{x\in \mathbb{S}^1}\vert \partial_x\varphi(t,x)\vert\geq\frac{c}{2}\,.
\end{equation}
Moreover for every $j\in\mathbb{N}$
\begin{equation*}
\left\lVert \frac{1}{\vert \partial_x\varphi(t,x)\vert^j}\right\rVert_{C^{\frac{\alpha}{4},\alpha}([0,T]\times[0,1])}
\leq C(c, M, j)\,.
\end{equation*}
\end{lemma}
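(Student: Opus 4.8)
The plan is to control the $C^{\frac{\alpha}{4},\alpha}$–norm of $\partial_x\varphi$ on a short time interval by using the fact that $\varphi\in\mathbb{E}^0_T\cap\overline{B_M}$ is close to its initial datum $\varphi^0$, and that $\varphi^0$ is regular. First I would observe that, since $\varphi$ has parabolic regularity $C^{\frac{4+\alpha}{4},4+\alpha}$ with norm bounded by $M$, the spatial derivative $\partial_x\varphi$ lies in $C^{\frac{3+\alpha}{4},3+\alpha}([0,T]\times\mathbb{S}^1)$ with norm $\leq M$; in particular $\partial_x\varphi$ is $\tfrac14$–Hölder in time uniformly, so
\[
\sup_{x\in\mathbb{S}^1}\bigl|\partial_x\varphi(t,x)-\partial_x\varphi(0,x)\bigr|
= \sup_{x\in\mathbb{S}^1}\bigl|\partial_x\varphi(t,x)-\partial_x\varphi^0(x)\bigr|
\leq M\, t^{1/4}
\]
for all $t\in[0,T]$, where I used $\varphi(0,\cdot)=\varphi^0$. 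Hence choosing $\widehat T=\widehat T(c,M)\in(0,\widetilde T]$ so small that $M\,\widehat T^{1/4}\leq c/2$ gives, for every $T\in(0,\widehat T]$,
\[
\inf_{x\in\mathbb{S}^1}|\partial_x\varphi(t,x)|
\geq \inf_{x\in\mathbb{S}^1}|\partial_x\varphi^0(x)| - M t^{1/4}
\geq c - \tfrac{c}{2} = \tfrac{c}{2}\,,
\]
which is exactly~\eqref{inverseest}. This also shows each $\varphi$ is regular.

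For the second estimate I would write $g(t,x):=|\partial_x\varphi(t,x)|^2=\langle\partial_x\varphi,\partial_x\varphi\rangle$. Since $\partial_x\varphi\in C^{\frac{\alpha}{4},\alpha}$ (indeed much better) with norm controlled by $M$, and $C^{\frac{\alpha}{4},\alpha}$ is a Banach algebra, $g\in C^{\frac{\alpha}{4},\alpha}$ with $\|g\|_{C^{\frac{\alpha}{4},\alpha}}\leq C(M)$; moreover by the lower bound just proved $g\geq c^2/4$ on $[0,T]\times\mathbb{S}^1$. Now the function $u\mapsto u^{-j/2}$ is smooth (hence locally Lipschitz) on the interval $[c^2/4,\infty)$, so composition with $g$ preserves the Hölder class: one has the elementary estimate $\|F\circ g\|_{C^{\frac{\alpha}{4},\alpha}}\leq C\bigl(\|F\|_{C^1([c^2/4,\|g\|_\infty])}\bigr)\bigl(1+\|g\|_{C^{\frac{\alpha}{4},\alpha}}\bigr)$ for the seminorms (Lipschitz bound for the seminorm, $\sup$ bound for the $C^0$ part). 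Applying this with $F(u)=u^{-j/2}$ yields
\[
\left\| \frac{1}{|\partial_x\varphi|^j}\right\|_{C^{\frac{\alpha}{4},\alpha}([0,T]\times\mathbb{S}^1)}
= \|\, g^{-j/2}\,\|_{C^{\frac{\alpha}{4},\alpha}} \leq C(c,M,j)\,,
\]
as claimed, with a constant depending only on $c$, $M$ and $j$ (the $\|g\|_\infty\leq M^2$ bound and $g\geq c^2/4$ absorb everything).

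The only mildly delicate point is making the composition/product estimates in the \emph{parabolic} Hölder seminorm precise — i.e. checking that $C^{\frac{\alpha}{4},\alpha}([0,T]\times\mathbb{S}^1)$ is a Banach algebra and is stable under composition with $C^1$ functions bounded away from the singularity, with constants that do not blow up as $T\to 0$. This is standard (it reduces to the one–variable Hölder inequalities $[fg]\leq \|f\|_\infty[g]+[f]\|g\|_\infty$ and $[F\circ g]\leq \mathrm{Lip}(F)[g]$ applied separately in the time and space variables, using that the $C^0$ norms are uniformly bounded by $M$), so I would simply state it and refer to the structure of parabolic Hölder spaces recalled earlier in the paper. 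The genuinely geometric input — that a short time keeps $|\partial_x\varphi|$ uniformly positive — is the time–Hölder continuity of $\partial_x\varphi$ together with $\varphi(0,\cdot)=\varphi^0$, and that is the heart of the argument.
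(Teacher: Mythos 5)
Your proof is correct and follows essentially the same route as the paper: a time--H\"older estimate on $\partial_x\varphi$ (the paper uses the sharper exponent $\beta=\tfrac{3+\alpha}{4}$ from the $\mathbb{E}_T$--norm, where you use $\tfrac14$, which is equivalent after taking $T\le 1$) gives the lower bound $c/2$, and the reciprocal estimate then follows from standard algebra/composition properties of the parabolic H\"older norm, exactly as the paper does via the definition of the norm for $j=1$ and multiplicativity for $j\ge 2$.
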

\begin{proof}
We have
$$
\vert \partial_x\varphi(t,x)\vert\geq \vert\partial_x \varphi^0(x)\vert -\vert \partial_x\varphi(t,x)-\partial_x \varphi^0(x)\vert\,,
$$
with $\vert \partial_x\varphi(t,x)-\partial_x \varphi^0(x)\vert\leq \left[\varphi\right] _{\beta,0}t^\beta
\leq Mt^\beta$ with $\beta=\frac{3}{4}+\frac{\alpha}{4}$.
Taking $\widehat{T}$ sufficiently small, passing to the infimum we get the first claim.
As a consequence 
\begin{equation}\label{lowbound}
\sup_{x\in[0,1]}\frac{1}{\vert \partial_x\varphi(t,x)\vert}\leq \frac{2}{c}\,.
\end{equation}
Then  for $j=1$ the second estimate follows directly combining
 the estimate~\eqref{lowbound} with
 the definition of the norm 
$\Vert\cdot\Vert_{C^{\frac{\alpha}{4},\alpha}([0,T]\times\mathbb{S}^1)}$.
The case $j\geq 2$ follows from multiplicativity of the norm. 
\end{proof}

Form now on we fix $\widetilde{T}=1$
and we denote by $\hat{T}=\hat{T}(c,M)$
the time given by Lemma~\ref{regular} for given $c$ and $M$.

\begin{dfnz}
For every $T\in (0,\hat{T}]$
we define the map 
\begin{align*}
N_{T}:
\begin{cases}
\mathbb{E}^0_T \to C  ^{\frac\alpha4,{^{\alpha}}}([0,T]\times\mathbb{S}^1;\mathbb{R}^2)\\
\varphi \mapsto
f(\varphi),
\end{cases}
\end{align*}
where the functions $f(\varphi):=f(\partial_x^4\varphi,\partial_x^3\varphi,\partial_x^2\varphi,\partial_x\varphi)$ is defined in~\eqref{definizione-f}.
Moreover we introduce 
the map $\mathcal{N}_T$
given by $\mathbb{E}^0_T \ni  \gamma\,\mapsto 
(N_{T}(\gamma),\gamma\vert_{t=0})$.
\end{dfnz}

\begin{rem}
We remind that $f$ is given by 
\begin{align*}
f(\varphi)=&\left(\frac{2}{\vert\partial_x \varphi^0\vert^4} -\frac{2}{\vert\partial_x\varphi\vert^4}\right)\partial_x^4\varphi+12\frac{\partial_x^3\varphi\left\langle \partial_x^2\varphi,\partial_x\varphi\right\rangle }{\left|\partial_x\varphi\right|^{6}}
+5\frac{\partial_x^2\varphi\left|\partial_x^2\varphi\right|^{2}}{\left|\partial_x\varphi\right|^{6}}\\
&+8\frac{\partial_x^2\varphi\left\langle \partial_x^3\varphi,\partial_x\varphi\right\rangle }
{\left|\partial_x\varphi\right|^{6}}
-35\frac{\partial_x^2\varphi\left\langle \partial_x^2\varphi,\partial_x\varphi\right\rangle ^{2}}
{\left|\partial_x\varphi\right|^{8}}+\mu\frac{\partial_x^2\varphi}{\vert\partial_x\varphi\vert^2}\,.
\end{align*}
By Lemma~\ref{regular}, for $\varphi \in \mathbb{E}^0_T$, we have that for all $t\in[0,T]$
the map $\varphi(t)$ is a regular curve. 
Hence $N_{T}$ is well--defined.
Furthermore we notice that 
the map $\mathcal{N}_t$
is a mapping from $\mathbb{E}^0_T$ to $\mathbb{F}^0_T$.
\end{rem}

The following lemma is a classical result
on parabolic H\"{o}lder spaces. 
For a proof see for instance~\cite{LunardiSemigroups2013}.

\begin{lemma}\label{notproved}
Let $k\in\{1,2,3\}$, $T\in [0,1]$ and $\varphi,\widetilde{\varphi}\in\mathbb{E}_T^0$. We denote by 
$\varphi^{(4-k)}, \widetilde{\varphi}^{(4-k)}$ the 
$(4-k)$--th  space derivative of $\varphi$ and $\widetilde{\varphi}$, respectively.
Then there exist $\varepsilon>0$ and a constant $\widetilde{C}$ independent of $T$ such that
$$
\left\lVert
\varphi^{(4-k)}-\widetilde{\varphi}^{(4-k)}
\right\rVert_{C^{\frac{\alpha}{4},\alpha}}
\leq 
\widetilde{C}T^{\varepsilon}\left\lVert
\varphi^{(4-k)}-\widetilde{\varphi}^{(4-k)}
\right\rVert_{C^{\frac{k+\alpha}{4},k+\alpha}}
\leq \widetilde{C}T^{\varepsilon}\left\lVert \varphi-\widetilde{\varphi}
\right\rVert_{\mathbb{E}_T}\,.
$$
\end{lemma}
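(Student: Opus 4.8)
The plan is to reduce the statement to a property of a single function vanishing at the initial time. Put $v:=\varphi-\widetilde{\varphi}$ and $w:=\partial_x^{4-k}v=\varphi^{(4-k)}-\widetilde{\varphi}^{(4-k)}$; since $\varphi,\widetilde{\varphi}\in\mathbb{E}_T^0$ have the common initial datum $\varphi^0$, we get $v(0,\cdot)\equiv 0$, hence $w(0,\cdot)\equiv 0$ and therefore also $\partial_x w(0,\cdot)\equiv 0$. The rightmost inequality is then essentially definitional: for every pair $(i,j)$ with $4i+j\le k$ (resp.\ $=k$, resp.\ $0<k+\alpha-4i-j<4$) the term $\|\partial_t^i\partial_x^j w\|_\infty$ (resp.\ $[\partial_t^i\partial_x^j w]_{0,\alpha}$, resp.\ $[\partial_t^i\partial_x^j w]_{\frac{k+\alpha-4i-j}{4},0}$) in the norm of $C^{\frac{k+\alpha}{4},k+\alpha}$ equals $\|\partial_t^i\partial_x^{j+4-k}v\|_\infty$ (resp.\ the corresponding seminorm of $v$), and the constraint on $(i,j)$ becomes $4i+(j+4-k)\le 4$ (resp.\ $=4$, resp.\ $0<(4+\alpha)-4i-(j+4-k)<4$), which is exactly what makes that term part of $\|v\|_{\mathbb{E}_T}=\|v\|_{C^{\frac{4+\alpha}{4},4+\alpha}}$. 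Summing term by term gives $\|w\|_{C^{\frac{k+\alpha}{4},k+\alpha}}\le\|v\|_{\mathbb{E}_T}$.

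For the first inequality I would estimate the three pieces of $\|w\|_{C^{\frac{\alpha}{4},\alpha}}=\|w\|_\infty+[w]_{\frac{\alpha}{4},0}+[w]_{0,\alpha}$ using $w(0,\cdot)\equiv 0$ together with the higher time regularity of $w$ and of $\partial_x w$ that is encoded in $\|w\|_{C^{\frac{k+\alpha}{4},k+\alpha}}$; here the hypothesis $k\ge 1$ is used, so that $\partial_x w$ is defined, the seminorm $[\partial_x w]_{\frac{k-1+\alpha}{4},0}$ is one of the quantities controlled by that norm, and $\tfrac{k+\alpha}{4},\tfrac{k-1+\alpha}{4}\in(0,1)$. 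Concretely, by the fundamental theorem of calculus in $t$: $\|w(t,\cdot)\|_\infty=\|w(t,\cdot)-w(0,\cdot)\|_\infty\le [w]_{\frac{k+\alpha}{4},0}\,t^{\frac{k+\alpha}{4}}$; next $\tfrac{|w(t,x)-w(\tau,x)|}{|t-\tau|^{\alpha/4}}\le [w]_{\frac{k+\alpha}{4},0}\,|t-\tau|^{\frac{k}{4}}$; and, since the domain $\mathbb{S}^1$ has finite diameter $D$, $\tfrac{|w(t,x)-w(t,y)|}{|x-y|^{\alpha}}\le \|\partial_x w(t,\cdot)\|_\infty\,|x-y|^{1-\alpha}\le D^{1-\alpha}\,[\partial_x w]_{\frac{k-1+\alpha}{4},0}\,t^{\frac{k-1+\alpha}{4}}$, where I used $\partial_x w(0,\cdot)\equiv 0$ in the last step. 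Since $T\in[0,1]$, each right-hand side is bounded by a $T$-independent multiple of $T^{\varepsilon}\,\|w\|_{C^{\frac{k+\alpha}{4},k+\alpha}}$ with $\varepsilon:=\tfrac{k-1+\alpha}{4}>0$, the smallest of the three gained exponents $\tfrac{k+\alpha}{4},\tfrac{k}{4},\tfrac{k-1+\alpha}{4}$. Adding the three bounds and chaining with the rightmost inequality yields the assertion, with $\widetilde{C}$ depending only on $\alpha,k,D$.

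The only point requiring care — more a bookkeeping matter than a genuine difficulty — is checking that the intermediate parabolic seminorms invoked above, in particular $[\partial_x w]_{\frac{k-1+\alpha}{4},0}$, really occur in the definition of $\|\cdot\|_{C^{\frac{k+\alpha}{4},k+\alpha}}$; this is precisely where $k\ge 1$ enters, since for $k=0$ the space-Hölder piece cannot be improved and no positive power of $T$ survives. Alternatively, one may bypass these computations and simply invoke the standard embedding and interpolation theory of anisotropic parabolic Hölder spaces, e.g.\ \cite{LunardiSemigroups2013} or \cite[\S 11, \S 13]{solonnikov2}.
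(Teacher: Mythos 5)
Your proof is correct. The paper does not actually prove this lemma: it labels it a ``classical result on parabolic H\"older spaces'' and refers to \cite{LunardiSemigroups2013}, so your self-contained argument fills a gap rather than duplicating anything. The two key observations you make are exactly the right ones: (i) the chained (rightmost) inequality is purely definitional, since each term of $\lVert \partial_x^{4-k}v\rVert_{C^{\frac{k+\alpha}{4},k+\alpha}}$ reappears, after the index shift $j\mapsto j+4-k$, as a term of $\lVert v\rVert_{\mathbb{E}_T}$; and (ii) the gain of $T^\varepsilon$ comes entirely from $v(0,\cdot)\equiv 0$, which forces $w(0,\cdot)=\partial_x w(0,\cdot)\equiv 0$ and lets you trade the time-H\"older seminorms $[w]_{\frac{k+\alpha}{4},0}$ and $[\partial_x w]_{\frac{k-1+\alpha}{4},0}$ (both of which you correctly verify to be among the seminorms defining $\lVert\cdot\rVert_{C^{\frac{k+\alpha}{4},k+\alpha}}$, precisely because $k\ge1$) for positive powers of $T$. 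Your exponents $\tfrac{k+\alpha}{4}$, $\tfrac{k}{4}$, $\tfrac{k-1+\alpha}{4}$ are all correct, the smallest is $\varepsilon=\tfrac{k-1+\alpha}{4}>0$, and since $T\le 1$ all three bounds reduce to $T^\varepsilon$ times the larger norm. Two cosmetic remarks only: the first display is an application of the definition of $[\,\cdot\,]_{\frac{k+\alpha}{4},0}$ with $\tau=0$ rather than of the fundamental theorem of calculus, and on $\mathbb{S}^1$ the mean-value step $|w(t,x)-w(t,y)|\le \lVert\partial_x w(t,\cdot)\rVert_\infty|x-y|$ should be read with $|x-y|$ the intrinsic distance; neither affects the argument.
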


\begin{dfnz}
Let $\varphi^0$ be an admissible initial parametrization,
$c:=\inf_{x\in \mathbb{S}^1}\vert\partial_x \varphi^0\vert$.
For a positive $M$ and 
a time $T\in (0,\widehat{T}(c,M)]$ we define 
$\mathcal{K}_T:\mathbb{E}^0_T\cap\overline{B_M}\to\mathbb{E}^0_T$ by 
$$\mathcal{K}_T:=\mathcal{L}_T^{-1}\circ \mathcal{N}_T\,.$$
\end{dfnz}

\begin{prop}\label{contraction}
Let $\varphi^0$ be an admissible initial parametrization,
$c:=\inf_{x\in \mathbb{S}^1}\vert\partial_x \varphi^0\vert$.
Then there exists a positive radius
$M(\varphi^0)>\Vert \varphi^0\Vert_{C^{4+\alpha}}$
and a time $\overline{T}(c,M)$ such that
for all $T\in (0,\overline{T}]$ the map
$\mathcal{K}_T:\mathbb{E}^0_T\cap\overline{B_M}\to\mathbb{E}^0_T$ takes values in $\mathbb{E}^0_T\cap\overline{B_M}$ and it is a contraction.
\end{prop}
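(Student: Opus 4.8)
The plan is a standard contraction-mapping argument, exploiting that $\mathcal{K}_T=\mathcal{L}_T^{-1}\circ\mathcal{N}_T$ is built from the linear isomorphism of Theorem~\ref{linearexistence} (with the $T$-uniform bound on $\|\mathcal{L}_T^{-1}\|$ from Lemma~\ref{uniformestimateestension}) composed with the nonlinearity $\mathcal{N}_T$, whose ``size'' and ``Lipschitz constant'' on a ball carry a small factor $T^\varepsilon$ thanks to Lemma~\ref{notproved}. First I would fix $M$: since $\varphi^0$ is an admissible initial parametrization, the constant function $t\mapsto\varphi^0$ (or rather the reference solution $\mathcal{L}_T^{-1}\mathcal{N}_T(\varphi^0)$ restricted appropriately) has $\mathbb{E}_T$-norm controlled by $\|\varphi^0\|_{C^{4+\alpha}}$ and by $\|f(\varphi^0)\|_{C^{\alpha/4,\alpha}}$, the latter being a fixed number $C_0=C_0(\varphi^0,c)$ once we know $\inf|\partial_x\varphi^0|\ge c$ and $\|\varphi^0\|_{C^{4+\alpha}}=R$. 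So I set $M:=2c(\widetilde T)\big(C_0+R\big)+\|\varphi^0\|_{C^{4+\alpha}}+1$, say, which is strictly larger than $\|\varphi^0\|_{C^{4+\alpha}}$ as required, and then work with $T\le\widehat T(c,M)$ so that Lemma~\ref{regular} applies and every $\varphi\in\mathbb{E}^0_T\cap\overline{B_M}$ is regular with $\inf|\partial_x\varphi|\ge c/2$ and with all the negative powers $\||\partial_x\varphi|^{-j}\|_{C^{\alpha/4,\alpha}}$ bounded by $C(c,M,j)$.

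Next I would estimate $\mathcal{N}_T(\varphi)-\mathcal{N}_T(\widetilde\varphi)$ for $\varphi,\widetilde\varphi\in\mathbb{E}^0_T\cap\overline{B_M}$. Since both have the same trace $\varphi^0$ at $t=0$, the second component of the difference vanishes, so only $N_T(\varphi)-N_T(\widetilde\varphi)=f(\varphi)-f(\widetilde\varphi)$ matters. The function $f$ is, term by term (see the Remark after the definition of $N_T$), a sum of products of space derivatives $\partial_x^j\varphi$ with $j\le 4$ and of smooth (rational, with nonvanishing denominators by Lemma~\ref{regular}) functions of $\partial_x\varphi$. Writing each difference telescopically and using that on $\overline{B_M}$ all factors of order $\le 3$ are bounded in $C^{\alpha/4,\alpha}$ by constants depending only on $c,M$, while the top-order factor $\partial_x^4\varphi$ multiplies the coefficient $\big(|\partial_x\varphi^0|^{-4}-|\partial_x\varphi|^{-4}\big)$ which is small — it vanishes at $t=0$ and is $C^{\alpha/4,\alpha}$-controlled by $\|\varphi-\varphi^0\|$, hence by $M$, with an extra $T^\varepsilon$ gain — one reduces everything to differences of derivatives of order $\le 3$ of $\varphi$ and $\widetilde\varphi$. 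Applying Lemma~\ref{notproved} to each such difference produces
\[
\|\mathcal{N}_T(\varphi)-\mathcal{N}_T(\widetilde\varphi)\|_{\mathbb{F}_T}\le C(c,M)\,T^\varepsilon\,\|\varphi-\widetilde\varphi\|_{\mathbb{E}_T}\,,
\]
and, taking $\widetilde\varphi$ to be (the $\mathbb{E}^0_T$-representative of) $\varphi^0$ together with the already-recorded bound on $\mathcal{N}_T(\varphi^0)$, also
\[
\|\mathcal{N}_T(\varphi)\|_{\mathbb{F}_T}\le C(c,M)\,T^\varepsilon\,M+C_0+R\,.
\]

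Finally I would compose with $\mathcal{L}_T^{-1}$. By Lemma~\ref{uniformestimateestension} there is $c_1=c(\widetilde T)$ with $\|\mathcal{L}_T^{-1}\|_{\mathcal{L}(\mathbb{F}_T,\mathbb{E}_T)}\le c_1$ for all $T\le\frac12\widetilde T$; note $\mathcal{L}_T^{-1}$ maps $\mathbb{F}^0_T$ into $\mathbb{E}^0_T$ because the second component $\varphi^0$ is exactly the prescribed initial datum, so $\mathcal{K}_T$ indeed lands in $\mathbb{E}^0_T$. Then
\[
\|\mathcal{K}_T(\varphi)\|_{\mathbb{E}_T}\le c_1\big(C(c,M)T^\varepsilon M+C_0+R\big)\,,\qquad
\|\mathcal{K}_T(\varphi)-\mathcal{K}_T(\widetilde\varphi)\|_{\mathbb{E}_T}\le c_1 C(c,M)T^\varepsilon\|\varphi-\widetilde\varphi\|_{\mathbb{E}_T}\,.
\]
With $M$ chosen as above so that $c_1(C_0+R)\le M/2$, I pick $\overline T=\overline T(c,M)\in(0,\widehat T]$ small enough that $c_1 C(c,M)\overline T^\varepsilon\le\tfrac12$ and $c_1 C(c,M)\overline T^\varepsilon M\le M/2$; then for all $T\le\overline T$ the map $\mathcal{K}_T$ sends $\mathbb{E}^0_T\cap\overline{B_M}$ into itself and is a contraction with constant $\le\tfrac12$. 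I expect the main obstacle to be the bookkeeping in the nonlinear estimate: one must check that \emph{every} monomial of $f$ — in particular the genuinely top-order term where $\partial_x^4\varphi$ appears — can be rearranged so that the difference is always taken in a factor of order $\le 3$ (the order-$4$ factor only ever multiplying a coefficient that is itself small and difference-able in lower order), so that Lemma~\ref{notproved} is applicable and the crucial $T^\varepsilon$ smallness is actually extracted; the rest is routine algebra with the bounds supplied by Lemma~\ref{regular}.
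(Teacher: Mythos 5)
Your proposal is correct and follows essentially the same route as the paper: the same decomposition of the top-order term (so that $\partial_x^4\varphi-\partial_x^4\widetilde\varphi$ only ever multiplies the coefficient $|\partial_x\varphi^0|^{-4}-|\partial_x\varphi|^{-4}$, which vanishes at $t=0$ and yields the $T^\varepsilon$ gain via Lemma~\ref{notproved}), the same use of Lemma~\ref{regular} for the negative powers and of Lemma~\ref{uniformestimateestension} for the $T$-uniform bound on $\mathcal{L}_T^{-1}$, and the same device of comparing with the constant-in-time extension of $\varphi^0$ to fix $M$ before shrinking $T$. Nothing essential is missing.
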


In the following proof constants may vary from line to line and depend on $c$, $M$ and $\Vert \varphi^0\Vert_{C^{4+\alpha}}$.

\begin{proof}
Let $M>0$ and $\widetilde{T}>0$ be arbitrary positive numbers. Let $\widehat{T}(c,M)$ be given by~\Cref{regular} and assume without loss of generality that $\widehat{T}(c,M)<\tfrac12 \widetilde{T}$. 
Let $T\in (0,\widehat{T}(c,M)]$ be a generic time.

Clearly $\mathcal{L}_T^{-1}(\mathbb{F}_T^0)\subseteq \mathbb{E}^0_T$ and 
the $\mathcal{K}_T$ is well defined on
$\mathbb{E}^0_T\cap\overline{B_M}$.

First we show that there exists a time
 $T'\in (0,\widehat{T}(c,M))$ 
 such that for all $T\in (0,T']$,
 for every $\varphi,\widetilde{\varphi} \in \mathbb{E}^0_T\cap\overline{B_M}$, it holds
 \begin{equation}\label{contraz}
     \Vert \mathcal{K}_T(\varphi)-
     \mathcal{K}_T(\widetilde{\varphi})\Vert_{\mathbb{E}_T}
     \leq \frac12\Vert \varphi-
     \widetilde{\varphi}\Vert_{\mathbb{E}_T}\,.
 \end{equation}
 We  begin by estimating
\begin{align*}
&\Vert N_{T}(\varphi)-N_{T}(\widetilde{\varphi})\Vert_{C^{\frac{\alpha}{4},\alpha}} =
\Vert f(\varphi)-f(\widetilde{\varphi})\Vert_{C^{\frac{\alpha}{4},\alpha}}\,.
\end{align*}
The highest order term in the above norm is
\begin{equation}\label{highestord}
\begin{split}
&\left(\frac{2}{\vert\partial_x \varphi^0\vert^4} -\frac{2}{\vert\partial_x\varphi\vert^4}\right)
\partial_x^4\varphi+
\left(\frac{2}{\vert\partial_x \widetilde{\varphi}\vert^4} -\frac{2}{\vert\partial_x \varphi^0\vert^4}\right)
\partial_x^4 \widetilde{\varphi} \\
&=  \left(\frac{2}{\vert\partial_x \varphi^0\vert^4} -\frac{2}{\vert\partial_x\varphi\vert^4}\right)
\left(\partial_x^4\varphi-\partial_x^4 \widetilde{\varphi}\right)+
\left(\frac{2}{\vert\partial_x \widetilde{\varphi}\vert^4} -\frac{2}{\vert\partial_x\varphi\vert^4}\right)
\partial_x^4 \widetilde{\varphi}
\end{split}
\end{equation}
We can rewrite the above expression using  the identity
\begin{equation}\label{identitadenominatore}
\frac{1}{\vert a \vert^4} -\frac{1}{\vert b \vert^4}=
\left(\vert b \vert-\vert a \vert \right)
\left(\frac{1}{\vert a \vert^2\vert b \vert}
+\frac{1}{\vert a \vert\vert b \vert^2}\right)
\left( \frac{1}{\vert a \vert^2}
+\frac{1}{\vert b \vert^2}\right)\,.
\end{equation}
We get
\begin{equation*}
\left(\frac{2}{\vert\partial_x \varphi^0\vert^4} -\frac{2}{\vert\partial_x\varphi\vert^4}\right)  =  
\left(\vert \partial_x\varphi \vert-\vert \partial_x \varphi^0 \vert \right)
\left(\frac{1}{\vert \partial_x \varphi^0 \vert^2\vert \partial_x\varphi \vert}
+\frac{1}{\vert \partial_x \varphi^0 \vert\vert \partial_x\varphi \vert^2}\right)
\left( \frac{1}{\vert \partial_x \varphi^0 \vert^2}
+\frac{1}{\vert \partial_x\varphi \vert^2}\right)\,.
\end{equation*}
In order to control $\left(\frac{1}{\vert \partial_x \varphi^0 \vert^2\vert \partial_x\varphi \vert}
+\frac{1}{\vert \partial_x \varphi^0 \vert\vert \partial_x\varphi \vert^2}\right)
\left( \frac{1}{\vert \partial_x \varphi^0 \vert^2}
+\frac{1}{\vert \partial_x\varphi \vert^2}\right)$ 
we  use Lemma~\ref{regular}. 
Now we identify $\varphi^0$ with its constant in time extension
$\psi^0(t,x):=\varphi^0(x)$, which belongs to $\mathbb{E}_T^0$
for arbitrary $T$.  Observe that $\|\psi^0\|_{\mathbb{E}_T} = \|\psi^0\|_{C^{\frac{4+\alpha}{4},4+\alpha}} = \|\varphi^0\|_{C^{4+\alpha}}$ is independent of $T$.
Then making use of 
Lemma~\ref{notproved} we obtain 
\begin{align*}
\left\lVert
\vert\partial_x\varphi\vert -\vert\partial_x\psi^0\vert
\right\rVert_{C^{\alpha,\frac{\alpha}{4}}}
\leq
\left\lVert
\partial_x\varphi -\partial_x\psi^0
\right\rVert_{C^{\alpha,\frac{\alpha}{4}}}
\leq 
C T^{\varepsilon}
\Vert \varphi-\psi^0\Vert_{\mathbb{E}_T}
\leq CM T^{\varepsilon}\,.
\end{align*}
Then 
\begin{align*}
\left\lVert
\left(\frac{2}{\vert\partial_x \varphi^0\vert^4} -\frac{2}{\vert\partial_x\varphi\vert^4}\right)
\left(\partial_x^4\varphi
-\partial_x^4 \widetilde{\varphi}\right)
\right\rVert_{C^{\alpha,\frac{\alpha}{4}}}
\leq
C MT^{\varepsilon}
\Vert \varphi-\widetilde{\varphi}\Vert_{\mathbb{E}_T}\,.
\end{align*}
Similarly we obtain
allows us to write
\begin{align}\label{fractionestimate}
\left\lVert
\left(\frac{2}{\vert\partial_x \widetilde{\varphi}\vert^4} -\frac{2}{\vert\partial_x\varphi\vert^4}\right)
\partial_x^4 \widetilde{\varphi}
\right\rVert_{C^{\alpha,\frac{\alpha}{4}}}
\leq
C MT^{\varepsilon}
\Vert \varphi-\widetilde{\varphi}\Vert_{\mathbb{E}_T}\,.
\end{align}

The lower order terms of 
$f(\varphi)-f(\widetilde{\varphi})$ 
are of the  form
\begin{equation}\label{shape}
\frac{a\left\langle b,c\right\rangle }{\vert d\vert^j}
-\frac{\tilde{a}\langle \tilde{b},\tilde{c}\rangle }{\vert\tilde{d}\vert^{j}}\,,
\end{equation}
with $j\in\{2,6,8\}$ 
and with $a,b,c,d,\tilde{a},\tilde{b},\tilde{c},\tilde{d}$ space derivatives up to order three
of $\varphi$ and $\widetilde{\varphi}$, respectively.
Adding and subtracting the expression
$$
\frac{\tilde{a}\left\langle b,c\right\rangle }{\vert d\vert^j}+\frac{\tilde{a}\langle \tilde{b},c\rangle }{\vert d\vert^j}+
\frac{\tilde{a}\langle \tilde{b},\tilde{c}\rangle }{\vert d\vert^j}
$$
to~\eqref{shape},
we get 
\begin{equation}\label{lowerorder}
\frac{(a-\tilde{a})\left\langle b,c\right\rangle }{\vert d\vert^j}+
\frac{\tilde{a}\left\langle (b-\tilde{b}),c\right\rangle }{\vert d\vert^j}
+\frac{\tilde{a}\left\langle \tilde{b},(c-\tilde{c})\right\rangle }{\vert d\vert^j}
+\left(\frac{1}{\vert d\vert^j}-\frac{1}{\vert \tilde{d}\vert^j}\right)
\tilde{a}\left\langle \tilde{b},\tilde{c}\right\rangle\,.
\end{equation}
With the help of Lemma~\ref{notproved} we can estimate the first term of~\eqref{lowerorder}
in the following way:
\begin{align*}
\left\lVert \frac{(a-\tilde{a})\left\langle b,c\right\rangle }{\vert d\vert^j}
\right\rVert_{C^{\frac{\alpha}{4},\alpha}}
\leq C\Vert a-\tilde{a} \Vert_{C^{\frac{\alpha}{4},\alpha}}
\leq CT^{\varepsilon}\Vert \varphi-\widetilde{\varphi}\Vert_{\mathbb{E}_T}\,.
\end{align*}
The second and the third term of~\eqref{lowerorder}  
can be estimated similarly by Cauchy-Schwarz inequality.
To obtain the desired estimate for the last term 
of~\eqref{lowerorder} we proceed in a similar way
as for the second term of~\eqref{highestord}.
We use the identities
\begin{align*}
\frac{1}{\vert d \vert^2} -\frac{1}{\vert \tilde{d} \vert^2}=&
\left(\vert \tilde{d} \vert-\vert d \vert \right)
\left(\frac{1}{\vert d \vert^2\vert \tilde{d} \vert}
+\frac{1}{\vert d \vert\vert \tilde{d} \vert^2}\right)\,,\\
\frac{1}{\vert d \vert^j} -\frac{1}{\vert \tilde{d} \vert^j}=&
\left(\vert \tilde{d} \vert-\vert d \vert \right)
\left(\frac{1}{\vert d \vert^2\vert \tilde{d} \vert}
+\frac{1}{\vert d \vert\vert \tilde{d} \vert^2}\right)
\left( \frac{1}{\vert d \vert^2}
+\frac{1}{\vert \tilde{d} \vert^2}\right)
\left( \frac{1}{\vert d \vert^{j-4}}
+\frac{1}{\vert \tilde{d} \vert^{j-4}}\right)
\,,
\end{align*}
for $j\in\{6,8\}$ and Lemma~\ref{regular} and~\ref{notproved} and we finally get
\begin{equation*}
 \left\lVert
\left(\frac{1}{\vert d\vert^j}-\frac{1}{\vert \tilde{d}\vert^j}\right)
\tilde{a}\left\langle \tilde{b},\tilde{c}\right\rangle\right\rVert_{C^{\frac{\alpha}{4},\alpha}}
\leq C T^{\varepsilon}\Vert d-\tilde{d}\Vert_{C^{\frac{\alpha}{4},\alpha}}
\leq C T^{\varepsilon}\Vert \varphi-\widetilde{\varphi}\Vert_{\mathbb{E}_T}\,.   
\end{equation*}
Putting the above inequalities together we have
\begin{equation*}
    \Vert f(\varphi)-f(\widetilde{\varphi})\Vert_{C^{\frac{\alpha}{4}},\alpha}\leq 
C T^{\varepsilon}\Vert \varphi-\widetilde{\varphi}\Vert_{\mathbb{E}_T}\,.
\end{equation*}
By~\Cref{uniformestimateestension}, this implies that for all $T\in (0,\widehat{T}(M,c)]$
\begin{equation}\label{eq:StimaKIntermedia}
\begin{split}
\Vert \mathcal{K}_T(\varphi)
-\mathcal{K}_T(\widetilde{\varphi})\Vert_{\mathbb{E}_T}
&=\Vert \mathcal{L}^{-1}_T(\mathcal{N}_T(\varphi))
-\mathcal{L}^{-1}(\mathcal{N}_T(\widetilde{\varphi}))\Vert_{\mathbb{E}_T}\\
&\leq \sup_{T\in[0,\widehat{T}]}\Vert \mathcal{L}^{-1}_T\Vert_{\mathcal{L}(\mathbb{F}_T,\mathbb{E}_T)}
\Vert \mathcal{N}_T(\varphi)-\mathcal{N}_T(\widetilde{
\varphi})\Vert_{\mathbb{F}_T}\\
&\leq C(M,c,\widetilde{T})T^\varepsilon\Vert \varphi-\widetilde{\varphi}\Vert_{\mathbb{E}_T}\,,
\end{split}
\end{equation}
with $0<\varepsilon<1$. Choosing $T'$ small enough we can conclude that
for every $T\in (0,T']$ 
the inequality~\eqref{contraz} holds.

In order to conclude the proof it remains to show that
we can choose $M$ sufficiently big so that 
$\mathcal{K}_T$ maps 
$\mathbb{E}^0_T\cap\overline{B_M}$
into itself.

As before we identify $\varphi^0(x)$
with its constant in time extension $\psi^0(t,x)$.
Notice that the expressions $\mathcal{K}_T(\psi^0)$
and $\mathcal{N}_T(\psi^0)$ are then well defined.

As $M$ is an arbitrary positive constant, let us choose $M$ at the beginning, depending on $\varphi^0$ and $\widetilde{T}$ only, so that
\[
\|\psi^0\|_{\mathbb{E}_T} = \|\varphi^0\|_{C^{4+\alpha}} < \frac{M}{2} 
\qquad \forall\,T>0\,,
\]
and
\[
\begin{split}
\Vert \mathcal{K}_T(\psi^0)\Vert_{\mathbb{E}_T}
&\le \sup_{T\in[0,\widetilde{T}/2-\delta]} \Vert \mathcal{L}_T^{-1}\Vert_{\mathcal{L}(\mathbb{F}_T,\mathbb{E}_T)}\Vert \mathcal{N}_T(\psi^0)\Vert_{\mathbb{F}_T} \\
&= \sup_{T\in[0,\widetilde{T}/2-\delta]} \Vert \mathcal{L}_T^{-1}\Vert_{\mathcal{L}(\mathbb{F}_T,\mathbb{E}_T)}\Vert (f(\varphi^0),\varphi^0))\Vert_{\mathbb{F}_T} \\
&\le c(\widetilde{T}) C(\varphi^0) \\
& < \frac{M}{2} 
\qquad \forall\,\delta>0\,,
\end{split}
\]
where we used that $\Vert (f(\varphi^0),\varphi^0))\Vert_{\mathbb{F}_T}$ is time independent and then estimated by a constant $C(\varphi^0)$ depending only on $\varphi^0$ and we also used~\Cref{uniformestimateestension}. For $T\in(0,T']$, as $T'\le \widehat{T}(c,M) \le \tfrac12\widetilde{T}-\delta$ for some positive $\delta$, we also have
\[
\begin{split}
\Vert \mathcal{K}_T(\varphi)\Vert_{\mathbb{E}_T} 
&\le \Vert \mathcal{K}_T(\psi^0)\Vert_{\mathbb{E}_T} + \Vert \mathcal{K}_T(\varphi) - \mathcal{K}_T(\psi^0)\Vert_{\mathbb{E}_T} \\
&< \frac{M}{2} + C(M,c,\widetilde{T})T^\varepsilon 2M\,,
\end{split}
\]
for any $\varphi\in \mathbb{E}^0_T\cap \overline{B_M}$, where we used~\eqref{eq:StimaKIntermedia}. It follows that by taking $T\le T'$ sufficiently small, we have that $\mathcal{K}_T:\mathbb{E}^0_T\cap \overline{B_M} \to \mathbb{E}^0_T\cap \overline{B_M}$ and it is a contraction.
\end{proof}

\begin{teo}\label{existenceanalyticprob}
Let $\varphi^0$ be an admissible initial parametrization. There exists a positive radius $M$ and a positive time $T$ such that the special flow~\eqref{specialflow} of closed
curves  has a unique solution in $C^{\frac{4+\alpha}{4},4+\alpha}\left([0,T]\times\mathbb{S}^1\right)\cap \overline{B_M}$.	
\end{teo}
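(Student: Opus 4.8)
The plan is to produce the solution of~\eqref{specialflow} as the unique fixed point of the map $\mathcal{K}_T=\mathcal{L}_T^{-1}\circ\mathcal{N}_T$ and then invoke the Banach fixed point theorem. In fact, by this point essentially all the analytic work has been carried out: the existence, uniqueness and uniform invertibility of the linearized problem are in~\Cref{linearexistence} and~\Cref{uniformestimateestension}, and the crucial self--mapping and contraction properties of $\mathcal{K}_T$ are the content of~\Cref{contraction}. What is left is largely bookkeeping.

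First I would fix the admissible initial parametrization $\varphi^0$ and set $c:=\inf_{x\in\mathbb{S}^1}|\partial_x\varphi^0|>0$. By~\Cref{contraction} there exist a radius $M=M(\varphi^0)>\|\varphi^0\|_{C^{4+\alpha}}$ and a time $\overline{T}=\overline{T}(c,M)>0$ such that for every $T\in(0,\overline{T}]$ the map $\mathcal{K}_T$ sends $\mathbb{E}^0_T\cap\overline{B_M}$ into itself and is a contraction there. Now $\mathbb{E}^0_T\cap\overline{B_M}$ is the intersection of the closed affine subspace $\mathbb{E}^0_T=\{\gamma\in\mathbb{E}_T:\gamma|_{t=0}=\varphi^0\}$ with the closed ball $\overline{B_M}\subset\mathbb{E}_T$; it is nonempty, since the constant--in--time extension $\psi^0(t,x):=\varphi^0(x)$ lies in it by the choice of $M$, hence it is a nonempty complete metric space for the distance induced by $\|\cdot\|_{\mathbb{E}_T}$. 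The Banach fixed point theorem then yields a unique $\varphi\in\mathbb{E}^0_T\cap\overline{B_M}$ with $\mathcal{K}_T(\varphi)=\varphi$; taking $T:=\overline{T}(c,M)$ fixes the time and radius claimed in the statement.

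Next I would verify that solutions of the special flow in $C^{\frac{4+\alpha}{4},4+\alpha}([0,T]\times\mathbb{S}^1)\cap\overline{B_M}$ coincide with fixed points of $\mathcal{K}_T$. Since $\mathcal{L}_T$ is an isomorphism, for $\varphi\in\mathbb{E}^0_T\cap\overline{B_M}$ the identity $\varphi=\mathcal{L}_T^{-1}(\mathcal{N}_T(\varphi))$ is equivalent to $\mathcal{L}_T(\varphi)=\mathcal{N}_T(\varphi)$, that is
\[
\left(\partial_t\varphi+\frac{2}{|\partial_x\varphi^0|^4}\,\partial_x^4\varphi,\ \varphi|_{t=0}\right)=\big(f(\varphi),\ \varphi^0\big)\,,
\]
with $f$ as in~\eqref{definizione-f}. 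The second component is the initial condition $\varphi(0,\cdot)=\varphi^0$, and the first, after inserting the expression of $f$ and cancelling the common term $\tfrac{2}{|\partial_x\varphi^0|^4}\partial_x^4\varphi$, becomes
\[
\partial_t\varphi=-\frac{2}{|\partial_x\varphi|^4}\,\partial_x^4\varphi+\widetilde f(\partial_x^3\varphi,\partial_x^2\varphi,\partial_x\varphi)\,,
\]
i.e.\ exactly the motion equation of~\eqref{specialflow} for a single closed curve, the periodic boundary conditions being built into the choice of $\mathbb{S}^1$ as spatial domain. Since $\varphi\in\mathbb{E}^0_T\cap\overline{B_M}$ and $T\le\overline{T}(c,M)\le\widehat{T}(c,M)$,~\Cref{regular} ensures $\inf_x|\partial_x\varphi(t,\cdot)|\ge c/2>0$, so $\varphi(t,\cdot)$ is a regular curve for each $t$ and all the expressions above are meaningful; hence the fixed point $\varphi$ is a solution of the special flow lying in the required space. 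Conversely, any solution $\sigma$ of~\eqref{specialflow} in that space satisfies $\sigma(0,\cdot)=\varphi^0$ and is regular, so $\sigma\in\mathbb{E}^0_T\cap\overline{B_M}$, $f(\sigma)$ is well defined, and $\mathcal{L}_T(\sigma)=\mathcal{N}_T(\sigma)$, i.e.\ $\sigma=\mathcal{K}_T(\sigma)$.

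Combining the two directions, the solutions of the special flow of a closed curve in $C^{\frac{4+\alpha}{4},4+\alpha}([0,T]\times\mathbb{S}^1)\cap\overline{B_M}$ are exactly the fixed points of $\mathcal{K}_T$, of which there is precisely one by~\Cref{contraction} and the contraction mapping principle; this proves existence and uniqueness. The only delicate point in the whole argument is the coherent choice of the threshold times ($\widehat{T}$ from~\Cref{regular}, and $T'$ and $\overline{T}$ from~\Cref{contraction}), but this was already settled in those statements; there is no further obstacle, the genuinely hard estimate — the $T^\varepsilon$ bound on the Lipschitz constant of the nonlinearity $f$ — being precisely what~\Cref{contraction} supplies.
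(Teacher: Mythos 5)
Your proposal is correct and follows essentially the same route as the paper: choose $M$ and $\overline{T}$ via Proposition~\ref{contraction}, apply the Banach--Caccioppoli fixed point theorem on the complete metric space $\mathbb{E}^0_T\cap\overline{B_M}$, and identify fixed points of $\mathcal{K}_T=\mathcal{L}_T^{-1}\circ\mathcal{N}_T$ with solutions of the special flow. The only difference is that you spell out the equivalence between fixed points and solutions (cancelling the linearized leading term and invoking Lemma~\ref{regular} for regularity of the curves), which the paper compresses into the phrase ``by definition of $\mathcal{K}_T$''; this is a welcome clarification but not a different argument.
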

\begin{proof}
Choosing $M$ and $\overline{T}$ as in
Proposition~\ref{contraction}, for every $T\in (0,\overline{T}]$ the map 
$\mathcal{K}_T:\mathbb{E}_T^0\cap \overline{B_M}\to\mathbb{E}_T^0\cap \overline{B_M}$
is a contraction of the complete metric space $\mathbb{E}_T^0\cap\overline{B_M}$.
Thanks to Banach--Cacciopoli contraction theorem
$\mathcal{K}_T$ has a unique fixed point in $\mathbb{E}_T^0\cap \overline{B_M}$. 
By definition of $\mathcal{K}_T$, an element of $\mathbb{E}_T^0\cap \overline{B_M}$ is a fixed point for $\mathcal{K}_T$ if and only if it is a solution to the special flow~\eqref{specialflow} of closed
curves in
$C^{\frac{4+\alpha}{4},4+\alpha}\left([0,T]\times\mathbb{S}^1\right)\cap \overline{B_M}$.
\end{proof}

\begin{rem}\label{generalizzazioneopen}
In order to prove an existence and uniqueness theorem for the special flow of curves with fixed endpoints subjected 
to natural or clamped boundary conditions, it is enough to repeat the previous arguments 
with some small adjustments.

In the case of Navier boundary condition we 
replace
$\mathbb{E}_T$, $\mathbb{E}_T^0$, $\mathbb{F}_T$ 
and $\mathbb{F}_T^0$ by 
\begin{align*}
  \mathbb{E}_T^1 &:=\left\lbrace\varphi\in C  ^{\frac{4+\alpha}{4},{^{4+\alpha}}}\left([0,T]\times[0,1];\mathbb{R}^2\right)\,:\,
  \partial_x^2\varphi(0)=\partial_x^2\varphi(1)=0,\,\varphi_{\vert t=0}=\varphi^0\right\rbrace \,,\\
  \mathbb{E}_T^{0,1} &:=\left\lbrace
  \varphi\in \mathbb{E}_T^1\,:\,\varphi(t,0)=P, \varphi(t,1)=Q,
  \right\rbrace\,,\\
   \mathbb{F}_T^{1} &:=
 C  ^{\frac{\alpha}{4},{^{\alpha}}}\left([0,T]\times[0,1];\mathbb{R}^2\right)\times 
 (C  ^{\frac{4+\alpha}{4}}\left([0,T];\mathbb{R}^2\right))^2
 \times C^{4+\alpha}([0,1];\mathbb{R}^2)
 \,, \\
 \mathbb{F}_T^{0,1} &:=
  C  ^{\frac{\alpha}{4},{^{\alpha}}}\left([0,T]\times[0,1];\mathbb{R}^2\right)
 \times \{P\}\times \{Q\}\times \{\varphi^0\}\,, 
\end{align*}
where by $P,Q \in \R^2$.
In this case we introduce the linear operator
$$
\mathcal{L}_T(\varphi):=\left(\partial_t\varphi+\frac{2}{\vert\partial_x\varphi^0\vert^4}\partial_x^4\varphi,
\varphi_{\vert x=0},\varphi_{\vert x=1},\varphi_{\vert t=0}\right)\,.
$$
This modification allows us to treat the linear boundary
conditions $\partial_x^2\varphi(0)=\partial_x\varphi(1)=0$
and the affine ones $\varphi(t,0)=P$, 
$\varphi(t,1)=Q$.

In the case of clamped boundary conditions instead
we have to take into account four vectorial affine boundary conditions.
We modify 
the affine space $\mathbb{E}_T^0$ into 
\begin{equation*}
  \mathbb{E}_T^{0,2} :=\left\lbrace
    \varphi\in \mathbb{E}_T\,:\,\varphi(t,0)=P, \varphi(t,1)=Q,
    \partial_x\varphi(t,0)=\tau^0, \partial_x\varphi(t,1)=\tau^1,
  \varphi_{\vert t=0}=\varphi^0\right\rbrace\,,
\end{equation*}
and 
\begin{align*}
       \mathbb{F}_T^{2} &:=
 C  ^{\frac{\alpha}{4},{^{\alpha}}}\left([0,T]\times[0,1];\mathbb{R}^2\right)\times 
 \left(C  ^{\frac{4+\alpha}{4}}\left([0,T];\mathbb{R}^2\right)\right)^2 \times \left(C  ^{\frac{3+\alpha}{4}}\left([0,T];\mathbb{R}^2\right)\right)^2
 \times C^{4+\alpha}([0,1];\mathbb{R}^2)\,, \\
 \mathbb{F}_T^{0,2} &:= C  ^{\frac{\alpha}{4},{^{\alpha}}}\left([0,T]\times[0,1];\mathbb{R}^2\right)
 \times \{P\}\times \{Q\}\times\{\tau^0\}
 \times \{\tau^1\} \times\{\varphi^0\}\,.
\end{align*}
Finally the operator $\mathcal{L}_T$
in this case is 
$$
\mathcal{L}_T(\varphi):=
\left(\partial_t\varphi+\frac{2}{\vert\partial_x\varphi^0\vert^4}\partial_x^4\varphi, 
\varphi_{\vert x=0},\varphi_{\vert x=1},
\partial_x\varphi_{\vert x=0},
\partial_x\varphi_{\vert x=1},
\varphi_{\vert t=0}\right)\,.
$$
\end{rem}

\begin{rem}\label{generalizzazionenetwork}
Differently from the case of endpoints of order one,
at the  multipoints of higher order we impose also
non linear boundary conditions (quasilinear or even fully non linear). 
Treating these terms is then harder: it is necesssary
to linearize both the main equation and the boundary operator.

Consider for instance the case of the elastic flow of a network
composed of $N$ curves that meet at two junction, both of order $N$ and
subjected to natural boundary conditions.
The concurrency condition and the second order condition are already linear.
Instead the third order condition is of the form
\begin{equation*}\label{thirdorder}
\sum_{i=1}^N\frac{1}{\vert \partial_x \varphi^i\vert^3}
\left\langle \partial_x^3 \varphi,\nu^i\right\rangle \nu^i 
+h^i(\partial_x \varphi^i)=0\,,
\end{equation*}
where we 
omit the dependence on $(t,y)$ with $y\in\{0,1\}$.
The linearized version of the highest order term in the third order condition is:
\begin{align}\label{definizione-b}
-&\sum_{i=1}^N\frac{1}{\vert \partial_x \varphi^{0,i}\vert^3}
\left\langle \partial_x^3 \varphi,\nu_0^i\right\rangle \nu_0^i \nonumber\\
=&
-\sum_{i=1}^N\frac{1}{\vert \partial_x \varphi^{0,i}\vert^3}
\left\langle \partial_x^3 \varphi,\nu_0^i\right\rangle \nu_0^i
+\sum_{i=1}^N\frac{1}{\vert \partial_x \varphi^i\vert^3}
\left\langle \partial_x^3 \varphi,\nu^i\right\rangle \nu^i 
+h^i(\partial_x \varphi^i)=:b(\varphi)
\,,
\end{align}
where we denoted by $\nu_0$ the unit normal vector of the initial datum $\varphi^0$.
Then, instead of~\eqref{linearisedproblem}, the linearized system associated to the special flow  is 
\begin{equation}\label{lyntheta}
\begin{cases}
\begin{array}{ll}
\partial_t\varphi^i(t,x)+\frac{2}{\vert\partial_x \varphi^{0,i}(x)\vert^4}\partial_x^4\varphi^i(t,x)&=f^i(t,x)\\
\varphi^{i}(t,y)-\varphi^{j}(t,y)&=0 \\
\partial_x^2\varphi^i(t,y)&=0 \\
-\sum_{i=1}^N\frac{1}{\vert \partial_x \varphi^{0,i}(y)\vert^3}
\left\langle \partial_x^3 \varphi(t,y),\nu_0^i\right\rangle \nu_0^i(y)&=b(t,y)\\
\varphi^{i}(0,x)&=\psi^{i}(x)\\
\end{array}
\end{cases}\,,
\end{equation}
for $i, j\in\{1,\ldots,N\}$, $j\neq i$, $t\in [0,T]$, $x\in [0,1]$, $y\{0,1\}$. 

The spaces introduced in Definition~\ref{linearspaces} and~\ref{affinespaces}
are replaced by
\begin{align*}
\mathbb{E}_T 
&=\big\{\varphi\in C^{\frac{4+\alpha}{4}, 4+\alpha}\left([0,T]\times [0,1]; (\mathbb{R}^2)^N\right)\;
\text{such that for}\, i,j\in\{1, \ldots, N\}, t\in [0,T], \\
&\qquad  y\in\{0,1\}\,\text{it holds}\; \varphi^i(t,y)=\varphi^j(t,y), \partial_x^2\varphi^i(t,y)=0\big\}
\,,\\
\mathbb{F}_T&
= C^{\frac{\alpha}{4}, \alpha}\left([0,T]\times [0,1]; (\mathbb{R}^2)^N\right)
\times \left(C^{1+\alpha}\left([0,T]; \mathbb{R}^2\right)\right)^2\times C^{4+\alpha}\left([0,1];(\mathbb{R}^2)^N\right)\,,\\
\mathbb{E}^0_T&=\{\varphi\in 
\mathbb{E}_T\,\text{such that}\,\varphi\vert_{ t=0}=\varphi^0
\}\,,\\
\mathbb{F}^0_T&
	=C^{\frac{\alpha}{4},{^{\alpha}}}\left([0,T]\times[0,1];(\mathbb{R}^2)^N\right)
\times \left(C^{1+\alpha}\left([0,T]; \mathbb{R}^2\right)\right)^2
\times\{\varphi^0\}
\,.
\end{align*}

The operator $\mathcal{L}_{T}:\mathbb{E}_T\to \mathbb{F}_T$ becomes
\begin{equation*}
\mathcal{L}_{T}(\varphi)
:=\left(\partial_t\varphi+\frac{2}{\vert\partial_x\varphi^0\vert^4}\partial_x^4\varphi, 
-\sum_{i=1}^N\frac{1}{\vert \partial_x \varphi^{0,i}(y)\vert^3}
\left\langle \partial_x^3 \varphi(t,y),\nu_0^i\right\rangle \nu_0^i(y),\varphi_{\vert t=0}\right)\,,
\end{equation*}
and the operator that encodes the non--linearities of the problem is
$\mathcal{N}_{T}:\mathbb{E}^0_T\to \mathbb{F}^0_T$ that maps $\varphi$
into the triple $(N^1_{T}(\gamma),N^2_{T}(\gamma),\gamma\vert_{t=0})$
with
\begin{align*}
N^1_{T}:
\begin{cases}
\mathbb{E}^0_T \to C  ^{\frac\alpha4,{^{\alpha}}}([0,T]\times[0,1];\mathbb{R}^2)\\
\varphi \mapsto
f(\varphi)\,,
\end{cases}\\
N^2_{T}:
\begin{cases}
\mathbb{E}^0_T \to C  ^{1+\alpha}([0,T]\times[0,1];\mathbb{R}^2)\\
\varphi \mapsto
b(\varphi)\,,
\end{cases}
\end{align*}
where the functions $f(\varphi):=f(\partial_x^4\varphi,\partial_x^3\varphi,\partial_x^2\varphi,\partial_x\varphi)$ 
and $b(\varphi):=b(\partial_x^3\varphi,\partial_x^2\varphi,\partial_x\varphi)$
are defined in~\eqref{definizione-f} and in~\eqref{definizione-b}.
The map $\mathcal{K}$ will be defined accordingly.
We do not here describe the details concerning the solvability of the linear system,
as well as  the proof of the 
contraction property of $\mathcal{K}$ and 
we refer to~\cite[Section 3.4.1]{GaMePl1}.

\end{rem}

\subsection{Parabolic smoothing}

When dealing with parabolic problems, it is natural 
to investigate the regularization
of the solutions of the flow.
More precisely, we claim that the following holds.

\begin{prop}\label{parabolicsmoth}
Let $T>0$ and 
$\varphi_0=(\varphi^1_0, \ldots,\varphi^N_0)$ be an admissible initial parametrization (possibly) with endpoints of order one 
and (possibly) with junctions of different orders 
$m\in \mathbb{N}_{\geq 2}$. 
Suppose that $\varphi_{t\in [0,T]}$,
$\varphi=(\varphi^1, \ldots,\varphi^N)$ is a solution in 
$\mathbb{E}_T$ to the special flow 
in the time interval $[0,T]$ with initial datum $\varphi_0$. 
Then the solution $\varphi$ is smooth for positive times in the sense that
$$
\varphi\in C^\infty\left([\varepsilon,T]\times[0,1];
(\mathbb{R}^2)^N\right)
$$
 for every $\varepsilon\in(0,T)$.
\end{prop}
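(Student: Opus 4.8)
The plan is to run a parabolic bootstrap, exploiting the smoothing of the linearised fourth order operator together with incremental quotients in time. Fix $\varepsilon\in(0,T)$; it suffices to prove $\varphi\in C^\infty([\tau_1,T]\times[0,1];(\mathbb R^2)^N)$ for an arbitrary $\tau_1\in(0,\varepsilon]$, and we fix an auxiliary $\tau_0\in(0,\tau_1)$. Throughout one uses that $\varphi\in C^{\frac{4+\alpha}{4},4+\alpha}([0,T]\times[0,1];(\mathbb R^2)^N)$ and $\inf_{[0,T]\times[0,1]}|\partial_x\varphi^i|>0$, so that $\partial_t\varphi$ is continuous and all the geometric quantities $|\partial_x\varphi^i|^{-j}$, $\nu^i$, $\tau^i$ and the function $\widetilde f^i$ in the quasilinear form $\partial_t\varphi^i+\frac{2}{|\partial_x\varphi^i|^4}\partial_x^4\varphi^i=\widetilde f^i(\partial_x^3\varphi^i,\partial_x^2\varphi^i,\partial_x\varphi^i)$ of~\eqref{specialflow} are bounded in $C^{\frac{\alpha}{4},\alpha}$. \textbf{Step 1 (incremental quotients in time).} For $|h|$ small set $w^i_h(t,x):=h^{-1}\big(\varphi^i(t+h,x)-\varphi^i(t,x)\big)$ on $[\tau_0,T-|h|]\times[0,1]$. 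Subtracting the equations of the special flow at times $t+h$ and $t$ and expressing the increments of the smooth nonlinearities via the mean value theorem (so that each such increment carries a factor $\partial_x^k w_h$ times a bounded coefficient), one checks that $w_h=(w^1_h,\dots,w^N_h)$ solves a linear, uniformly parabolic fourth order system
\[
\partial_t w^i_h+a^i_h(t,x)\,\partial_x^4 w^i_h=\sum_{k=0}^{3}b^i_{h,k}(t,x)\,\partial_x^k w^i_h ,
\]
coupled with the linear boundary conditions obtained by linearising the operators of Definition~\ref{def:SpecialFlow} (concurrency becomes $w^i_h=w^j_h$ at the junctions, the curvature/clamped conditions become $\partial_x^2 w^i_h=0$, $\langle\partial_x^2 w^i_h,\partial_x\varphi^i\rangle=0$ or $\partial_x w^i_h=0$, and the third order junction condition becomes a linear condition of the type in~\eqref{lyntheta}). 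Crucially, since $\varphi\in C^{\frac{4+\alpha}{4},4+\alpha}$, the coefficients $a^i_h$, $b^i_{h,k}$ and those of the linearised boundary operators are bounded in $C^{\frac{\alpha}{4},\alpha}$ (respectively in the trace spaces of Remark~\ref{generalizzazionenetwork}) \emph{uniformly in $h$}, with $a^i_h\ge c_0>0$, while $\|w_h\|_{C^0([\tau_0,T-|h|]\times[0,1])}\le\|\partial_t\varphi\|_{C^0}=:C_0$ uniformly in $h$.

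\textbf{Step 2 (uniform Schauder estimate and limit).} Applying the interior-in-time parabolic Schauder estimates for this linear system — the general theory of~\cite{solonnikov2}, which localises the estimates of Theorem~\ref{linearexistence} and its network version in Remark~\ref{generalizzazionenetwork}, trading the missing initial datum for the $C^0$ norm on a slightly larger interval — one gets $\|w_h\|_{C^{\frac{4+\alpha}{4},4+\alpha}([\tau_1,T]\times[0,1])}\le C$ with $C$ independent of $h$. Since $w_h\to\partial_t\varphi$ pointwise, this uniform bound together with the compact embedding into $C^{\frac{4+\alpha'}{4},4+\alpha'}$, $\alpha'<\alpha$, yields $\partial_t\varphi\in C^{\frac{4+\alpha}{4},4+\alpha}([\tau_1,T]\times[0,1])$, in particular $\partial_t\varphi(t,\cdot)\in C^{4+\alpha}([0,1])$. \textbf{Step 3 (bootstrap).} Solving~\eqref{specialflow} algebraically for the top space derivative, $\partial_x^4\varphi^i=\tfrac12|\partial_x\varphi^i|^4\big(\widetilde f^i(\partial_x^3\varphi^i,\partial_x^2\varphi^i,\partial_x\varphi^i)-\partial_t\varphi^i\big)$, the right-hand side on $[\tau_1,T]$ gains spatial regularity (only $\widetilde f^i$, built from $\partial_x^{\le 3}\varphi^i$, limits it), so $\varphi^i(t,\cdot)\in C^{5+\alpha}$, then $C^{6+\alpha}$, and so on until $\varphi^i(t,\cdot)$ is as regular in $x$ as $\partial_t\varphi$ allows. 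One then repeats Steps 1–2 — now the coefficients of the $w_h$-system are far more regular in $x$, so the Schauder estimate produces more derivatives of $\partial_t\varphi$ — alternating with the algebraic step; each round gains four orders of regularity, and differentiating~\eqref{specialflow} in $t$ converts time derivatives back into already controlled space derivatives. Iterating gives $\varphi\in C^\infty([\tau_1,T]\times[0,1])$, and since $\tau_1\in(0,\varepsilon]$ was arbitrary, $\varphi\in C^\infty([\varepsilon,T]\times[0,1])$ for every $\varepsilon\in(0,T)$.

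The main obstacle is the boundary analysis underlying Steps 1–2: one must check that the linearised boundary operators for $w_h$, together with the linearised parabolic equation, satisfy the Lopatinskii–Shapiro complementing condition uniformly in $h$, so that the cited Schauder estimates genuinely apply. For a single closed curve there is no boundary and the argument is clean; for the clamped conditions, and especially for the fully nonlinear third order junction conditions, verifying this uniform complementing condition is precisely the technical heart of the matter, which is carried out in detail in~\cite{GaMePl2,DaChPo20}, to which we refer for the complete proof.
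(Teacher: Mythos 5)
Your argument is correct in outline, but it follows a genuinely different route from the paper. The paper sketches two mechanisms for the closed-curve case: (i) Angenent's parameter trick, where $G(\lambda,\gamma)=(\gamma|_{t=0}-\gamma_0,\,\partial_t\gamma-\lambda\mathcal{A}(\gamma))$ is inverted via the implicit function theorem, the time-rescaled solutions $\varphi_\lambda(t,x)=\varphi(\lambda t,x)$ are identified with $\Phi(\lambda)$ by uniqueness, and smoothness of $\lambda\mapsto\varphi_\lambda$ yields $t\,\partial_t\varphi\in\mathbb{E}_T$; and (ii) an induction in which $g=\eta(t)\varphi$ solves a linear system with right-hand side $\eta'\varphi$ and one invokes the \emph{higher-order} Schauder theory of~\cite[Theorem 4.9]{solonnikov2} with $l=2(k+1)+2+\alpha$ to gain two orders of joint regularity per step, without ever differentiating the equation. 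You instead differentiate the equation in time via incremental quotients, use only the basic ($l=4+\alpha$) Schauder estimate uniformly in $h$ to conclude $\partial_t\varphi\in C^{\frac{4+\alpha}{4},4+\alpha}$, and then recover spatial regularity by solving the quasilinear equation algebraically for $\partial_x^4\varphi$. What your approach buys is that it never needs Schauder estimates above the basic order; what it costs is the extra bookkeeping of verifying that the $h$-dependent coefficients and linearised boundary operators (and the Lopatinskii--Shapiro condition) are controlled uniformly in $h$, and of upgrading the pointwise-in-$t$ spatial gains of Step 3 back into joint parabolic H\"older classes before re-running Steps 1--2. Both your deferral of the boundary analysis to~\cite{GaMePl2,DaChPo20} and the paper's are at the same level of rigour, since the paper itself only proves the closed-curve case.

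Two small points to tighten. First, if you realise the ``interior-in-time'' estimate by the usual time cut-off, the right-hand side acquires the term $\eta' w_h$, which must be bounded in $C^{\frac{\alpha}{4},\alpha}$ uniformly in $h$, not merely in $C^0$; this does hold here because $w_h(t,\cdot)=\frac{1}{h}\int_0^h\partial_t\varphi(t+s,\cdot)\,\mathrm{d}s$ is an average of time-translates of $\partial_t\varphi\in C^{\frac{\alpha}{4},\alpha}$, but it should be said. Second, to obtain the estimate up to $t=T$ you should take backward difference quotients ($h<0$), since forward quotients only live on $[\tau_0,T-h]$.
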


We give now a sketch of proof of this fact 
in the case of closed curves.
Basically, it is possible to prove the result in two different ways: with the so--called Angenent's parameter trick~\cite{angenent,angen3,daprato-grisvard} 
or making use of the 
classical theory of linear parabolic equations~\cite{solonnikov2}.

\medskip

\noindent\textit{Sketch of the proof.} 
For the sake of notation let
\begin{align*}
\mathcal{A}(\gamma)
=-2\frac{\partial_x^4 \gamma}{\left|\partial_x\gamma\right|^{4}}
+12\frac{\partial_x^3 \gamma\left\langle \partial_x^2 \gamma,\partial_x\gamma\right\rangle }{\left|\partial_x\gamma\right|^{6}}
+5\frac{\partial_x^2 \gamma\left|\partial_x^2 \gamma\right|^{2}}{\left|\partial_x\gamma\right|^{6}}
+8\frac{\partial_x^2 \gamma\left\langle \partial_x^3 \gamma,\partial_x\gamma\right\rangle }
{\left|\partial_x\gamma\right|^{6}}
-35\frac{\partial_x^2 \gamma\left\langle \partial_x^2 \gamma,\partial_x\gamma\right\rangle ^{2}}
{\left|\partial_x\gamma\right|^{8}}
+\mu\frac{\partial_x^2 \gamma}{\left|\partial_x\gamma\right|^{2}}\,.
\end{align*}
Then the motion equation reads $\partial_t\gamma=\mathcal{A}(\gamma)$.
We consider the map
\begin{equation*}
    G:
\begin{cases}
(0,\infty)\times \mathbb{E}_T\to C^{4+\alpha}(\mathbb{S}^1;\mathbb{R}^2)\times
C^{\frac{\alpha}{4},\alpha}([0,T]\times\mathbb{S}^1;\R^2)\\
(\lambda,\gamma)\to 
\left(\gamma_{\vert t=0}-\gamma_0,\partial_t\gamma-\lambda A(\gamma)\right)
\end{cases}
\end{equation*}
We notice that if we take $\lambda=1$ and 
$\gamma=\varphi$ the solution of the special flow we get 
$G(1,\varphi)=0$.
The Fr\'{e}chet derivative 
$\delta G(1,\varphi)(0,\cdot):\mathbb{E}_T\to  C^{4+\alpha}(\mathbb{S}^1;\mathbb{R}^2)\times C^{\frac{\alpha}{4},\alpha}([0,T]\times\mathbb{S}^1;\R^2)$ is given by
\begin{equation*}
    \delta G(1,\varphi)(0,\gamma)=\left(\gamma_{\vert t=0},
\partial_t\gamma+\frac{2}{\vert \partial_x\partial\varphi\vert^4}\partial_x^4 \gamma+F_\varphi(\gamma)\right)
\end{equation*}
where $F_\varphi$ is linear in $\gamma$,
where $\partial_x^3 \gamma,\partial_x^2 \gamma$ and $\partial_x\gamma$
appears and the coefficients 
are depending of $\partial_x\varphi,\partial_x^2\varphi,\partial_x^3\varphi$
and $\partial_x^4\varphi$.
The computation to write in details the 
Fr\'{e}chet derivative  is rather long and we 
do not write it here.
Since the time derivative appears only as $\partial_t\gamma$
and it is not present in $A(\gamma)$, 
formally one can follow the computations of 
Section~\ref{sec:SecondVariation}.

It is possible to prove that $\delta G(1,\varphi)(0,\cdot)$
is an isomorphism.
This is equivalent to show that given any 
$\psi \in C^{4+\alpha}\mathbb{S}^1;\mathbb{R}^2)$ and 
$f\in C^{\frac{\alpha}{4},\alpha}([0,T]\times \mathbb{S}^1;\mathbb{R}^2)$ the system
\begin{equation*}
  \begin{cases}
\partial_t\gamma(t,x)+\frac{2}{\vert \partial_x\varphi(t,x)\vert^4}\partial_x^4 \gamma(t,x)+F(\gamma)
=f(t,x)\\
\gamma(0,x)=\psi(x)
\end{cases}  
\end{equation*}
has a unique solution.

Then the implicit function theorem implies the existence 
of a neighbourhood $(1+\varepsilon,1-\varepsilon)\subseteq (0,\infty)$, a neighbourhood $U$ of $\varphi$
in $\mathbb{E}_T$ and a function $\Phi:(1+\varepsilon,1-\varepsilon) \to U$ with $\Phi(1)=0$ and
\begin{equation*}
    \{(\lambda,\gamma)\in (1+\varepsilon,1-\varepsilon) \times U: G(\lambda,\gamma)=0\}
    =\{(\lambda, \Phi(\lambda)):\lambda\in (1+\varepsilon,1-\varepsilon)\}\,.
\end{equation*}

Given $\lambda$ close to $1$ consider 
\begin{equation*}
    \varphi_{\lambda}(t,x):=\varphi(\lambda t, x)\,,
\end{equation*}
where $\varphi$, as before, is a solution to the special flow.
This satisfies $G(\lambda,\varphi_{\lambda})=0$. 
Moreover by uniqueness $\varphi_{\lambda}=\Phi(\lambda)$.
Since $\Phi$ is smooth, this shows that $\varphi_{\lambda}$ is a smooth function of $\lambda$ with values in $\mathbb{E}_T$.
This implies 
\begin{equation*}
    t\partial_t\varphi
    =\partial_\lambda (\varphi_{\lambda})_{\vert \lambda=1}
    \in \mathbb{E}_T
\end{equation*}
from which we gain regularity in time of the solution 
$\varphi$. 

Then using the fact that $\varphi$ is a solution to the special flow and the structure of the motion equation of the special flow it is possible to increase regularity also in space. 

We can then start a bootstrap to obtain that the solution
is smooth for every positive time. 

\medskip

Alternatively we can 
show inductively that there exists $\alpha\in(0,1)$ such that for all $k\in\mathbb{N}$ and $\varepsilon\in(0,T)$,
\begin{equation*}
\varphi\in C^{\frac{2k+2+\alpha}{4},2k+2+\alpha}\left([\varepsilon,T]\times\mathbb{S}^1;\mathbb{R}^2\right)\,.
\end{equation*}
The case $k=1$ is true because 
$\varphi\in C^{\frac{4+\alpha}{4},4+\alpha}\left([0,T]\times\mathbb{S}^1;\mathbb{R}^2\right)$ by Theorem~\ref{existenceanalyticprob}.

Now assume that the assertion holds true for some $k\in\mathbb{N}$ and consider any $\varepsilon\in(0,T)$. 
Let $\eta\in C_0^\infty\left((\frac{\varepsilon}{2},\infty);\mathbb{R}\right)$ be a cut--off function with $\eta\equiv 1$ on $[\varepsilon,T]$. 
By assumption, 
$$
\varphi\in C^{\frac{2k+2+\alpha}{4},2k+2+\alpha}\left(\left[\varepsilon,T\right]\times\mathbb{S}^1;\mathbb{R}^2\right)\,,
$$ 
and thus 
it is straightforward to check that the function $g$ defined by
\begin{equation*}
(t,x)\mapsto g(t,x):=\eta(t)\varphi(t,x)
\end{equation*}
lies in  $C^{\frac{2k+2+\alpha}{4},2k+2+\alpha}\left(\left[0,T\right]\times\mathbb{S}^1;\mathbb{R}^2\right)$.
Moreover $g$ satisfies a parabolic problem of the following form: for all $t\in (0,T)$, $x\in \mathbb{S}^1$:
\begin{equation}\label{systemcutoff}
	\begin{cases}
	\begin{array}{ll}
	\partial_t g(t,x)+\frac{2}{\vert\partial_x\varphi(t,x)\vert^4}\partial_x^4 g(t,x)
	+f\left(\partial_x\varphi,\partial_x^2\varphi,\partial_x g,\partial_x^2 g,\partial_x^3 g\right)(t,x)&=\eta'(t)\varphi(t,x)\,,	\\
	g(0,x)&=0\,.
	\end{array}
	\end{cases}
\end{equation}
The lower order terms in the motion equation are given by
	\begin{align*}
	f\left(\partial_x\varphi,\partial_x^2\varphi,\partial_x g,\partial_x^2 g,\partial_x^3 g\right)(t,x)=&-12 \frac{\left\langle\partial_x^2\varphi,\partial_x\varphi\right\rangle}{\left\lvert\partial_x\varphi\right\rvert^6}\partial_x^3 g-8\frac{\partial_x^2\varphi}{\vert\partial_x\varphi\vert ^6}\left\langle \partial_x^3 g,\partial_x\varphi\right\rangle\\	&-\left(5\frac{\lvert\partial_x^2\varphi\rvert ^2}{\lvert\partial_x\varphi\rvert^6}-35\frac{\left\langle\partial_x^2\varphi,\partial_x\varphi\right\rangle^2}{\vert\partial_x\varphi\vert^8}+\mu\frac{1}{\vert\partial_x\varphi\vert ^2}\right)\partial_x^2 g\,.
	\end{align*} 
The problem is linear in the components of $g$ and in the highest order term of exactly the same structure as the linear system~\eqref{linearisedproblem} with time dependent coefficients in the motion equation.
The coefficients and the right hand side fulfil the regularity requirements of~\cite[Theorem 4.9]{solonnikov2} in the case $l=2(k+1)+2+\alpha$. As $\eta^{(j)}(0)=0$ for all $j\in\mathbb{N}$, the initial value $0$ satisfies the compatibility conditions of order $2(k+1)+2$ with respect to the given right hand side. Thus~\cite[Theorem 4.9]{solonnikov2} yields that there exists a unique solution to~\eqref{systemcutoff} $g$ with the regularity 
	\begin{equation*}
	g\in C^{\frac{2(k+1)+2+\alpha}{4},2(k+1)+2+\alpha}\left([0,T]\times\mathbb{S}^1;\mathbb{R}^2\right)\,.
	\end{equation*}
This completes the induction as $g=\varphi$ on $[\varepsilon,T]$.

\qed

\subsection{Short time existence and uniqueness}

We conclude this section by proving the local 
(in time) existence and uniqueness result 
for the elastic flow.

As before, we give the proof of this theorem in the case 
of closed curves and then we explain how to adapt it 
in all the other situations.

We remind that a solution of the elastic flow
is unique if it is unique up to reparametrizations.

\begin{teo}[Existence and uniqueness]\label{geomexistence}
Let $\mathcal{N}_0$ be an admissible initial network. Then there exists
a positive time $T$ such that within the time interval $[0,T]$ the
elastic flow of networks admits a unique solution $\mathcal{N}(t)$.
\end{teo}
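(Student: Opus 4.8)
The plan is to deduce Theorem~\ref{geomexistence} by combining the existence result for the special flow (Theorem~\ref{existenceanalyticprob}) with the correspondence between the special flow and the elastic flow (Lemma~\ref{lem:SpecialImplicaElastic} and Proposition~\ref{riparametriz}). First I would reduce the given admissible initial network $\mathcal{N}_0$ to the special-flow setting: by the reparametrization lemmas quoted before Proposition~\ref{riparametriz} (i.e.\ \cite[Lemma 3.31]{GaMePl1}), there exist smooth diffeomorphisms $\theta^i:[0,1]\to[0,1]$ such that $\varphi_0^i:=\gamma_0^i\circ\theta^i$ is an admissible initial parametrization in the sense of Definition~\ref{DEf:admissible-initial-para}; note that $\mathcal{N}_0=\cup_i\varphi_0^i([0,1])$ is unchanged as a subset of $\mathbb{R}^2$. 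Then Theorem~\ref{existenceanalyticprob} (in the closed-curve case, and its variants from Remarks~\ref{generalizzazioneopen} and~\ref{generalizzazionenetwork} for curves with boundary conditions and for networks) provides a positive time $T$, a radius $M$, and a solution $\varphi=(\varphi^1,\dots,\varphi^N)\in C^{\frac{4+\alpha}{4},4+\alpha}([0,T]\times[0,1];(\mathbb{R}^2)^N)$ of the special flow~\eqref{specialflow} with $\varphi^i(0,\cdot)=\varphi_0^i$, satisfying the prescribed boundary conditions.

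Next I would invoke Lemma~\ref{lem:SpecialImplicaElastic}: the time-dependent family $\mathcal{N}_t:=\cup_{i=1}^N\varphi^i(t,[0,1])$ is then a solution of the elastic flow of networks with initial datum $\mathcal{N}_0$, because the special flow's boundary conditions and the normal-velocity identity $\langle\partial_t\varphi^i,\nu^i\rangle\nu^i=-2\partial_s^2k^i-(k^i)^3+\mu^ik^i$ are exactly what Definition~\ref{Def:elasticflow} requires, and the parametrization $\varphi$ has the regularity demanded there. This establishes existence of a solution on $[0,T]$.

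For uniqueness — which, as recalled in the remarks of Section~\ref{Sec:invariance}, means uniqueness up to reparametrization — I would argue as follows. Suppose $\mathcal{N}(t)_{t\in[0,T]}$ is any solution of the elastic flow with initial datum $\mathcal{N}_0$, parametrized by regular curves $\gamma=(\gamma^1,\dots,\gamma^N)$ of the required parabolic Hölder class. By Proposition~\ref{riparametriz} there is $\widetilde T\in(0,T]$ and a time-dependent family of reparametrizations $\psi$ such that $\widetilde\varphi^i(t,x):=\gamma^i(t,\psi(t,x))$ solves the special flow on $[0,\widetilde T]$ (with the same initial parametrization up to the diffeomorphisms $\theta^i$, after possibly composing with a fixed reparametrization of the initial datum). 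On a common small time interval, both $\varphi$ and $\widetilde\varphi$ (after matching initial data) are solutions of the special flow with the same admissible initial parametrization; by the uniqueness part of Theorem~\ref{existenceanalyticprob} — i.e.\ uniqueness of the fixed point of the contraction $\mathcal{K}_T$ in $\mathbb{E}_T^0\cap\overline{B_M}$, after noting any special-flow solution lies in such a ball for $T$ small — they coincide. Hence $\mathcal{N}(t)=\widetilde\varphi(t,[0,1])=\varphi(t,[0,1])=\mathcal{N}_t$ as subsets of $\mathbb{R}^2$ on that interval, and a standard continuation/connectedness argument on $[0,T]$ (the set of times where the two flows agree is nonempty, open by local uniqueness, and closed by continuity) promotes this to the whole interval; thus the two solutions agree up to reparametrization.

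The main obstacle I expect is the uniqueness step, specifically the bookkeeping needed to reduce an \emph{arbitrary} elastic-flow solution to a special-flow solution on a common time interval and to align the two initial parametrizations so that Theorem~\ref{existenceanalyticprob}'s uniqueness (which is uniqueness within $\overline{B_M}$) actually applies. One must check that a given special-flow solution, a priori only known to be regular, satisfies the ball constraint $\|\varphi\|_{\mathbb{E}_T}\le M$ for $T$ chosen small enough depending on that solution, and that the reparametrizing diffeomorphisms from Proposition~\ref{riparametriz} and \cite[Lemma 3.31]{GaMePl1} can be composed consistently; for networks there is the extra subtlety that the tangential velocity at junctions is forced by the normal velocity (Remark~\ref{tang-in-funz-norm}), which is what makes the reduction to a non-degenerate system work. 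The existence half, by contrast, is essentially immediate from Theorem~\ref{existenceanalyticprob} and Lemma~\ref{lem:SpecialImplicaElastic}.
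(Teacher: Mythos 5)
Your proposal is correct and follows essentially the same route as the paper: existence comes from solving the special flow (Theorem~\ref{existenceanalyticprob} and its variants) and passing to the elastic flow via Lemma~\ref{lem:SpecialImplicaElastic}, after reparametrizing the initial datum into an admissible initial parametrization; uniqueness comes from reparametrizing an arbitrary elastic-flow solution into a special-flow solution (Proposition~\ref{riparametriz}) and invoking uniqueness of the fixed point. Your extra care about the ball constraint $\overline{B_M}$ and the open-closed continuation argument fills in details the paper leaves implicit, while the paper instead spells out the explicit derivative conditions defining the reparametrizations $\psi^i$ at endpoints and junctions, which you delegate to the cited lemma.
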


\begin{proof}
We write a proof for the case of the elastic flow of closed curves.

\textit{Existence.}
Let $\gamma_0$ be an admissible initial closed curve
of class $C^{4+\alpha}([0,1];\mathbb{R}^2)$.
Then $\gamma_0$ is also an admissible initial 
parametrization for the special flow.
By Theorem~\ref{existenceanalyticprob}
there exists a solution of the special flow, that is also a solution of the elastic flow.

\textit{Uniqueness.}
Consider a solution $\gamma_t$ of the elastic flow.
Then we can reparametrize the $\gamma_t$
into a solution to the special flow using
Proposition~\ref{riparametriz}.
Hence uniqueness follows from Theorem~\ref{existenceanalyticprob}.
\end{proof}

We now explain how to prove \emph{existence} of solution to the 
elastic flow of networks.
Differently from the situation of closed curves,
an admissible initial network $\mathcal{N}_0$
admits a parametrization 
$\gamma=(\gamma^1, \ldots,\gamma^N)$ of class
$C^{4+\alpha}([0,1];\mathbb{R}^2)$ that, in general,
is not an admissible initial parametrization
in the sense of Definition~\ref{DEf:admissible-initial-para}.
However it is always possible to reparametrize
each curve $\gamma^i$ by $\psi^i:[0,1]\to[0,1]$
in such a way that 
$\varphi=(\varphi^1, \ldots,\varphi^N)$
with $\varphi^i:=\gamma^i\circ\psi^i$
is an admissible initial parametrization for the special flow.
Then by the suitable modification of
Theorem~\ref{existenceanalyticprob}
there exists a solution to the special flow, that is also a solution of the elastic flow.

Thus all the difficulties lie is proving the existence of
the reparametrizations $\psi^i$.

In all cases we look for $\psi^i:[0,1]\to[0,1]$
with 
$\psi^i(0)=0$, $\psi^i(1)=1$ and $\partial_x\psi^i(x)\neq 0$
for every $x\in [0,1]$. 
We now list all possible further conditions a certain
$\psi^i$ has to fulfill at $y=0$ or $y=1$ in the different possible situations. It will then be clear that such reparametrizations $\psi^i$ exist.

\begin{itemize}
\item  If $\gamma(y)$ is an endpoint of order one with Navier boundary conditions
(namely $\gamma(y)=P$, $\boldsymbol{\kappa}(y)=0$), then $\psi(y)$ needs to satisfy
the following conditions:
\begin{equation*}
\begin{cases}
\partial_x\psi(y)=1\\
\partial_x^2\psi(y)=-\left\langle
\frac{\partial_x\gamma(y)}{\vert\partial_x\gamma(y)\vert},
\frac{\partial_x^2 \gamma(y)}{\vert\partial_x\gamma(y)\vert}\right\rangle=:a(y)\\
\partial_x^3\psi (y)=0\\
\partial_x^4\psi(y)=-\frac{1}{\vert\partial_x\gamma(y)\vert^5}\left\langle
\frac{\partial_x\gamma(y)}{\vert\partial_x\gamma(y)\vert},
\frac{\partial_x^4 \gamma(y)}{\vert\partial_x\gamma(y)\vert}
+6a(y)\frac{\partial_x^3 \gamma(y)}{\vert\partial_x\gamma(y)\vert}
+3a^2(y)\frac{\partial_x^2 \gamma(y)}{\vert\partial_x\gamma(y)\vert}
\right\rangle=:-\frac{1}{\vert\partial_x\gamma(y)\vert^5}b(y)\,.
\end{cases}
\end{equation*}    
Indeed, with such a request, we have
$\varphi(y)=\gamma(\psi(y))=\gamma(y)=P$
and 
\begin{align*}
\partial_x^2\varphi(y)
&=\partial_x^2 \gamma(\psi(y))(\partial_x\psi(y))^2
+\partial_x\gamma(\psi(y))\partial_x^2\psi(y)\\
&=\partial_x^2 \gamma(y)+\partial_x\gamma(y)\left(-\left\langle
\frac{\partial_x\gamma(y)}{\vert\partial_x\gamma(y)\vert},
\frac{\partial_x^2 \gamma(y)}{\vert\partial_x\gamma(y)\vert}\right\rangle\right)=\vert\partial_x\gamma\vert^2\boldsymbol{\kappa}(y)=0\,.
\end{align*}
Moreover $\overline{T}(y)=0$. Indeed
\begin{align*}
\overline{T}
&=-2\left\langle
\frac{\partial_x^4\varphi(y)}{\vert\partial_x\varphi(y)\vert^4},
\frac{\partial_x\varphi(y)}{\vert\partial_x\varphi(y)\vert}
\right\rangle\\
&=-2\left\langle
\frac{\partial_x^4 \gamma(y)+6\partial_x^3 \gamma(y)a(y)
+3\partial_x^2 \gamma(y)a^2(y)+\partial_x\gamma(y)\partial_x^4\psi(y)}{\vert
\partial_x\gamma(y)\vert^4},
\frac{\partial_x\gamma(y)}{\vert\partial_x\gamma(y)\vert}
\right\rangle\\
&=-2\frac{1}{\vert\partial_x\gamma(y)\vert^4}b(y)
+2\frac{1}{\vert\partial_x\gamma(y)\vert^5}
\left\langle b(y)\partial_x\gamma(y),\frac{\partial_x\gamma(y)}{\vert\partial_x\gamma(y)\vert}
\right\rangle\\
&=-2\frac{1}{\vert\partial_x\gamma(y)\vert^4}b(y)
+2\frac{1}{\vert\partial_x\gamma(y)\vert^5}b(y)
\left\langle
\frac{\partial_x\gamma(y)}{\vert\partial_x\gamma(y)},\frac{\partial_x\gamma(y)}{\vert\partial_x\gamma(y)\vert}
\right\rangle\vert\partial_x\gamma(y)\vert=0\,.
\end{align*}
\item If $\gamma(y)$ is an endpoint order one where clamped boundary conditions are imposed ($\gamma(y)=P$, 
$\frac{\partial_x\gamma(y)}{\vert \partial_x\gamma(y)\vert}=\tau^*$ with 
$\tau^*$ a unit vector) 
we require $\psi(y)$ to fulfill
\begin{equation*}
\begin{cases}
\partial_x\psi(y)=\frac{1}{\vert\partial_x\gamma(y)\vert}\\
\partial_x^2\psi(y)=0\\
\partial_x^3\psi(y)=0\\
\partial_x^4\psi(y)=b(y)\,.
\end{cases}
\end{equation*}
with 
$b(y)=\left\langle 
\frac{\partial_x^4 \gamma(y)}{\vert\partial_x\gamma(y)\vert^4}
-6\frac{\partial_x^3 \gamma(y)}{\vert\partial_x\gamma(y)\vert^3}\left\langle \frac{\partial_x^2 \gamma(y)}{\vert\partial_x\gamma(y)\vert^2},\tau^*\right\rangle -\frac{5}{2}\frac{\partial_x^2 \gamma(y)\left|\partial_x^2 \gamma(y)\right|^{2}}{\left|
\partial_x\gamma(y)\right|^{6}}
-4\frac{\partial_x^2 \gamma(y)}{\vert\partial_x\gamma(y)\vert^2}\left\langle \frac{\partial_x^3 \gamma(y)}{\vert\partial_x\gamma(y)\vert^3},\tau^*\right\rangle \right.$ 
$
\left. 
+\frac{35}{2}\frac{\partial_x^2 \gamma(y)}{\vert\partial_x\gamma(y)\vert^2}\left\langle \frac{\partial_x^2 \gamma(y)}{\vert\partial_x\gamma(y)\vert^2},\tau^*\right\rangle ^{2}
-\frac{\mu}{2}\frac{\partial_x^2 \gamma(y)}{\vert\partial_x\gamma(y)\vert^2},
\frac{\partial_x\gamma(y)}{\vert\partial_x\gamma(y)\vert}\right\rangle$.
So that $\varphi(y)=\gamma(\psi(y))=\gamma(y)=P$,
$\partial_x\varphi(y)=\partial_x\gamma(\psi(y))\partial_x\psi(y)
=(\partial_x\gamma(y))(\frac{1}{\vert\partial_x\gamma\vert})=\tau^*$,
and $\overline{T}(y)=0$.

\item Suppose instead that $\gamma^{p_1}(y_1)=\ldots=\gamma^{p_m}(y_m)$
is a multipoint of order $m$ with natural boundary conditions.
Then each curve is paramatrized by $\gamma^{p_i}\in C^{4+\alpha}([0,1];\mathbb{R}^2)$ and the network
$\mathcal{N}_0$ satisfies the  conditions
ii),  iv) and v) of Definition~\ref{Def:admissible-initial-net}.

The non--degeneracy condition is satisfied because of iv).

By requiring 
\begin{equation*}
\begin{cases}
\partial_x\psi^{p_i}(y_i)=1\\
\partial_x^2\psi^{p_i}(y_i)=a^{p_i}(y_i)\\
\partial_x^3\psi^{p_i}(y_i)=0\,,
\end{cases}
\end{equation*}  
where $a^{p_i}(y_i):=-\left\langle
\frac{\partial_x\gamma^{p_i}(y_i)}{\vert\partial_x\gamma^{p_i}(y_i)\vert},
\frac{\partial_x^2\gamma^{p_i}(y_i)}{\vert\partial_x\gamma^{p_i}(y_i)\vert}\right\rangle$
all the conditions imposed by the system are satisfied.
We have to choose $\partial_x^4\psi^{p_i}(y_i)$
in a manner that implies the fourth order compatibility condition 
\begin{equation}\label{condizionequartordine}
V^{p_1}_\varphi(y_1)\nu^{p_1}_\varphi(y_1)
+\overline{T}^{p_1}_\varphi(y_1)\tau^{p_1}_\varphi(y_1)
=\ldots
=V^{p_m}_\varphi(y_m)\nu^{p_m}_\varphi(y_m)
+\overline{T}^{p_m}_\varphi(y_m)\tau^{p_m}_\varphi(y_m)\,,
\end{equation}
where the subscript $\varphi$ we mean that 
all the quantities in~\eqref{condizionequartordine}
are computed with respect on the 
parametrization $\varphi^{p_i}:=\gamma^{p_i}\circ \psi^{p_i}$.
Notice that the geometric quantities 
$V$, $\nu$ and $\tau$ are invariant under reparametrization, they coincide for  $\varphi^{p_i}$ 
and $\gamma^{p_i}$ and so from now on we omit the subscript.
Condition iv) allows us to consider 
two consecutive unit normal vectors 
$\nu^{p_i}(y_i)$ and $\nu^{p_k}(y_k)$
such that
$\mathrm{span}\{\nu^{p_i}(y_i),\nu^{p_k}(y_k)\}=\R^2$.
Then, by condition v), 
for every $j\in\{1,\ldots,m\}$, $j\neq i$, $j\neq k$
we have
\begin{equation}\label{sommaV}
   \sin\theta^i V^{p_i}(y_i)+\sin\theta^k V^{p_k}(y_k)
+\sin\theta^j V^{p_j}(y_j)=0\,, 
\end{equation}
where $\theta^i$ is the angle between $\nu^{p_k}(y_k)$
and $\nu^{p_j}(y_j)$, 
$\theta^k$ between $\nu^{p_j}(y_j)$
and $\nu^{p_i}(y_i)$
and $\theta^{j}$  between $\nu^{p_i}(y_i)$,
and $\nu^{p_k}(y_k)$
and at most one between $\sin\theta^i$ and $\sin\theta^k$
is equal to zero.
Consider first every curve $\gamma^{p_j}$ with 
$j\in\{1,\ldots,m\}$, $j\neq i$, $j\neq k$
for which  both $\sin\theta^i$ and $\sin\theta^k$
are different from zero, then the conditions
\begin{align}\label{condzioneangolinonzero}
\sin\theta^i\overline{T}_{\varphi}^{p_i}(y_i)
&=\cos\theta^k V^{p_k}(y_k)-\cos\theta^j V^{p_j}(y_j)\nonumber\\
\sin\theta^k\overline{T}_{\varphi}^{p_k}(y_k)
&=\cos\theta^j V^{p_j}(y_j)-\cos\theta^i V^{p_i}(y_i)\nonumber\\
\sin\theta^j\overline{T}_{\varphi}^{p_j}(y_j)
&=\cos\theta^i V^{p_i}(y_i)-\cos\theta^k V^{p_k}(y_k)
\end{align}
combined together with~\eqref{sommaV} imply~\eqref{condizionequartordine} (see~\cite{menzelthesis}
for details).
Instead for all the curves $\gamma^{p_j}$ with 
$j\in\{1,\ldots,m\}$, $j\neq i$, $j\neq k$
for which, for example,  $\sin\theta^i=0$  it is possible to 
prove (see again~\cite{menzelthesis}) that 
\begin{align}\label{condizioneangolozero}
\sin\theta^k V^{p_k}(y_k)
&+\sin\theta^j V^{p_j}(y_j)=0\nonumber\\
\sin\theta^k\overline{T}_{\varphi}^{p_i}(y_i)
&=V^{p_j}(y_j)-\cos\theta^k V^{p_i}(y_i)\nonumber\\
\sin\theta^j\overline{T}_{\varphi}^{p_k}(y_k)
&=V^{p_i}(y_i)-\cos\theta^j V^{p_k}(y_k)\nonumber\\
\sin\theta^k\overline{T}_{\varphi}^{p_j}(y_j)
&=\cos\theta^k V^{p_j}(y_j)- V^{p_i}(y_i)
\end{align}
yielding~\eqref{condizionequartordine}.
One can show that for every 
$i\in\{1, \ldots,m\}$, imposing such requirements (i.e., either~\eqref{sommaV},~\eqref{condzioneangolinonzero}   or~\eqref{condizioneangolozero}) implies that
$\partial_x^4\psi^{p_i}(y_i)$ is uniquely determined.

\item Also the case of a multipoint with clamped boundary conditions can be treated following the arguments 
of the just considered cases of natural 
boundary conditions.
\end{itemize}

To summarise, for every $i\in\{1,\ldots,N\}$
we must prove the existence of 
$\psi^i:[0,1]\to[0,1]$
with $\partial_x\psi^i(x)\neq 0$
for every $x\in [0,1]$ satisfying 
\begin{equation}\label{valorineipunti}
    \begin{cases}
 \psi^i(0)=0\\
 \partial_x\psi^i(0)=c_1\\
 \partial_x^2\psi^i(0)=c_2\\
 \partial_x^3\psi^i(0)=0\\
\partial_x^4  \psi^i(0)=c_3\\
    \end{cases}
\quad \text{and}\quad    
    \begin{cases}
 \psi^i(1)=1\\
 \partial_x\psi^i(1)=c_4\\
\partial_x^2 \psi^i(1)=c_5\\
\partial_x^3 \psi^i(1)=0\\
 \partial_x^4 \psi^i(1)=c_6\\
    \end{cases}
\end{equation}
with $c_1,c_2,c_3$ and $c_4,c_5,c_6$
depending on the type of the endpoint $\gamma^i(0)$
and $\gamma^i(1)$.
The $\psi^i$ can be (roughly) constructed by
choosing $\psi^i$ to be,  
near the points $0$ and $1$, 
the respective fourth Taylor polynomial that is determined by the values of the derivatives 
appearing in~\eqref{valorineipunti}. 
Then one connects the two polynomial graphs by a suitable increasing smooth function.

\medskip

To get \emph{uniqueness} when we let evolve an open curve
or a network, one has to use
Proposition~\ref{riparametriz}. 
We refer to~\cite{DaChPo20, GaMePl1, menzelthesis, Sp17}
for a complete proof.

\begin{rem}
The previous theorem gives a solution of class 
$C^{\frac{4+\alpha}{4},4+\alpha}([0,T]\times [0,1];\mathbb{R}^2)$ whenever the initial datum
is of class $C^{4+\alpha}([0,1];\mathbb{R}^2)$ 
and satisfies all the conditions
listed in Definition~\ref{Def:admissible-initial-net}. 
We can remove the fourth order conditions iii)--iv) setting the problem in Sobolev spaces, with the initial datum in 
$W^{4-4/p,p}([0,1];\mathbb{R}^2)$ with $p\in (5, \infty)$. 
Even in this lower regularity class it is possible to
prove uniqueness of solutions (see~\cite{GaMePl2}), but 
we pay in regularity of the solution, that is merely in 
$W^{1,p}\left((0,T);L^p\left((0,1);\mathbb{R}^2\right)\right)\cap L^p\left((0,T);W^{4,p}\left((0,1);\mathbb{R}^2 \right)\right)$.

With the strategy we presented in this paper it is possible
to get a smooth solution in $[0,T]$ if in addition
the initial datum admits a smooth parametrization
and satisfies the compatibility conditions of any order
(for a complete proof of this result we refer to~\cite{DaChPo20}).
Since the solution of class $C^{\frac{4+\alpha}{4},4+\alpha}$ is unique, a fortiori the smooth solution is unique.
Although a smooth solution is desiderable, asking
for compatibility conditions of any order is a very strong
request. 
\end{rem}

\section{Long time existence}\label{sec:LongTimeExistence}

\begin{dfnz}
A time--dependent family of networks $\mathcal{N}_t$ parametrized by
$\gamma_t=(\gamma^1,\ldots,\gamma^N)$
is a maximal solution to
the  elastic flow with initial datum $\mathcal{N}_0$ in $[0,T)$
if it is a solution in the sense of Definition~\ref{Def:elasticflow} in $(0,\hat{T}]$ for all $\hat{T}<T$,  $\gamma\in C^\infty\left([\varepsilon,T)\times[0,1];
(\mathbb{R}^2)^N\right)$ for all $\varepsilon>0$
	and if there does not exist a smooth solution 
	$\widetilde{\mathcal{N}}_t$ in $(0,\widetilde{T}]$ with $\widetilde{T}\geq T$ and such that $\mathcal{N}=\widetilde{\mathcal{N}}$
		in $(0,T)$.
\end{dfnz}

If $T=\infty$ in the above definition, $\widetilde{T}\geq T$ is supposed to mean $\widetilde{T}=\infty$.
The maximal time interval of existence of a solution to the elastic flow will be denoted by $[0,T_{\max})$, for $T_{\max}\in (0,+\infty]$.\\

Notice that	the existence of a maximal solution 
is granted by Theorem~\ref{existenceanalyticprob}, Theorem~\ref{geomexistence} and  Proposition~\ref{parabolicsmoth}.

\subsection{Evolution of geometric quantities}

In this section we 
use the following version of the Gagliardo--Nirenberg Inequality which follows from~\cite[Theorem 1]{nirenberg} and a scaling argument.

Let $\eta$ be a smooth regular curve in $\mathbb{R}^2$ 
with finite length $\ell$ and let $u$ be a smooth function defined on $\eta$. 
Then for every $j\geq 1$, $p\in [2,\infty]$ and $n\in\{0,\dots,j-1\}$ we have the estimates
\begin{equation*}
\lVert \partial_s^nu\rVert_{L^p}\leq \widetilde{C}_{n,j,p}\lVert \partial_s^ju\rVert_{L^2}^\sigma\lVert u\rVert_{L^2}^{1-\sigma}+\frac{B_{n,j,p}}{\ell^{j\sigma}}\lVert u\rVert_{L^2}
\end{equation*}
where 
\begin{equation*}
\sigma=\frac{n+1/2-1/p}{j}
\end{equation*}
and the constants $\widetilde{C}_{n,j,p}$ and $B_{n,j,p}$ are independent of $\eta$.
In particular, if  $p=+\infty$, 
\begin{equation}\label{int2}     
{\Vert\partial_s^n u\Vert}_{L^\infty}     \leq \widetilde{C}_{n,j}  
{\Vert\partial_s^j u\Vert}_{L^2}^{\sigma}       
{\Vert u\Vert}_{L^2}^{1-\sigma}+      
 \frac{B_{n,j}}{\ell^{j\sigma}}{\Vert         
u\Vert}_{L^2}\qquad\text{ { with} }\quad \text{ $\sigma=\frac{n+1/2}{j}$.}  
\end{equation}

We notice that in the case of a time--dependent family of curves 
with length equibounded from below 
by some positive value, the Gagliardo--Nirenberg inequality holds with uniform constants.

By the monotonicity of the elastic energy along the flow (Section~\ref{sec:energy-decreases-in-time}), the following result holds.
\begin{cor}\label{curvaturebounded}
Let $\mathcal{N}_t=\bigcup_{i=1}^N \gamma^i_t$ 
be a maximal solution to the elastic flow with initial datum
$\mathcal{N}_0$ in the maximal time interval $[0,T_{\max})$
and let $\mathcal{E}_\mu(\mathcal{N}_0)$
be the elastic energy of the initial datum. 
Then for all $t\in (0,T_{\max})$ it holds
\begin{equation}\label{basic-bound}
\int_{\gamma^i_t} \vert k^i\vert^2\,\mathrm{d}s
\leq \int_{\mathcal{N}_t} \vert k\vert^2\,\mathrm{d}s
\leq \mathcal{E}_\mu(\mathcal{N}_0)\,.
\end{equation}
\end{cor}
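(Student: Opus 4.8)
The plan is to combine the energy identity of Proposition~\ref{energydecreases} with the non-negativity of the weighted global length term in $\mathcal{E}_\mu$.

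First I would invoke the parabolic smoothing of Proposition~\ref{parabolicsmoth}: a maximal solution $\mathcal{N}_t$ is smooth on $[\varepsilon,T_{\max})\times[0,1]$ for every $\varepsilon\in(0,T_{\max})$, so Proposition~\ref{energydecreases} applies on each such time subinterval and gives
\[
\partial_t\,\mathcal{E}_\mu(\mathcal{N}_t)=-\int_{\mathcal{N}_t}V^2\,\mathrm{d}s\le 0\,,\qquad t\in(\varepsilon,T_{\max})\,.
\]
Integrating from $\varepsilon$ to $t$ yields $\mathcal{E}_\mu(\mathcal{N}_t)\le\mathcal{E}_\mu(\mathcal{N}_\varepsilon)$ for all $0<\varepsilon\le t<T_{\max}$. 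To pass to the limit $\varepsilon\to 0^+$, I would use that by Definition~\ref{Def:elasticflow} the flow admits, on any $[0,\widehat{T}]$ with $\widehat{T}<T_{\max}$, a parametrization of class $C^{\frac{4+\alpha}{4},4+\alpha}([0,\widehat{T}]\times[0,1];(\mathbb{R}^2)^N)$ with regular components and $\gamma(0,\cdot)=\gamma_0$; hence $\partial_x\gamma^i$ and $\partial_x^2\gamma^i$ converge uniformly in $x$ as $t\to 0^+$ to those of $\gamma_0$, while $\inf_x|\partial_x\gamma^i(t,x)|$ stays bounded away from $0$, so $t\mapsto\mathcal{E}_\mu(\mathcal{N}_t)$ is continuous up to $t=0$ with value $\mathcal{E}_\mu(\mathcal{N}_0)$. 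Letting $\varepsilon\to 0^+$ gives $\mathcal{E}_\mu(\mathcal{N}_t)\le\mathcal{E}_\mu(\mathcal{N}_0)$ for every $t\in(0,T_{\max})$.

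Then I would conclude from the definition~\eqref{eef}. Since $\mu^i\ge 0$ and $\ell(\gamma^i_t)\ge 0$, the term $\mu\,\mathrm{L}(\mathcal{N}_t)$ is non-negative, so
\[
\int_{\mathcal{N}_t}|k|^2\,\mathrm{d}s\le\int_{\mathcal{N}_t}|k|^2\,\mathrm{d}s+\mu\,\mathrm{L}(\mathcal{N}_t)=\mathcal{E}_\mu(\mathcal{N}_t)\le\mathcal{E}_\mu(\mathcal{N}_0)\,,
\]
and because $\int_{\gamma^i_t}|k^i|^2\,\mathrm{d}s$ is one of the finitely many non-negative summands defining $\int_{\mathcal{N}_t}|k|^2\,\mathrm{d}s$, it is in particular bounded by the full sum. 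This is precisely the chain~\eqref{basic-bound}. The argument is essentially immediate; the only point requiring a little care is the continuity of the energy at the initial time needed for the passage $\varepsilon\to 0^+$, which is guaranteed by the regularity class prescribed in the definition of the flow.
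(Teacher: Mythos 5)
Your argument is correct and follows exactly the route the paper intends: Corollary~\ref{curvaturebounded} is stated as an immediate consequence of the energy monotonicity of Proposition~\ref{energydecreases} together with the non-negativity of the weighted length term and of each curvature summand in~\eqref{eef}. Your additional care about applying the identity on $(\varepsilon,T_{\max})$ via parabolic smoothing and recovering $t=0$ by continuity of the energy in the regularity class of Definition~\ref{Def:elasticflow} is a reasonable filling-in of details the paper leaves implicit.
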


Now we consider the evolution in time
of the length of the  curves of the network.

\begin{lemma}\label{lengthbounded}
Let $\mathcal{N}_t=\bigcup_{i=1}^N \gamma^i_t$ 
be a maximal solution to the elastic flow
in the maximal time interval $[0,T_{\max})$
with initial datum $\mathcal{N}_0$ 
and let $\mathcal{E}_\mu(\mathcal{N}_0)$
be the elastic energy of the initial datum. 
Let $\mu^1,\ldots,\mu^N>0$ and $\mu^*:=\min_{i=1,\ldots, N}\mu^i$.
Then for all $t\in (0,T_{\max})$ it holds
\begin{equation}\label{basic-bound2}
\ell(\gamma^i_t)
\leq \mathrm{L}(\mathcal{N}_t)
\leq \frac{1}{\mu^*}\mathcal{E}_\mu(\mathcal{N}_0)\,.
\end{equation}
Furthermore if $\mathcal{N}_t$ is composed of 
a time dependent family of closed curves $\gamma_t$, then for all $t\in (0,T_{\max})$
\begin{equation}\label{GBbound}
    \ell(\gamma_t)\geq \frac{4\pi^2}{\mathcal{E}_\mu(\gamma_0)}\,.
\end{equation}
Suppose instead that $\gamma_t$ 
is a time dependent family of curves subjected either to Navier boundary conditions
or to clamped boundary conditions with 
$\gamma(t,0)=P$ and $\gamma(t,1)=Q$ for every $t\in [0,T_{\max})$.
Then for all $t\in (0,T_{\max})$
\begin{equation}\label{boundlunghezza}
    \ell(\gamma_t)\geq \vert P-Q\vert >0 \;\text{if}\; P\neq Q
    \quad \text{and} \quad \ell(\gamma_t)\geq \frac{\pi^2}{\mathcal{E}_\mu(\gamma_0)}>0 \;\text{if}\; P= Q\,.
\end{equation}
\end{lemma}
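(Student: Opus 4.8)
The bound~\eqref{basic-bound2} is immediate: since $\mathcal{E}_\mu$ is non-increasing along the flow by Proposition~\ref{energydecreases}, for every $t\in(0,T_{\max})$ we have $\mu^*\,\mathrm{L}(\mathcal{N}_t)\le \sum_i \mu^i\ell(\gamma^i_t)\le \mathcal{E}_\mu(\mathcal{N}_t)\le \mathcal{E}_\mu(\mathcal{N}_0)$, and $\ell(\gamma^i_t)\le \mathrm{L}(\mathcal{N}_t)$ trivially; rearranging gives the claim. So the content is in the \emph{lower} bounds on the length in the three boundary settings, and all three follow the same pattern: combine an isoperimetric-type or geometric inequality relating $\ell$ and $\int k^2\,\de s$ with the energy monotonicity $\int_{\gamma_t} k^2\,\de s\le \mathcal{E}_\mu(\gamma_0)$ from Corollary~\ref{curvaturebounded}.

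For~\eqref{GBbound} (closed curves), the key input is the inequality
\[
\left(\int_\gamma k\,\de s\right)^2 \le \ell(\gamma)\int_\gamma k^2\,\de s
\]
(Cauchy--Schwarz), together with the fact that for a closed curve $\left|\int_\gamma k\,\de s\right|=2\pi|m|\ge 2\pi$, where $m\in\mathbb{Z}\setminus\{0\}$ is the rotation index; here one uses that $\gamma_t$ is an immersed closed curve, so $m\ne0$. Hence $4\pi^2\le \ell(\gamma_t)\int_{\gamma_t}k^2\,\de s\le \ell(\gamma_t)\,\mathcal{E}_\mu(\gamma_0)$, which rearranges to~\eqref{GBbound}. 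For the clamped/Navier case with $P\ne Q$, the bound $\ell(\gamma_t)\ge|P-Q|$ is simply that a curve joining $P$ and $Q$ has length at least the distance between them. For $P=Q$ in the open case, one cannot invoke a nonzero rotation index, so instead I would use a Poincaré/Wirtinger-type inequality: writing $\gamma$ parametrized by arclength on $[0,\ell]$ with $\gamma(0)=\gamma(\ell)=P$, the tangent indicatrix $\theta$ satisfies $\int_0^\ell \tau\,\de s = \gamma(\ell)-\gamma(0)=0$, i.e. $\int_0^\ell \cos\theta\,\de s=\int_0^\ell\sin\theta\,\de s=0$; combined with $\theta'=k$ and a Wirtinger inequality on an interval with the mean-zero constraint on $(\cos\theta,\sin\theta)$, one obtains $\pi^2\le \ell^2 \int_0^\ell k^2\,\de s \cdot \ell^{-1} = \ell\int_{\gamma}k^2\,\de s$ — more precisely, the sharp constant for a curve closing up at a point is $\ell\int k^2\,\de s\ge \pi^2$, achieved in the limit by a doubled segment. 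Then again $\pi^2\le \ell(\gamma_t)\,\mathcal{E}_\mu(\gamma_0)$ gives the stated bound.

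The main obstacle I anticipate is pinning down the sharp geometric inequality $\ell\int_\gamma k^2\,\de s\ge \pi^2$ for a curve returning to its starting point (the $P=Q$ case): one must argue carefully that the endpoint constraint $\int_0^\ell \tau\,\de s=0$ forces enough oscillation of $\tau$, handling the vector-valued nature of the constraint and the fact that the curve need not be embedded. This is a one-dimensional variational estimate that can be reduced to a Wirtinger-type inequality, but getting the constant $\pi^2$ (rather than a weaker one) requires the right formulation. The rest is routine: energy monotonicity is already established, and Cauchy--Schwarz plus the rotation-index identity handle the closed case cleanly.
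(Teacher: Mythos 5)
Your treatment of~\eqref{basic-bound2} and of the case $P\neq Q$ coincides with the paper's. The genuine problem is in your argument for~\eqref{GBbound}: you replace the total absolute curvature by the rotation index, claiming $\left|\int_\gamma k\,\mathrm{d}s\right|=2\pi|m|$ with $m\neq 0$ ``since $\gamma_t$ is an immersed closed curve''. This is false: an immersed closed plane curve can have rotation index $m=0$ (the figure--eight does), and such curves are squarely within the scope of this lemma --- indeed the symmetric figure--eight is one of the expected limits of the flow mentioned in the introduction, and nothing in the hypotheses excludes initial data of turning number zero. For such a curve your chain gives only $0\le \ell(\gamma_t)\int_{\gamma_t}k^2\,\mathrm{d}s$, which is vacuous. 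The paper avoids this by bounding the \emph{total absolute} curvature from below, $2\pi\le\int_{\gamma_t}|k|\,\mathrm{d}s$ (Fenchel's theorem, valid for every closed immersed curve regardless of turning number), and then applying Cauchy--Schwarz exactly as you do. The repair is therefore one line, but as written your step is wrong, not merely differently routed.

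For the case $P=Q$ you correctly identify the key constraint $\int_0^\ell\tau\,\mathrm{d}s=\gamma(\ell)-\gamma(0)=0$, but you stop short of deriving $\ell\int_\gamma k^2\,\mathrm{d}s\ge\pi^2$ from it, flagging the constant as an open obstacle. The clean way to close this (and the route the paper takes, citing a generalized Gauss--Bonnet inequality) is again through the total absolute curvature: the mean--zero constraint forces the tangent indicatrix not to be contained in any open half--circle, hence the total variation of the angle $\theta$ satisfies $\int_{\gamma}|k|\,\mathrm{d}s=\int_0^\ell|\theta'|\,\mathrm{d}s\ge\pi$, and Cauchy--Schwarz then gives $\pi^2\le\ell(\gamma_t)\int_{\gamma_t}k^2\,\mathrm{d}s$. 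A vector--valued Wirtinger inequality is not needed and would be harder to push to the sharp constant. In short: both lower bounds should be run through $\int|k|\,\mathrm{d}s$ rather than through $\int k\,\mathrm{d}s$ or an $L^2$ Poincar\'e argument; once that substitution is made your proof matches the paper's.
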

\begin{proof}
Formula~\eqref{basic-bound2} is a direct consequence of
Proposition~\ref{energydecreases}.
Suppose $\gamma_t$ is a one--parameter family of single closed curves. Then by Gauss--Bonnet theorem we have
\begin{equation}\label{catena}
    2\pi\leq \int_{\gamma_t} \vert k\vert\,\mathrm{d}s
    \leq \left(\int_{\gamma_t} \vert k\vert^2\,\mathrm{d}s\right)^{1/2}
    \left(\int_{\gamma_t} 1\,\mathrm{d}s\right)^{1/2}
    = \ell(\gamma_t)^{1/2}\left(\int_{\gamma_t} \vert k\vert^2\,\mathrm{d}s\right)^{1/2}\,,
\end{equation}
that combined with~\eqref{basic-bound} gives~\eqref{GBbound}. 
Clearly if $\gamma_t$ 
is composed of a curve with fixed endpoints $\gamma(t,0)=P$
and $\gamma(t,1)=Q$ with $P\neq Q$, then 
$\ell(\gamma_t)\geq \vert P-Q\vert>0$.
Suppose now that $P=Q$.
Then by a generalization of the Gauss--Bonnet
Theorem (see~\cite[Corollary A.2]{danovplu}) 
to not necessarily embedded curves 
with coinciding endpoints it holds
$\int_{\gamma_t}\vert k\vert\,\mathrm{d}s\geq \pi$
and so repeating the chain of inequalities~\eqref{catena}
one gets~\eqref{boundlunghezza}.
\end{proof}

\begin{rem}
 In many situations it seems not possible to generalize the above computations  
in the case of networks  to control the lengths of the curves neither individually nor globally.
At the moment there are no explicit examples of networks whose curves disappear during the evolution,
but we believe in this possible scenario.

Consider for example a sequence of networks composed of three curves 
that meet only at their endpoints in two triple junctions.  In particular, 
suppose that the networks is composed of 
two arcs of circle of radius $1$  and length $\varepsilon$
that meet with a segment (of length $2\sin \tfrac\varepsilon2\sim\varepsilon$)
with angles of amplitude $\frac\varepsilon2$. 
The energy (with $\mu^i=1$ for any $i$) of this network is
$\mathcal{E}_\mu(\mathcal{N}_\varepsilon)=4\varepsilon+2\sin\tfrac\varepsilon2$ and it converges to zero
when $\varepsilon\to 0$.

A similar behavior has been shown by N\"{u}rnberg in the following numerical examples
based on the methods developed in~\cite{BaGaNu12} (see~\cite[Section 5.5]{menzelthesis} for more details). The initial datum is the standard double bubble.

\begin{figure}[H]
\begin{center}
\includegraphics[scale=1]{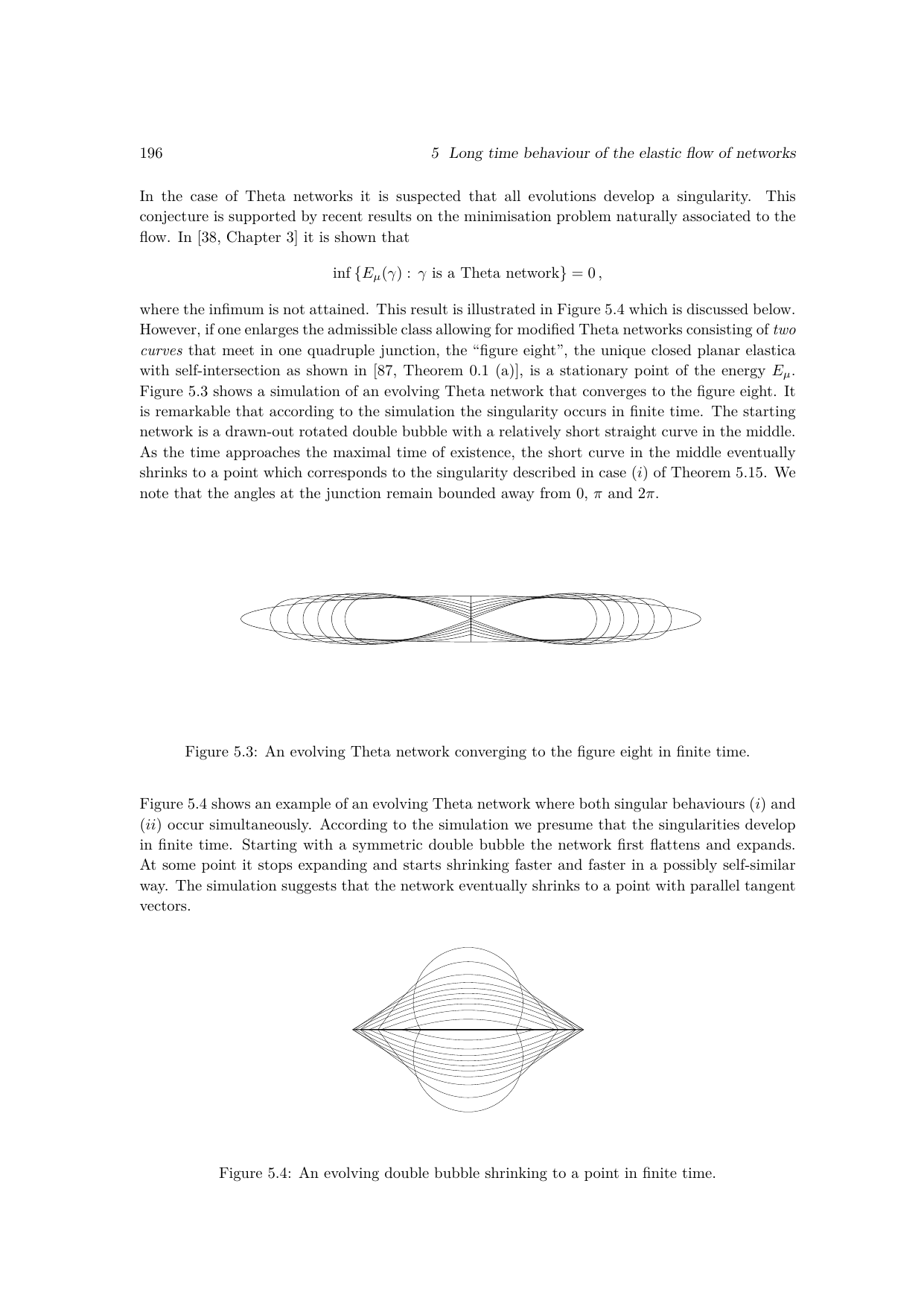}
\caption{A numerical example of a shrinking network. The weighs $\mu^i$ are all equal to $0.2$.}
\end{center}
\end{figure}

First the symmetric double bubble expand and then it starts flattening. 
The length of all the curves becomes smaller and smaller and the same happen to the amplitude of
the angles.  The simulation suggest that the networks shrink to a point in finite time.

In this other example instead only the length of one curve goes to zero and the network
composed of three curve becomes a ``figure eight''.
\begin{figure}[H]
\begin{center}
\includegraphics[scale=1]{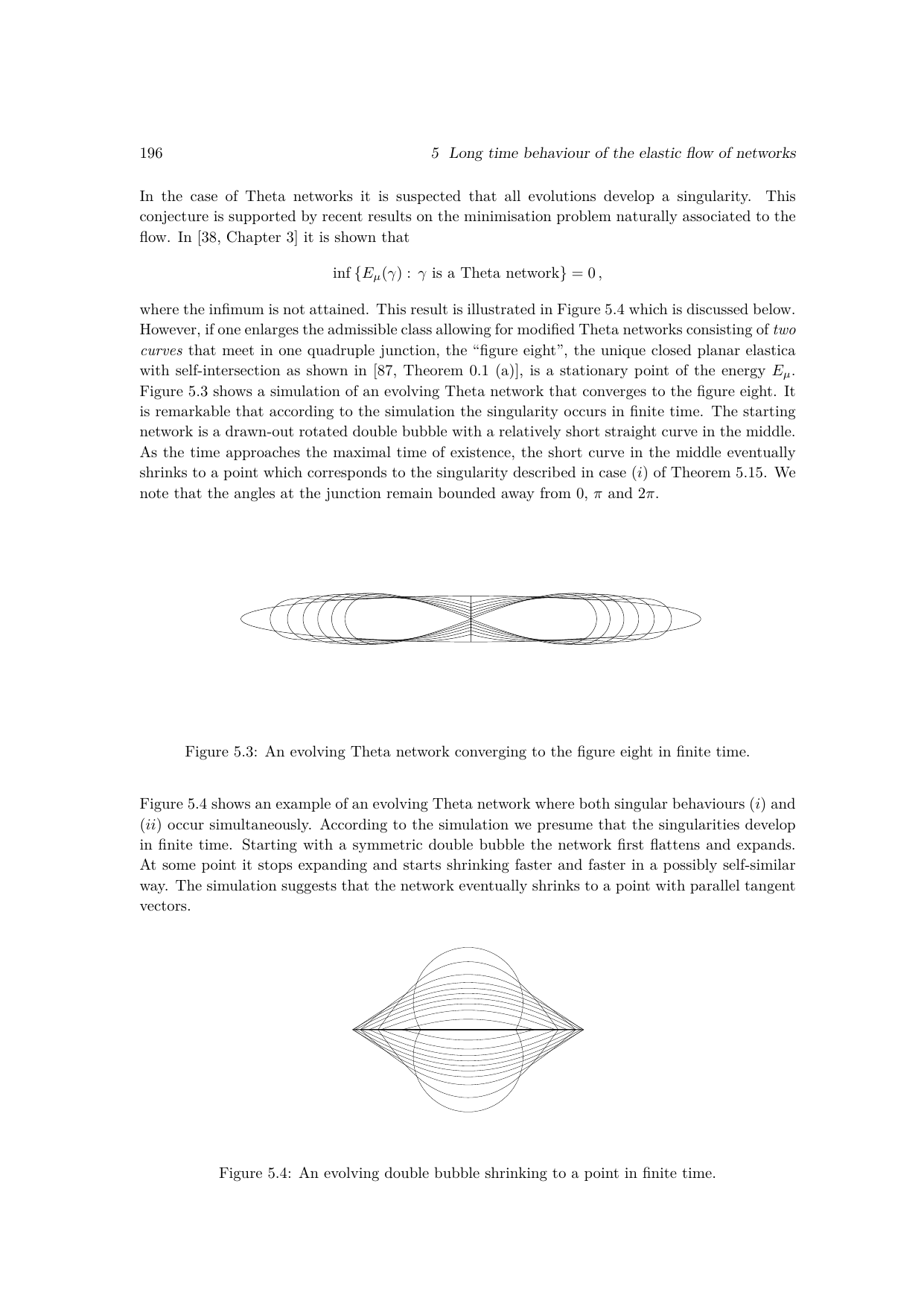}
\caption{A numerical example of a disappearance of one curve. The weighs $\mu^i$ are all equal to $2$.}
\end{center}
\end{figure}
 
\end{rem}

\begin{rem}\label{crescita-lineare}
If some of the weights 
$\mu^i$ of the  definition of the elastic flow are equal to zero, then the $L^2$--norm of the curvature remains bounded, but
the lengths of the network can go to infinity.
However, during the flow of either a single closed
curve or a curve with Navier boundary conditions,
the length of the curve can go to infinity, but not in finite time.
Suppose $\mu=0$, in this case we call the functional $\mathcal{E}_0$.
It holds
\begin{align*}
    \frac{d}{dt}\ell(\gamma_t)
  &=\frac{d}{dt}\int_{\gamma_t}1\,\mathrm{d}s  
  =\int_{\gamma_t} \partial_s T-k V\,\mathrm{d}s
  =T(1)-T(0)+\int_{\gamma_t} 2k\partial_s^2k+k^4\,\mathrm{d}s\\
  &=\int_{\gamma_t} -2\vert\partial_s k\vert^2+k^4\,\mathrm{d}s
+k(1)\partial_sk(1)-k(0)\partial_sk(0)\\
&=\int_{\gamma_t} -2\vert\partial_s k\vert^2+k^4\,\mathrm{d}s\,,
\end{align*}
indeed, in the case of a closed curve $T(1)=T(0)$ and $k(1)\partial_sk(1)=k(0)\partial_sk(0)$, while natural boundary conditions implies $T(1)=T(0)=k(1)=k(0)=0$. The Gagliardo-Nirenberg inequality gives
\begin{align*}
\Vert k	\Vert_4\leq \widetilde{C}\Vert \partial_sk\Vert_2^{\frac{1}{4}} \Vert k\Vert_2^{\frac{3}{4}}+\frac{B}{\ell^{\frac{1}{4}}}\Vert k\Vert_2\leq 
 c\Vert k\Vert_2^{\frac{3}{4}}\left(\Vert \partial_sk\Vert_2^{\frac{1}{4}} +\Vert k\Vert_2^{\frac{1}{4}}\right)
 \leq 2^{{\frac34}} c\Vert k\Vert_2^{\frac{3}{4}}\left(\Vert \partial_sk\Vert_2+\Vert k\Vert_2\right)^{\frac{1}{4}} \,,
\end{align*}
where $c=\max\left\lbrace  \widetilde{C},B/\ell^{\frac{1}{4}}\right\rbrace$. 
Thanks to~\eqref{GBbound} and~\eqref{boundlunghezza}, we know that
$\ell$ is uniformly bounded from below away from zero and thus that constants are independent of the length. Also, as $\|k\|_2 \le C(\mathcal{E}_0(\gamma_0))$, using Young inequality we obtain
\begin{align*}
\Vert k	\Vert_4^4&\leq 
 C\Vert k\Vert_2^3\left(\Vert \partial_sk\Vert_2 +\Vert k\Vert_2\right)
 \leq \varepsilon C \Vert \partial_sk\Vert_2^2 +C(\mathcal{E}_0(\gamma_0),\varepsilon).
\end{align*}
By taking $\varepsilon$ small enough we then conclude
\begin{align*}
    \frac{d}{dt}\ell(\gamma_t)&\leq \int_{\gamma_t} -2\vert\partial_s k\vert^2+k^4\,\mathrm{d}s
    \leq \int_{\gamma_t} -\vert \partial_sk\vert^2\,\mathrm{d}s 
    + C(\mathcal{E}_0(\gamma_0)),
    \end{align*}
thus in both cases
$  \frac{d}{dt}\ell(\gamma_t)\leq C(\mathcal{E}_0(\gamma_0))$
and hence the length grows at most linearly.\\
Unfortunately in the case of clamped curves we are not able to reproduce the same computation because
we cannot get rid of the boundary terms $k(1)\partial_s k(1)$ and $k(0)\partial_s k(0)$. However we are not
aware of examples in which the length of a clamped curve subjected to the $L^2$--gradient flow 
of $\mathcal{E}_0$ blows up in finite time.
\end{rem}

\begin{lemma}\label{stimeintegrali}
Let $\gamma:[0,1]\to\R^2$ be a smooth regular curve.
Then the following estimates hold:
\begin{align}\label{stima}
\int_{\gamma}^{} |\mathfrak{p}_{2j+6}^{j+1}\left(k\right)|\,\mathrm{d}s
\leq \varepsilon \lVert\partial_s^{j+2}k\rVert_{L^2}^2+C(\varepsilon,\ell(\gamma))\left(\lVert k\rVert_{L^2}^2+\lVert k\rVert^{2(2j+5)}_{L^2}\right)\,,\nonumber\\
\int_{\gamma}^{}|\mathfrak{p}_{2j+4}^j\left(k\right)|\,\mathrm{d}s
\leq \varepsilon \lVert\partial_s^{j+1}k\rVert_{L^2}^2
+C(\varepsilon,\ell(\gamma))\left(\lVert k\rVert_{L^2}^2+C\lVert k\rVert^{2(2j+3)}_{L^2}\right)\,,
\end{align}
for any $\varepsilon>0$.
\end{lemma}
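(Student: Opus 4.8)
The plan is to prove the two inequalities in \eqref{stima} by the standard scheme for estimating the integral of a curvature polynomial of the type $\mathfrak{p}_\sigma^h(k)$: interpolate the high-order derivatives of $k$ against $\|\partial_s^{j+2}k\|_{L^2}$ (resp.\ $\|\partial_s^{j+1}k\|_{L^2}$) and $\|k\|_{L^2}$ by the Gagliardo--Nirenberg inequality \eqref{int2}, then absorb powers via Young's inequality. I will give the argument for the first estimate in detail and only indicate the (entirely analogous) changes for the second. Throughout, lengths play a role only through the constant $B_{n,j,p}/\ell^{j\sigma}$ in the Gagliardo--Nirenberg inequality, so all constants may be taken to depend on $\ell(\gamma)$, as stated.

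First I would unwind the definition of $\mathfrak{p}_{2j+6}^{j+1}(k)$: by definition it is a sum of monomials $C\prod_{l=0}^{j+1}(\partial_s^l k)^{\beta_l}$ with $\sum_{l=0}^{j+1}(l+1)\beta_l = 2j+6$, at least one $\beta_l\geq 1$. For a single such monomial, I estimate its $L^1$-norm along $\gamma$ by H\"older's inequality, distributing exponents so that each factor $\partial_s^l k$ appears in $L^{p_l}$ with $\sum 1/p_l = 1$; the natural choice is $p_l = (2j+6)/((l+1)\beta_l)$ when $\beta_l\geq 1$ (note $\sum (l+1)\beta_l/(2j+6) = 1$). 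This reduces the problem to bounding each $\|\partial_s^l k\|_{L^{p_l}}$ by Gagliardo--Nirenberg. Applying \eqref{int2} with $n=l$, $j$ replaced by $j+2$, and $p=p_l$, one gets $\|\partial_s^l k\|_{L^{p_l}}\leq \widetilde C\,\|\partial_s^{j+2}k\|_{L^2}^{\sigma_l}\|k\|_{L^2}^{1-\sigma_l} + (B/\ell^{(j+2)\sigma_l})\|k\|_{L^2}$ with $\sigma_l = (l+1/2-1/p_l)/(j+2)$. Multiplying these bounds over the factors of the monomial, the total power of $\|\partial_s^{j+2}k\|_{L^2}$ is $\Sigma := \sum_l \beta_l\sigma_l$; a direct computation using $\sum_l (l+1)\beta_l = 2j+6$ and $\sum_l \beta_l \leq 2j+6$ (crude bound) shows $\Sigma = \big(\sum_l \beta_l(l+1/2-1/p_l)\big)/(j+2)$, which after substituting $(l+1)\beta_l/p_l$ values simplifies to $\Sigma<2$ — this scaling bound $\Sigma<2$ (strict) is exactly the structural reason the scheme works and is where the specific exponent $2j+6$ matters. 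Then Young's inequality $ab\leq \varepsilon a^{2/\Sigma} + C(\varepsilon) b^{2/(2-\Sigma)}$ (applied to the leading term) converts the factor $\|\partial_s^{j+2}k\|_{L^2}^{\Sigma}$ into $\varepsilon\|\partial_s^{j+2}k\|_{L^2}^2$ plus a constant times a power of $\|k\|_{L^2}$, while the lower-order terms coming from the $B/\ell^{\cdots}$ summands are already pure powers of $\|k\|_{L^2}$. Tracking the exponent of $\|k\|_{L^2}$ through this bookkeeping yields powers between $2$ and $2(2j+5)$, which is absorbed into $\|k\|_{L^2}^2 + \|k\|_{L^2}^{2(2j+5)}$. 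Summing over the finitely many monomials of $\mathfrak{p}_{2j+6}^{j+1}(k)$ and relabelling $\varepsilon$ gives \eqref{stima}.

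For the second inequality the same argument applies verbatim with $2j+6\rightsquigarrow 2j+4$, $j+2\rightsquigarrow j+1$, and $2j+5\rightsquigarrow 2j+3$; the scaling count again produces a total power of $\|\partial_s^{j+1}k\|_{L^2}^2$ strictly below $2$, so Young's inequality closes the estimate. The main technical obstacle is purely combinatorial: verifying that for \emph{every} admissible choice of exponents $(\beta_l)$ (and the associated H\"older split) the resulting power $\Sigma$ of the top-order term is strictly less than $2$, and simultaneously keeping the exponent of $\|k\|_{L^2}$ under control; this is where one must use carefully both the weight identity $\sum (l+1)\beta_l = \sigma$ and the fact that $j+2$ (resp.\ $j+1$) exceeds all the orders $l$ appearing. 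Apart from this bookkeeping, every step is a routine invocation of Gagliardo--Nirenberg and Young.
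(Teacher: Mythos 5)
Your proposal is correct and follows essentially the same route as the paper's proof: H\"older's inequality with exponents proportional to $(2j+6)/((l+1)\beta_l)$, Gagliardo--Nirenberg interpolation against $\|\partial_s^{j+2}k\|_{L^2}$ and $\|k\|_{L^2}$, verification that the total power $\Sigma=\sum_l\beta_l\sigma_l$ of the top-order norm is strictly below $2$, and Young's inequality, yielding the exponent $2(2j+5)$. The only blemish is a notational conflation of the H\"older exponent with the Lebesgue exponent of the individual factor $\partial_s^l k$ (which should be $(2j+6)/(l+1)$, entering $\sigma_l$ as $1/(\beta_l p_l)$ rather than $1/p_l$), but this does not affect the conclusion $\Sigma<2$ or the final estimate.
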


\begin{proof}
Every monomial of $\mathfrak{p}_{2j+6}^{j+1}\left(k\right)$ is of the form
$C\prod_{l=0}^{j+1}\left(\partial_s^lk\right)^{\alpha_l}$ 
with $\alpha_l\in\mathbb{N}$ and $\sum_{l=0}^{j+1}\alpha_l(l+1)=2j+6$. 
We define $J:=\{l\in\{0,\dots,j+1\}:\alpha_l\neq 0\}$
and for every $l\in J$  we set
$$
\beta_l:=\frac{2j+6}{(l+1)\alpha_l}\,.
$$
We observe that $\sum_{l\in J}^{}\frac{1}{\beta_l}=1$ and $\alpha_l\beta_l>2$
for every $l\in J$. Thus the H\"older inequality implies
\begin{equation*}
C\int_{\gamma}\prod_{l\in J}^{} (\partial_s^lk)^{\alpha_l}\,\mathrm{d}s
\leq C\prod_{l\in J}^{}\left(\int_{\gamma}\lvert\partial_s^lk\rvert^{\alpha_l\beta_l}\,\mathrm{d}s\right)^{\frac{1}{\beta_l}}
=C\prod_{l\in J}^{}\lVert\partial_s^lk\rVert^{\alpha_l}_{L^{\alpha_l\beta_l}}\,.
\end{equation*}
Applying the Gagliardo--Nirenberg inequality for every $l\in J$ yields for every $i\in\{1,\ldots,j+1\}$
\begin{equation*}
\lVert\partial_s^lk^i\rVert_{L^{\alpha_l\beta_l}}\leq C_{l,j,\alpha_l,\beta_l}\lVert\partial_s^{j+2}k^i\rVert_{L^2}^{\sigma_l}\lVert k^i\rVert_{L^2}^{1-\sigma_l}+\frac{B_{l,j,\alpha_l,\beta_l}}{\ell(\gamma)^{(j+2)\sigma_l}}\lVert k^i\rVert_{L^2}\,
\end{equation*}
where for all $l\in J$ the coefficient $\sigma_l$ is given by
$$
\sigma_l=\frac{l+1/2-1/(\alpha_l\beta_l)}{j+2}\,.
$$
We may choose 
$$
C=\max\left\{C_{l,j,\alpha_l,\beta_l},
\frac{B_{l,j,\alpha_l,\beta_l}}{\ell(\gamma)^{(j+2)\sigma_l}}:l\in J \right\}\,.
$$ 
Since 
the polynomial $\mathfrak{p}_{2j+6}^{j+1}\left(k\right)$
consists of finitely many monomials
of the above type, we can write
\begin{align*}
C\int_{\gamma}^{}\prod_{l\in J}^{}\lvert\partial_s^lk\rvert^{\alpha_l}\,\mathrm{d}s
&\leq C\prod_{l\in J }^{}\lVert\partial_s^lk\rVert^{\alpha_l}_{L^{\alpha_l\beta_l}}\\
&\leq C\prod_{l\in J}^{}\lVert k\rVert^{(1-\sigma_l)\alpha_l}_{L^2}\left(\lVert\partial_s^{j+2}k\rVert_{L^2}+\lVert k\rVert_{L^2}\right)^{\sigma_l\alpha_l}_{L^2}\\
&= C\lVert k\rVert^{\sum_{l\in J}(1-\sigma_l)\alpha_l}_{L^2}\left(\lVert\partial_s^{j+2}k\rVert_{L^2}+\lVert k\rVert_{L^2}\right)^{\sum_{l\in J}\sigma_l\alpha_l}_{L^2}.
\end{align*}
Moreover we have
\begin{align*}
\sum_{l\in J}\sigma_l\alpha_l&
\leq 2-\frac{1}{(j+2)^2}<2\,.
\end{align*}
Applying Young's inequality with $p:=\frac{2}{\sum_{l\in J}\sigma_l\alpha_l}$ and $q:=\frac{2}{2-\sum_{l\in J}^{}\sigma_l\alpha_l}$ we obtain
\begin{align*}
C\int_{\gamma}^{}\prod_{l\in J}^{}\lvert\partial_s^lk\rvert^{\alpha_l}\,\mathrm{d}s
&\leq \frac{C}{\varepsilon}\lVert k\rVert^{2\frac{\sum_{l\in J}(1-\sigma_l)\alpha_l}{2-\sum_{l\in J}^{}\sigma_l\alpha_l}}_{L^2}
+\varepsilon C \left(\lVert\partial_s^{j+2}k\rVert_{L^2}+\lVert k\rVert_{L^2}\right)^{2}_{L^2}
\end{align*}
where
\begin{align*}
2\frac{\sum_{l\in J}(1-\sigma_l)\alpha_l}{2-\sum_{l\in J}^{}\sigma_l\alpha_l}&
=2(2j+5)\,.
\end{align*}
As $C$ depends only on $j$ and the length of the curve, we get choosing $\varepsilon$ small enough 
\begin{align*}
\int_{\gamma}|\mathfrak{p}_{2j+6}^{j+1}|\left(k\right)\,\mathrm{d}s
&\leq\varepsilon \left(\lVert\partial_s^{j+2}k\rVert_{L^2}+\lVert k\rVert_{L^2}\right)^{2}_{L^2}+\frac{C}{\varepsilon}\lVert k\rVert^{2(2j+5)}_{L^2}
\,.
\end{align*}
To conclude it is enough to take  
choose a suitable $\varepsilon>0$.
The second inequality in~\eqref{stima} can be proved
in the very same way.
\end{proof}

\begin{lemma}\label{stimepuntuali}
Let $\gamma:[0,1]\to\R^2$ be a smooth regular curve.
Suppose that $\gamma$ has a fixed endpoint
of order one $\gamma(y)$ with $y\in\{0,1\}$.
Then the following estimates hold:
\begin{align}\label{stimanormainfinito}
|\mathfrak{p}^{j+1}_{2j+5}(k)(y)|\leq \varepsilon\lVert\partial_s^{j+2}k\rVert_{L^2}^2+C(\varepsilon,\ell(\gamma))\left(\lVert k\rVert_{L^2}^2+\lVert k\rVert^{2(2j+5)}_{L^2} \right)\,,\nonumber \\
|\mathfrak{p}^{j+1}_{2j+3}(k)(y)|\leq \varepsilon\lVert\partial_s^{j+2}k\rVert_{L^2}^2+C(\varepsilon,\ell(\gamma))\left(\lVert k\rVert_{L^2}^2+C\lVert k\rVert^{(2j+3)^2}_{L^2}\right)\,,
\end{align}
for any $\varepsilon>0$.
\end{lemma}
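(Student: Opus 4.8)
The plan is to reduce the pointwise bound to the $L^\infty$ Gagliardo--Nirenberg inequality \eqref{int2}, following the same scheme as the proof of Lemma~\ref{stimeintegrali} but with $L^\infty$ in place of the H\"older splitting (note that the hypothesis that $\gamma(y)$ is an endpoint plays no role; only $y\in[0,1]$ matters). First I would fix one monomial $\mathfrak{m}(k)=C\prod_{l=0}^{j+1}(\partial_s^l k)^{\alpha_l}$ of $\mathfrak{p}^{j+1}_{2j+5}(k)$, so that $\alpha_l\in\mathbb{N}$, $\sum_{l=0}^{j+1}(l+1)\alpha_l=2j+5$ and $\alpha_{l_0}\ge1$ for some $l_0$; set $J=\{l:\alpha_l\neq0\}$ and $A=\sum_{l\in J}\alpha_l$. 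Evaluating at $y$ gives $|\mathfrak{m}(k)(y)|\le C\prod_{l\in J}\Vert\partial_s^l k\Vert_{L^\infty}^{\alpha_l}$. Since every $l\in J$ satisfies $l\le j+1<j+2$, I can apply \eqref{int2} to each factor with $n=l$ and order $j+2$, so that, after merging the two (length-dependent) constants, $\Vert\partial_s^l k\Vert_{L^\infty}\le C(j,\ell(\gamma))\,\Vert k\Vert_{L^2}^{1-\sigma_l}\bigl(\Vert\partial_s^{j+2}k\Vert_{L^2}+\Vert k\Vert_{L^2}\bigr)^{\sigma_l}$ with $\sigma_l=(l+1/2)/(j+2)$. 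Multiplying over $l\in J$,
\[
|\mathfrak{m}(k)(y)|\le C\,\Vert k\Vert_{L^2}^{\sum_{l\in J}(1-\sigma_l)\alpha_l}\,\bigl(\Vert\partial_s^{j+2}k\Vert_{L^2}+\Vert k\Vert_{L^2}\bigr)^{S},\qquad S:=\sum_{l\in J}\sigma_l\alpha_l=\frac{(2j+5)-\tfrac12A}{j+2}.
\]

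The key point is then the combinatorial observation that, since $l+1\le j+2$ for $l\in J$, one has $2j+5=\sum_{l\in J}(l+1)\alpha_l\le(j+2)A$, hence $A\ge\frac{2j+5}{j+2}>2$, i.e.\ $A\ge3$; this forces $S\le\frac{4j+7}{2j+4}<2$, so Young's inequality with exponents $\frac2S$, $\frac2{2-S}$ applies and, for any $\varepsilon>0$,
\[
|\mathfrak{m}(k)(y)|\le\varepsilon\bigl(\Vert\partial_s^{j+2}k\Vert_{L^2}+\Vert k\Vert_{L^2}\bigr)^2+C(\varepsilon,\ell(\gamma))\,\Vert k\Vert_{L^2}^{\frac{2(A-S)}{2-S}}.
\]
Using $A-S=\frac{(2j+5)(A-2)}{2(j+2)}$ and $2-S=\frac{A-2}{2(j+2)}$ one gets $\frac{2(A-S)}{2-S}=2(2j+5)$ for every monomial; this is simply the fact that $\Vert\partial_s^{j+2}k\Vert_{L^2}^2$ and $\mathfrak{p}^{j+1}_{2j+5}(k)(y)$ carry the same parabolic scaling. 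Estimating $\bigl(\Vert\partial_s^{j+2}k\Vert_{L^2}+\Vert k\Vert_{L^2}\bigr)^2\le2\Vert\partial_s^{j+2}k\Vert_{L^2}^2+2\Vert k\Vert_{L^2}^2$, then summing over the finitely many monomials of $\mathfrak{p}^{j+1}_{2j+5}(k)$ and relabelling $\varepsilon$, the first inequality follows.

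For the second inequality I would run the identical argument on a monomial of $\mathfrak{p}^{j+1}_{2j+3}(k)$; now $\sum(l+1)\alpha_l=2j+3$ only yields $A\ge\frac{2j+3}{j+2}>1$, i.e.\ $A\ge2$, and $S=\frac{(2j+3)-\tfrac12A}{j+2}\le2-\frac2{j+2}<2$, so Young's inequality still produces a term $\Vert k\Vert_{L^2}^{2(A-S)/(2-S)}$ with $\frac{2(A-S)}{2-S}=\frac{2\,[A(2j+5)-(4j+6)]}{A+2}$. This exponent genuinely depends on the monomial (consistently with the fact that $\mathfrak{p}^{j+1}_{2j+3}(k)(y)$ and $\Vert\partial_s^{j+2}k\Vert_{L^2}^2$ no longer scale alike, which is why $\ell(\gamma)$ enters the constant); it is increasing in $A$, equals $2$ at $A=2$, and since $A\le2j+3$ it is at most $\frac{2(2j+3)^2}{2j+5}\le(2j+3)^2$. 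Hence $\Vert k\Vert_{L^2}^{2(A-S)/(2-S)}\le\Vert k\Vert_{L^2}^2+\Vert k\Vert_{L^2}^{(2j+3)^2}$ for each monomial, and summing over them gives the claim. The only real difficulty is this bookkeeping: extracting the lower bound on $A$ from the weight constraint (which is exactly what makes $S<2$, so that Young's inequality is available) and then controlling the resulting power of $\Vert k\Vert_{L^2}$---automatic by scaling in the first case, and requiring the monotonicity-in-$A$ estimate in the second.
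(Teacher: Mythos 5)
Your proposal is correct and follows essentially the same route as the paper: bound each monomial evaluated at $y$ by the product of $L^\infty$ norms, apply the $L^\infty$ Gagliardo--Nirenberg inequality \eqref{int2} with target order $j+2$ to each factor, verify that the total interpolation weight $S=\sum_l\sigma_l\alpha_l$ is strictly below $2$, and close with Young's inequality. Your bookkeeping of the resulting exponent on $\lVert k\rVert_{L^2}$ in the $\mathfrak{p}^{j+1}_{2j+3}$ case (showing it is monotone in $A$, equal to $2$ at $A=2$, and at most $(2j+3)^2$) is in fact slightly cleaner than the paper's, which only records that the exponent lies in an interval, but the argument is the same.
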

\begin{proof}
The term $|\mathfrak{p}^{j+1}_{2j+5}(k)(y)|$
can be controlled by a sum of terms like 
$C\prod_{l=0}^{j+1}\Vert\ders^{l}k\Vert_{L^\infty}^{\alpha_l}$
with $\sum_{l=0}^{j+1}(l+1)\alpha_l=2j+5$.
Then, for every $l\in\{0,\ldots, j+1\}$
we use interpolation inequalities with $p=+\infty$
to obtain
$$
\Vert\ders^lk\Vert_{L^\infty}\leq C_l\left(
    {\Vert\partial_s^{j+2}  k\Vert}_{L^2}^{\sigma_l}
    {\Vert k\Vert}_{L^2}^{1-\sigma_l}+
    {\Vert k\Vert}_{L^2}\right)\,,
$$
with $\sigma_l=\frac{l+1/2}{j+2}$.
Thus 
\begin{align*}
\prod_{l=0}^{j+1}\Vert\ders^{l}k\Vert_{L^\infty}^{\alpha_l}
\leq&\, 
C \prod_{l=0}^{j+1}\left({\Vert\partial_s^{j+2}  k\Vert}_{L^2}
+{\Vert k\Vert}_{L^2}\right)^{\sigma_l\alpha_l}
{\Vert k\Vert}_{L^2}^{(1-\sigma_l)\alpha_l}\\
\leq&\, 
C \left({\Vert\partial_s^{j+2}k\Vert}_{L^2}
    + {\Vert k\Vert}_{L^2}\right)^{\sum_{l=0}^{j+1}\sigma_l\alpha_l}
{\Vert k\Vert}_{L^2}^{\sum_{l=0}^j(1-\sigma_l)\alpha_l}
\end{align*}
with 
\begin{align*}
\sum_{l=0}^{j+1}\sigma_l\alpha_l&
=\,\sum_{l=0}^{j+1}\alpha_l
\frac{l+1/2}{j+2}
=\frac{2j+5-1/2\sum_{l=0}^j\alpha_l}{j+2}\\
&\leq\, \frac{2j+5-1/2\sum_{l=0}^{j+1}\alpha_l(l+1)/(j+2)}{j+2}
\\
&=\, \frac{2j+5 -1 -1/2(j+2)}{j+2}=
2 - \frac{1}{2(j+2)^2}<2\,.
\end{align*}
Then  by Young inequality,
$$
\left({\Vert\partial_s^{j+2}  k\Vert}_{L^2}
+{\Vert k\Vert}_{L^2}\right)^{\sum_{l=0}^{j+1}\sigma_l\alpha_l}
{\Vert k\Vert}_{L^2}^{\sum_{l=0}^{j+1}(1-\sigma_l)\alpha_l}
\leq \varepsilon \left( {\Vert\partial_s^{j+2}  k\Vert}_{L^2}
    + {\Vert k\Vert}_{L^2}\right)^2
+ C {\Vert  k\Vert}_{L^2}^{a^*}
$$
and the last exponent
$a^*=2\frac{\sum_{l=0}^j(1-\sigma_l)\alpha_l}
{2-\sum_{l=0}^j\sigma_l\alpha_l}$
is equal to $2(2j+5)$. Choosing 
a value $\varepsilon>0$ small enough, we get the desired estimate.

Similarly $\mathfrak{p}^{j+1}_{2j+3}(k)(y)$
can be controlled by 
a sum of terms like 
$C\prod_{l=0}^{j+1}\Vert\ders^{l}k\Vert_{L^\infty}^{\alpha_l}$
with $\sum_{l=0}^{j+1}(l+1)\alpha_l=2j+3$.
We can repeat the same proof.
Also in this case $\sum_{l=0}^{j+1}\sigma_l\alpha_l<2$:
indeed 
\begin{align*}
\sum_{l=0}^{j+1}\sigma_l\alpha_l&
=\frac{2j+3-1/2\sum_{l=0}^{j+1}\alpha_l}{j+2}\\
&\leq\, \frac{2j+3-1/2\sum_{l=0}^{j+1}\alpha_l(l+1)/(j+2)}{j+2}
\\
&=\, \frac{2j+3 + 1-1 - \frac{2j+3}{2j+4}}{j+2}=
	2 - \frac{1}{j+2}-\frac{2j+3}{2(j+2)^2}<2\,.
\end{align*}
This time we get that the exponent $a^*\in (\frac{2}{j+5},\frac{(2j+3)^2}{2})$.
We have 
$a^*=\frac{(j+\frac{5}{2})(\sum_{l=0}^{j+1}\alpha_l)-2j-3}{
1+\frac{1}{2}\sum_{l=0}^{j+1}\alpha_l}$.
Because of the properties of the polynomial $\mathfrak{p}_{2j+3}^{j+1}(k)$
we have that $2\leq \sum_{l=0}^{j+1}\alpha_l< 2j+3$.
Then 
$a^*<\frac{(j+\frac{5}{2})(2j+3)-(2j+3)}{1+\frac{1}{2}\sum_{l=0}^{j+1}\alpha_l}
<\frac{(2j+3)^2}{2}$.
Moreover 
$a^*\geq \frac{2(j+\frac{5}{2})-2j-3}{1+\frac{1}{2}(2j+3)}
=\frac{2}{j+5}$.
Now that we have ensured that $a^*$ is bounded from below 
away from zero we can conclude that the desired estimate hold true.
\end{proof}

\begin{lemma}\label{derivate-pari-nulle}
Let $\gamma_t$ be a maximal solution in $[0,T_{\max})$
to the elastic flow of a curve with Navier boundary conditions.
The for all $t\in [0,T_{\max})$ and for all $n\in\mathbb{N}$ 
\begin{equation*}
    \partial_s^{2n}k(t,0)=\partial_s^{2n}k(t,1)=0\,.
\end{equation*}
\end{lemma}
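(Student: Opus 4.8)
The plan is to argue by induction on $n$, exploiting repeatedly that the endpoint $\gamma(t,y)$ is fixed and that $k$ vanishes there for all times. Throughout, $y\in\{0,1\}$ is fixed.

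First the base cases. The Navier boundary condition $\boldsymbol{\kappa}(t,y)=0$ is exactly $k(t,y)=0$ for every $t$, which is the case $n=0$. Moreover, since $\gamma(t,0)=P$ and $\gamma(t,1)=Q$ do not depend on $t$, we have $\partial_t\gamma(t,y)=0$, hence $V(t,y)=0$ and $T(t,y)=0$ because $\nu$ and $\tau$ are linearly independent. Recalling $V=-2\partial_s^2 k-k^3+\mu k$, evaluating at $y$ and using $k(t,y)=0$ gives $\partial_s^2 k(t,y)=0$, which is the case $n=1$.

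For the inductive step, fix $n\ge 1$ and assume $\partial_s^{2m}k(t,y)=0$ for all $t$ and all $0\le m\le n$. Then $t\mapsto \partial_s^{2(n-1)}k(t,y)$ is identically zero on $(0,T_{\max})$ (take $m=n-1$), so its time derivative vanishes and $\partial_t\partial_s^{2(n-1)}k(t,y)=0$. I would insert $j=2(n-1)$ into the evolution equation for $\partial_s^j k$ from Lemma~\ref{evoluzionigeom}, using the uncompressed form of~\eqref{derivativekt} in which the terms $-5k^2\partial_s^{j+2}k+\mu\partial_s^{j+2}k$ are displayed, and evaluate at $(t,y)$. The term $T\partial_s^{2n-1}k$ vanishes since $T(t,y)=0$; the term $k^2\partial_s^{2n}k$ vanishes since $k(t,y)=0$; and $\mu\partial_s^{2n}k$ vanishes by the inductive hypothesis. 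It remains to kill the polynomial remainders $\mathfrak{p}_{2n+3}^{2n-1}(k)$ and $\mathfrak{p}_{2n+1}^{2n-2}(k)$ at $y$.

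The key observation for the remainders is a parity argument: a monomial $C\prod_{l=0}^h(\partial_s^l k)^{\beta_l}$ with $\sum_l(l+1)\beta_l=\sigma$ and $\sigma$ odd must have $\beta_l\ge 1$ for some \emph{even} $l$, since otherwise every occurring factor $\partial_s^l k$ has $l$ odd, $l+1$ even, and the weighted degree $\sigma$ would be even. Since $2n+3$ and $2n+1$ are odd, every monomial of $\mathfrak{p}_{2n+3}^{2n-1}(k)$ and of $\mathfrak{p}_{2n+1}^{2n-2}(k)$ contains a factor $\partial_s^{2m}k$ with $0\le 2m\le 2n-1$, that is $m\le n-1$, which vanishes at $y$ by the inductive hypothesis (or by $k(t,y)=0$ when $m=0$); hence the whole monomial, and therefore each remainder, vanishes at $y$. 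What survives of the evolution equation at $(t,y)$ is then $0=-2\partial_s^{2n+2}k(t,y)$, i.e. $\partial_s^{2(n+1)}k(t,y)=0$, closing the induction. The case $t=0$ follows by continuity, letting $t\downarrow 0$ in the identities established for $t>0$ (for $n=0,1$ it also follows directly from the admissibility of the initial network). I expect the only point requiring care to be the bookkeeping of the orders $h$ in $\mathfrak{p}_\sigma^h$, so that the parity argument produces an even-order derivative of index $m\le n-1\le n$, hence covered by the hypothesis — which is precisely why one must use the uncompressed version of~\eqref{derivativekt} rather than its compressed form; everything else is routine.
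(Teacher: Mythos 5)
Your proof is correct and follows essentially the same route as the paper: induction, with the base cases obtained by differentiating the Navier conditions in time (giving $V=T=0$ at the endpoints, hence $\partial_s^2k=0$), and the inductive step obtained by differentiating $\partial_s^{2(n-1)}k(t,y)=0$ in time via~\eqref{derivativekt} and killing the remainders because every monomial contains an even-order derivative of $k$ of order at most $2n-2$. Your parity argument (odd weighted degree forces a factor $\partial_s^{2m}k$) is a welcome justification of a claim the paper's proof merely asserts.
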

\begin{proof}
We prove the result by induction.
Since we required Navier boundary conditions, apart from the fixed
endpoints 
$\gamma(t,0)=P$ and $\gamma(t,1)=Q$, we also have 
$k(0)=k(1)=0$.
Differentiating in time the first condition we get for $y\in\{0,1\}$
\begin{align*}
   0&=\partial_t \gamma(t,y)= V(t,y)\nu(t,y)+T(t,y)\tau(t,y)\\
   &=(-2\partial_s^2 k(t,y)-k^3(t,y)+\mu k (t,y))\nu(t,y)+T(t,y)\tau(t,y)\,,
\end{align*}
that implies $T(t,y)=0$ and $\partial_s^2 k (t,y)=0$,
and this gives the first step of the induction.
Let $n\in\mathbb{N}$ and suppose that 
$\partial_s^{2n}k(0)=\partial_s^{2n}k(1)=0$ holds for any natural number  $m\leq n$. Then making use of~\eqref{derivativekt} 
 we have 
\begin{align*}
 0=\partial_t\partial_s^{2(n-1)}k(t,y)
&=-2\partial_s^{2(n+1)}k(t,y)
-5k^2\partial_s^{2n}k(t,y)+\mu \partial_s^{2n}k(t,y)-T\partial_s^{2n-1}k(t,y)\\
&+\mathfrak{p}^{2n-1}_{2n+3}(k)(t,y)+\mu \mathfrak{p}^{2n-2}_{2n+1}(k)(t,y)
=-2\partial_s^{2(n+1)}k(t,y)\,,   
\end{align*} 
where we use the induction hypothesis,  $T(t,y)=0$
and the fact that
each monomial of 
$\mathfrak{p}^{2n-1}_{2n+3}(k)$, $\mu \mathfrak{p}^{2n-2}_{2n+1}(k)$
contains at least one term of the form
$\partial_s^{2m}k$.
\end{proof}

\begin{lemma}
Let $\gamma_t$ be a maximal solution in $[0,T_{\max})$
to the elastic flow of a curve with clamped boundary conditions.
The for all $t\in [0,T_{\max})$, $y\in\{0,1\}$
and $n\in\mathbb{N}$
\begin{align}
\partial_s^{4n+2}k(t,y)
=\mathfrak{p}^{4n}_{4n+3}(k)(t,y)
+\mu \mathfrak{p}^{4n}_{4n+1}(k)(t,y)\,,\label{caso4j+2}\\
\partial_s^{4n+3}k(t,y)
=\mathfrak{p}^{4n+1}_{4n+4}(k)(t,y)
+\mu \mathfrak{p}^{4n+1}_{4n+2}(k)(t,y)\,.\label{caso4j+3}
\end{align}
\end{lemma}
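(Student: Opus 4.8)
The plan is to prove~\eqref{caso4j+2} and~\eqref{caso4j+3} by induction on $n$, in the spirit of Lemma~\ref{derivate-pari-nulle} but now using that the clamped conditions freeze \emph{both} the endpoint and the unit tangent at $y$.

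\emph{Base case $n=0$.} Since $\gamma(t,y)$ and $\tau(t,y)$ do not depend on $t$, differentiating in time and invoking Lemma~\ref{evoluzionigeom} gives $0=\partial_t\gamma(t,y)=V(t,y)\nu(t,y)+T(t,y)\tau(t,y)$ and $0=\partial_t\tau(t,y)=\left(\partial_s V(t,y)+T(t,y)k(t,y)\right)\nu(t,y)$. As $\nu(t,y)$ and $\tau(t,y)$ are linearly independent, the first equality forces $V(t,y)=T(t,y)=0$ and then the second forces $\partial_s V(t,y)=0$. Since $V=-2\partial_s^2k-k^3+\mu k$, solving $V(t,y)=0$ and $\partial_s V(t,y)=0$ for the highest order derivative yields $\partial_s^2k(t,y)=-\tfrac12 k^3(t,y)+\tfrac\mu2 k(t,y)$ and $\partial_s^3k(t,y)=-\tfrac32 k^2\partial_sk(t,y)+\tfrac\mu2\partial_sk(t,y)$, which are exactly~\eqref{caso4j+2} and~\eqref{caso4j+3} for $n=0$.

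\emph{Inductive step.} Suppose~\eqref{caso4j+2} and~\eqref{caso4j+3} hold up to some level $n$. Since~\eqref{caso4j+2} at level $n$ holds for every $t$, I would differentiate it in time. For the left-hand side, the evolution formula~\eqref{derivativekt} with $j=4n+2$, evaluated at $x=y$ where $T(t,y)=0$ kills the $T\,\partial_s^{4n+3}k$ term, gives $\partial_t\partial_s^{4n+2}k(t,y)=-2\,\partial_s^{4n+6}k(t,y)+\mathfrak{p}^{4n+4}_{4n+7}(k)(t,y)+\mu\,\mathfrak{p}^{4n+4}_{4n+5}(k)(t,y)$. For the right-hand side, the chain rule together with~\eqref{der-t-pol} (again with $T(t,y)=0$) turns $\partial_t\mathfrak{p}^{4n}_{4n+3}(k)$ into a $\mathfrak{p}^{4n+4}_{4n+7}(k)+\mu\,\mathfrak{p}^{4n+2}_{4n+5}(k)$ and $\partial_t\mathfrak{p}^{4n}_{4n+1}(k)$ into a $\mathfrak{p}^{4n+4}_{4n+5}(k)+\mu\,\mathfrak{p}^{4n+2}_{4n+3}(k)$. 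Equating the two expressions, solving for $\partial_s^{4n+6}k(t,y)$, and simplifying sums of polynomials of equal weight via~\eqref{calcpol}, one presents $\partial_s^{4n+6}k(t,y)$ as $\mathfrak{p}^{4n+4}_{4n+7}(k)(t,y)+\mu\,\mathfrak{p}^{4n+4}_{4n+5}(k)(t,y)$ up to a remainder of type $\mu^2\,\mathfrak{p}^{4n+2}_{4n+3}(k)(t,y)$ produced by $\partial_t(\mu\,\mathfrak{p}^{4n}_{4n+1}(k))$. That remainder involves only derivatives of order at most $4n+3$, and the only ones of order $\equiv 2,3\pmod 4$ exceeding $4n$ that can occur there, namely $\partial_s^{4n+2}k$ and $\partial_s^{4n+3}k$, appear linearly and are eliminated by substituting~\eqref{caso4j+2} and~\eqref{caso4j+3} at level $n$; this generates further powers of the constant $\mu$ multiplying strictly lower-weight polynomials, which get absorbed into the coefficients of the $\mu$-dependent term. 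One thus obtains~\eqref{caso4j+2} at level $n+1$ with $\mathfrak{p}^{4(n+1)}_{4(n+1)+3}$ and $\mathfrak{p}^{4(n+1)}_{4(n+1)+1}$, and~\eqref{caso4j+3} at level $n+1$ follows by the same manipulation, differentiating~\eqref{caso4j+3} at level $n$ and using~\eqref{derivativekt} with $j=4n+3$.

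\emph{Main obstacle.} The structure is short; the effort lies in the bookkeeping. The point I expect to be most delicate is verifying at each step that the orders of differentiation and the weights of the polynomials produced by~\eqref{derivativekt} and~\eqref{der-t-pol} land exactly on the pattern $\mathfrak{p}^{4n+4}_{4n+7}$, $\mathfrak{p}^{4n+4}_{4n+5}$ dictated by the statement, and that the extra powers of $\mu$ arising from iterating a $\mu$-linear flow are reorganised so that the right-hand side retains the claimed two-term shape. A companion subtlety --- in contrast with the Navier case of Lemma~\ref{derivate-pari-nulle}, where the even boundary derivatives vanish --- is that here the derivatives of order $\equiv 0,1\pmod 4$ at $y$ are genuinely unconstrained; this is consistent with the fact that the polynomials $\mathfrak{p}^{4n}_{\cdot}$ and $\mathfrak{p}^{4n+1}_{\cdot}$ appearing on the right are allowed to contain exactly those derivatives, so one must not try to simplify them away. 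Finally, as in Lemma~\ref{derivate-pari-nulle}, everything is carried out for positive times, where Proposition~\ref{parabolicsmoth} guarantees that all these quantities are well defined and the differentiations legitimate.
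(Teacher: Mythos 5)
Your proposal is correct and follows essentially the same route as the paper: induction on $n$, with the base case obtained by differentiating the clamped conditions $\gamma(t,y)=P$ and $\tau(t,y)=\tau_0$ in time (giving $V=T=0$ and $\partial_s V=0$ at $y$), and the inductive step obtained by differentiating the level-$n$ identity in time and combining~\eqref{derivativekt} with~\eqref{der-t-pol}, using $T(t,y)=0$ to kill the tangential terms and then solving for the top-order derivative. Your explicit handling of the extra $\mu^2$ lower-weight remainder is a point the paper glosses over, but it is the same argument.
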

\begin{proof}
Consider first the fixed endpoints condition $\gamma(t,0)=P$
and $\gamma(t,1)=Q$.
Differentiating in time we have, for $y\in\{0,1\}$ 
\begin{align*}
   0&=\partial_t \gamma(t,y)= V(t,y)\nu(t,y)+T(t,y)\tau(t,y)\\
   &=(-2\partial_s^2 k(t,y)-k^3(t,y)+\mu k (t,y))\nu(t,y)+T(t,y)\tau(t,y)\,.
\end{align*}
Since both normal and tangential velocity have to be zero,
we get $T(t,y)=0$ and 
$\partial_s^2 k(t,y)=\frac{\mu}{2} k(t,y)-\frac{1}{2}k^3(t,y)$: 
the case $n=0$ of~\eqref{caso4j+2} holds true. 
Fix a certain $n\in\mathbb{N}$, 
suppose that~\eqref{caso4j+2} is true for any natural number
$m\leq n$. 
Then 
\begin{align*}
    0&=\partial_t\left(\partial_s^{4n+2}k(t,y)
    +\mathfrak{p}^{4n}_{4n+3}(k(t,y))+\mu \mathfrak{p}^{4n}_{4n+1}(k(t,y))\right)\\
    &=-2\partial_{s}^{4n+6}k(t,y)	
-T(t,y)\partial_{s}^{4n+3}k(t,y)
+\mathfrak{p}_{4n+7}^{4n+4}\left(k(t,y)\right)+
\mu\,\mathfrak{p}_{4n+5}^{4n+4}(k(t,y))\\
&+\mathfrak{p}_{4n+7}^{4n+4}(k(t,y))+T(t,y)\mathfrak{p}_{4n+4}^{4n+1}(k(t,y))
+\mu\mathfrak{p}_{4n+5}^{4n+4}(k(t,y))+\mu T(t,y)\mathfrak{p}_{4n+2}^{4n+1}(k(t,y))\\
&=-2\partial_{s}^{4n+6}k(t,y)	
+\mathfrak{p}_{4n+7}^{4n+4}\left(k(t,y)\right)+
\mu\,\mathfrak{p}_{4n+5}^{4n+4}(k(t,y))\,.
\end{align*}

We prove also~\eqref{caso4j+3} by induction.
We consider the clamped boundary condition $\tau(t,0)=\tau_0$
and $\tau(t,1)=\tau_1$. In this case differentiating in time we obtain
\begin{align*}
    0=\partial_t \tau(t,y)=(\partial_s V(t,y)+T(t,y)k(t,y))\nu(t,y)\,,
\end{align*}
that implies $0=\partial_s V(t,y)
=2\partial_s^3 k(t,y)+3k^2(t,y)\partial_sk(t,y)+\mu \partial_sk(t,y)$.
The induction step follows as in the previous situation.
\end{proof}

\begin{rem}
It is not possible to 
generalize~\eqref{caso4j+2} and~\eqref{caso4j+3}
to $\partial_s^nk=\mathfrak{p}^{n-2}_{n+1}(k)+\mu \mathfrak{p}^{n-2}_{n-1}(k)$, where $n\in\mathbb{N}$
is arbitrary. 
Indeed we do not have any particular 
request on $k$ and $\partial_s k$ at the boundary,  
we cannot produce the step $n=0$ of the induction.
\end{rem}

\begin{lemma}\label{lemmastimaderivatecurvatura}
Let $\gamma_t$ be a maximal solution to the elastic flow 
either of closed curves or of an open curve
subjected to Navier boundary conditions with initial datum
$\gamma_0$ in the maximal time interval $[0,T_{\max})$.
Let $\mathcal{E}_\mu(\gamma_0)$
be the elastic energy of the initial curve $\gamma_0$. 
Then for all $t\in (0,T_{\max})$ and $j\in\mathbb{N}$,
$j\geq 1$ it holds
\begin{equation}\label{stimaderivatecurvatura}
\frac{d}{dt}\int \frac{1}{2}\vert\partial_s^jk\vert^2\,\mathrm{d}s
\leq C(j,\mathcal{E}_\mu(\gamma_0))\,.
\end{equation}
\end{lemma}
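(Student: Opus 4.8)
The plan is to differentiate $\tfrac12\int|\partial_s^j k|^2\,\mathrm{d}s$ in time and reduce everything to the interpolation estimates of Lemma~\ref{stimeintegrali}. Using, from Lemma~\ref{evoluzionigeom}, that $\partial_t(\mathrm{d}s)=(\partial_s T-kV)\,\mathrm{d}s$ together with the evolution identity~\eqref{derivativekt} for $\partial_t\partial_s^j k$, I would write
\begin{align*}
\frac{d}{dt}\int\tfrac12|\partial_s^j k|^2\,\mathrm{d}s
&=\int\partial_s^j k\Big(-2\partial_s^{j+4}k-5k^2\partial_s^{j+2}k+\mu\,\partial_s^{j+2}k+T\partial_s^{j+1}k+\mathfrak{p}_{j+5}^{j+1}(k)+\mu\,\mathfrak{p}_{j+3}^{j}(k)\Big)\,\mathrm{d}s\\
&\qquad+\int\tfrac12|\partial_s^j k|^2\big(\partial_s T-kV\big)\,\mathrm{d}s .
\end{align*}
The first observation is that the two tangential contributions combine, after one integration by parts, into the boundary term $\tfrac12\big[T|\partial_s^j k|^2\big]_0^1$, which vanishes: for a closed curve by periodicity, and in the Navier case because fixing $\gamma(t,0)$ and $\gamma(t,1)$ forces $T(t,0)=T(t,1)=0$ (differentiate the endpoint conditions in time and use that $\nu,\tau$ are orthonormal, exactly as in the proof of Lemma~\ref{derivate-pari-nulle}). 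Hence the unknown tangential velocity drops out entirely.

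Next I would integrate by parts the principal terms: twice on $-2\int\partial_s^j k\,\partial_s^{j+4}k\,\mathrm{d}s=-2\|\partial_s^{j+2}k\|_{L^2}^2+(\text{bdry})$; once on $-5\int k^2\partial_s^j k\,\partial_s^{j+2}k\,\mathrm{d}s$, so as to lower its top order to $j+1$; and once on $\mu\int\partial_s^j k\,\partial_s^{j+2}k\,\mathrm{d}s=-\mu\|\partial_s^{j+1}k\|_{L^2}^2+(\text{bdry})$. All boundary contributions produced are zero: trivially for closed curves, and in the Navier case because Lemma~\ref{derivate-pari-nulle} gives $\partial_s^{2n}k(t,y)=0$ for all $n$ while $k(t,y)=0$ annihilates the $k^2$-weighted bracket — inspecting both parities of $j$ shows every bracket vanishes. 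Writing $\partial_s^j k=\mathfrak{p}_{j+1}^j(k)$ and $kV=\mathfrak{p}_4^2(k)+\mu\,\mathfrak{p}_2^0(k)$ and using the multiplicativity rules~\eqref{calcpol}, every remaining term under the integral is of the form $\mathfrak{p}_{2j+6}^{j+1}(k)+\mu\,\mathfrak{p}_{2j+4}^{j}(k)$, so
$$\frac{d}{dt}\int\tfrac12|\partial_s^j k|^2\,\mathrm{d}s\le-2\|\partial_s^{j+2}k\|_{L^2}^2-\mu\|\partial_s^{j+1}k\|_{L^2}^2+\int|\mathfrak{p}_{2j+6}^{j+1}(k)|\,\mathrm{d}s+\mu\int|\mathfrak{p}_{2j+4}^{j}(k)|\,\mathrm{d}s .$$

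Now I would apply Lemma~\ref{stimeintegrali} to the two integral remainders and choose the interpolation parameter $\varepsilon<1$ small enough that the resulting $\varepsilon\|\partial_s^{j+2}k\|_{L^2}^2$ is absorbed by $-2\|\partial_s^{j+2}k\|_{L^2}^2$ and $\mu\varepsilon\|\partial_s^{j+1}k\|_{L^2}^2$ by $-\mu\|\partial_s^{j+1}k\|_{L^2}^2$; this leaves $\frac{d}{dt}\int\tfrac12|\partial_s^j k|^2\,\mathrm{d}s\le-\|\partial_s^{j+2}k\|_{L^2}^2+C(\varepsilon,\ell(\gamma_t))\big(\|k\|_{L^2}^2+\|k\|_{L^2}^{2(2j+5)}\big)$. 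By Corollary~\ref{curvaturebounded} one has $\|k\|_{L^2}^2\le\mathcal{E}_\mu(\gamma_0)$, and by Lemma~\ref{lengthbounded} (estimate~\eqref{GBbound} for closed curves, \eqref{boundlunghezza} in the Navier case) the length is bounded below by a positive constant depending only on $\mathcal{E}_\mu(\gamma_0)$ (and on $|P-Q|$ in the Navier case); since the constant of Lemma~\ref{stimeintegrali} depends on $\ell$ only through such a positive lower bound — its $\ell$-dependent factors being of the form $B\,\ell^{-(j+2)\sigma}$ with $\sigma>0$ — it is controlled by a constant depending only on $j$ and $\mathcal{E}_\mu(\gamma_0)$. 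Dropping the non-positive term $-\|\partial_s^{j+2}k\|_{L^2}^2$ yields~\eqref{stimaderivatecurvatura}.

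The main obstacle is the bookkeeping rather than any genuine analytic difficulty: one must verify that, after exactly the right number of integrations by parts, the order-$(j+2)$ derivative always occurs linearly and is reduced to order $j+1$, so that every leftover term really matches the templates $\mathfrak{p}_{2j+6}^{j+1}$ or $\mu\,\mathfrak{p}_{2j+4}^{j}$ to which Lemma~\ref{stimeintegrali} applies (that lemma bounds only orders $\le j+1$ against $\|\partial_s^{j+2}k\|_{L^2}$); and one must check, case by case on the parity of $j$, that all boundary contributions vanish — in the Navier case this is exactly where Lemma~\ref{derivate-pari-nulle} together with $k(t,y)=0$ is needed, while for closed curves it is automatic.
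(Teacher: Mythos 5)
Your proposal is correct and follows essentially the same route as the paper's proof: the same use of~\eqref{derivativekt} and the evolution of $\mathrm{d}s$, the same cancellation of the tangential terms into a vanishing boundary bracket, the same integrations by parts producing $-2\|\partial_s^{j+2}k\|_{L^2}^2-\mu\|\partial_s^{j+1}k\|_{L^2}^2$ plus $\mathfrak{p}_{2j+6}^{j+1}(k)+\mu\,\mathfrak{p}_{2j+4}^{j}(k)$ with boundary terms killed by periodicity or Lemma~\ref{derivate-pari-nulle}, and the same absorption via Lemma~\ref{stimeintegrali} together with~\eqref{basic-bound}. Your explicit remark that the length lower bound from Lemma~\ref{lengthbounded} is what makes the interpolation constants depend only on $\mathcal{E}_\mu(\gamma_0)$ is a point the paper leaves implicit, but it does not change the argument.
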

\begin{proof}
Using~\eqref{derivativekt}
we obtain
\begin{align}\label{inizio-stime}
\frac{d}{dt}&\int_{\gamma_t} \frac{1}{2}\vert\partial_s^jk\vert^2\,\mathrm{d}s
=\int_{\gamma_t} \partial_s^jk\partial_t\partial_s^j k
+\frac{1}{2}\vert\partial_s^jk\vert^2(\partial_sT-kV)\,\mathrm{d}s
\nonumber\\
&=\int_{\gamma_t} \partial_s^jk
\left\lbrace 
-2\partial_{s}^{j+4}k	
-5k^2\partial_s^{j+2}k
+\mu\,\partial_s^{j+2}k
+\mathfrak{p}_{j+5}^{j+1}\left(k\right)+
\mu\,\mathfrak{p}_{j+3}^{j}(k)+T\partial_s^{j+1}k
\right\rbrace\,\mathrm{d}s\nonumber\\
&+\int_{\gamma_t} \frac{1}{2}\vert\partial_s^jk\vert^2(\partial_sT-kV)\,\mathrm{d}s\,.
\end{align}
We begin by considering the terms involving the tangential velocity: 
we have 
\begin{equation}\label{parte-tang}
\int_{\gamma_t}T\partial_s^jk\partial_s^{j+1}k
+\frac{1}{2}\partial_sT(\partial_s^jk)^2\,\mathrm{d}s=\frac12 \left(
T(t,1)(\partial_s^jk(t,1))^2-T(t,0)(\partial_s^jk(t,0))^2\right)=0\,,
\end{equation}
since for a closed curve $T(t,1)(\partial_s^jk(t,1))^2=T(t,0)(\partial_s^jk(t,0))^2$
and in the case of Navier boundary conditions $T(t,1)=T(t,0)=0$.

Integrating twice by parts the term 
$\int -2\partial_s^jk\partial_s^{j+4}k\,\mathrm{d}s$ and once
$\int \mu\partial_s^jk\partial_s^{j+2}k-5k^2\partial_s^j k\partial_s^{j+2}k\,\mathrm{d}s$
we have
\begin{align}\label{integrazione}
\frac{d}{dt}\int \frac{1}{2}\vert\partial_s^jk\vert^2\,\mathrm{d}s
&=\int -2\vert\partial_s^{j+2}k\vert^2-\mu\vert\partial_s^{j+1}k\vert^2
+\mathfrak{p}_{2j+6}^{j+1}\left(k\right)
+\mu \mathfrak{p}_{2j+4}^j\left(k\right) \,\mathrm{d}s\,.
\end{align}
Also in the case of open curves with Navier boundary conditions  there is no boundary contribution thanks to Lemma~\ref{derivate-pari-nulle}.
Combing~\eqref{integrazione} together with~\eqref{stima} one has
\begin{equation}\label{ultimo-passaggio}
\frac{d}{dt}\int \frac{1}{2}\vert\partial_s^jk\vert^2\,\mathrm{d}s
\leq \int -\vert\partial_s^{j+2}k\vert^2-\frac{\mu}{2}\vert\partial_s^{j+1}k\vert^2
 \,\mathrm{d}s+ C\Vert k\Vert^{2(2j+5)}_2+C\Vert k\Vert_{L^2}^2\leq C(j,\mathcal{E}_\mu(\gamma_0))\,,
\end{equation}
where in the last inequality we used~\eqref{basic-bound}.
\end{proof}

The case of clamped boundary conditions is more tricky.

\begin{lemma}\label{lemmastimaderivatecurvatura-clamped}
Let $\gamma_t$ be a maximal solution to the elastic flow 
subjected to  clamped boundary conditions
with initial datum
$\gamma_0$ in the maximal time interval $[0,T_{\max})$.
Let $\mathcal{E}_\mu(\gamma_0)$
be the elastic energy of the initial curve $\gamma_0$. 
Then for all $t\in (0,T_{\max})$ and $n\in\mathbb{N}$,
$n\geq 0$ it holds
\begin{equation*}
\frac{d}{dt}\int \frac{1}{2}\vert\partial_s^{4n}k\vert^2\,\mathrm{d}s
\leq C(n,\mathcal{E}_\mu(\gamma_0))\,.
\end{equation*}
\end{lemma}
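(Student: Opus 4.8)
The plan is to mimic the proof of Lemma~\ref{lemmastimaderivatecurvatura}, but now tracking carefully the boundary terms that do not automatically vanish under clamped conditions, and to exploit the structural identities~\eqref{caso4j+2} and~\eqref{caso4j+3} to estimate them. First I would fix $n\geq 1$ (the case $n=0$ being essentially the energy monotonicity of Proposition~\ref{energydecreases}, since $\partial_t\int\frac12 k^2\,\mathrm{d}s = -\int V^2\,\mathrm{d}s\le 0$), set $j=4n$, and start from the general differentiation formula, exactly as in~\eqref{inizio-stime}:
\[
\frac{d}{dt}\int_{\gamma_t}\frac12|\partial_s^{4n}k|^2\,\mathrm{d}s
=\int_{\gamma_t}\partial_s^{4n}k\left\{-2\partial_s^{4n+4}k-5k^2\partial_s^{4n+2}k+\mu\,\partial_s^{4n+2}k+\mathfrak{p}_{4n+5}^{4n+1}(k)+\mu\,\mathfrak{p}_{4n+3}^{4n}(k)+T\partial_s^{4n+1}k\right\}\mathrm{d}s
+\int_{\gamma_t}\frac12|\partial_s^{4n}k|^2(\partial_sT-kV)\,\mathrm{d}s\,.
\]
The tangential terms $\int T\partial_s^{4n}k\,\partial_s^{4n+1}k+\frac12\partial_sT(\partial_s^{4n}k)^2\,\mathrm{d}s$ integrate to the boundary contribution $\frac12[T(\partial_s^{4n}k)^2]_0^1$, which vanishes since $T(t,0)=T(t,1)=0$ under clamped conditions (as established in the proof of Proposition~\ref{energydecreases}). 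So the tangential part causes no trouble.

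Next I would integrate by parts the principal term $\int -2\partial_s^{4n}k\,\partial_s^{4n+4}k\,\mathrm{d}s$ twice and the term $\int(\mu-5k^2)\partial_s^{4n}k\,\partial_s^{4n+2}k\,\mathrm{d}s$ once, collecting all boundary terms. The interior contribution is, as in~\eqref{integrazione},
\[
\int_{\gamma_t}-2|\partial_s^{4n+2}k|^2-\mu|\partial_s^{4n+1}k|^2+\mathfrak{p}_{8n+6}^{4n+1}(k)+\mu\,\mathfrak{p}_{8n+4}^{4n}(k)\,\mathrm{d}s\,,
\]
which by the interpolation estimate~\eqref{stima} of Lemma~\ref{stimeintegrali} (with $j=4n$) is bounded above by $\int -|\partial_s^{4n+2}k|^2-\frac\mu2|\partial_s^{4n+1}k|^2\,\mathrm{d}s + C(n,\mathcal{E}_\mu(\gamma_0))$, using the curvature bound~\eqref{basic-bound}. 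The new feature, compared with Lemma~\ref{lemmastimaderivatecurvatura}, is the set of boundary terms at $y\in\{0,1\}$, which are of the schematic form
\[
\Big[\,2\partial_s^{4n}k\,\partial_s^{4n+3}k - 2\partial_s^{4n+1}k\,\partial_s^{4n+2}k + (\mu-5k^2)\partial_s^{4n}k\,\partial_s^{4n+1}k \;-\; 2\partial_s^{4n}k\cdot\partial_s(5k^2)\cdot\partial_s^{4n+1}k\;\text{-type products}\,\Big]_0^1\,,
\]
together with lower order boundary products coming from $\mathfrak{p}$-terms after any further integration by parts.

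The main obstacle is precisely to show that these boundary terms are controlled. The key is that at a clamped endpoint $y$ one has, by~\eqref{caso4j+2} and~\eqref{caso4j+3}, the identities $\partial_s^{4n+2}k(y)=\mathfrak{p}_{4n+3}^{4n}(k)(y)+\mu\,\mathfrak{p}_{4n+1}^{4n}(k)(y)$ and $\partial_s^{4n+3}k(y)=\mathfrak{p}_{4n+4}^{4n+1}(k)(y)+\mu\,\mathfrak{p}_{4n+2}^{4n+1}(k)(y)$ — that is, the ``odd-by-two'' high-order derivatives at the boundary are polynomially expressible in derivatives of order $\le 4n+1$. Substituting these into the boundary products turns every boundary term into a sum of pointwise polynomials $\mathfrak{p}_\sigma^{4n+1}(k)(y)$ with $\sigma$ in the appropriate range, and then I would apply the pointwise interpolation estimates~\eqref{stimanormainfinito} of Lemma~\ref{stimepuntuali} (with $j=4n$): each such term is bounded by $\varepsilon\|\partial_s^{4n+2}k\|_{L^2}^2 + C(\varepsilon,\ell,\mathcal{E}_\mu(\gamma_0))$. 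Choosing $\varepsilon$ small enough, the $\varepsilon\|\partial_s^{4n+2}k\|_{L^2}^2$ contributions are absorbed into the good negative interior term $-\int|\partial_s^{4n+2}k|^2\,\mathrm{d}s$, and since $\ell(\gamma_t)\ge |P-Q|>0$ or $\ell(\gamma_t)\ge\pi^2/\mathcal{E}_\mu(\gamma_0)$ by~\eqref{boundlunghezza} (hence the interpolation constants are uniform in time), one concludes
\[
\frac{d}{dt}\int\frac12|\partial_s^{4n}k|^2\,\mathrm{d}s\le C(n,\mathcal{E}_\mu(\gamma_0))\,,
\]
as claimed. I would carry out the induction implicitly by noting that~\eqref{caso4j+2}--\eqref{caso4j+3} already hold for all $m\le n$, so all substitutions are legitimate; the only delicate bookkeeping is checking that the exponents $\sigma$ appearing after substitution indeed fall in the range covered by Lemma~\ref{stimepuntuali}, which is a direct degree count using the algebra rules~\eqref{calcpol} for the $\mathfrak{p}$-polynomials.
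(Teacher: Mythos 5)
Your proposal follows essentially the same route as the paper's proof: vanishing of the tangential boundary contribution via $T(t,0)=T(t,1)=0$, two integrations by parts yielding the same interior terms as in Lemma~\ref{lemmastimaderivatecurvatura}, reduction of the surviving boundary products $\partial_s^{4n}k\,\partial_s^{4n+3}k$ and $\partial_s^{4n+1}k\,\partial_s^{4n+2}k$ to pointwise polynomials $\mathfrak{p}^{4n+1}_{8n+5}(k)+\mu\,\mathfrak{p}^{4n+1}_{8n+3}(k)$ via~\eqref{caso4j+2}--\eqref{caso4j+3}, and absorption of the resulting $\varepsilon\lVert\partial_s^{4n+2}k\rVert_{L^2}^2$ terms into the good interior term using Lemmas~\ref{stimeintegrali} and~\ref{stimepuntuali}. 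The only slip is in dispatching $n=0$: $\partial_t\int\frac12 k^2\,\mathrm{d}s$ is not equal to $-\int V^2\,\mathrm{d}s$ (that is $\partial_t\mathcal{E}_\mu$, which also carries the length term), but this is harmless since your general argument applied with $j=0$ covers that case exactly as the paper does.
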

\begin{proof}
Consider the equality~\eqref{inizio-stime}. 
As in the case of Navier boundary conditions,
also in the case of clamped boundary conditions  $T(t,1)=T(t,0)=0$ and thus we have~\eqref{parte-tang}.
Then integrating by parts the terms
$\int -2\partial_s^jk\partial_s^{j+4}k\,\mathrm{d}s$ and 
$\int \mu\partial_s^jk\partial_s^{j+2}k-5k^2\partial_s^j k\partial_s^{j+2}k\,\mathrm{d}s$
appearing in~\eqref{inizio-stime} we obtain
\begin{align*}
\frac{d}{dt}\int \frac{1}{2}\vert\partial_s^jk\vert^2\,\mathrm{d}s
&=\int -2\vert\partial_s^{j+2}k\vert^2-\mu\vert\partial_s^{j+1}k\vert^2
+\mathfrak{p}_{2j+6}^{j+1}\left(k\right)
+\mu \mathfrak{p}_{2j+4}^j\left(k\right) \,\mathrm{d}s\nonumber\\
&-2\partial_s^jk\partial_s^{j+3}k+2\partial_s^{j+1}k\partial_s^{j+2}k+\mu \partial_s^jk\partial_s^{j+1}k-5k^2\partial_s^jk\partial_s^{j+1}k\,\Big\vert^1_0 \nonumber \\
&=\int -2\vert\partial_s^{j+2}k\vert^2-\mu\vert\partial_s^{j+1}k\vert^2
+\mathfrak{p}_{2j+6}^{j+1}\left(k\right)
+\mu \mathfrak{p}_{2j+4}^j\left(k\right) \,\mathrm{d}s\nonumber\\
&-2\partial_s^jk\partial_s^{j+3}k
+2\partial_s^{j+1}k\partial_s^{j+2}k+\mathfrak{p}^{j+1}_{2j+5}(k)+\mu \mathfrak{p}^{j+1}_{2j+3}(k)\,\Big\vert^1_0 \,.\nonumber
\end{align*}
Suppose $j=4n$ with $n\in\mathbb{N}$. Then, using~\eqref{caso4j+2} and~\eqref{caso4j+3} 
\begin{align*}
 \partial_s^jk\partial_s^{j+3}k
&= \partial_s^{4n}k\partial_s^{4n+3}k
=\partial_s^{4n}k\mathfrak{p}^{4n+1}_{4n+4}(k) 
+\mu \partial_s^{4n}k \mathfrak{p}^{4n+1}_{4n+2}(k)
=\mathfrak{p}^{j+1}_{2j+5}(k) +\mu \mathfrak{p}^{j+1}_{2j+3}(k)\,,\\  
\partial_s^{j+1}k\partial_s^{j+2}k&=
\partial_s^{4n+1}k\partial_s^{4n+2}k
=\partial_s^{4n+1}k\mathfrak{p}^{4n}_{4n+3}(k)+\mu   \partial_s^{4n+1}k\mathfrak{p}^{4n}_{4n+1}(k)
=\mathfrak{p}^{j+1}_{2j+5}(k) +\mu \mathfrak{p}^{j+1}_{2j+3}(k)\,.
\end{align*}
So, for $j=4n$ with $n\in\mathbb{N}$,
combing~\eqref{integrazione} 
together with~\eqref{stima} 
and~\eqref{stimanormainfinito} 
one has
\begin{align*}
\frac{d}{dt}\int \frac{1}{2}\vert\partial_s^jk\vert^2\,\mathrm{d}s
&\leq \int -\frac{1}{2}\vert\partial_s^{j+2}k\vert^2
-\frac{\mu}{4}\vert\partial_s^{j+1}k\vert^2
 \,\mathrm{d}s+ C\Vert k\Vert^{2(2j+5)}_2+
C\Vert k\Vert^{(2j+3)^2}_{L_2}+C\Vert k\Vert_{L^2}^2\\
&\leq C(j,\mathcal{E}_\mu(\gamma_0))\,.
\end{align*}
\end{proof}

We pass now to networks. 
In the case of clamped boundary conditions, apart from the monotonicity of the energy, 
geometric estimates on the derivative of the curvature are not know (see also Section~\ref{Openprob}). 

To describe the results contained in~\cite{DaChPo19,GaMePl2} for networks with  junctions
subjected to natural boundary conditions we need a preliminary definition.
\begin{dfnz}
	We say that at a junction 
	of order $m\in\mathbb{N}_{\geq 2}$ the 
	uniform non--degeneracy condition is satisfied if 
	there exists $\rho>0$ such that
	\begin{equation}\label{nondegeneracy}
	\inf_{t\in[0,T_{\max})}\max_{i=1,\ldots,m}\left\{\left\vert\sin\alpha^i(t)\right\vert\right\}\geq \rho\,,
	\end{equation}
	where with $\alpha^i$ we denote the angles 
	between two consecutive tangent vectors of the 
	curves concurring at the junction. 
\end{dfnz}

Then~\cite[Proposition 6.15]{GaMePl2} reads as follow:
\begin{lemma}\label{estimatetogether}
Let $\mathcal{N}(t)$ be a maximal solution to the elastic flow with initial datum $\mathcal{N}_0$ in the maximal time interval $[0,T_{\max})$ and let $\mathcal{E}_\mu(\mathcal{N}_0)$ be the elastic energy
of the initial network. 
Suppose that at all the junctions (of any order
$m\in\mathbb{N}_{\geq 2}$) we impose natural boundary
conditions, 
for $t\in(0,T_{\max})$ the lengths of all the
curves of the network $\mathcal{N}(t)$
are uniformly bounded away from zero and 
that the uniform non--degeneracy condition is satisfied.
Then for all $t\in (0,T_{\max})$ it holds
\begin{equation}\label{estimate}
\frac{d}{dt}\int_{\mathcal{N}_t}
\left\lvert\partial_s^2k\right\rvert^2 \,\mathrm{d}s
\leq C(\mathcal{E}_\mu(\mathcal{N}_0))\,.
\end{equation}
\end{lemma}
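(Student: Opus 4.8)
The plan is to carry out, on each curve $\gamma^i$ of the network, the same computation underlying the proof of Lemma~\ref{lemmastimaderivatecurvatura} (now with $j=2$ and keeping the boundary terms produced at the junctions), and then to absorb those boundary terms into the negative leading term by means of the natural boundary conditions~\eqref{flussonatural}, the uniform non--degeneracy~\eqref{nondegeneracy} and the uniform lower bound on the lengths. Concretely, I would fix $i$, insert~\eqref{derivativekt} with $j=2$ into $\frac{d}{dt}\int_{\gamma^i_t}\tfrac12|\partial_s^2k^i|^2\,\mathrm{d}s=\int_{\gamma^i_t}\partial_s^2k^i\,\partial_t\partial_s^2k^i+\tfrac12|\partial_s^2k^i|^2(\partial_sT^i-k^iV^i)\,\mathrm{d}s$, integrate by parts twice in $\int-2\partial_s^2k^i\,\partial_s^6k^i\,\mathrm{d}s$ and once in $\int(\mu^i-5(k^i)^2)\partial_s^2k^i\,\partial_s^4k^i\,\mathrm{d}s$ and in the tangential term, and sum over $i$. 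The interior integrals that remain are $\sum_i\int_{\gamma^i_t}\big(-2|\partial_s^4k^i|^2-\mu^i|\partial_s^3k^i|^2+\mathfrak{p}_{10}^{3}(k^i)+\mu^i\mathfrak{p}_{8}^{2}(k^i)\big)\,\mathrm{d}s$, which I would bound exactly as in Lemma~\ref{lemmastimaderivatecurvatura}, using Lemma~\ref{stimeintegrali} (whose constants are uniform in $t$ because the lengths stay bounded away from zero) and Corollary~\ref{curvaturebounded}; this gives $-\sum_i\int_{\gamma^i_t}|\partial_s^4k^i|^2\,\mathrm{d}s+C(\mathcal{E}_\mu(\mathcal{N}_0))$, up to an arbitrarily small multiple of $\sum_i\|\partial_s^4k^i\|_{L^2}^2$.

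The new work is the boundary terms. At endpoints of order one, if present and carrying natural boundary conditions, they are treated as in Lemma~\ref{lemmastimaderivatecurvatura}. Fix a junction $O$ of order $m$ and, up to reorienting the curves, assume they all meet $O$ at $x=0$. The curvature condition gives $k^i(O)=0$, which removes several contributions and, importantly, makes $V^i(O)=-2\partial_s^2k^i(O)$ and $\partial_s^2V^i(O)=-2\partial_s^4k^i(O)+\mu^i\partial_s^2k^i(O)$. What survives is $\sum_i\big(2\partial_s^2k^i\,\partial_s^5k^i-2\partial_s^3k^i\,\partial_s^4k^i-\mu^i\partial_s^2k^i\,\partial_s^3k^i-\tfrac12T^i(\partial_s^2k^i)^2\big)(O)$ (up to signs and harmless constants), which contains space derivatives of order higher than $j+1=3$ and so cannot be fed directly to Lemma~\ref{stimepuntuali}. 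To lower the order I would differentiate in time the conditions of~\eqref{flussonatural} and use the motion equation together with the commutation rules of Lemma~\ref{evoluzionigeom}: from $\partial_tk^i(O)=0$ one gets $\partial_s^4k^i(O)=\tfrac{\mu^i}{2}\partial_s^2k^i(O)+\tfrac{T^i}{2}\partial_sk^i(O)$, and from the time derivative of the \emph{vector} third order condition in~\eqref{flussonatural} one gets a vector identity at $O$ tying the $\partial_s^5k^i(O)$ to lower order quantities and the $T^i$. The clean device --- in the spirit of the boundary cancellation in the proof of Proposition~\ref{energydecreases} --- is not to solve for each $\partial_s^5k^i(O)$ separately (for $m\ge 3$ there are too few scalar relations for that) but to pair both the surviving boundary vector and the time-differentiated third order condition with the common junction velocity $\partial_t\gamma(O)=V^i\nu^i+T^i\tau^i$ via the inner product; this kills the highest-order boundary derivatives and collapses the whole boundary contribution at $O$ into a finite sum of terms such as $T^i(V^i)^2$, $T^i\,\partial_sk^i\,\partial_s^3k^i$, $V^i\,\mathfrak{p}_6^2(k^i)$ and $\mu$--weighted lower order analogues like $\mu^2V^i\partial_sk^i$ and $\mu\,T^i(\partial_sk^i)^2$.

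To finish the boundary estimate I would substitute $V^i(O)=-2\partial_s^2k^i(O)$ and, by Remark~\ref{tang-in-funz-norm} together with~\eqref{nondegeneracy}, each $T^i(O)$ as a linear combination of the $V^j(O)$ with coefficients uniformly bounded in $t$ (the matrix $M$ there has $\det M$ uniformly away from zero): every surviving monomial then becomes a polynomial of the scaling type handled by Lemma~\ref{stimepuntuali} --- of the form $\mathfrak{p}_9^3$ or $\mu\,\mathfrak{p}_7^3$ in the curvatures of the curves meeting at $O$, possibly of strictly lower degree, which only makes the estimate easier. After splitting the cross-curve products by Young's inequality, Lemma~\ref{stimepuntuali} (whose proof is purely a pointwise interpolation inequality, hence valid at a junction point as well) bounds the boundary contribution by $\varepsilon\sum_i\|\partial_s^4k^i\|_{L^2}^2+C(\mathcal{E}_\mu(\mathcal{N}_0))$. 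Choosing $\varepsilon$ small enough, this and the interior bound are both absorbed by $-\sum_i\int_{\gamma^i_t}|\partial_s^4k^i|^2\,\mathrm{d}s$, which yields~\eqref{estimate}.

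The main obstacle is precisely the junction bookkeeping of the previous two paragraphs: writing down the correct time-differentiated junction identities at a multipoint of arbitrary order, organizing the inner-product cancellation so that no boundary derivative of order exceeding $j+1$ is left over, and verifying that the surviving monomials have exactly the scaling degree $2j+5$ (with $\mu$--weighted pieces of degree $2j+3$) required by Lemma~\ref{stimepuntuali}, with all the coefficients generated by inverting the matrix $M$ uniformly bounded --- which is exactly why the uniform lower bound on the lengths and the uniform non--degeneracy~\eqref{nondegeneracy} are both indispensable. For clamped junctions neither a boundary cancellation of this type nor an analogue of Remark~\ref{tang-in-funz-norm} with controlled coefficients seems available, and even for a single clamped curve the pointwise estimate fails for $\partial_s^2k$, which is why the statement is restricted to natural boundary conditions (cf.\ Section~\ref{Openprob}). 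The complete computation is carried out in~\cite[Proposition~6.15]{GaMePl2}.
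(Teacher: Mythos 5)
Your proposal follows essentially the same route as the paper: the paper does not reprove this lemma but defers to \cite[Proposition 6.15]{GaMePl2}, describing exactly the structure you lay out --- the computation of Lemma~\ref{lemmastimaderivatecurvatura} with $j=2$, interior terms handled by Lemma~\ref{stimeintegrali} and Corollary~\ref{curvaturebounded}, boundary terms reduced via the natural boundary conditions and absorbed through Lemma~\ref{stimepuntuali}, with the uniform length lower bound giving time-independent interpolation constants and the non--degeneracy condition (via Remark~\ref{tang-in-funz-norm}) giving uniformly bounded coefficients when expressing $T^i$ in terms of the $V^j$. Your reconstruction of the junction bookkeeping (using $k^i=0$ to get $V^i=-2\partial_s^2k^i$, time-differentiating the conditions in~\eqref{flussonatural} to lower the order of $\partial_s^4k^i$ and $\partial_s^5k^i$, and checking the scaling degrees $2j+5$ and $2j+3$) is consistent with what the cited proof does, and you correctly identify why the argument is confined to natural boundary conditions.
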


This lemma is proved in the case of network with triple junctions, but with an accurate inspection
of the proof one notices that it can be adapted to junctions of any order $m\in\mathbb{N}_{\geq 2}$.
The structure of the proof of~\cite[Proposition 6.15]{GaMePl2} is the same of 
Lemma~\ref{lemmastimaderivatecurvatura}  and Lemma~\ref{lemmastimaderivatecurvatura-clamped}. 
The main difference is the treatment of the boundary terms, which is more intricate.
The uniform bound from below on the length of each curve is needed in Lemma~\ref{stimeintegrali}
and Lemma~\ref{stimepuntuali}, that are both used in the proof. The uniform non--degeneracy conditions 
allows us to express the tangential velocity at the boundary 
in function of the normal velocity (see Remark~\ref{tang-in-funz-norm}).
As in Lemma~\ref{lemmastimaderivatecurvatura}  and Lemma~\ref{lemmastimaderivatecurvatura-clamped},
in~\cite[Proposition 6.15]{GaMePl2} the tangential velocity is arbitrary. 

To generalize~\cite[Proposition 6.15]{GaMePl2} from $\partial_s^2k$ to  $\partial_s^{2+4j}k$
with $j\in\mathbb{N}$ we must also require that
the tangential velocity
in the interior of the curves
is a linear interpolation
between the tangential velocity at the junction  (given in terms of the normal velocity)
and zero (for further details we refer the reader to~\cite{DaChPo19}).

\subsection{Long time existence}

\begin{teo}[Global Existence]\label{teo:GlobalExistence}
Let $\mu>0$ and let 
$$
\gamma\in C^{\frac{4+\alpha}{4},4+\alpha}([0,T_{\max})\times [0,1]))
\cap C^{\infty}([\varepsilon, T_{\max})\times [0,1])
$$ 
be a maximal solution to the elastic flow
of a single curve (either closed, or with fixed endpoints in
$\mathbb{R}^2$)
in the maximal time interval $[0,T_{\max})$ 
with admissible initial datum $\gamma_0\in C^{4+\alpha}([0,1])$.
Then $T_{\max}=+\infty$. In other words, the solution
exists globally in time.
\end{teo}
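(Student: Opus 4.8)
The plan is to argue by contradiction: assume $T_{\max}<+\infty$ and show that the solution extends to a solution on a strictly larger time interval, contradicting maximality. The strategy is the standard one for parabolic geometric evolution equations: first extract from the energy a priori bounds on $\Vert k\Vert_{L^2}$ and on the length; then bootstrap these to uniform-in-time bounds on all arclength derivatives of the curvature up to $T_{\max}$; finally deduce uniform $C^\infty$ control on a non-degenerate parametrization, take the limit as $t\uparrow T_{\max}$, and restart the flow from the limit curve via the short time existence result. For the first part: since $\mathcal{E}_\mu$ is non-increasing along the flow (Proposition~\ref{energydecreases}), Corollary~\ref{curvaturebounded} gives $\Vert k\Vert_{L^2}^2\le\mathcal{E}_\mu(\gamma_0)$ for all $t\in(0,T_{\max})$; and crucially, because we assume $\mu>0$, Lemma~\ref{lengthbounded} yields $\ell(\gamma_t)\le\mathcal{E}_\mu(\gamma_0)/\mu$ together with a strictly positive lower bound for $\ell(\gamma_t)$ (Gauss--Bonnet in the closed case, $\vert P-Q\vert$ or $\pi^2/\mathcal{E}_\mu(\gamma_0)$ in the fixed-endpoint case). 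Thus the length is uniformly bounded above and bounded below away from $0$, so the Gagliardo--Nirenberg interpolation inequalities invoked below hold with constants independent of $t$.

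Next I would bootstrap regularity. By parabolic smoothing (Proposition~\ref{parabolicsmoth}) the solution is smooth on $[\varepsilon,T_{\max})$ for every $\varepsilon\in(0,T_{\max})$, so $\Vert\partial_s^jk\Vert_{L^2}$ is finite at time $\varepsilon$ for every $j$. In the closed and Navier cases, Lemma~\ref{lemmastimaderivatecurvatura} gives $\frac{d}{dt}\int\frac12\vert\partial_s^jk\vert^2\,\mathrm{d}s\le C(j,\mathcal{E}_\mu(\gamma_0))$ for every $j\ge1$; integrating over $[\varepsilon,t]$ and using $T_{\max}<\infty$ produces uniform bounds $\sup_{t\in[\varepsilon,T_{\max})}\Vert\partial_s^jk\Vert_{L^2}<\infty$ for all $j$. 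In the clamped case Lemma~\ref{lemmastimaderivatecurvatura-clamped} controls, by the same integration, only $\Vert\partial_s^{4n}k\Vert_{L^2}$, and the remaining derivatives $\partial_s^m k$ are recovered by interpolating (Gagliardo--Nirenberg, uniform constants) between $k$ and a sufficiently high $\partial_s^{4n}k$. In all three cases one ends up with uniform $L^2$-bounds on every arclength derivative of $k$, hence, interpolating once more, with uniform bounds $\sup_{[\varepsilon,T_{\max})}\Vert\partial_s^jk\Vert_{L^\infty}=:C_j<\infty$ for all $j$.

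To conclude, I would reduce to the special flow on a time subinterval ending at $T_{\max}$ (Proposition~\ref{riparametriz} and the discussion of reparametrization invariance), so that the parametrization $\varphi$ solves $\partial_t\varphi=V\nu+\overline{T}\tau$ with $V=-2\partial_s^2k-k^3+\mu k$ and $\overline{T}$ as in~\eqref{Tang}; by the curvature bounds just obtained together with the length lower bound, $\partial_t\varphi$ and all its space derivatives are bounded in $C^0$ uniformly on $[\varepsilon,T_{\max})$. Therefore $\varphi(t,\cdot)$ is a Cauchy family in every $C^k$ norm as $t\uparrow T_{\max}$, and its limit $\varphi(T_{\max},\cdot)$ is a smooth, regular curve (regularity survives in the limit because $\vert\partial_t\partial_x\varphi\vert$ is bounded over the finite interval $[\varepsilon,T_{\max})$) that satisfies the relevant boundary and compatibility conditions by continuity, the fourth-order compatibility identities holding along the whole flow, cf. Lemma~\ref{derivate-pari-nulle} and~\eqref{caso4j+2}--\eqref{caso4j+3}. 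Then $\varphi(T_{\max},\cdot)$ is an admissible initial datum, and Theorem~\ref{geomexistence} (equivalently Theorem~\ref{existenceanalyticprob} in the special-flow formulation) provides a solution on $[T_{\max},T_{\max}+\delta]$ for some $\delta>0$; gluing it with the given solution contradicts the maximality of $T_{\max}$. Hence $T_{\max}=+\infty$.

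I expect the genuinely delicate point to be the clamped case of the bootstrap step: Lemma~\ref{lemmastimaderivatecurvatura-clamped} is available only for derivative orders divisible by $4$, because the boundary terms produced by integrating by parts can be absorbed only via the boundary identities~\eqref{caso4j+2}--\eqref{caso4j+3}, which themselves are established by an induction that advances in steps of $4$; one must therefore carefully interleave these estimates with interpolation and check that no boundary contribution is lost. By comparison, the closed and Navier cases, and the limit-and-restart argument, are routine once the energy and length bounds are in hand.
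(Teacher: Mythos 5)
Your proposal follows essentially the same route as the paper's proof: monotonicity of the energy gives the $L^2$ curvature bound and the two-sided length bounds (here $\mu>0$ is used exactly as you say), Lemmas~\ref{lemmastimaderivatecurvatura} and~\ref{lemmastimaderivatecurvatura-clamped} are integrated over the finite interval $[\varepsilon,T_{\max})$ to control $\partial_s^jk$ in $L^2$ uniformly (with interpolation filling in the orders not divisible by $4$ in the clamped case, precisely as you anticipate), and the contradiction is reached by extracting a limit curve at $T_{\max}$, checking admissibility by continuity, and restarting the flow by short time existence.

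The one place where you diverge, and where your version has a small unaddressed wrinkle, is the final reparametrization. You propose to pass to the special flow via Proposition~\ref{riparametriz} ``on a subinterval ending at $T_{\max}$'', but that proposition only yields a special-flow reparametrization on some $[0,\widetilde T]$ with $\widetilde T$ possibly strictly smaller than the given existence time; to use it up to $T_{\max}$ you would additionally have to show that the reparametrizing diffeomorphisms do not degenerate as $t\uparrow T_{\max}$. The paper sidesteps this entirely by reparametrizing each time slice to constant speed, $|\partial_x\tilde\gamma(t,x)|=\ell(\gamma_t)$, which is defined for every $t<T_{\max}$ with speed bounded above and below by the length bounds, so that the estimates on $\partial_s^j\boldsymbol{\kappa}$ translate directly into $C^0$ (resp.\ $L^2$) bounds on $\partial_x^2\tilde\gamma,\dots,\partial_x^5\tilde\gamma$ and the limit curve is obtained without any ODE for the reparametrization. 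A further economy in the paper worth noting: only the estimates of Lemma~\ref{lemmastimaderivatecurvatura} for $j\le 3$ (equivalently $n=1$ in the clamped lemma) are actually needed, since bounding $\boldsymbol{\kappa}$ up to third order already makes the limit an admissible $C^{4+\alpha}$ initial datum; your plan of bounding all derivatives is correct but more than is required.
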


\begin{proof}
Suppose by contradiction that $T_{\max}$ is finite.
In the whole time interval $[0,T_{\max})$
the length of the curves $\gamma_t$ is uniformly bounded 
from above and from below away from zero and 
the $L^2$--norm of the curvature is uniformly bounded. 

If the curve is closed or subjected to Navier 
boundary conditions, then \eqref{stimaderivatecurvatura} tells us that
for every $t_1,t_2\in (\varepsilon, T_{\max})$, $t_1<t_2$
\begin{equation*}
    \int_{\gamma_{{t}_2}} \vert \partial_s^j k\vert^2\,\mathrm{d}s
    -\int_{\gamma_{t_1}} \vert \partial_s^j k\vert^2\,\mathrm{d}s
\leq     C\mathcal{E}_{\mu}(\gamma_0)\left(t_2-t_1\right)
\leq     C\mathcal{E}_{\mu}(\gamma_0)\left(T_{\max}-\varepsilon\right)\,.
\end{equation*}
The estimate implies 
$\partial_s^j k\in L^{\infty}\left((\varepsilon, T_{\max});L^2\right)$\,.
Instead in the case of clamped boundary condition
we get 
\begin{equation*}
    \int_{\gamma_{t_2}} \vert \partial_s^4 k\vert^2\,\mathrm{d}s
    -\int_{\gamma_{t_1}} \vert \partial_s^4 k\vert^2\,\mathrm{d}s
\leq     C\mathcal{E}_{\mu}(\gamma_0)\left(t_2-t_1\right)
\leq     C\mathcal{E}_{\mu}(\gamma_0)\left(T_{\max}-\varepsilon\right)\,,
\end{equation*}
because Lemma~\ref{lemmastimaderivatecurvatura-clamped}
holds true only when $j$ is a multiple of $4$.
Again this estimate gives $\partial_s^4 k\in L^{\infty}\left((\varepsilon, T_{\max});L^2\right)$\,.
Using Gagliardo--Nirenberg 
inequality for all $t\in[0,T_{\max})$
we get
\begin{align*}
    \Vert \partial_s k(t)\Vert_{L^2}
    \leq C_1\Vert \partial_s^{4}k(t)\Vert_{L^2}^\sigma
    \Vert k(t)\Vert_{L^2}^{1-\sigma}+
    C_2 \Vert k(t)\Vert_{L^2}\leq C(\mathcal{E}_{\mu}(\gamma_0))\,,\\
       \Vert \partial_s^{2}k(t)\Vert_{L^2}
\leq C_1\Vert \partial_s^{4}k(t)\Vert_{L^2}^\sigma
    \Vert k(t)\Vert_{L^2}^{1-\sigma}+
    C_2 \Vert k(t)\Vert_{L^2}\leq C(\mathcal{E}_{\mu}(\gamma_0))\,,\\
       \Vert \partial_s^{3}k(t)\Vert_{L^2}
\leq C_1\Vert \partial_s^{4}k(t)\Vert_{L^2}^\sigma
    \Vert k(t)\Vert_{L^2}^{1-\sigma}+
    C_2 \Vert k(t)\Vert_{L^2}\leq C(\mathcal{E}_{\mu}(\gamma_0))\,,
\end{align*}
with constants independent on $t$.

Hence in all cases (closed curves, either Navier of clamped boundary conditions), 
since $\tau \in L^{\infty}\left((\varepsilon, T_{\max});
L^\infty\right)$, by interpolation we obtain
$$k,\partial_s k,\partial_s^2 k  \in L^{\infty}\left((\varepsilon, T_{\max});
L^\infty\right)\,.$$
Putting this observation together with the formulas
\begin{align*}
\boldsymbol{\kappa}&=    k\nu\,,\\
\partial_s\boldsymbol{\kappa}&=    \partial_s k\nu-k^2\tau\,,\\
\partial_s^2\boldsymbol{\kappa}&= (\partial_s^2 k-k^3)\nu
-3k\partial_s k\tau\,,\\
\partial_s^3\boldsymbol{\kappa}&= 
(\partial_s^3 k  -6k^2\partial_s k)\nu
-(4k\partial_s^2 k+ 3\partial_s k^2)\tau\,,
\end{align*}
we get $\boldsymbol{\kappa},\partial_s \boldsymbol{\kappa},
\partial_s^2 \boldsymbol{\kappa} 
\in L^{\infty}\left((\varepsilon, T_{\max});L^\infty\right)$ and 
$\partial_s^3\boldsymbol{\kappa}
\in L^{\infty}\left((\varepsilon, T_{\max});L^2\right)$.
We also get 
$(\partial_t\gamma)^\perp \in L^{\infty}\left((\varepsilon, T_{\max});L^\infty\right)$. 

We reparametrize $\gamma(t)$ into $\tilde{\gamma}(t)$
with the property $\vert \partial_x\tilde{\gamma}(t,x)\vert =\ell(\tilde{\gamma}(t))$ for every $x\in [0,1]$
and for all $t\in [0,T_{\max})$.
This choice in particular implies
\begin{equation*}
    0<c\leq \sup_{t\in [0,T_{\max}), x\in [0,1]} \vert
    \partial_x\tilde{\gamma}(t,x)\vert \leq C<\infty\,.
\end{equation*}
We make use of the relations
\begin{equation*}
    \boldsymbol{\kappa}(t,x)
    =\frac{\partial_x^2\tilde{\gamma}(t,x)}{\ell (\tilde{\gamma}(t))^2}\,,\quad
      \partial_s\boldsymbol{\kappa}(t,x)
    =\frac{\partial_x^3\tilde{\gamma}(t,x)}{\ell (\tilde{\gamma}(t))^3}\,,\quad
      \partial_s^2\boldsymbol{\kappa}(t,x)
    =\frac{\partial_x^4\tilde{\gamma}(t,x)}{\ell (\tilde{\gamma}(t))^4}\,,\quad
      \partial_s^3\boldsymbol{\kappa}(t,x)
    =\frac{\partial_x^5\tilde{\gamma}(t,x)}{\ell (\tilde{\gamma}(t))^5}\,
\end{equation*}
to get 
\begin{equation*}
    \int_0^1 \frac{\vert \partial_x^2\tilde{\gamma}(t,x) \vert^2 }
    {\ell (\tilde{\gamma}(t))^3}\,\mathrm{d}x
    =\int _{\tilde{\gamma}_t}
    \vert \boldsymbol{k}\vert ^2\,\mathrm{d}s
    =\int _{\tilde{\gamma}_t}
    \vert k\vert ^2\,\mathrm{d}s \leq \mathcal{E}_\mu(\gamma_0)\,,
\end{equation*}
and
\begin{equation*}
    \int_0^1 \frac{\vert \partial_x^5\tilde{\gamma}(t,x) \vert^2 }
    {\ell (\tilde{\gamma}(t))^9}\,\mathrm{d}x
    =\int _{\tilde{\gamma}_t}
    \vert \partial_s^3\boldsymbol{k}\vert ^2\,\mathrm{d}s\leq C(\mathcal{E}_\mu(\gamma_0))\,.
\end{equation*}
These estimates allows us to conclude that  
$\partial_x^2\tilde{\gamma},
\partial_x^3\tilde{\gamma},
\partial_x^4\tilde{\gamma}\in L^{\infty}\left((\varepsilon, T_{\max});L^\infty((0,1))\right)$ and 
$\partial_x^5\tilde{\gamma}\in L^{\infty}\left((\varepsilon, T_{\max});L^2((0,1))\right)$. 
Moreover 
$(\partial_t \tilde{\gamma})^\perp=(\partial_t\gamma)^\perp 
\in L^{\infty}\left((\varepsilon, T_{\max});L^\infty((0,1))\right)$
implies $\tilde{\gamma}\in L^{\infty}\left((\varepsilon, T_{\max});L^\infty((0,1))\right)$.
Then there exists $\gamma_\infty(\cdot)$ limit as $t\to T_{\max}$
of $\tilde{\gamma}(t,\cdot)$ together with 
the limit of its derivatives till $5$-th order.

The curve $\gamma_\infty$ is an admissible initial curve,
indeed it belongs to $C^{4+\alpha}([0,1])$ and in the case
fixed endpoint are present, by continuity of 
$k$ and $\partial_s^2 k$ it holds
$$
2\partial_s^2k_{\infty}(0)+k_{\infty}^3(0)-\mu k_{\infty}(0)
=2\partial_s^2k_{\infty}(1)+k_{\infty}^3(1)-\mu k_{\infty}(1)
=0\,.
$$
Then there exists an elastic flow 
$\overline{\gamma}\in C^{\frac{4+\alpha}{4},4+\alpha}([T_{\max}, T_{\max}+\delta]\times [0,1]))$
with initial datum $\gamma_\infty$ in the time interval $[T_{\max}, T_{\max}+\delta]$ with $\delta>0$.
We reparametrize $\overline{\gamma}$ in $\hat{\gamma}$
with the property 
$\vert\partial_x\hat{\gamma}(t,x)\vert=\ell(\hat{\gamma}(t))$
for every $x\in [0,1]$ and $t\in [T_{\max}, T_{\max}+\delta]$.
Then for every $x\in [0,1]$ 
$$
\lim_{t\nearrow T_{\max}} \tilde{\gamma}(t,x)=\gamma_{\infty}(x)
=\lim_{t\searrow T_{\max}}\hat{\gamma}(t,x)
$$
and also for $j\in\{1,2,3,4,5\}$
$$
\lim_{t\nearrow T_{\max}} \partial_x^j\tilde{\gamma}(t,x)=\partial_x^j\gamma_{\infty}(x)
=\lim_{t\searrow T_{\max}}\partial_x^j\hat{\gamma}(t,x)\,.
$$
Then 
$$
\lim_{t\nearrow T_{\max}} \partial_t\tilde{\gamma}(t,x)
=\lim_{t\searrow T_{\max}}\partial_t\hat{\gamma}(t,x)\,.
$$
Thus we can define 
$$
g:[0,T_{\max}+\delta]\to\mathbb{R}^2, g(t, [0,1]):=
\begin{cases}
\tilde{\gamma}\quad\text{for}\; t\in[0,T_{\max})\\
\hat{\gamma}\quad\text{for}\;t\in[T_{\max}, T_{\max}+\delta]\,.
\end{cases}
$$
solution of the elastic flow  in $[0, T_{\max}+\delta]$.
This contradicts the maximality of $\gamma$.
\end{proof}

\begin{rem}
In the case of the elastic flow either of 
closed curve or of curves with Navier boundary conditions
with the help of Remark~\ref{crescita-lineare}
it is possible to generalize the above result to the case
$\mu\geq 0$ (see~\cite{poldenthesis,DzKuSc02}).
\end{rem}

\begin{rem}
In Lemma~\ref{lemmastimaderivatecurvatura}
and Lemma~\ref{lemmastimaderivatecurvatura-clamped}
we derive estimates for every derivative 
of $k$. 
The above proof shows that it is enough to get 
the estimate of Lemma~\ref{lemmastimaderivatecurvatura}
for $j=1,2,3$ 
and of  Lemma~\ref{lemmastimaderivatecurvatura-clamped}
for $n=1$.
\end{rem}

At the moment for general networks 
we are able to get the following partial result:

\begin{teo}[Long time behavior, \cite{DaChPo19, GaMePl2}]\label{thm:LongTimeNetworks}
Let $\mathcal{N}_0$ be a geometrically admissible initial network. 
Suppose that $\left(\mathcal{N}(t)\right)$ is a 
maximal solution to
the elastic flow with initial datum $\mathcal{N}_0$ 
in the maximal time interval $[0,T_{\max})$ with $T_{\max}\in (0,\infty)\cup\{\infty\}$. 
Suppose that at each junction we impose
Navier boundary conditions.
Then $$T_{\max}=+\infty\,,$$ or at least one of the following happens:
\begin{itemize} 
\item[(i)] the inferior limit as $t\nearrow T_{\max}$ of the length of at least one curve of $\mathcal{N}(t)$ is zero;
\item[(ii)]
at one of the junctions it occurs that
$\liminf_{t\nearrow T_{\max}}\max_i\left\{\left\vert\sin\alpha^i(t)\right\vert\right\}=0$, 
where $\alpha^i(t)$  are the angles 
formed by the unit tangent vectors of the curves
concurring at a junction.
\end{itemize}
\end{teo}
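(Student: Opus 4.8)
The plan is to argue by contradiction, mimicking the proof of Theorem~\ref{teo:GlobalExistence} but carefully tracking the boundary terms at the junctions. Suppose $T_{\max}<\infty$ and that both alternatives (i) and (ii) fail. Then there are $\delta_0>0$ and $\rho>0$ such that every curve of $\mathcal{N}(t)$ has length at least $\delta_0$ for all $t\in[0,T_{\max})$ and the uniform non--degeneracy condition~\eqref{nondegeneracy} holds at every junction. By the energy monotonicity of Proposition~\ref{energydecreases} together with Lemma~\ref{lengthbounded} one has $\int_{\mathcal{N}_t}|k|^2\,\mathrm{d}s\le\mathcal{E}_\mu(\mathcal{N}_0)$ and $\delta_0\le\ell(\gamma^i_t)\le\mathrm{L}(\mathcal{N}_t)\le\mathcal{E}_\mu(\mathcal{N}_0)/\mu^*$ uniformly in $t$; in particular the Gagliardo--Nirenberg inequality holds along the flow with constants independent of $t$.

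Next I would invoke Lemma~\ref{estimatetogether} — and its generalisation to $\partial_s^{2+4j}k$ indicated in the remark following it, choosing in the interior of each curve the tangential velocity to be the linear interpolation between the value forced at the junctions (Remark~\ref{tang-in-funz-norm}, which uses $\rho>0$) and zero, a choice which is admissible since tangential reparametrisations do not alter the geometric flow — to obtain $\frac{d}{dt}\int_{\mathcal{N}_t}|\partial_s^{2+4j}k|^2\,\mathrm{d}s\le C(\mathcal{E}_\mu(\mathcal{N}_0))$ for $j=0,1$. Integrating over $[\varepsilon,T_{\max})$ and using $T_{\max}<\infty$ yields $\partial_s^2k,\partial_s^6k\in L^\infty((\varepsilon,T_{\max});L^2(\mathrm{d}s))$, and then the Gagliardo--Nirenberg inequality gives $\partial_s^nk\in L^\infty((\varepsilon,T_{\max});L^\infty)$ for $n\le5$. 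Combined with $\tau^i,\nu^i\in L^\infty((\varepsilon,T_{\max});L^\infty)$ and the formulas expressing $\partial_s^n\boldsymbol{\kappa}^i$ in terms of $k^i,\partial_sk^i,\dots$ and $\tau^i,\nu^i$, this bounds $\boldsymbol{\kappa}^i,\dots,\partial_s^5\boldsymbol{\kappa}^i$ uniformly, and in particular the normal velocity $(\partial_t\gamma^i)^\perp$.

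Then I would reparametrise each curve by constant speed, $|\partial_x\tilde\gamma^i(t,x)|=\ell(\tilde\gamma^i(t))\in[\delta_0,\mathcal{E}_\mu(\mathcal{N}_0)/\mu^*]$, and transfer the curvature bounds into $\partial_x^j\tilde\gamma^i\in L^\infty((\varepsilon,T_{\max});L^\infty)$ for $j\le7$ through $\partial_s^{j-2}\boldsymbol{\kappa}^i=\partial_x^j\tilde\gamma^i/\ell(\tilde\gamma^i)^j$, exactly as in the proof of Theorem~\ref{teo:GlobalExistence}. Since $(\partial_t\tilde\gamma^i)^\perp=(\partial_t\gamma^i)^\perp$ is uniformly bounded and, at the junctions, its tangential component is controlled via Remark~\ref{tang-in-funz-norm} using $\rho>0$ (while in the interior the chosen tangential velocity is bounded by the above estimates), the maps $t\mapsto\tilde\gamma^i(t,\cdot)$ are uniformly continuous up to $T_{\max}$, so by Arzel\`a--Ascoli $\tilde\gamma^i(t,\cdot)\to\gamma_\infty^i(\cdot)$ in $C^{4+\alpha'}([0,1])$ as $t\nearrow T_{\max}$ for some $\alpha'\in(0,\alpha)$.

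Finally I would check that $\mathcal{N}_\infty:=\bigcup_i\gamma_\infty^i([0,1])$ is an admissible initial network in the sense of Definition~\ref{Def:admissible-initial-net}: the concurrency, curvature and third--order conditions at the junctions and the boundary conditions at the endpoints of order one pass to the limit by the $C^{4+\alpha'}$ convergence; the non--degeneracy condition iv) survives because $|\sin\alpha^i|\ge\rho$ in the limit; and the fourth--order compatibility conditions iii) and v) hold by continuity, since they hold along the flow for all $t<T_{\max}$ (condition iii) being $V^{i_j}(y)=0$ at the order--one endpoints with Navier boundary conditions, and condition v) following by passing to the limit in the identities obtained differentiating the concurrency conditions in time). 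Theorem~\ref{geomexistence} then provides a solution of the elastic flow on $[T_{\max},T_{\max}+\delta]$ with initial datum $\mathcal{N}_\infty$, and gluing it to $\mathcal{N}(t)$ — the matching of the reparametrised curves and of their derivatives at $t=T_{\max}$ being verified as in Theorem~\ref{teo:GlobalExistence} — yields a solution on $[0,T_{\max}+\delta]$, contradicting maximality. The main obstacle is the higher--order curvature estimate of Lemma~\ref{estimatetogether}: unlike in the single--curve case the integrations by parts generate boundary contributions at the junctions that can only be absorbed using the concurrency, curvature and third--order boundary conditions jointly with the uniform non--degeneracy $\rho>0$ and the uniform lower bound $\delta_0$ on the lengths (the latter being needed to keep the Gagliardo--Nirenberg constants uniform), which is precisely why those two quantities must be assumed not to degenerate; a secondary point requiring care is checking that $\mathcal{N}_\infty$ satisfies the fourth--order compatibility conditions so that short--time existence can be reapplied.
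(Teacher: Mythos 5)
Your proposal is correct and follows essentially the route the paper intends: the paper only states this theorem with a citation, but the argument you give is precisely the network analogue of the proof of Theorem~\ref{teo:GlobalExistence}, assembled from the ingredients the paper supplies (the negation of (i) and (ii) yielding the uniform length lower bound and the uniform non--degeneracy condition needed in Lemma~\ref{estimatetogether} and its generalization to $\partial_s^{2+4j}k$ via the interpolated tangential velocity, followed by interpolation, constant--speed reparametrization, extraction of a $C^{4+\alpha'}$ limit that is an admissible initial network, and restarting the flow to contradict maximality). No gaps beyond the level of detail the paper itself leaves to the cited references.
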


\section{Asymptotic behavior}\label{sec:SmoothConvergence}

In this section we collect results on the asymptotic convergence of the elastic flow, that is, we analyze the possibility that the solutions have a limit as times goes to $+\infty$ and such limit is an {\em elastica}, i.e., a critical point of the energy. The first step in this direction is the proof of the subconvergence of the flow, that consists in the fact that the solution converges to an elastica along an increasing sequence of times, up to reparametrization and translation in the ambient space. We present such a result for the flow of closed curves and for a single curve with Navier or clamped boundary conditions.

\begin{prop}[Subconvergence]\label{Subconvergence}
Let $\mu>0$ and let $\gamma_t$ be a solution
of the elastic flow of closed curves in $[0,+\infty)$ with initial datum 
$\gamma_0$.
Then, up to subsequences, reparametrization,
and translations, the curve $\gamma_t$ converges to
an elastica as $t\to \infty$.
\end{prop}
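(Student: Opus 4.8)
The plan is to run the classical subconvergence argument for geometric gradient flows. By Proposition~\ref{energydecreases} the function $t\mapsto\mathcal{E}_\mu(\gamma_t)$ is non-increasing, and since $\mathcal{E}_\mu\ge 0$ it converges to some $\mathcal{E}_\infty\ge 0$; integrating $\partial_t\mathcal{E}_\mu(\gamma_t)=-\int_{\gamma_t}V^2\,\mathrm{d}s$ gives $\int_0^{+\infty}\!\int_{\gamma_t}V^2\,\mathrm{d}s\,\mathrm{d}t=\mathcal{E}_\mu(\gamma_0)-\mathcal{E}_\infty<+\infty$, so there is a sequence $t_n\to+\infty$ with $\Vert V(t_n)\Vert_{L^2}\to 0$. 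Along the whole flow, Corollary~\ref{curvaturebounded} gives $\Vert k(t)\Vert_{L^2}^2\le\mathcal{E}_\mu(\gamma_0)$, and Lemma~\ref{lengthbounded} (this is where $\mu>0$ is used) gives $\tfrac{4\pi^2}{\mathcal{E}_\mu(\gamma_0)}\le\ell(\gamma_t)\le\tfrac{1}{\mu}\mathcal{E}_\mu(\gamma_0)$; in particular the constants in the Gagliardo--Nirenberg inequality~\eqref{int2} can be taken uniformly in $t$.

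The first real step is to promote the smallness of $V$ at the times $t_n$ to uniform Sobolev bounds on $\gamma_{t_n}$. Since $V=-2\partial_s^2 k-k^3+\mu k$, one has $\partial_s^2 k=-\tfrac12(V+k^3-\mu k)$, hence
\[
\Vert\partial_s^2 k\Vert_{L^2}\le\tfrac12\Vert V\Vert_{L^2}+\tfrac12\Vert k\Vert_{L^6}^3+\tfrac{\mu}{2}\Vert k\Vert_{L^2}.
\]
Interpolating $\Vert k\Vert_{L^6}\le C\Vert\partial_s k\Vert_{L^2}^{1/3}\Vert k\Vert_{L^2}^{2/3}+C\Vert k\Vert_{L^2}$ and $\Vert\partial_s k\Vert_{L^2}\le C\Vert\partial_s^2 k\Vert_{L^2}^{1/2}\Vert k\Vert_{L^2}^{1/2}+C\Vert k\Vert_{L^2}$ by~\eqref{int2}, using $\Vert k\Vert_{L^2}\le C$ and absorbing with Young's inequality, one obtains $\Vert\partial_s^2 k(t)\Vert_{L^2}\le C(1+\Vert V(t)\Vert_{L^2})$, whence $\Vert\partial_s k(t_n)\Vert_{L^2}+\Vert\partial_s^2 k(t_n)\Vert_{L^2}\le C$. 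Reparametrising each $\gamma_{t_n}$ with constant speed and translating it so that $\gamma_{t_n}(0)=0$, the bounds on $\ell(\gamma_{t_n})$ (uniformly positive and bounded) and on $k,\partial_s k,\partial_s^2 k$ show that $\gamma_{t_n}$ is bounded in $H^4([0,1];\mathbb{R}^2)$ and in $C^3$; by Rellich--Kondrachov a subsequence satisfies $\ell(\gamma_{t_n})\to\ell_\infty>0$ and $\gamma_{t_n}\to\gamma_\infty$ in $C^3$ and weakly in $H^4$, with $\gamma_\infty$ a regular closed curve.

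It then remains to identify $\gamma_\infty$. From the $C^3$ convergence, $k(t_n)\to k_\infty$ in $C^1$, so $k(t_n)^3-\mu k(t_n)\to k_\infty^3-\mu k_\infty$ in $L^2$, while $\partial_s^2 k(t_n)$, being bounded in $L^2$, converges (up to a further subsequence) weakly in $L^2$, necessarily to $\partial_s^2 k_\infty$. Passing to the limit in $-2\partial_s^2 k(t_n)-k(t_n)^3+\mu k(t_n)=V(t_n)\to 0$ gives $2\partial_s^2 k_\infty+k_\infty^3-\mu k_\infty=0$, i.e.\ $\gamma_\infty$ is a critical point of $\mathcal{E}_\mu$ (it solves the stationary equation behind~\eqref{motionequation}, which by the first variation formula~\eqref{riformulazione} on a closed curve is exactly $\delta\mathcal{E}_\mu(\gamma_\infty)=0$); elliptic regularity for this fourth order ODE makes $\gamma_\infty$ smooth, hence an elastica. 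The main obstacle is the middle step: converting the purely time-integrated bound $\int_0^\infty\Vert V\Vert_{L^2}^2\,\mathrm{d}t<\infty$ into genuine uniform spatial regularity at the selected times, which is precisely where the interpolation inequalities and the two-sided length bound (hence $\mu>0$) are essential. If the stronger conclusion of $C^\infty$--subconvergence is wanted, one observes that $\int_{t_n}^{t_n+\tau}\Vert V\Vert_{L^2}^2\,\mathrm{d}t\to 0$ for a fixed small $\tau>0$, picks $\tilde t_n\in[t_n,t_n+\tau]$ with $\Vert V(\tilde t_n)\Vert_{L^2}\to 0$, and applies short--time existence (Theorem~\ref{geomexistence}) with the $H^4$--bounded data $\gamma_{t_n}$ together with parabolic smoothing (Proposition~\ref{parabolicsmoth}) to get uniform bounds on all curvature derivatives at $\tilde t_n$, then repeats the compactness argument along $\tilde t_n$.
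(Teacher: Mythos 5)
Your proof is correct, but it follows a genuinely different route from the paper's. The paper's proof is ``parabolic'': it invokes the evolution inequality~\eqref{ultimo-passaggio} from Lemma~\ref{lemmastimaderivatecurvatura}, interpolates, and uses ODE comparison to get \emph{uniform-in-time} $L^2$ bounds on $\partial_s^jk$ for every $j$; this yields smooth subconvergence along an arbitrary time sequence, and the limit is then identified as a critical point by a Barbalat-type argument ($v(t):=\int_{\gamma_t}|V|^2\,\mathrm{d}s$ lies in $L^1(0,\infty)$ and has bounded derivative, hence $v(t)\to0$ for \emph{all} $t\to\infty$). You instead argue ``elliptically'': you first select good times $t_n$ with $\lVert V(t_n)\rVert_{L^2}\to0$ from the integrated energy identity, then exploit the structure $\partial_s^2k=-\tfrac12(V+k^3-\mu k)$ together with~\eqref{int2} and the two-sided length bound of Lemma~\ref{lengthbounded} to convert smallness of $V$ into a uniform $H^2$ bound on $k$ at those times, and pass to the limit in the Euler--Lagrange equation using $C^1$ convergence of $k$ plus weak $L^2$ convergence of $\partial_s^2k$. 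Your route is more economical — it needs only the energy identity and second-order interpolation, not the higher-order estimates of Lemma~\ref{lemmastimaderivatecurvatura} — but it delivers less: convergence only in $C^3$ (weak $H^4$) and only along the specially chosen sequence, whereas the paper's version produces the $C^m$ subconvergence and the uniform bounds~\eqref{eq:StimaParabolica} that are actually consumed later in the proof of Theorem~\ref{FullConvergence}. One caveat on your closing remark: upgrading to $C^\infty$ by combining Theorem~\ref{geomexistence} with Proposition~\ref{parabolicsmoth} requires \emph{quantitative} smoothing estimates (uniform in $n$), which the qualitative statement of Proposition~\ref{parabolicsmoth} does not provide; the paper's derivative bounds are precisely what fills that role, so as written that step is a sketch rather than a proof.
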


\begin{proof}
We remind that thanks to~\eqref{basic-bound} and~\eqref{GBbound} for every $t\in [0,+\infty)$
the length $\ell (\gamma_t)$ is uniformly bounded 
from above and from below away from zero by constants
that depends only on the energy of the initial datum
$\mathcal{E}_{\mu}(\gamma_0)$ and on $\mu$. 
We can rewrite inequality~\eqref{ultimo-passaggio}
as
\begin{equation*}
\frac{d}{dt}\int_{\gamma(t)} \frac{1}{2}\vert\partial_s^jk\vert^2\,
\mathrm{d}s
+ \int_{\gamma(t)} \vert\partial_s^{j+2}k\vert^2\,
\mathrm{d}s
\leq C(\mathcal{E}_\mu(\gamma_0))\,.
\end{equation*}
Using interpolation inequalities (with constants $c_1,c_2$
independent of time) for every $j\in\mathbb{N}$
we get 
\begin{align*}
    \frac{d}{dt}\int_{\gamma(t)} \vert\partial_s^jk\vert^2\,
\mathrm{d}s
+ \int_{\gamma(t)} \vert\partial_s^{j}k\vert^2\,
\mathrm{d}s
&\leq 
\frac{d}{dt}\int_{\gamma(t)} \vert\partial_s^jk\vert^2\,
\mathrm{d}s
+ c_1 \int_{\gamma(t)} \vert\partial_s^{j+2}k\vert^2\,
\mathrm{d}s
+c_2 \int_{\gamma(t)} \vert k \vert^2\,
\mathrm{d}s\\
&\leq C(\mathcal{E}_\mu(\gamma_0))\,.
\end{align*}
By comparison we obtain 
\begin{equation}\label{eq:FinalBound}
    \int_{\gamma(t)} \vert\partial_s^jk\vert^2\,\mathrm{d}s\leq 
\int_{\gamma_0} \vert\partial_s^jk\vert^2\,\mathrm{d}s
+C(\mathcal{E}_\mu(\gamma_0)).
\end{equation}
Hence by Sobolev embedding we get that $\|\pa_s^jk\|_{L^\infty}$ is uniformly bounded in time, for any $j\in \N$.
By Ascoli--Arzel\'a Theorem, up to subsequences and reparametrizations, there exists the limit
$\lim_{i\to \infty} \partial_s^j k_{t_i}=:\partial_s^j k_{\infty}$ uniformly on $[0,1]$, for some sequence of times $t_i\to+\infty$.
Thus, for a suitable choice of a sequence of points
$p_i\in\R^2$ such that
$\gamma(t_i, 0)-p_{t_i}$ is the origin $O$ of $\mathbb{R}^2$, 
the sequence of curves $\gamma(t_i,\cdot)-p_i$ 
converges (up to reparametrizations) smoothly to a limit curve 
$\gamma_{\infty}$ with $\gamma(0)=O$
and $0<c\leq \ell (\gamma_{\infty})\leq C <\infty$.

It remains to show that the limit curve is a stationary point
for the energy $\mathcal{E}_\mu$.
Let $V\coloneqq \pa_t\gamma^\perp = -2 (\pa_s^\perp)^2 \boldsymbol{\kappa} - |\boldsymbol{\kappa}|^2\boldsymbol{\kappa}+\mu \boldsymbol{\kappa}$ and $v(t):=\int_{\gamma(t)} |V|^2\,\mathrm{d}s$.
By Proposition~\ref{energydecreases} we know that
$\partial_t E(\gamma (t,\cdot))=-v(t)$,
thus 
\begin{equation}\label{derivatekinLinfinito}
   \int _{0}^\infty v(t)\,\mathrm{d}t= \mathcal{E}_\mu(\gamma(0, \cdot))-\lim_{t\to \infty}\mathcal{E}_\mu(\gamma(t, \cdot))\leq  \mathcal{E}_\mu(\gamma_0) ,
\end{equation}
and then $v\in L^1(0,\infty)$. By~\eqref{eq:FinalBound} we can estimate
$$
\vert \partial_t{v}(t)\vert \leq C(\mu,\gamma_0)\,.
$$
Therefore $v(t)\to 0$ as $t\to+\infty$ and then the limit curve is a critical point.
\end{proof}

Notice that in the previous proof we cannot hope for a (uniform in time) bound on the supremum norm of $\gamma$ itself. In fact all the parabolic estimates are obtained on the curvature vector of the evolving curve. This means that we are not yet able to exclude
that the flow leaves any compact set as $t\to \infty$.

\begin{prop}[Subconvergence]\label{Subconvergence--fixed--endpoints}
Let $\mu>0$ and let
$(\gamma_t$ be a solution in $[0,+\infty)$
of the elastic flow of open curves subjected either to
Navier or clamped boundary conditions.
Then, up to subsequences and reparametrization, 
the curve $\gamma_t$ converges to
an elastica as $t\to \infty$.
\end{prop}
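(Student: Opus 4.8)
The plan is to mimic the proof of Proposition~\ref{Subconvergence} for closed curves, replacing the use of the Gauss--Bonnet lower length bound~\eqref{GBbound} with the bounds~\eqref{boundlunghezza} available for curves with fixed endpoints, and replacing the uniform interior estimates on $\partial_s^j k$ by the ones that are actually available at the boundary. First I would recall that, by Theorem~\ref{teo:GlobalExistence}, the flow exists for all times, so $\gamma_t$ is defined on $[0,+\infty)$; by Corollary~\ref{curvaturebounded} the quantity $\|k(t)\|_{L^2}^2$ is bounded by $\mathcal{E}_\mu(\gamma_0)$ for all $t$, and by Lemma~\ref{lengthbounded}, in particular~\eqref{boundlunghezza}, the lengths $\ell(\gamma_t)$ are uniformly bounded above (by $\mu^{-1}\mathcal{E}_\mu(\gamma_0)$) and below away from zero. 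This is exactly the setting in which the Gagliardo--Nirenberg inequalities~\eqref{int2} hold with time-uniform constants and in which Lemmas~\ref{stimeintegrali} and~\ref{stimepuntuali} apply.

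Next I would obtain uniform-in-time bounds on all the derivatives $\partial_s^j k$. In the Navier case this is immediate: Lemma~\ref{lemmastimaderivatecurvatura} gives $\frac{d}{dt}\int \frac12|\partial_s^j k|^2\,\mathrm{d}s \le C(j,\mathcal{E}_\mu(\gamma_0))$ for every $j\ge 1$, and combining with the interpolated absorption term exactly as in~\eqref{ultimo-passaggio} yields, after comparison,
\[
\int_{\gamma(t)}|\partial_s^j k|^2\,\mathrm{d}s \le \int_{\gamma_0}|\partial_s^j k|^2\,\mathrm{d}s + C(j,\mathcal{E}_\mu(\gamma_0))\,.
\]
In the clamped case the analogous differential inequality (Lemma~\ref{lemmastimaderivatecurvatura-clamped}) is only available for $j=4n$, but that already gives a uniform bound on $\int|\partial_s^{4n}k|^2\,\mathrm{d}s$ for every $n$, and by Gagliardo--Nirenberg interpolation (as performed in the proof of Theorem~\ref{teo:GlobalExistence}) one recovers uniform bounds on $\int|\partial_s^j k|^2\,\mathrm{d}s$ for all intermediate $j$ as well. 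Either way, Sobolev embedding on the fixed-length interval gives that $\|\partial_s^j k\|_{L^\infty}$ is uniformly bounded in time for every $j\in\mathbb{N}$.

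Then I would extract the limit. By Ascoli--Arzelà, passing to a subsequence $t_i\to+\infty$ and reparametrizing by arclength (scaled to $[0,1]$), the functions $\partial_s^j k_{t_i}$ converge uniformly on $[0,1]$ for every $j$; since one of the two endpoints is fixed at a point $P\in\mathbb{R}^2$, no translation in the ambient space is even needed — the curves $\gamma(t_i,\cdot)$ themselves converge smoothly (up to reparametrization) to a limit curve $\gamma_\infty$ with $0<c\le \ell(\gamma_\infty)\le C<\infty$, and $\gamma_\infty$ inherits the boundary conditions (Navier or clamped) by continuity. Finally, to see that $\gamma_\infty$ is an elastica, set $V:=\partial_t\gamma^\perp = -2(\partial_s^\perp)^2\boldsymbol{\kappa}-|\boldsymbol{\kappa}|^2\boldsymbol{\kappa}+\mu\boldsymbol{\kappa}$ and $v(t):=\int_{\gamma(t)}|V|^2\,\mathrm{d}s$; by Proposition~\ref{energydecreases}, $\partial_t\mathcal{E}_\mu(\gamma_t)=-v(t)$, so $v\in L^1(0,\infty)$, while the uniform derivative bounds above give $|\partial_t v(t)|\le C(\mu,\gamma_0)$ (this uses the boundary-term arguments already worked out for the clamped and Navier cases, i.e.\ that the boundary contributions in $\partial_t v$ vanish or are controlled by Lemma~\ref{stimepuntuali}). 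Hence $v(t)\to 0$, and passing to the limit along $t_i$ shows $\int_{\gamma_\infty}|V_\infty|^2\,\mathrm{d}s=0$, i.e.\ $\gamma_\infty$ is a critical point of $\mathcal{E}_\mu$.

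The main obstacle is the clamped case, for two intertwined reasons: first, the differential estimates on $\int|\partial_s^j k|^2\,\mathrm{d}s$ are only directly available for $j$ a multiple of $4$ (Lemma~\ref{lemmastimaderivatecurvatura-clamped}), so one must be careful to recover control on all derivatives by interpolation rather than by a direct energy estimate; second, the boundary terms arising when differentiating $v(t)$ in time do not vanish as cleanly as under periodic or Navier conditions, and must be absorbed using the pointwise interpolation estimates of Lemma~\ref{stimepuntuali} together with the identities~\eqref{caso4j+2}--\eqref{caso4j+3} expressing odd/high-order boundary derivatives of $k$ in terms of lower-order polynomial quantities. Once those boundary terms are handled, the argument is a routine adaptation of the closed-curve proof.
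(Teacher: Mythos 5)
Your proposal is correct and follows essentially the same route as the paper: uniform length bounds from~\eqref{basic-bound2} and~\eqref{boundlunghezza}, uniform bounds on $\|\partial_s^j k\|_{L^\infty}$ (directly in the Navier case, and via the $j=4n$ estimates of Lemma~\ref{lemmastimaderivatecurvatura-clamped} plus Gagliardo--Nirenberg interpolation in the clamped case), Ascoli--Arzel\`a with no translation needed thanks to the fixed endpoints, and criticality of the limit via $v\in L^1(0,\infty)$ together with a uniform bound on $|\partial_t v|$. The paper's proof is exactly this, stated more tersely, so no further comparison is needed.
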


\begin{proof}
Whatever boundary condition we consider 
the length of $\gamma_t$ is bounded from above
by~\eqref{basic-bound2} and from below away from zero
by~\eqref{boundlunghezza}.
Furthermore, since the endpoints are fixed,
the evolving curve will remain in a ball of center 
$\gamma(t,0)=P$ and radius $2\mathcal{E}_\mu(\gamma_0)$
and so for every 
$t\in [0,T_{\max})$ it holds
$\sup_{x\in [0,1]}\vert \gamma(t,x)\vert <C$
with $C$ independent of time.

Consider the case of Navier boundary conditions:
as in the proof of Proposition~\ref{Subconvergence}
we obtain that $\| \partial_s^j k\|_{L^\infty}$ is uniformly bounded in time, for every $j\in\mathbb{N}$.
In the clamped case instead we have that
$\partial_s^j k\in L^{2}(0,\infty);L^{\infty})$
only when $j$ is a multiple of $4$.
However, by interpolation, we get that $\| \partial_s^j k\|_{L^\infty}$ is uniformly bounded in time,
for every $j\in\mathbb{N}$.
Therefore, up to subsequences and reparametrization, the curves $\gamma(t_i,\cdot)$
converges smoothly to a  limit curve $\gamma_\infty$ for some sequence of times $t_i\to+\infty$.
One can show that $\gamma_\infty$ is a critical point exactly as in Proposition~\ref{Subconvergence}.
\end{proof}

By the same methods, it is also possible to prove the following subconvergence result.

\begin{prop}[\cite{DaChPo19}]
Let $\mathcal{N}_0$ be a geometrically admissible initial network composed of three curves that meet
at a triple junction. 
Suppose that $\mathcal{N}_t$ is a 
maximal solution to
the elastic flow with initial datum $\mathcal{N}_0$ 
in the maximal time interval $[0,+\infty)$.
Suppose that along the flow the length 
of the three curves is uniformly bounded from below,
at the junction Navier boundary condition is imposed and
the uniform non--degeneracy conditions
is fulfilled. 
Then it is possible to find a sequence 
of time $t_i\to \infty$, 
such that the networks $\mathcal{N}_{t_i}$ converge, after an appropriate reparametrization, to a critical point for the energy $\mathcal{E}_\mu$ with Navier boundary conditions.
\end{prop}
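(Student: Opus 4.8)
The plan is to mirror the proof of Proposition~\ref{Subconvergence} line by line, substituting Lemma~\ref{estimatetogether} (the network analogue of Lemma~\ref{lemmastimaderivatecurvatura}) wherever the single-curve parabolic estimate was used, and keeping careful track of the hypotheses (uniform lower length bound, uniform non--degeneracy) that make the constants in Lemma~\ref{stimeintegrali} and Lemma~\ref{stimepuntuali} uniform in time.

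\medskip

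First I would record the standing bounds. By assumption all three curves of $\mathcal{N}_t$ have length uniformly bounded below by some $\ell_0>0$, and by~\eqref{basic-bound2} each length is bounded above by $\mathcal{E}_\mu(\mathcal{N}_0)/\mu^*$; also~\eqref{basic-bound} gives the uniform bound on $\|k^i\|_{L^2}$. Since the uniform non--degeneracy condition~\eqref{nondegeneracy} holds, Remark~\ref{tang-in-funz-norm} lets us express the tangential velocity at the junction as a linear combination of the normal velocities, and (as explained in the discussion following Lemma~\ref{estimatetogether}) we choose the interior tangential velocity to interpolate linearly between its value at the junction and zero at the other endpoint, so that the generalization of~\cite[Proposition 6.15]{GaMePl2} to $\partial_s^{2+4j}k$ applies. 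Thus for every $j\in\mathbb{N}$ there is a constant $C=C(j,\mathcal{E}_\mu(\mathcal{N}_0),\ell_0,\rho)$ with
\[
\frac{d}{dt}\int_{\mathcal{N}_t}\left\lvert\partial_s^{2+4j}k\right\rvert^2\,\mathrm{d}s\leq C\,.
\]
Exactly as in the proof of Proposition~\ref{Subconvergence}, absorbing a lower-order term by interpolation upgrades this to a differential inequality of the form $\frac{d}{dt}\int|\partial_s^{2+4j}k|^2\,\mathrm{d}s+\int|\partial_s^{2+4j}k|^2\,\mathrm{d}s\leq C$, whence by comparison $\int_{\mathcal{N}_t}|\partial_s^{2+4j}k|^2\,\mathrm{d}s$ is bounded uniformly in time; interpolating between these exponents and the $L^2$ bound on $k$ gives a uniform-in-time bound on $\|\partial_s^m k^i\|_{L^\infty}$ for every $m\in\mathbb{N}$ and every curve.

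\medskip

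With uniform $C^\infty$ bounds on the curvatures in hand, I would run the Ascoli--Arzel\`a / compactness argument. The junction point may move, so one translates by a sequence $p_i\in\mathbb{R}^2$ (fixing, say, the image of the junction at the origin); since all lengths stay in $[\ell_0,\mathcal{E}_\mu(\mathcal{N}_0)/\mu^*]$ and all derivatives of curvature are controlled, after reparametrizing each curve proportionally to arclength one extracts a sequence of times $t_i\to\infty$ along which $\mathcal{N}_{t_i}-p_i$ converges smoothly to a limit network $\mathcal{N}_\infty$ of three curves meeting at a triple junction, still satisfying the Navier boundary conditions (these pass to the limit by smooth convergence) and still non--degenerate. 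Finally, to see $\mathcal{N}_\infty$ is a critical point: set $v(t):=\int_{\mathcal{N}_t}|V|^2\,\mathrm{d}s$; by Proposition~\ref{energydecreases}, $\partial_t\mathcal{E}_\mu(\mathcal{N}_t)=-v(t)$, so $v\in L^1(0,\infty)$, while the uniform curvature estimates bound $|\partial_t v(t)|$ uniformly, forcing $v(t)\to 0$ and hence $V\equiv 0$ on $\mathcal{N}_\infty$; together with the Navier boundary conditions this says $\mathcal{N}_\infty$ is a critical point of $\mathcal{E}_\mu$ with Navier boundary conditions.

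\medskip

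The main obstacle is entirely contained in the first step: justifying the uniform-in-time estimate $\frac{d}{dt}\int_{\mathcal{N}_t}|\partial_s^{2+4j}k|^2\,\mathrm{d}s\leq C$ for all $j$, i.e.\ the generalization of~\cite[Proposition 6.15]{GaMePl2} from $\partial_s^2 k$ to higher (mod $4$) derivatives. The interior terms are handled by Lemma~\ref{stimeintegrali} just as in Lemma~\ref{lemmastimaderivatecurvatura-clamped}, but the boundary contributions at the triple junction are delicate: one must repeatedly differentiate the concurrency, second-order and third-order conditions in time, use the motion equation~\eqref{derivativekt} to trade time derivatives for space derivatives, and invoke the non--degeneracy of the matrix $M$ of Remark~\ref{tang-in-funz-norm} to solve for the tangential velocities — this is why the linear-interpolation choice of interior tangential velocity and the \emph{uniform} non--degeneracy are both needed. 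I would cite~\cite{DaChPo19, GaMePl2} for the full bookkeeping rather than reproduce it, since the structure is identical to Lemma~\ref{lemmastimaderivatecurvatura-clamped} and only the boundary algebra changes.
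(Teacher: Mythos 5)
Your proposal is correct and follows exactly the route the paper intends: it presents this proposition without proof, remarking only that it follows ``by the same methods'' as \Cref{Subconvergence} and \Cref{Subconvergence--fixed--endpoints}, with the parabolic estimates supplied by \Cref{estimatetogether} and its extension to $\partial_s^{2+4j}k$ under the special interpolated choice of tangential velocity, deferring the boundary-term bookkeeping to \cite{DaChPo19, GaMePl2}. Your reconstruction — uniform bounds from the length and non--degeneracy hypotheses, interpolation to all orders, Ascoli--Arzel\`a after translation and reparametrization, and criticality via $v\in L^1$ with $|\partial_t v|$ bounded — is precisely that argument.
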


\medskip

From now on we restrict ourselves to the case of closed curves and we want to improve the subconvergence result of~\Cref{Subconvergence} into full convergence of the flow. More precisely, we want to prove that the solution of the elastic flow of closed curves admits a full limit as time increases and such a limit is a critical point.

Recall that when we say that $\gamma:[0,1]\to\R^2$ is a smooth closed curve, periodic conditions at the endpoints are understood. More precisely, it holds that $\partial_x^k\gamma(0)=\partial_x^k\gamma(1)$ for any $k\in \mathbb{N}$. Therefore we can write that a closed smooth curve is just a smooth immersion $\gamma:\mathbb{S}^1\to \mathbb{R}^2$ of the unit circle $\mathbb{S}^1\simeq[0,2\pi]/_\sim$. In this section we shall adopt this notation as a shortcut for recalling that periodic boundary conditions are assumed. Moreover, for sake of simplicity, we assume that the constant weight on the length in the functional $\mathcal{E}_\mu$ is $\mu=1$ and we write
$\mathcal{E}$.

Now we can state the result about the full convergence of the flow.

\begin{teo}[{Full convergence~\cite{MaPo20, Po20}}]\label{FullConvergence}
Let $\gamma:[0,+\infty)\times \SS^1\to \R^2$ be the smooth solution of the elastic flow with initial datum $\gamma(0,\cdot)=\gamma_0(\cdot)$, that is a smooth closed curve.

Then there exists a smooth critical point $\gamma_\infty$ of $\mathcal{E}$ such that $\gamma(t,\cdot)\to \gamma_\infty(\cdot)$ in $C^m(\SS^1)$ for any $m\in\N$, up to reparametrization. In particular, the support $\gamma(t,\SS^1)$ stays in a compact set of $\R^2$ for any time.
\end{teo}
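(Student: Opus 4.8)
The plan is to upgrade the subconvergence of \Cref{Subconvergence} to full convergence by establishing a \L ojasiewicz--Simon gradient inequality for the elastic energy $\mathcal{E}$ near a critical point $\gamma_\infty$, and then using it to control the length of the trajectory in time. First I would recall from \Cref{Subconvergence} that, up to reparametrization and translation, there is a sequence $t_i\to+\infty$ along which $\gamma(t_i,\cdot)$ converges smoothly to a critical point $\gamma_\infty$, and that all curvature estimates $\|\partial_s^j k\|_{L^\infty}$ are uniformly bounded in time. Since $\mathcal{E}$ is translation invariant and the normal velocity is geometric, I may work with representatives of the flow that stay near $\gamma_\infty$ for large times along the subsequence.

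The key analytic input is the \L ojasiewicz--Simon inequality: there exist constants $C>0$, $\theta\in(0,1/2]$, and a neighborhood $\mathcal{U}$ of $\gamma_\infty$ (in a suitable norm, e.g.\ $H^2$ or $C^{2,\alpha}$, on normal graphs over $\gamma_\infty$) such that
\[
|\mathcal{E}(\gamma)-\mathcal{E}(\gamma_\infty)|^{1-\theta} \le C\,\|\nabla\mathcal{E}(\gamma)\|_{L^2}
\]
for all $\gamma\in\mathcal{U}$, where $\nabla\mathcal{E}(\gamma)=2(\partial_s^\perp)^2\boldsymbol{\kappa}+|\boldsymbol{\kappa}|^2\boldsymbol{\kappa}-\mu\boldsymbol{\kappa}$ is the $L^2$-gradient identified in \Cref{sec:FirstVariation}. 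To prove this I would follow the abstract scheme of Chill: one writes nearby curves as normal graphs $\gamma_\infty+\psi\nu_\infty$ over $\gamma_\infty$, regards $\mathcal{E}$ as an analytic functional of $\psi$ on a Banach space, computes its first and second variations (the second variation being the operator $2(\partial_s^\perp)^4+\Omega$ from \Cref{sec:SecondVariation}, which is elliptic, self-adjoint and Fredholm of index zero on the relevant spaces), and invokes the general \L ojasiewicz--Simon theorem for analytic functionals whose Hessian is Fredholm. Here the analyticity of $\mathcal{E}$ in $\psi$ (it is a rational expression in $\psi$ and its derivatives, with the denominators $|\partial_x\gamma|$ bounded away from zero for $\psi$ small) and the Fredholm property of $\delta^2\mathcal{E}_\mu$ are what make the abstract result applicable.

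With the inequality in hand, the convergence argument is standard: set $H(t):=(\mathcal{E}(\gamma(t,\cdot))-\mathcal{E}_\infty)^{\theta}$, where $\mathcal{E}_\infty:=\lim_{t\to\infty}\mathcal{E}(\gamma(t,\cdot))$, which equals $\mathcal{E}(\gamma_\infty)$ by subconvergence. Using $\partial_t\mathcal{E}=-\int_{\gamma}V^2\,\mathrm{d}s=-\|\nabla\mathcal{E}\|_{L^2}^2$ from \Cref{energydecreases} and the equivalence (from the evolution equation) of $\|\partial_t\gamma^\perp\|_{L^2}$ with $\|\nabla\mathcal{E}\|_{L^2}$, one computes, as long as $\gamma(t,\cdot)\in\mathcal{U}$,
\[
-\frac{d}{dt}H(t)=\theta\,(\mathcal{E}-\mathcal{E}_\infty)^{\theta-1}\|\nabla\mathcal{E}\|_{L^2}^2 \ge c\,\|\nabla\mathcal{E}\|_{L^2}\ge c\,\|\partial_t\gamma^\perp\|_{L^2},
\]
so that $\int_{t_0}^{\infty}\|\partial_t\gamma^\perp\|_{L^2}\,dt\le \tfrac1c H(t_0)<\infty$. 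A continuity/bootstrap argument shows the flow cannot leave $\mathcal{U}$ once it has entered close enough along the subsequence: if it stays in $\mathcal{U}$ the trajectory has finite length in $L^2$ and hence Cauchy, forcing convergence of $\gamma(t,\cdot)$ (modulo tangential reparametrization, handled via the invariance discussion in \Cref{Sec:invariance}) to a single limit, which must be $\gamma_\infty$; the uniform higher-order curvature bounds then upgrade this to $C^m$ convergence for every $m$. Finiteness of the trajectory length also pins the curve in a compact set.

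The main obstacle I expect is the proof of the \L ojasiewicz--Simon inequality itself: one must set up the correct functional-analytic framework (choosing the Banach spaces so that $\mathcal{E}$ is analytic, the gradient maps between them appropriately, and the Hessian $2(\partial_s^\perp)^4+\Omega$ is Fredholm), and carefully deal with the degeneracy coming from reparametrization invariance --- the kernel of the Hessian always contains tangential directions, so the inequality must be formulated on a complement, or one argues only with normal variations as in \Cref{sec:SecondVariation}. Reconciling this normal-graph inequality with the actual flow, whose tangential component is not prescribed, is the delicate bookkeeping step; it is exactly where the reparametrization lemmas of \Cref{Sec:invariance} are used to replace the given solution by one that is a normal graph over $\gamma_\infty$. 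I would refer to \cite{MaPo20, Po20} for the full details of these technical points.
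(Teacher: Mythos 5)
Your proposal is correct and follows essentially the same route as the paper: an abstract \L ojasiewicz--Simon inequality obtained via Chill's framework applied to $\mathcal{E}$ restricted to normal $H^4$ graphs over $\gamma_\infty$ (analyticity plus Fredholmness of index zero of the second variation $2(\partial_s^\perp)^4+\Omega$ on normal fields), followed by the standard argument of differentiating $H(t)=(\mathcal{E}(\gamma_t)-\mathcal{E}(\gamma_\infty))^\theta$ to get finite $L^2$-length of the trajectory, a continuity/bootstrap step combining this with the uniform parabolic estimates and Gagliardo--Nirenberg interpolation to keep the flow in the neighborhood for all later times, and hence Cauchy convergence upgraded to $C^m$. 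You also correctly identify the two delicate points the paper handles explicitly, namely the tangential degeneracy (resolved by working with normal fields) and the reparametrization of the actual flow as a normal graph over $\gamma_\infty$.
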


Let us remark again that sub-convergence of a flow is a consequence of the parabolic estimates that one can prove. However this fact is not sufficient for proving the existence of a full limit as $t\to+\infty$ of $\gamma(t,\cdot)$ in any reasonable notion of convergence. We observe that sub-convergence does not even prevent from the possibility that for different sequences of times $t_j,\tau_j \nearrow+\infty$ and points $p_j,q_j\in \R^2$, the curves $\gamma(t_j,\cdot)-p_j$ and $\gamma(\tau_j,\cdot)-q_j$ converge to different critical points. The sub-convergence clearly does not imply that the flow remains in a compact set for all times either. This last fact, that is, uniform boundedness of the flow in $\R^2$, is not a trivial property for fourth order equation like the elastic flow. Indeed, such evolution equation lacks of maximum principle and therefore uniform boundedness of the flow in $\R^2$ cannot be deduced by comparison with other solutions.

However, all these properties will follow at once as the proof of~\Cref{FullConvergence} will be completed, that is, as we prove the full convergence of the flow.

\medskip

The proof of~\Cref{FullConvergence} is based on several intermediate results. The strategy is rather general and the main steps can be sum up as follows. First we need to set up a suitable functional analytic framework in which the energy functional and its first and second variations are considered in a neighborhood of an arbitrary critical point. In this setting we prove some variational properties that are needed in order to produce a \L ojasiewicz--Simon gradient inequality. Such an inequality estimates the difference in energy between the critical point and points in a neighborhood of it in terms of the operator norm of the first variation. As the first variation functional coincide with the driving velocity of the flow, this furnishes an additional estimate. Such an estimate will be finally applied to the flow as follows.
Since we know that the flow subconverges, it passes arbitrarily close to critical points of the energy at large times. The application of the \L ojasiewicz--Simon inequality will then imply that, once the flow passes sufficiently close to a critical point, it will be no longer able to ``escape'' from a neighborhood of such critical point. This will eventually imply the convergence of the flow.

The use of this kind of inequality for proving convergence of solutions to parabolic equations goes back to the semimal work of Simon~\cite{Si83}. The \L ojasiewicz--Simon inequality we shall employ follows from the abstract results of~\cite{Ch03} and the method is inspired by the strategy used in~\cite{ChFaSc09}, where the authors study the Willmore flow of closed surfaces. We mention that another successful application of this strategy is contained in~\cite{DaPoSp16}, where the authors study open curves with clamped boundary conditions. Finally, a recent application of this strategy to the flow of generalized elastic energies on Riemannian manifolds is contained in~\cite{Po20}. We remark that this strategy is rather general and it can be applied to many different geometric flows. We refer to~\cite{MaPo20} for a more detailed exposition of the method, in which the authors stress on the crucial general ingredients needed to run the argument.

\medskip

Now we introduce the above mentioned framework. Observe that for any fixed smooth curve $\gamma:\SS^1\to \R^2$ there exists $\rho(\gamma)>0$ such that if $\psi:\SS^1\to\R^2$ is a field of class $H^4$ with $\|\psi\|_{H^4}\le \rho$, then $\gamma + \psi$ is a regular curve of class $H^4$. Whenever $\gamma$ is fixed, we will always assume that $\rho=\rho(\gamma)>0$ is sufficiently small so that $\gamma+\psi$ is a regular curve for any field $\psi$ with $\|\psi\|_{H^4}\le \rho$. Hence the following definition is well posed.

\begin{dfnz}
Let $\gamma:\SS^1\to\R^2$ be a regular smooth closed curve and $\rho=\rho(\gamma)>0$ sufficiently small. We define
\[
H^{m,\perp}_\gamma \coloneqq \left\{ \psi\in H^m(\SS^1,\R^2) \,\,|\,\, \scal{\psi(x),\tau(x)}=0 \,\,\,\mbox{ a.e. on } \,\SS^1 \right\},
\]
for any $m\in\N$ and we denote $L^{2,\perp}_\gamma\equiv H^{0,\perp}_\gamma$. Moreover we define
\[
E:B_\rho(0)\subseteq H^{4,\perp}_\gamma \to \R 
\qquad \qquad E(\psi)\coloneqq \mathcal{E}(\gamma+\psi).
\]
\end{dfnz}

For a fixed  smooth curve $\gamma:\SS^1\to\R^2$, the functional $E$ is defined on an open subset of an Hilbert spaces. Therefore, we can treat the first and second variations of $\mathcal{E}$ as functionals over such Hilbert spaces. More precisely, we know that in classical functional analysis if $F:B_\rho\subseteq V\to\R$ is a twice Gateaux differentiable functional defined on a ball $B_\rho$ in a Banach space $V$, then $\delta F: B_\rho \to V^\star $ and $\delta^2 F: B_\rho \to L(V,V^\star)$, where
\[
\delta F (v)[w] \coloneqq \frac{d}{d\varepsilon} F(v + \varepsilon w) \Big\vert_{\varepsilon=0},
\]
\[
\delta^2 F (v) [w][z]\coloneqq \delta^2 F (v) [w,z] \coloneqq \frac{d}{d\eta}\frac{d}{d\varepsilon} F(v + \varepsilon w + \eta z) \Big\vert_{\varepsilon=0}\Big\vert_{\eta=0},
\]
where $(\cdot)^\star$ denotes the dual space of $(\cdot)$ and $L(V,V^\star)$ is the space of bounded linear functionals from $V$ to $V^\star$. We adopted the notation $\delta^2 F (v) [w,z]$ by the fact that the second variation can be also seen as a bilinear symmetric form on $V$, indeed $\delta^2 F (v) [w][z]=\delta^2 F (v) [z][w]$. In such a setting we have
\[
\delta E : B_\rho(0) \subseteq H^{4,\perp}_\gamma \to (H^{4,\perp}_\gamma)^\star 
\qquad\qquad \delta E(\psi)[\varphi] \coloneqq \frac{d}{d\varepsilon} E(\psi + \varepsilon\varphi) \Big\vert_{\varepsilon=0},
\]
and the second variation functional
\[
\begin{split}
&\delta^2 E :  B_\rho(0) \subseteq H^{4,\perp}_\gamma \to L(H^{4,\perp}_\gamma,(H^{4,\perp}_\gamma)^\star) \\
 &\delta^2 E(\psi)[\varphi,\zeta] \coloneqq \frac{d}{d\eta}\frac{d}{d\varepsilon} E(\psi + \varepsilon\varphi + \eta \zeta) \Big\vert_{\varepsilon=0}\Big\vert_{\eta=0}.
\end{split}
\]
We will actually only need to consider $\delta^2 E$ evaluated at $\psi=0$, that is, the second variation of $\mathcal{E}$ at the given curve $\gamma$.

The study carried out in~\Cref{sec:FirstVariation}  shows that the functional $E$ is Fr\'{e}chet differentiable with
\[
\delta E (\psi) [\varphi] = \int_{\SS^1} \scal{ 2(\ders^\perp)^2 \boldsymbol{\kappa}_{\gamma+\psi} +|\boldsymbol{\kappa}_{\gamma+\psi}|^2\boldsymbol{\kappa}_{\gamma+\psi} - \boldsymbol{\kappa}_{\gamma+\psi}, \varphi}\,\mathrm{d}s,
\]
where $\boldsymbol{\kappa}_{\gamma+\psi}$ is the curvature vector of the curve $\gamma+\psi$ and both arclength derivative $\ders$ and measure $\mathrm{d}s$ are understood with respect to the curve $\gamma+\psi$.
The explicit formula for such first variation functionals shows that actually $\delta E(\psi) $  belongs to the smaller dual space $(L^{2,\perp}_\gamma)^\star$. Indeed $\delta E(\psi) $ is represented in $L^2$-duality as
\begin{equation}\label{eq:GradienteNormalFields}
\begin{split}
    \delta E(\psi)[\varphi] &= \left\langle |\gamma'+\psi'| \left(  2(\ders^\perp)^2 \boldsymbol{\kappa}_{\gamma+\psi} +|\boldsymbol{\kappa}_{\gamma+\psi}|^2\boldsymbol{\kappa}_{\gamma+\psi} - \boldsymbol{\kappa}_{\gamma+\psi} \right) , \varphi \right\rangle_{L^2(\mathrm{d}x)},
\end{split}
\end{equation}
for any $\varphi \in L^{2,\perp}_\gamma$, where the derivative $\ders^\perp$ is understood with respect to the curve $\gamma+\psi$.

Moreover, the results in~\Cref{sec:SecondVariation} similarly imply that the second variation $\delta^2 E (0) [\varphi,\cdot]$ evaluated at some $\varphi\in H^{4,\perp}_\gamma$ belongs to the smaller dual space $(L^{2,\perp}_\gamma)^\star$ via the pairing
\begin{equation}\label{eq:SecondVariation2}
\delta^2 E (0) [\varphi,\zeta] = \left\langle 
|\gamma'| \left((\ders^\perp)^4 \varphi + \Omega(\varphi) \right)
,\zeta\right\rangle_{L^2(\mathrm{d}x)},
\end{equation}
for any $\zeta \in L^{2,\perp}_\gamma$, where $\Omega:H^{4,\perp}_\gamma \to L^{2,\perp}_\gamma $ is a compact operator and the derivative $\ders^\perp$ is understood with respect to the curve $\gamma$.

In this setting, we can prove the following properties on the first and second variations.

\begin{prop}\label{prop:AnalyticityFredholm}
Let $\gamma:\SS^1\to\R^2$ be a regular smooth closed curve and $\rho=\rho(\gamma)>0$ sufficiently small. Then the following holds true.
\begin{enumerate}
    \item The functions $E:B_\rho\subseteq H^{4,\perp}_\gamma\to \R$ and $\delta E: B_\rho\subseteq H^{4,\perp}_\gamma\to  (L^{2,\perp}_\gamma)^\star$ are analytic.
    \item The second variation operator $\delta^2 E(0): H^{4,\perp}_\gamma \to (L^{2,\perp}_\gamma)^\star$, which is defined by
    \[
    \delta^2 E(0)[\varphi][\zeta] \coloneqq \delta^2 E(0)[\varphi,\zeta] 
    \qquad \forall \varphi \in H^{4,\perp}_\gamma,\,\,\forall\,\zeta \in L^{2,\perp}(\gamma),
    \]
    is a Fredholm operator of index zero, i.e., ${\rm dim } \, \ker \delta^2 E(0) = {\rm codim }\, ({\rm Imm } \, \delta^2 E(0))$ is finite.
\end{enumerate}
\end{prop}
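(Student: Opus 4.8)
The plan is to prove the two claims by working out explicitly enough of the structure of $E$ and $\delta E$ near the origin in $H^{4,\perp}_\gamma$. For analyticity (claim 1), the key point is that $\mathcal{E}(\gamma+\psi)$ can be written as an integral over $\SS^1$ of a function built from $\psi$ and its derivatives up to order two, multiplied by reciprocals of $|\gamma'+\psi'|$. Concretely, expanding $\boldsymbol{\kappa}_{\gamma+\psi}$ in terms of $(\gamma+\psi)'$ and $(\gamma+\psi)''$ and using $\de s = |\gamma'+\psi'|\,\de x$, one gets an integrand of the form $P\big(\psi,\psi',\psi''\big)\cdot |\gamma'+\psi'|^{-N}$ for a polynomial $P$ (with coefficients depending analytically on the fixed data $\gamma,\gamma',\gamma''$) and some integer $N$. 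First I would observe that, because $\rho$ is chosen small enough that $|\gamma'+\psi'|\ge c>0$ for all $\|\psi\|_{H^4}\le\rho$, the map $\psi \mapsto |\gamma'+\psi'|^{-N}$ is analytic from $B_\rho\subseteq H^{4,\perp}_\gamma$ into, say, $C^1(\SS^1)$ (the scalar function $t\mapsto (a+t)^{-N/2}$ composed with the analytic, in fact polynomial, map $\psi\mapsto |\gamma'+\psi'|^2$, using that $H^3(\SS^1)\hookrightarrow C^1$ is an algebra and Banach-space composition with analytic functions preserves analyticity). Then $E$ is a finite sum of terms each of which is a product of such analytic maps with multilinear continuous maps in $\psi,\psi',\psi''$, integrated over $\SS^1$; products and integrals of analytic maps are analytic, so $E$ is analytic on $B_\rho$. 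The analyticity of $\delta E: B_\rho \to (L^{2,\perp}_\gamma)^\star$ follows from the same analysis applied to the explicit integrand in~\eqref{eq:GradienteNormalFields}: after the reparametrization to $\de x$, $\delta E(\psi)$ is represented by $|\gamma'+\psi'|\big(2(\ders^\perp)^2\boldsymbol{\kappa}_{\gamma+\psi} + |\boldsymbol{\kappa}_{\gamma+\psi}|^2\boldsymbol{\kappa}_{\gamma+\psi} - \boldsymbol{\kappa}_{\gamma+\psi}\big)$, which is a polynomial expression in $\psi,\dots,\psi^{(4)}$ times negative powers of $|\gamma'+\psi'|$; since $\psi\in H^4$, this lies in $L^2$ and depends analytically on $\psi$ with values in $L^2(\SS^1,\R^2)$, hence in $(L^{2,\perp}_\gamma)^\star$ after composing with the bounded $L^2$-orthogonal projection onto normal fields.

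For claim 2, I would start from the representation~\eqref{eq:SecondVariation2}: $\delta^2 E(0)[\varphi]$ is represented in $L^2(\de x)$-duality by $|\gamma'|\big((\ders^\perp)^4\varphi + \Omega(\varphi)\big)$, where $\ders^\perp$ is taken with respect to the fixed smooth curve $\gamma$ and $\Omega:H^{4,\perp}_\gamma\to L^{2,\perp}_\gamma$ is compact (this compactness is the content recalled from~\Cref{sec:SecondVariation}: $\Omega(\varphi)$ involves $\varphi$ and its normal derivatives only up to order three, so $\Omega$ factors through the compact embedding $H^4\hookrightarrow H^3$). The idea is to split $\delta^2 E(0) = A + K$, where $A\varphi \coloneqq$ (the functional represented by) $2|\gamma'|(\ders^\perp)^4\varphi$ is the principal part and $K\varphi\coloneqq$ (the functional represented by) $|\gamma'|\Omega(\varphi)$ together with the lower-order pieces absorbed in $\Omega$. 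Since $\gamma$ is smooth and regular, $|\gamma'|$ is smooth and bounded away from zero, so $A:H^{4,\perp}_\gamma\to(L^{2,\perp}_\gamma)^\star$ is, modulo the isomorphism $(L^{2,\perp}_\gamma)^\star\cong L^{2,\perp}_\gamma$, essentially the fourth-order elliptic operator $\varphi\mapsto 2|\gamma'|(\ders^\perp)^4\varphi$ from $H^4$ normal fields to $L^2$ normal fields. I would verify that this operator is a bounded invertible operator, or at least Fredholm of index zero: on the scalar side, writing $\varphi = \varphi^\perp\nu$, $(\ders^\perp)^4\varphi$ corresponds (up to lower-order terms, which may again be thrown into $K$) to $\partial_s^4\varphi^\perp\,\nu$, and $\varphi^\perp\mapsto \partial_s^4\varphi^\perp$ on $\SS^1$ is a Fredholm operator of index zero $H^4(\SS^1)\to L^2(\SS^1)$ (its kernel is the constants, and it is self-adjoint/surjective onto the mean-zero functions — in any case elliptic with index zero, by Fourier series). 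Multiplication by the smooth positive factor $|\gamma'|$ does not change the Fredholm index. Hence $A$ is Fredholm of index zero, and $\delta^2 E(0) = A + K$ with $K$ compact (composition of the compact $\Omega$, or of compact embeddings for the lower-order terms, with bounded multiplication and projection operators); by stability of the Fredholm index under compact perturbations, $\delta^2 E(0)$ is Fredholm of index zero, which is precisely the assertion $\dim\ker\delta^2 E(0) = \mathrm{codim}\,\mathrm{Imm}\,\delta^2 E(0) < \infty$.

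The main obstacle I anticipate is bookkeeping rather than conceptual: one must be careful about which Hilbert spaces the operators act between and about the duality identifications, since $\delta^2 E(0)$ is a priori valued in $L(H^{4,\perp}_\gamma,(H^{4,\perp}_\gamma)^\star)$ but the claim upgrades its target to the smaller space $(L^{2,\perp}_\gamma)^\star$, and Fredholmness is asserted for the operator $H^{4,\perp}_\gamma\to(L^{2,\perp}_\gamma)^\star$. The delicate step is to justify rigorously, using~\eqref{eq:SecondVariation2} and the structure of $\Omega$ from~\Cref{sec:SecondVariation}, that the representing map $\varphi\mapsto |\gamma'|\big((\ders^\perp)^4\varphi + \Omega(\varphi)\big)$ indeed lands in $L^{2,\perp}_\gamma$ and that its principal part is elliptic of the stated index on the space of \emph{normal} fields along $\gamma$; the constraint $\scal{\varphi,\tau}=0$ interacts with the operator $(\ders^\perp)^4$, so one should check that $(\ders^\perp)^4$ maps $H^{4,\perp}_\gamma$ into $L^{2,\perp}_\gamma$ (it does, since $\ders^\perp$ of a normal field is normal) and that, written in the scalar coordinate $\varphi^\perp$, it really is a fourth-order elliptic ODE operator on $\SS^1$ with the claimed index. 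Once this identification is in place, the Fredholm conclusion is a standard consequence of elliptic theory on the circle together with the compact-perturbation stability of the index, and the analyticity in claim 1 is a routine (if lengthy) application of the calculus of analytic maps between Banach spaces to the explicit rational-in-$|\gamma'+\psi'|$, polynomial-in-derivatives formulas for $E$ and $\delta E$.
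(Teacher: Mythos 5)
Your proposal is correct, and for part (1) it follows essentially the paper's route (the paper simply invokes the fact that $E$ and $\delta E$ are sums and compositions of analytic maps, citing an external lemma for the details you spell out). For part (2), however, you take a genuinely different path at the key step. Both you and the paper reduce, via compactness of $|\gamma'|\Omega(\cdot)$ and invertibility of multiplication by $|\gamma'|$, to showing that $(\ders^\perp)^4:H^{4,\perp}_\gamma\to L^{2,\perp}_\gamma$ is Fredholm of index zero. At that point the paper perturbs by the (compact) identity and proves that $\mathrm{id}+(\ders^\perp)^4$ is actually \emph{invertible}: injectivity by multiplying by $\varphi$ and integrating by parts, surjectivity by minimizing the quadratic functional $\int \tfrac12|(\ders^\perp)^2\varphi|^2+\tfrac12|\varphi|^2-\scal{\varphi,X}\de s$ over $H^{2,\perp}_\gamma$ and then upgrading the minimizer to $H^4$ by elliptic regularity. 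You instead compute the index of $(\ders^\perp)^4$ directly, using the codimension-one trivialization $\varphi=\varphi^\perp\nu$, under which $(\ders^\perp)^4\varphi=(\partial_s^4\varphi^\perp)\nu$ exactly (no lower-order remainder is even needed in the plane), and the scalar operator $\partial_s^4:H^4(\SS^1)\to L^2(\SS^1)$ has one-dimensional kernel (the constants) and one-dimensional cokernel (the mean-value constraint $\int f\,\de s=0$), hence index zero. Your argument is more elementary and explicit, but it leans on the normal bundle being trivialized by the single field $\nu$; the paper's variational argument never uses this and therefore transfers verbatim to curves in $\R^n$, which matters since the authors later remark that the full convergence theorem holds in arbitrary codimension. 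The remaining bookkeeping you flag — that $\ders^\perp$ preserves normality, that multiplication by $|\gamma'|$ and the duality identification $(L^{2,\perp}_\gamma)^\star\cong L^{2,\perp}_\gamma$ do not affect Fredholmness — is handled the same way in both arguments and is unproblematic.
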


\begin{proof}
The fact that both $E$ and $\delta E$ are analytic maps follows from the fact that such operators are compositions and sums of analytic functions. For a detailed proof of this fact we refer to~\cite[Lemma 3.4]{DaPoSp16}.

Now we prove the second statement. Consider the operator $\mathcal{L}:H^{4,\perp}_\gamma \to L^{2,\perp}_\gamma$ defined by
\[
\mathcal{L}(\varphi) = |\gamma'|(\ders^\perp)^4 \varphi + |\gamma'|\Omega(\varphi),
\]
where $\Omega$ is as in~\eqref{eq:SecondVariation2}. We clearly have that $\delta^2 E(0): H^{4,\perp}_\gamma \to (L^{2,\perp}_\gamma)^\star$ is Fredholm of index zero if and only if $\mathcal{L}$ is. Since $|\gamma'|\Omega(\cdot)$ is compact, the thesis is then equivalent to say that $ H^{4,\perp}_\gamma \ni \,\varphi \mapsto |\gamma'| (\ders^\perp)^4 \varphi \, \in L^{2,\perp}_\gamma$ is Fredholm of index zero (see~\cite[Section 19.1, Corollary 19.1.8]{HormanderIII}). Since $|\gamma'|$ is uniformly bounded away from zero, the thesis is equivalent to prove that the operator
\[
(\ders^\perp)^4: H^{4,\perp}_\gamma \to L^{2,\perp}_\gamma
\]
is Fredholm of index zero. By~\cite[Corollary 19.1.8]{HormanderIII}, as ${\rm id} : H^{4,\perp}_\gamma \to L^{2,\perp}_\gamma$ is compact, this is equivalent to show that the operator
\[
{\rm id} + (\ders^\perp)^4: H^{4,\perp}_\gamma \to L^{2,\perp}_\gamma
\]
is Fredholm of index zero. We can prove, in fact, that ${\rm id} + (\ders^\perp)^4$ is even invertible.

Injectivity follows as if $\varphi + (\ders^\perp)^4 \varphi =0$, then multiplication by $\varphi$ and integration by parts give
\[
\int |(\ders^\perp)^2\varphi|^2 + |\varphi|^2 \, \mathrm{d}s = 0,
\]
and then $\varphi=0$.

Let now $X\in L^{2,\perp}_\gamma$ be any field and consider the continuous functional $F:H^{2,\perp}_\gamma \to \R$ defined by
\[
F(\varphi) \coloneqq \int_{\SS^1} \frac12 |(\ders^\perp)^2\varphi|^2 + \frac12 |\varphi|^2 - \scal{\varphi, X}\,\mathrm{d}s.
\]
The explicit computation shows that
\begin{equation}\label{eq:DoubleNormalDerivative}
    (\ders^\perp)^2\varphi = \partial^2_s\varphi + \left( 2\langle \partial_s\varphi,\boldsymbol{\kappa}\rangle + \langle \varphi, \partial_s \boldsymbol{\kappa}\rangle \right) \tau  - \langle \partial_s\varphi,\tau\rangle \boldsymbol{\kappa}.
\end{equation}
Since $\int |\partial_s\varphi|^2\,\mathrm{d}s = - \int \scal{\varphi,\ders \varphi}\,\mathrm{d}s \le \varepsilon\int |\ders\varphi|^2 + C(\varepsilon) \int |\varphi|^2$, computing $|\ders\varphi|^2$ using~\eqref{eq:DoubleNormalDerivative}, Young's inequality yields that
\[
\int_{\SS^1} |\varphi|^2 +  |\partial_s\varphi|^2 +  |\partial^2_s\varphi|^2 \, \mathrm{d}s \le C(\gamma) \int_{\SS^1}  \frac12 |(\ders^\perp)^2\varphi|^2 + \frac12 |\varphi|^2 \, \mathrm{d}s.
\]
Therefore, by direct methods in Calculus of Variations, it follows that there exists a minimizer $\phi$ of $F$ in $H^{2,\perp}_\gamma$. In particular $\phi$ solves
\begin{equation}\label{eq:EquationForSurjectivity}
\int_{\SS^1} \scal{(\ders^\perp)^2 \phi, (\ders^\perp)^2 \varphi } + \scal{\phi,\varphi} \,\mathrm{d}s = \int_{\SS^1} \scal{X, \varphi}\,\mathrm{d}s,
\end{equation}
for any $\varphi\in H^{2,\perp}_\gamma$. If we show that $\phi \in H^{4,\perp}_\gamma$, then $\phi + (\ders^\perp)^4\phi = X$ and surjectivity will be proved.
However, this follows by very standard arguments, simply noticing that once writing the integrand of the functional $F$ in terms of $\partial_s^2\varphi$, $\partial_s\varphi$ and $\varphi$, by means of equation~\eqref{eq:DoubleNormalDerivative}, its dependence on the highest order term $\partial_s^2\varphi$ is quadratic and the ``coefficients'' are given by the geometric quantities of $\gamma$, which is smooth.
\end{proof}

We remark that it is essential to employ normal fields in the proof of the Fredholmness properties of $\delta^2 E(0)$ in~\Cref{prop:AnalyticityFredholm} in order to rule out the tangential degeneracy related to the geometric nature of the energy functional.

\medskip

The above analysis of the second variation is exactly what is needed in order to derive a \L ojasiewicz--Simon gradient inequality. More precisely, we can rely on the following functional analytic result, which is a corollary of the results in~\cite{Ch03}. We recall the result here without proof.

\begin{prop}[{\cite[Corollary 2.6]{Po20}}]\label{prop:AbstractLoja}
Let $E:B_{\rho_0}(0) \subseteq V \to \R$ be an analytic map, where $V$ is a Banach space and $0$ is a critical point of $E$. Suppose that we have a Banach space $W=Z^\star\hookrightarrow V^\star$, where $V\hookrightarrow Z$, for some Banach space $Z$, that $\mathrm{Imm}\,\delta E\subseteq W$ and the map $\delta E:U\to W$ is analytic. Assume also that $\delta^2 E(0)\in L(V,W)$ and it is Fredholm of index zero.\\
Then there exist constants $C$, $\rho>0$ and $\theta\in(0,1/2]$ such that
\begin{equation*}
|E(\psi)-E(0)|^{1-\theta}\le C \| \delta E(\psi) \|_{W},
\end{equation*}
for any $\psi \in B_\rho(0)\subseteq U$.
\end{prop}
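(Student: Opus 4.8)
The plan is to follow the by-now classical Lyapunov--Schmidt reduction scheme that underlies Chill's abstract theorem~\cite{Ch03}; since the statement is only recalled here, I merely outline the strategy. \textbf{Step 1 (finite dimensional reduction).} Write $L\coloneqq \delta^2 E(0)\in L(V,W)$. By hypothesis $L$ is Fredholm of index zero, so $N\coloneqq \ker L$ is finite dimensional and $\mathrm{Imm}\,L$ is a closed subspace of $W$ of the same finite codimension. I would fix a continuous projection $\Pi\colon V\to V$ with image $N$, and a continuous projection $\Pi_W\colon W\to W$ whose image is a complement of $\mathrm{Imm}\,L$ and whose kernel contains $\mathrm{Imm}\,L$; then $(\mathrm{id}-\Pi_W)\,L$ restricted to $\ker\Pi$ is a Banach space isomorphism onto $\mathrm{Imm}\,L$. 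Consider, for $\xi\in N$ and $\eta\in\ker\Pi$ small, the auxiliary equation $(\mathrm{id}-\Pi_W)\,\delta E(\xi+\eta)=0$. It holds at $(\xi,\eta)=(0,0)$ since $0$ is critical, its partial differential in $\eta$ at the origin is the isomorphism above, and $\delta E$ is analytic into $W$ by assumption; hence the analytic implicit function theorem in Banach spaces produces an analytic solution map $\eta=\Phi(\xi)$ with $\Phi(0)=0$ and $D\Phi(0)=0$ on a small ball of $N$.

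\textbf{Step 2 (the reduced function).} Next I would set $f(\xi)\coloneqq E(\xi+\Phi(\xi))$, an analytic function on a neighbourhood of $0$ in the finite dimensional space $N$, with $0$ a critical point. Differentiating and using the auxiliary equation together with the chain rule for $\Phi$, one checks that $|\nabla f(\xi)|$ is controlled by $\|\Pi_W\,\delta E(\xi+\Phi(\xi))\|_W$ (and conversely). At this point the classical Łojasiewicz gradient inequality for real analytic functions on $\R^{\dim N}$ furnishes constants $C_0,\rho_2>0$ and $\theta\in(0,1/2]$ with $|f(\xi)-f(0)|^{1-\theta}\le C_0\,|\nabla f(\xi)|$ for $|\xi|\le\rho_2$.

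\textbf{Step 3 (transfer back to $V$).} Finally, for a general $\psi=\xi+\eta$ with $\|\psi\|_V$ small I would split $\delta E(\psi)=\Pi_W\,\delta E(\psi)+(\mathrm{id}-\Pi_W)\,\delta E(\psi)$. Since $(\mathrm{id}-\Pi_W)\,\delta E(\xi+\Phi(\xi))=0$, the invertibility of $(\mathrm{id}-\Pi_W)\,L|_{\ker\Pi}$ and a first order Taylor expansion of $\delta E$ bound the transverse displacement by $\|\eta-\Phi(\xi)\|_V\le C\,\|(\mathrm{id}-\Pi_W)\,\delta E(\psi)\|_W$. Taylor expanding $E$ and $\Pi_W\,\delta E$ around the point $\xi+\Phi(\xi)$ of the reduced manifold then yields $|E(\psi)-f(\xi)|\le C\,\|\delta E(\psi)\|_W^{2}$ and $|\nabla f(\xi)|\le C\,\|\delta E(\psi)\|_W$; combining these with Step~2, with $\|(\mathrm{id}-\Pi_W)\,\delta E(\psi)\|_W\le \|\delta E(\psi)\|_W$, and with the fact that $1-\theta<1$ (so the quadratic error term is absorbed), one obtains $|E(\psi)-E(0)|^{1-\theta}\le C\,\|\delta E(\psi)\|_W$ on a possibly smaller ball $B_\rho(0)$, as claimed.

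The conceptually nontrivial hypotheses --- analyticity of $E$ and of $\delta E$, and Fredholmness of $\delta^2 E(0)$ --- have already been verified in~\Cref{prop:AnalyticityFredholm}, and the finite dimensional Łojasiewicz inequality is classical. Hence the main work, and the only genuinely delicate point, is the bookkeeping in Step~3: one must verify that all error terms produced by projecting $\psi$ onto the reduced manifold are of higher order in $\|\delta E(\psi)\|_W$, so that they can be absorbed thanks to $\theta\le 1/2$, and that the implicit function map $\Phi$ is indeed analytic (not merely $C^\infty$). This is precisely the content of~\cite{Ch03}, to which we refer for the complete argument.
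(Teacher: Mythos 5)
The paper deliberately states this proposition without proof, deferring entirely to~\cite{Po20} and~\cite{Ch03}, and your outline correctly reproduces the Lyapunov--Schmidt reduction argument used in those references: finite-dimensional reduction via the Fredholm property and the analytic implicit function theorem, the classical \L ojasiewicz inequality for the reduced analytic function on $\ker\delta^2E(0)$, and the transfer back to $V$ with the quadratic error absorbed precisely because $\theta\le 1/2$. The only inessential slip is the parenthetical ``and conversely'' in Step~2 --- the reverse control of $\|\Pi_W\,\delta E(\xi+\Phi(\xi))\|_W$ by $|\nabla f(\xi)|$ is neither needed in your final chain of estimates nor automatic without further use of the $W=Z^\star$ structure --- but since it is never invoked, the argument as assembled in Step~3 is sound.
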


We can use~\Cref{prop:AbstractLoja} in order to derive a \L ojasiewic--Simon inequality on our elastic functional $\mathcal{E}$.

\begin{cor}[\L ojasiewicz--Simon gradient inequality]\label{cor:Loja}
Let $\gamma:\SS^1\to \R^2$ be a smooth critical point of $\mathcal{E}$. Then there exists $C,\sigma>0$ and $\theta \in (0,\tfrac12]$ such that
\begin{equation*}
	|\mathcal{E}(\gamma+\psi)-\mathcal{E}(\gamma)|^{1-\theta}\le C \| \delta E (\psi) \|_{(L^{2,\perp}_\gamma)^\star},
\end{equation*}
for any $\psi \in B_\sigma(0)\subseteq H^{4,\perp}_\gamma(\SS^1,\R^2)$.
\end{cor}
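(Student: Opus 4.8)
The plan is to deduce the inequality directly from the abstract Łojasiewicz--Simon result of Proposition~\ref{prop:AbstractLoja}, applied to the functional $E$ with the concrete triple of spaces
\[
V = H^{4,\perp}_\gamma, \qquad Z = L^{2,\perp}_\gamma, \qquad W = Z^\star = (L^{2,\perp}_\gamma)^\star .
\]
First I would verify the structural hypotheses of Proposition~\ref{prop:AbstractLoja}. The embedding $V = H^{4,\perp}_\gamma \hookrightarrow Z = L^{2,\perp}_\gamma$ is continuous (indeed compact) and dense by Sobolev embedding, so dualizing produces the chain $W = (L^{2,\perp}_\gamma)^\star \hookrightarrow (H^{4,\perp}_\gamma)^\star = V^\star$ required in the statement. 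The representation formula~\eqref{eq:GradienteNormalFields} shows that for every $\psi \in B_\rho(0)$ the functional $\delta E(\psi)$ acts on $L^{2,\perp}_\gamma$ by $L^2(\mathrm{d}x)$-pairing with the $L^{2,\perp}_\gamma$ field $|\gamma'+\psi'|\left( 2(\ders^\perp)^2\boldsymbol{\kappa}_{\gamma+\psi} + |\boldsymbol{\kappa}_{\gamma+\psi}|^2\boldsymbol{\kappa}_{\gamma+\psi} - \boldsymbol{\kappa}_{\gamma+\psi}\right)$; hence $\mathrm{Imm}\,\delta E \subseteq W$. Analyticity of $E:B_\rho\to\R$ and of $\delta E:B_\rho\to W$ is exactly statement~(1) of Proposition~\ref{prop:AnalyticityFredholm}, and statement~(2) of that proposition gives $\delta^2 E(0)\in L(V,W)$ together with its being Fredholm of index zero.

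It then remains to check the one genuinely geometric hypothesis, namely that $0$ is a critical point of $E$. Evaluating~\eqref{eq:GradienteNormalFields} at $\psi=0$ gives, for every $\varphi\in H^{4,\perp}_\gamma$,
\[
\delta E(0)[\varphi] = \left\langle |\gamma'|\left( 2(\ders^\perp)^2\boldsymbol{\kappa} + |\boldsymbol{\kappa}|^2\boldsymbol{\kappa} - \boldsymbol{\kappa}\right),\varphi\right\rangle_{L^2(\mathrm{d}x)},
\]
and this vanishes because the assumption that $\gamma$ is a critical point of $\mathcal{E}$ means precisely that $2(\ders^\perp)^2\boldsymbol{\kappa} + |\boldsymbol{\kappa}|^2\boldsymbol{\kappa} - \boldsymbol{\kappa}\equiv 0$ (the boundary terms in the first variation of Section~\ref{sec:FirstVariation} are absent since $\gamma$ is closed). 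Restricting to normal test fields loses nothing here: variations in the tangential direction are reparametrizations, which leave the geometric energy $\mathcal{E}$ unchanged, so vanishing of the full gradient is equivalent to vanishing of its normal component.

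With all hypotheses verified, Proposition~\ref{prop:AbstractLoja} yields constants $C,\sigma>0$ and $\theta\in(0,\tfrac12]$ such that
\[
|E(\psi)-E(0)|^{1-\theta}\le C\,\|\delta E(\psi)\|_{(L^{2,\perp}_\gamma)^\star}
\qquad \forall\,\psi\in B_\sigma(0)\subseteq H^{4,\perp}_\gamma ,
\]
which, upon recalling $E(\psi)=\mathcal{E}(\gamma+\psi)$ and $E(0)=\mathcal{E}(\gamma)$, is precisely the claimed inequality. I expect the only non-bookkeeping step to be the correct matching of the abstract spaces $V\hookrightarrow Z$, $W=Z^\star$ with $H^{4,\perp}_\gamma$, $L^{2,\perp}_\gamma$, $(L^{2,\perp}_\gamma)^\star$, together with the use of~\eqref{eq:GradienteNormalFields} to certify that $\delta E$ really takes values in the smaller space $W$ and not merely in $V^\star$; the analytic and Fredholm inputs, as well as the abstract inequality itself, have already been established in Propositions~\ref{prop:AnalyticityFredholm} and~\ref{prop:AbstractLoja}.
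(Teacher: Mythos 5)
Your proposal is correct and follows exactly the paper's own route: the corollary is obtained by applying the abstract result of Proposition~\ref{prop:AbstractLoja} to $E$ with $V=H^{4,\perp}_\gamma$ and $W=(L^{2,\perp}_\gamma)^\star$, the hypotheses being supplied by Proposition~\ref{prop:AnalyticityFredholm} and the representation formula~\eqref{eq:GradienteNormalFields}. Your additional verifications (the embedding chain, $\mathrm{Imm}\,\delta E\subseteq W$, and that $0$ is a critical point of $E$) are exactly the bookkeeping the paper leaves implicit.
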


\begin{proof}
By~\Cref{prop:AnalyticityFredholm} we can apply~\Cref{prop:AbstractLoja} on the functional $E:B_{\rho_0}(0)\subseteq H^{4,\perp}_\gamma\to \R$ with the spaces $V= H^{4,\perp}_\gamma$ and $W=(L^{2,\perp}_\gamma)^\star$. This immediately implies the thesis.
\end{proof}

Let $\gamma:\SS^1\to \R^2$ be an embedded smooth curve. Choosing such $\rho$ small enough, the open set $U=\{p\in\R^2\,:\,d_\ga(p):=d(p,\gamma)<\rho\}$ is a tubular neighborhood of $\gamma$ with the property of {\em unique orthogonal projection}. The ``projection'' map $\pi:U\to\gamma(\SS^1)$ turns out to be $C^2$ in $U$ and given by $p\mapsto p-\nabla d^2_\ga(p)/2$, moreover the vector $\nabla d^2_\ga(p)$ is orthogonal to $\gamma$ at the point $\pi(p)$, see~\cite[Section 4]{MantegazzaMennucci} for instance.
Then, given $\varphi \in B_\rho(0)\subseteq H^4(\SS^1,\R^2)$, we can define a map $\chi:\SS^1\to\SS^1$ by
$$
\chi(x)=\gamma^{-1}\bigl[\pi\bigl(\ga(x)+\varphi(x)\bigr)\bigr],
$$
that is $C^2$ and invertible if $\ga'(x)+\varphi'(x)$ is never parallel to the unit vector $\nabla d_\ga(\ga(x)+\varphi(x))$, which is true if we have (possibly) chosen a smaller $\rho$ (so that $|\varphi|$ and $|\partial_x \varphi|$ are small and the claim follows as $\langle \gamma'(x),\nabla d_\ga(p)\rangle\to0$ as $p\to\ga(x)$).

We consider the vector field along $\gamma$ defined by
$$
\psi(\chi(x)):=\frac12 \nabla d^2_\gamma(\gamma(x)+\varphi (x))
$$
which is orthogonal to $\gamma$ at the point $\pi(\gamma(x)+\varphi(x))=\gamma(\chi(x))$,  for every $x\in\SS^1$, by construction. Hence $\psi$ is a normal vector field along the reparametrized curve $x\mapsto\gamma(\chi(x))$. Thus, we have
\begin{align*}
\gamma(\chi(x))+\psi (\chi(x))=&\,\pi\bigl(\ga(x)+\varphi(x)\bigr)+\nabla d^2_\ga(\ga(x)+\varphi(x))/2\\
=&\,\ga(x)+\varphi(x)-\nabla d^2_\ga(\ga(x)+\varphi(x))/2+\nabla d^2_\ga(\ga(x)+\varphi(x))/2\\
=&\,\ga(x)+\varphi(x).
\end{align*}
and we conclude that the curve $\ga+\varphi$ can be described by the (reparametrized) regular curve $(\ga+\psi)\circ\chi$, with $\psi\circ \chi$ normal vector field along $\gamma\circ \chi$. Moreover, by construction it follows that $\psi\circ \chi \in H^{4,\perp}_{\gamma\circ \chi}$.
Moreover, it is clear that if $\varphi\to0$ in $H^4$ then also $\psi\to0$ in $H^4$.

All this can be done also for a regular curve $\gamma:\SS^1\to\R^2$ which is only {\em immersed} (that is, it can have self--intersections), recalling that locally every immersion is an embedding and repeating the above argument a piece at a time along $\gamma$, getting also in this case a normal field $\psi$ describing a curve $\gamma+\varphi$ for $\varphi \in B_\rho(0)\subseteq H^4(\SS^1,\R^2)$.

\medskip

Now, if $\gamma=\gamma(t,x)$ is the smooth solution of the elastic flow wih datum $\gamma_0$, by~\Cref{Subconvergence} there exist a smooth critical point $\gamma_\infty$, a sequence $t_j\to+\infty$, a sequence of points $p_j\in\R^2$ and $\overline{\gamma}_{t_j}=\overline{\gamma}(t_j,\cdot)$ reparametrization of $\gamma(t_j,\cdot)$ such that
\begin{equation}\label{eq:subconvergence}
\overline{\gamma}_{t_j}-p_j \xrightarrow[j\to+\infty]{} \ga_\infty
\end{equation}
in $C^m(\SS^1,\R^n)$ for any $m\in \N$. Moreover, we know there are positive constants $C_L=C_L(\gamma_0)$ and $C(m,\gamma_0)$, for any $m\in \N$, such that
\[
\frac{1}{C_L} \le \ell (\gamma_t) \le C_L
\]
and
\begin{equation}\label{eq:StimaParabolica}
\| (\partial_s^\perp)^m \boldsymbol{\kappa}(t,\cdot) \|_{L^2(\mathrm{d}s)}\le C(m,\gamma_0)
\end{equation} 
for every $t\ge0$.

If for suitable times $t\in J$ we can write $\gamma=\gamma_\infty+\varphi_t$ with $\|\varphi_t\|_{H^4}<\rho=\rho_{\gamma_\infty}$ small enough, then it is an immediate computation to see that, if we describe $\gamma$ as a ``normal graph'' reparametrization along $\gamma_\infty$ by $\gamma_\infty+\psi_t$ as in the above discussion, then 
\begin{equation}\label{eq:StimaParabolica2}
\|\psi_t\|_{H^m}\le C(m,\gamma_0,\gamma_\infty)\,,
\end{equation} 
for every $m\in\N$ for any $t\in J$.

We are finally ready for proving the desired smooth convergence of the flow.

\begin{proof}[Proof of~\Cref{FullConvergence}]

Let us set $\gamma_t:=\gamma(t,\cdot)$ and we let $\ga_\infty$, $t_j$, $p_j$ and $\overline{\gamma}_{t_j}=\overline{\gamma}(t_j,\cdot)$ be as in~\eqref{eq:subconvergence}. Since the energy is non-increasing along the flow, we can assume that $\mathcal{E}(\ga_t)\searrow\mathcal{E}(\ga_\infty)$, as $t\to+\infty$ and $\mathcal{E}(\ga_t) > \mathcal{E}(\ga_\infty)$ for any $t$.
Thus, it is well defined the positive function
\[
H(t)= \left[\mathcal{E}(\ga_t) - \mathcal{E}(\ga_\infty) \right]^\theta,
\]
where $\theta\in(0,1/2]$ is given by~\Cref{cor:Loja} applied on the curve $\ga_\infty$. The function $H$ is monotone decreasing and converging to zero as $t\to+\infty$ (hence it is bounded above by $H(0)= \left[\mathcal{E}(\ga_0) - \mathcal{E}(\ga_\infty) \right]^\theta$).

Now let $m\ge6$ be a fixed integer. By Proposition~\ref{Subconvergence}, for any $\ep>0$ there exists $j_\ep\in\N$ such that
 \[
 \| \overline{\ga}_{t_{j_\ep}} - p_{j_\ep} -\ga_\infty \|_{C^m(\SS^1,\R^n)}\le \ep\qquad\text{ and }\qquad H(t_{j_\ep})\le\ep.
 \]
 Choosing $\varepsilon>0$ small enough, in order that 
 $$
 (\overline{\ga}_{t_{j_\ep}} - p_{j_\ep} -\ga_\infty)\in B_{\rho_{\ga_\infty}}(0)\subseteq H^4(\SS^1,\R^n),
 $$
for every $t$ in some interval $[t_{j_\varepsilon},t_{j_\ep} + \delta)$ there exists $\psi_t \in H^{4,\perp}_{\ga_\infty}$ such that the curve $\widetilde{\ga}_t = \ga_\infty + \psi_t$ is the ``normal graph'' reparametrization of $\gamma_t-p_{j_\ep}$. Hence 
 $$
 (\partial_t \widetilde{\ga})^\perp = -(2(\partial_s^\perp)^2 \boldsymbol{\kappa}_{\widetilde\ga_t} - |\boldsymbol{\kappa}_{\widetilde\ga_t}|^2\boldsymbol{\kappa}_{\widetilde\ga_t} + \boldsymbol{\kappa}_{\widetilde\ga_t}),
 $$
as the flow is invariant by translation and changing the parametrization of the evolving curves only affects the tangential part of the velocity. Since $\widetilde{\ga}_{t_\ep}$ is such reparametrization of $\overline{\ga}_{t_{j_\ep}} - p_{j_\ep}$ and this latter is close in $C^m(\SS^1,\R^n)$ to $\ga_\infty$, possibly choosing smaller $\ep,\delta>0$ above, it easily follows that for every $t\in[t_{j_\varepsilon},t_{j_\ep} + \delta)$ there holds
 $$
 \| \psi_t \|_{H^4} <\sigma,
 $$
where $\sigma>0$ is as in Corollary~\ref{cor:Loja} applied on $\ga_\infty$ and we possibly choose it smaller than the constant $\rho_\infty$.
 
We want now to prove that if $\ep>0$ is sufficiently small, then actually we can choose $\delta=+\infty$ and $\| \psi_t \|_{H^4} <\sigma$ for every time.

For $E$ as in Corollary~\ref{cor:Loja}, we have
 \begin{align}
 [\mathcal{E}(\ga_t)-\mathcal{E}(\ga_\infty)]^{1-\theta}=&\,[\mathcal{E}(\widetilde\ga_t)-\mathcal{E}(\ga_\infty)]^{1-\theta}\nonumber\\
 =&\,\left[E(\psi_t) - E(0) \right]^{1-\theta}\nonumber\\
 \le&\, C_1(\gamma_\infty,\sigma) \| \delta E (\psi_t) \|_{(L^{2,\perp}_{\ga_\infty})^\star}\nonumber\\
 =&\,C_1(\gamma_\infty,\sigma)\sup_{\Vert S\Vert_{L^{2,\perp}_{\ga_\infty}=1}}\int_{\SS^1}  \left\langle |\widetilde\ga_t'|\bigl(2(\partial_s^\perp)^2 
 \boldsymbol{\kappa}_{\widetilde\ga_t} + |\boldsymbol{\kappa}_{\widetilde\ga_t}|^2\boldsymbol{\kappa}_{\widetilde\ga_t} - \boldsymbol{\kappa}_{\widetilde\ga_t}\bigr), S \right\rangle\,\mathrm{d}x\nonumber\\
\leq&\,C_1(\gamma_\infty,\sigma)\sup_{\Vert S\Vert_{L^2(\SS^1,\R^n)=1}}\int_{\SS^1}  \left\langle |\widetilde\ga_t'|\bigl(2(\partial_s^\perp)^2 \boldsymbol{\kappa}_{\widetilde\ga_t} + |\boldsymbol{\kappa}_{\widetilde\ga_t}|^2\boldsymbol{\kappa}_{\widetilde\ga_t} - \boldsymbol{\kappa}_{\widetilde\ga_t}\bigr), S \right\rangle\,\mathrm{d}x\nonumber\\
 =&\,C_1(\gamma_\infty,\sigma)\left(\int_{\SS^1}  |\widetilde\ga_t'|^2\bigl\vert 2(\partial_s^\perp)^2 \boldsymbol{\kappa}_{\widetilde\ga_t} + |\boldsymbol{\kappa}_{\widetilde\ga_t}|^2\boldsymbol{\kappa}_{\widetilde\ga_t} - \boldsymbol{\kappa}_{\widetilde\ga_t}\bigr\vert^2\,\mathrm{d}x\right)^{1/2}\label{eqcar1}
 \end{align}
 where we can assume that $C_1(\gamma_\infty,\sigma)\geq 1$.\\
 Now, $\scal{\widetilde{\ga}_t,\tau_{\ga_\infty}}=\scal{\ga_\infty,\tau_{\ga_\infty}}$ is time independent, then $\scal{\pa_t \widetilde{\ga},\tau_{\ga_\infty}}=0$ and possibly taking a smaller $\sigma>0$, we can suppose that $|\tau_{\ga_\infty}-\tau_{\widetilde{\ga}}|\le \tfrac12$ for any $t\ge t_{j_\ep}$ such that $\| \psi_t \|_{H^4} <\sigma$. Hence,
 \[
 |(\pa_t\widetilde{\ga})^\perp|= | \pa_t \widetilde{\ga} - \scal{  \pa_t \widetilde{\ga} , \tau_{\widetilde{\ga}} } \tau_{\widetilde{\ga}} | 
 =  | \pa_t \widetilde{\ga} + \scal{  \pa_t \widetilde{\ga} , \tau_{\ga_\infty} -  \tau_{\widetilde{\ga}} } \tau_{\widetilde{\ga}} |\ge |\pa_t\widetilde{\ga}| - |\pa_t\widetilde{\ga}||\tau_{\ga_\infty}-\tau_{\widetilde{\ga}}|
  \ge \frac12 |\pa_t\widetilde{\ga}|.
 \]
 Differentiating $H$, we then get
 \begin{equation}
 \begin{split}
 \frac{d}{dt} H(t)=&\,\frac{d}{dt}[\mathcal{E}(\widetilde\ga_t)-\mathcal{E}(\ga_\infty)]^{\theta}\\
 =&\,-\theta H^{\frac{\theta-1}{\theta}}\int_{\SS^1}  |\widetilde\ga_t'|\bigl\vert 2(\partial_s^\perp)^2\boldsymbol{\kappa}_{\widetilde\ga_t} + |\boldsymbol{\kappa}_{\widetilde\ga_t}|^2\boldsymbol{\kappa}_{\widetilde\ga_t} - \boldsymbol{\kappa}_{\widetilde\ga_t}\bigr\vert^2\,\mathrm{d}x\nonumber\\
 \leq&\,-\theta H^{\frac{\theta-1}{\theta}}C_2(\ga_\infty,\sigma)\left(\int_{\SS^1}  \bigl\vert(\partial_t\widetilde\ga)^{\perp}\bigr\vert^2\,\mathrm{d}x\right)^{1/2}\left(\int_{\SS^1}  |\widetilde\ga_t'|^2\bigl\vert 2(\partial_s^\perp)^2\boldsymbol{\kappa}_{\widetilde\ga_t} + |\boldsymbol{\kappa}_{\widetilde\ga_t}|^2\boldsymbol{\kappa}_{\widetilde\ga_t} - \boldsymbol{\kappa}_{\widetilde\ga_t}\bigr\vert^2\,\mathrm{d}x\right)^{1/2}\nonumber\\
 \leq&\,-H^{\frac{\theta-1}{\theta}}C(\ga_\infty,\sigma) \|\pa_t\widetilde\ga \|_{L^2(\mathrm{d}x)}[\mathcal{E}(\widetilde\ga_t)-\mathcal{E}(\widetilde\ga_\infty)]^{1-\theta}\\ 
 =&\,-C(\ga_\infty,\sigma)\|\pa_t\widetilde\ga \|_{L^2(\mathrm{d}x)},\label{eqcar777}
 \end{split}
 \end{equation}
 where $C(\ga_\infty,\sigma)=\theta C_2(\ga_\infty,\sigma)/2C_1(\ga_\infty,\sigma)$. This inequality clearly implies the estimate
\begin{equation}\label{eqcar778}
 C(\ga_\infty,\sigma)\int_{\xi_1}^{\xi_2}\|\pa_t\widetilde\ga \|_{L^2(\mathrm{d}x)}\,\mathrm{d}t\leq H(\xi_1)-H(\xi_2)\leq H(\xi_1)
 \end{equation}
 for every $t_{j_\ep}\le \xi_1<\xi_2<t_{j_\ep}+\delta$ such that $\| \psi_t \|_{H^4} <\sigma$. Hence, for such $\xi_1,\xi_2$ we have
 \begin{equation}
 \begin{split}
 \Vert\widetilde\ga_{\xi_2}-\widetilde\ga_{\xi_1}\Vert_{L^2(\mathrm{d}x)}
 =&\,\left(\int_{\SS^1} |\widetilde\ga_{\xi_2}(x)-\widetilde\ga_{\xi_1}(x)|^2\,\mathrm{d}x\right)^{1/2}\\
 \leq&\,\biggl(\int_{\SS^1} \left(\int_{\xi_1}^{\xi_2}\partial_t\widetilde\ga(t,x)\,\mathrm{d}t\,\right)^2\,\mathrm{d}x\biggr)^{1/2}\\
 =&\,\left\Vert\int_{\xi_1}^{\xi_2}\partial_t\widetilde\ga\,\mathrm{d}t\,\right\Vert_{L^2(\mathrm{d}x)}\\
 \leq&\,\int_{\xi_1}^{\xi_2}\Vert\partial_t\widetilde\ga\Vert_{L^2(\mathrm{d}x)}\,\mathrm{d}t\\
 \leq&\, \frac{{H(\xi_1)}}{C(\ga_\infty,\sigma)}\\
 \le&\, \frac{\ep}{C(\ga_\infty,\sigma)},
 \end{split}\label{eq:CauchyL2}
 \end{equation}
 where we used that $H(\xi_1)\le H(t_{j_\ep})\le \ep$ and the fact that $\bigl\Vert\int_{\xi_1}^{\xi_2} v\,\mathrm{d}t\,\bigr\Vert_{L^2(\mathrm{d}x)}\leq\int_{\xi_1}^{\xi_2}\Vert v\Vert_{L^2(\mathrm{d}x)}\,\mathrm{d}t$, which easily follows from Holder inequality.

 Therefore, for $t\ge t_{j_\ep}$ such that $\| \psi_t \|_{H^4} <\sigma$, we have
 \[
 \|\psi_t\|_{L^2(\mathrm{d}x)}=\| \widetilde{\ga}_t - \ga_\infty \|_{L^2(\mathrm{d}x)}\le \| \widetilde{\ga}_t - \widetilde{\ga}_{t_{j_\ep}} \|_{L^2(\mathrm{d}x)}  + 
 \|   \widetilde{\ga}_{t_{j_\ep}}   - \ga_\infty  \|_{L^2(\mathrm{d}x)}\le  \frac{\ep}{C(\ga_\infty,\sigma)} +\ep \sqrt{2\pi}.
 \]
 Then, by means of Gagliardo--Nirenberg interpolation inequalities (see~\cite{adams} or~\cite{Aubin}, for instance) and estimates~\eqref{eq:StimaParabolica2}, for every $l\geq 4$, we have
 \[
 \|\psi_t \|_{H^l} \le C \|\psi_t \|_{H^{l+1}}^a \|\psi_t \|_{L^2(\mathrm{d}x)}^{1-a} \le C(l,\ga_0,\ga_\infty,\sigma)\ep^{1-a},
 \]
 for some $a\in(0,1)$ and any $t\ge t_{j_\ep}$ such that $\| \psi_t \|_{H^4} <\sigma$.\\
 In particular setting $l+1=m\ge6$, if $\ep>0$ was chosen sufficiently small depending only on $\ga_0$, $\ga_\infty$ and $\sigma$, then $\|\psi_t\|_{H^4}<\sigma/2$ for any time $t\ge t_{j_\ep}$, which means that we could have chosen $\delta=+\infty$ in the previous discussion.

 Then, from estimate~\eqref{eq:CauchyL2} it follows that $\widetilde{\ga}_t$ is a Cauchy sequence in $L^2(\mathrm{d}x)$ as $t\to+\infty$, therefore $\widetilde{\ga}_t $ converges in $L^2(\mathrm{d}x)$ as $t\to+\infty$ to some limit curve $\widetilde{\gamma}_\infty$ (not necessarily coincident with $\gamma_\infty$). Moreover, by means of the above interpolation inequalities, repeating the argument for higher $m$ we see that such convergence is actually in $H^m$ for every $m\in\N$, hence in $C^m(\SS^1,\R^n)$ for every $m\in\N$, by Sobolev embedding theorem. This implies that $\widetilde{\gamma}_\infty$ is a smooth critical point of $\mathcal{E}$. As the original flow $\ga_t$ is a fixed translation of $\widetilde{\ga}_t$, up to reparametrization, this completes the proof.
\end{proof}

Collecting the results we proved about the elastic flow of closed curves, we can state the following comprehensive theorem.

\begin{teo}\label{teo:FullExistenceAndConvergence}
Let $\gamma_0:\mathbb{S}^1\to \R^2$ be a smooth closed curve. Then there exists a unique solution $\gamma:[0,+\infty)\times \SS^1\to \R^2$ to the elastic flow
\[
\begin{cases}
\partial_t \gamma = -\left( 2(\ders^\perp)^2\boldsymbol{\kappa} + |\boldsymbol{\kappa}|^2\boldsymbol{\kappa} -\boldsymbol{\kappa} \right) & \mbox{ on } [0,+\infty)\times \SS^1,\\
\gamma(0,x)=\gamma_0(x) & \mbox{ on } \SS^1.
\end{cases}
\]
Moreover there exists a smooth critical point $\gamma_\infty$ of $\mathcal{E}$ such that $\gamma(t,\cdot)\to \gamma_\infty(\cdot)$ in $C^m(\SS^1)$ for any $m\in\N$, up to reparametrization.
\end{teo}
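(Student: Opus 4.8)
The plan is to assemble the results established throughout the previous sections, which together cover local existence, uniqueness, global existence in time, and asymptotic convergence for a single closed curve with weight $\mu=1>0$. No genuinely new argument is needed: the proof is an orchestration of Theorems~\ref{geomexistence}, \ref{teo:GlobalExistence}, \ref{FullConvergence} and Proposition~\ref{parabolicsmoth}.

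First I would set up short time existence. Since $\gamma_0:\SS^1\to\R^2$ is smooth it is in particular of class $C^{4+\alpha}$ for every $\alpha\in(0,1)$, and being a closed curve it satisfies (vacuously) all the boundary and compatibility conditions of Definition~\ref{Def:admissible-initial-net}, so it is an admissible initial datum. By Theorem~\ref{geomexistence} (proved via the special flow, Theorem~\ref{existenceanalyticprob}, whose core is the contraction argument of Proposition~\ref{contraction}) there is $T>0$ and a solution $\gamma\in C^{\frac{4+\alpha}{4},4+\alpha}([0,T]\times\SS^1;\R^2)$, unique up to reparametrization. By the parabolic smoothing of Proposition~\ref{parabolicsmoth} the solution is smooth for positive times, and since $\gamma_0$ is smooth and satisfies the compatibility conditions of every order (automatic in the closed, boundaryless case), one in fact obtains a smooth solution on $[0,T]\times\SS^1$ up to $t=0$; in particular a maximal smooth solution on $[0,T_{\max})$ exists.

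Second I would promote this to a global solution. As the weight is $\mu=1>0$, Theorem~\ref{teo:GlobalExistence} applies verbatim to the single closed curve and gives $T_{\max}=+\infty$, so $\gamma$ is defined on $[0,+\infty)\times\SS^1$, is smooth on $[\varepsilon,+\infty)\times\SS^1$ for every $\varepsilon>0$, and is unique up to reparametrization (short time uniqueness of Theorem~\ref{geomexistence} together with Proposition~\ref{riparametriz} propagates to the maximal interval). Third, I would invoke the asymptotic analysis: Theorem~\ref{FullConvergence}, which takes exactly a smooth solution of the elastic flow of closed curves on $[0,+\infty)$ as input, then provides a smooth critical point $\gamma_\infty$ of $\mathcal{E}$ with $\gamma(t,\cdot)\to\gamma_\infty(\cdot)$ in $C^m(\SS^1)$ for every $m\in\N$ up to reparametrization, whence also the compactness of $\bigcup_{t\ge0}\gamma(t,\SS^1)$.

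The only bookkeeping needed when gluing the pieces is checking that the hypotheses of each cited statement are met for a smooth closed $\gamma_0$ — admissibility and compatibility being trivial here — and that ``uniqueness up to reparametrization'' is stable under the passage from local to global. All the substantive difficulties were already resolved in the cited results: the fixed point construction for short time existence, the parabolic energy estimates on the arclength derivatives of the curvature (Lemma~\ref{lemmastimaderivatecurvatura}) used in the contradiction argument for global existence, and, above all, the \L ojasiewicz--Simon gradient inequality (Corollary~\ref{cor:Loja}) and the continuation/Cauchy-in-$L^2$ argument in the proof of Theorem~\ref{FullConvergence}. That last ingredient is the real heart of the matter, and it is the step I would single out as the main obstacle were the theorem to be proved from scratch.
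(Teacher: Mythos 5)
Your proposal is correct and follows exactly the route the paper intends: Theorem~\ref{teo:FullExistenceAndConvergence} is stated in the paper as a summary result whose proof is precisely the concatenation of Theorem~\ref{geomexistence} (short time existence and uniqueness via the special flow), Proposition~\ref{parabolicsmoth} (smoothing), Theorem~\ref{teo:GlobalExistence} (global existence for $\mu=1>0$), and Theorem~\ref{FullConvergence} (full smooth convergence via the \L ojasiewicz--Simon inequality). Your hypothesis-checking — admissibility and compatibility being vacuous for a smooth closed curve, and uniqueness up to reparametrization propagating to the maximal interval — is exactly the bookkeeping the paper leaves implicit.
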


\begin{rem}
We remark that~\Cref{teo:FullExistenceAndConvergence} is true exactly as stated for the analogously defined flow in the Euclidean spaces $\R^n$ for any $n\ge 2$ (see~\cite{MaPo20}). Indeed, it is immediate to see that the proof above generalizes to higher codimension. We observe that the very same statement holds for the suitably defined elastic flow defined in the hyperbolic plane and in the two-sphere by~\cite[Corollary 1.2]{Po20}. It is likely that smooth convergence of the elastic flow still holds true in hyperbolic spaces and spheres of any dimension $\ge2$ and, more generally, in homogeneous Riemannian manifolds, that is, complete Riemannian manifolds such that the group of isometries acts transitively on them. For further results and comments about the convergence of the elastic flow in Riemannian manifolds we refer to~\cite{Po20}.
\end{rem}

Let us conclude by stating the analogous full convergence result proved for the elastic flow of open curves with clamped boundary conditions.

\begin{teo}[\cite{Ch12, DaPoSp16}]
	Let $\gamma_0:[0,1]\to \R^n$ be a smooth curve. Then there exists a unique solution $\gamma:[0,+\infty)\times[0,1]\to\R^n$ to the elastic flow satisfying the clamped boundary conditions
	\[
	\gamma(t,0)=\gamma_0(0), \quad
	\gamma(t,1)=\gamma_0(1), \quad
	\ders\gamma(t,0)=\tau_{\gamma_0}(0), \quad
	\ders\gamma(t,1)=\tau_{\gamma_0}(1), \quad
	\]
	with initial datum $\gamma_0$. Moreover there exists a smooth critical point $\gamma_\infty$ of $\mathcal{E}$ subjected to the above clamped boundary conditions such that $\gamma(t,\cdot)\to \gamma_\infty(\cdot)$ in $C^m([0,1])$ for any $m\in\N$, up to reparametrization.
\end{teo}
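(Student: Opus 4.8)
The plan is to assemble the statement from the three ingredients already developed for closed curves, adapting each to the clamped boundary conditions: short time existence with instantaneous smoothing, global existence, and full convergence via a \L ojasiewicz--Simon inequality. First I would reduce to the special flow with clamped boundary conditions following~\Cref{generalizzazioneopen}, working in the affine space $\mathbb{E}_T^{0,2}$ which encodes $\varphi(t,0)=\gamma_0(0)$, $\varphi(t,1)=\gamma_0(1)$, $\partial_x\varphi(t,0)=\tau_{\gamma_0}(0)$, $\partial_x\varphi(t,1)=\tau_{\gamma_0}(1)$, and solving the resulting non--degenerate quasilinear system by the contraction argument of~\Cref{existenceanalyticprob}; the linear theory of~\cite{solonnikov2,LunardiSemigroups2013} applies because the clamped conditions satisfy the Lopatinskii--Shapiro complementing condition. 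Together with~\Cref{riparametriz} this produces short time existence and uniqueness up to reparametrization exactly as in~\Cref{geomexistence}, and~\Cref{parabolicsmoth} upgrades the solution to $C^\infty$ for positive times, so the subsequent arguments may assume $\gamma$ smooth on $(0,T_{\max})\times[0,1]$; if $\gamma_0$ does not satisfy the higher order compatibility conditions one either restarts the flow from a positive time or works in the Sobolev setting of~\cite{GaMePl2} and invokes instantaneous regularisation.

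Next I would show $T_{\max}=+\infty$ by the scheme of~\Cref{teo:GlobalExistence}. Assuming $T_{\max}<\infty$, energy monotonicity (\Cref{energydecreases}) bounds $\int|k|^2\,\de s$ and the length from above, while~\eqref{boundlunghezza} bounds the length from below away from zero because the endpoints are pinned. Then~\Cref{lemmastimaderivatecurvatura-clamped} gives $\frac{d}{dt}\int\frac12|\partial_s^4 k|^2\,\de s\le C$, hence $\partial_s^4 k\in L^\infty((\varepsilon,T_{\max});L^2)$ and, by Gagliardo--Nirenberg, $\partial_s^j k\in L^\infty((\varepsilon,T_{\max});L^2)$ for $j\le 3$ as well; reparametrizing by constant speed and repeating the compactness argument of~\Cref{teo:GlobalExistence} verbatim yields a limit curve at $t=T_{\max}$, again an admissible clamped datum, which restarts the flow and contradicts maximality. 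By~\Cref{Subconvergence--fixed--endpoints} the flow then subconverges, along a sequence $t_i\to\infty$ and up to reparametrization, to a smooth clamped elastica $\gamma_\infty$; since the endpoints are fixed no ambient translation is needed and the flow stays in a fixed ball of $\R^n$.

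Finally I would promote subconvergence to full convergence by rerunning the argument of~\Cref{FullConvergence} centred at $\gamma_\infty$. The correct Hilbert space is now
\[
H^{4,\perp}_{\gamma_\infty,\mathrm{cl}}:=\bigl\{\psi\in H^{4,\perp}_{\gamma_\infty}\ :\ \psi(0)=\psi(1)=0,\ \partial_x\psi(0)=\partial_x\psi(1)=0\bigr\},
\]
that is, the normal fields preserving the clamped constraints to first order, and one sets $E(\psi):=\mathcal{E}(\gamma_\infty+\psi)$ on a small ball of it. Analyticity of $E$ and $\delta E$ follows as in~\Cref{prop:AnalyticityFredholm}, and $\delta^2 E(0)=|\gamma_\infty'|\bigl((\partial_s^\perp)^4+\Omega\bigr)$ is Fredholm of index zero once one checks that $\mathrm{id}+(\partial_s^\perp)^4$ is invertible on this space --- by the same direct methods as in~\Cref{prop:AnalyticityFredholm}, minimizing $\int_0^1\frac12|(\partial_s^\perp)^2\varphi|^2+\frac12|\varphi|^2-\scal{\varphi,X}\,\de s$ over $H^{2,\perp}$ fields vanishing to first order at the endpoints, for which the clamped conditions are precisely the natural boundary conditions, so that no boundary integrals survive. \Cref{prop:AbstractLoja} then yields the \L ojasiewicz--Simon inequality $|\mathcal{E}(\gamma_\infty+\psi)-\mathcal{E}(\gamma_\infty)|^{1-\theta}\le C\|\delta E(\psi)\|_{(L^{2,\perp}_{\gamma_\infty})^\star}$ near $0$. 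Describing the flow near $\gamma_\infty$ as a normal graph $\gamma_t=\gamma_\infty+\psi_t$ through the tubular neighbourhood and unique projection construction of Section~\ref{sec:SmoothConvergence} --- valid for a curve with boundary, whose reparametrizations automatically respect the clamped conditions --- and using the parabolic bounds from the global existence step, one estimates $H(t):=[\mathcal{E}(\gamma_t)-\mathcal{E}(\gamma_\infty)]^\theta$: once the subconverging sequence enters the good neighbourhood one gets $\frac{d}{dt}H\le -C\|\partial_t\widetilde\gamma\|_{L^2(\de x)}$, hence $\partial_t\widetilde\gamma\in L^1\bigl((t_{j_\varepsilon},\infty);L^2\bigr)$, so $\widetilde\gamma_t$ is Cauchy in $L^2$ and converges, and a Gagliardo--Nirenberg bootstrap upgrades the convergence to $C^m$ for every $m$; the limit is a smooth clamped critical point.

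The hard part will be the boundary terms. In the higher order energy estimates only the derivatives $\partial_s^{4n}k$ of order a multiple of four are controlled (\Cref{lemmastimaderivatecurvatura-clamped}, via the boundary identities~\eqref{caso4j+2}--\eqref{caso4j+3}), which forces the global existence argument to be organised around $\partial_s^4 k$ rather than $\partial_s k$; and in the \L ojasiewicz--Simon step one must verify that the clamped conditions are compatible with the perpendicular Hilbert space structure and yield a self--adjoint, index--zero realisation of the linearised operator --- equivalently, that the boundary contributions in the second variation genuinely vanish on fields of $H^{4,\perp}_{\gamma_\infty,\mathrm{cl}}$. Once these points are settled, the remaining estimates are routine adaptations of the closed curve proof.
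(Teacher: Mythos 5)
Your proposal is correct and follows essentially the route the paper itself takes: the theorem is stated here without proof, with a citation to~\cite{Ch12,DaPoSp16}, and your assembly of the clamped short-time theory (Remark~\ref{generalizzazioneopen} plus Proposition~\ref{riparametriz}), the clamped higher-order energy estimates organised around $\partial_s^{4n}k$ (Lemma~\ref{lemmastimaderivatecurvatura-clamped}), the subconvergence of Proposition~\ref{Subconvergence--fixed--endpoints}, and the \L ojasiewicz--Simon scheme of Theorem~\ref{FullConvergence} run on the constrained space of normal fields vanishing to first order at the endpoints is exactly the strategy of those references and of the closed-curve proof of Section~\ref{sec:SmoothConvergence}, including the correct observation that the normal-graph representation automatically lands in that constrained space. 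The only slip is terminological: the conditions $\psi(y)=\partial_x\psi(y)=0$ are \emph{essential} (imposed) rather than natural boundary conditions for the auxiliary minimization problem --- the relevant point, which you state correctly, being that they annihilate all boundary terms in the integrations by parts, so that injectivity and surjectivity of $\mathrm{id}+(\partial_s^\perp)^4$ go through as in Proposition~\ref{prop:AnalyticityFredholm}.
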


\section{Open questions}\label{Openprob}

We conclude the paper by mentioning some related open problems.

\begin{itemize}
\item In Theorem~\ref{thm:LongTimeNetworks} a description of the possible behaviors
as $t\to T_{\max}$ is given for evolving networks subjected to Navier boundary conditions.
When instead clamped boundary conditions are imposed, only short time existence is known~\cite{GaMePl1}. One would like to investigate further this flow of networks as time approaches the maximal time of existence.
Recent results~\cite{DePlPo} on the minimization of $\mathcal{E}_\mu$ among 
networks whose curves meet with fixed angles suggest that an analogous of 
Theorem~\ref{thm:LongTimeNetworks} is expected: either $T_{\max}=\infty$
or as $t\to T_{\max}$  the length of at least one curve of the network 
could go to zero.
\item
In Section~\ref{sec:LongTimeExistence} we described a couple of numerical examples by Robert N\"{u}rnberg in which some curves vanish or the amplitude of the angles at the junctions goes to zero. 
It is an open problem to explicitly find an example of an evolving network developing such phenomena. More generally, one wants to give a more accurate description of the onset of singularities during the flow.
\item In the case of the flow of networks with Navier boundary conditions
estimates of the type
\begin{equation*}
\frac{d}{dt}\int_{\mathcal{N}_t}
\left\lvert\partial_s^n k\right\rvert^2 \,\mathrm{d}s
\leq C(\mathcal{E}_\mu(\mathcal{N}_0))\,, 
\end{equation*}
are shown for $n=2+4j$ with $j\in\mathbb{N}$  only for a special choice of the tangential velocity (see~\cite{DaChPo19}).
One could ask whether the same holds true for a general tangential velocity.
\item In the last section we show that if 
$\gamma_t$ is a solution of the elastic flow of closed curves in $[0,\infty)$,
then its support stays in a compact sets of $\mathbb{R}^2$ for any time. 
The same is true for open curves and networks with some endpoint fixed in the plane.
What about compact networks?
At the moment we are not able to exclude that
if the initial network $\mathcal{N}_0$ has no fixed endpoints (as in the case of a Theta)  as $t\to T_{\max}$ the entire configuration $\mathcal{N}_t$ ``escapes'' to infinity. 
\item Another related question asked by G. Huisken is the following: suppose that the support of an initial closed curve
$\gamma_0$ lies in the upper halfplane, is it possible to prove that 
there is no time $\tau$ such that the support of the solution 
at time $\tau$ lies completely in the lower halfplane?
\item Are there self--similar (for instance translating or rotating) solutions of the elastic flow?
\item Several variants of the elastic flow have been investigated, but an analysis of the 
elastic flow of closed curves that encloses a fixed (signed) area is missing.
\item At the moment no stability results are shown for the elastic flow of networks. More generally, one would understand whether an elastic flow of a general network defined for all times converges smoothly to a critical point, just as in the case of closed curves. Similarly, proving the stability of the flow would mean to understand whether an elastic flow of networks starting ``close to'' a critical point exists for all times and smoothly converges.
\item Is it possible to introduce a definition of weak solution (for instance by variational schemes such as minimizing movements) that is also capable to provide global existence in the case of networks?
We remark that all notions based on the maximum principle, such as viscosity solutions,
cannot work in this context, due to the high order of the evolution equation in the spatial variable.
\end{itemize}


\printbibliography[title={References}]

\typeout{get arXiv to do 4 passes: Label(s) may have changed. Rerun} 

\end{document}